%% file: waves-with-Misha.tex
\documentclass[12pt]{amsart}

\usepackage[dvipsnames]{xcolor}

\usepackage{imakeidx}
\usepackage[backend=biber,style=alphabetic,eprint=true,url=true,doi=true, maxalphanames=4]{biblatex}
\input{macros/basic_macros}

\input{macros/annotation_macros}
\input{local_macros}

\input{misha_needed_macros}

\graphicspath{{figs/}}
\usepackage[normalem]{ulem}

\usepackage{mathtools}

\makeindex

\def\note#1
{\marginpar
{\nt $\leftarrow$
\par
\hfuzz=20pt \hbadness=9000 \hyphenpenalty=-100 \exhyphenpenalty=-100
\pretolerance=-1 \tolerance=9999 \doublehyphendemerits=-100000
\finalhyphendemerits=-100000 \baselineskip=6pt
#1}\hfuzz=1pt}

\title[Parabolically bounded primitive bounds]{MLC for parabolically bounded primitive renormalization}
\author{Jeremy Kahn, Alex Kapiamba, and Mikhail Lyubich}

\begin{document}
\maketitle

\begin{abstract}
    We prove {\em a priori} bounds and MLC  (local connectivity of the Mandelbrot set $\MM$)
    for a class of infinitely renormalizable parameters whose renormalization type is primitive
    but can approach the cusp of $\MM$. To this end we develop and refine a variety of tools 
    that allow us to control degeneration of renormalizations. They  
    include the Thin-Thick  Decomposition, the Value Calculus,
    the Wanderers Theorem, and the Wave Lemma.  
\end{abstract}

\tableofcontents

\section{Introduction} \label{sec:intro}

Since {it was conjectured} by Douady and Hubbard in \cite{Orsay}, the MLC conjecture on the local connectivity of the Mandelbrot set $\MM$ has been a fundamental open problem in Complex Dynamics. The first major breakthrough in the problem was made by Yoccoz, establishing MLC for any parameter in $\MM$ that is not \textit{infinitely quadratic-like renormalizable}   
\cite{Hubbard-Yoccoz, Milnor}.
Lyubich then showed that MLC holds for parameters in $\MM$ with \textit{a priori bounds} and satisfying a certain \textit{secondary limbs condition {(SLC)}},
{and established {\em a priori} bounds for {\em high type} maps of  the SLC class} \cite{lyubich1997dynamics}.

Quadratic-like renormalization can be divided into two types, \textit{primitive} and \textit{satellite}. 
The next move towards MLC was made by Kahn, establishing \textit{a priori} bounds at all parameters of \textit{bounded primitive} type \cite{K06}.
{This paper (along with \cite{KL:QA}, 
    which proved
  {\em Quasi-Additivity Law} and the  {\em Covering Lemma})
  introduced to the field the machinery of the {\em Near-Degenerate Regime};
  this has since become the dominant paradigm for further development.}
These results were then expanded in \cite{KL:decorations} and \cite{KL:molecules} to all parameters of ``{definitely primitive}" type satisfying an \textit{anti-molecule condition}. 
Recently, Dudko and Lyubich have produced \textit{a priori} bounds for all parameters of bounded satellite type \cite{dudko2023mlcfeigenbaumpoints}.
All the above parameters satisfy the secondary limbs condition, and hence MLC holds there.

In this article, we provide {\textit{a priori}}  bounds for some \textit{near-parabolic} primitive types, which do not satisfy the anti-molecule condition of \cite{KL:molecules}.
Notably, these are the first {controlled}%
\footnote
 {{Using methods of \cite{lyubich1997dynamics} one can produce a class of examples with uncontrolled
 (in terms of the limb rotation number) growth of renormalization combinatorics.}}
examples of {quadratic-like} \emph{a priori} bounds for parameters that do not satisfy {SLC};
we observe, {however},  that the argument in \cite{lyubich1997dynamics} can still be used to upgrade  \textit{a priori} bounds to MLC.

Let us also remark that  \textit{a priori} bounds
(in the traditional quadratic-like sense)
 {have} not been 
the only successful framework for approaching MLC. 
Some {satellite parameters}  
have been studied in \cite{CS_satellite} and \cite{DL_pacmen} in the framework of  
 \textit{Near-Parabolic Renormalization} developed in \cite{IS} 
and 
\textit{Pacman Renormalization} developed  in \cite{DLS_pacman} respectively.
They are based upon different kind of renormalization and notion of {\em a priori} bounds.
Unification of all these renormalization theories is one of the
challenges on the road to the MLC.
(Compare with \S \ref{sec:future perspective} outlining our future perspective.)

\subsection{Statement of the main result}

Let $M\subsetneq \MM$ be a small Mandelbrot set, we will call $M$ a \textit{(combinatorial) type}\index{Mandelbrot set $\MM$!combinatorial type $M\subset \MM$}. There is a unique $p\geq 2$, called the \textit{associated period}, such that every polynomial corresponding to a parameter in $M$ is quadratic-like renormalizable with period $p$. The type is \textit{prime} \index{Mandelbrot set $\MM$!combinatorial type $M\subset \MM$!prime} when there is no other type $M'\subsetneq \MM$ satisfying $M\subsetneq M'$.

Let $\CC$ be a family of types, and let $f_0:U_0\to V_0$ be a quadratic-like map with quadratic-like restrictions $(f_{n+1}:= f_n^{p_n}: U_n\to V_n)_{n=0}^N$. The sequence $(f_n)$ has \textit{combinatorics in $\CC$} if each $f_n$ is hybrid conjugate to a quadratic polynomial in some $M_n\in \CC$ with associated period $p_n$. When $N= \infty$, the sequence $(f_n)$ has \textit{a priori bounds} \index{\emph{a priori} bounds} if the restrictions can be selected so that 
$$\mmod(V_n\sm \overline{U_n})\geq \eps >0.$$
If the bounds depend only on $\CC$ when $n$ is large enough {(depending on $\mod (V_0 \sm \overline {U_0} )$),} 
then we will say that $\CC$ has \textit{beau bounds}\index{family $\CC$ of types!beau bounds}.

\renewcommand{\Card}{\dd \HC_0}
\newcommand{\HC}{\Delta}
{The \emph{main hyperbolic component} $\HC_0$ 
of $\MM$ 
is the component of $\Int \MM$
corresponding to quadratic polynomials with an attracting fixed point. 
For each $p/q\in \Q$, there is a unique parameter $c_{p/q}\in \Card$  corresponding to a polynomial with a parabolic fixed point with multiplier $e^{2\pi i p/q}$.}
There is a unique prime type $M_{p/q}$ attached to $\HC_0$ at $c_{p/q}$; these types are \emph{{satellite}} \index{Mandelbrot set $\MM$!combinatorial type $M\subset \MM$!prime!satellite}, and all other prime types are \emph{{primitive}} \index{Mandelbrot set $\MM$!combinatorial type $M\subset \MM$!prime!primitive}.
The {closure} $\hat M_{p/q}$ \index{Mandelbrot set $\MM$!limb $\hat M_{p/q}$} of the component  of $\MM\sm \{c_{p/q}\}$ intersecting $M_{p/q}$ is called the \textit{$p/q$-limb} of $\MM$.
In \cite{BD86}, 
Branner and Douady used a quasiconformal surgery,  inserting an extra limb into the filled Julia sets,  to define a continuous dynamical embedding $\Phi_2: \hat{M}_{1/2}\to \hat{M}_{1/3}$, and their argument can be easily generalized to construct embeddings $\Phi_{q}: \hat{M}_{1/q}\to \hat{M}_{1/{(q+1)}}$ for all $q\geq 2$.
Choosing the embeddings so that each new limb is inserted ``far away" from the limb containing the critical point, we define a \textit{parabolic tail}  to be a family of types of the form 
\begin{figure}
	\begin{center}
		\def\svgwidth{5.5in}
		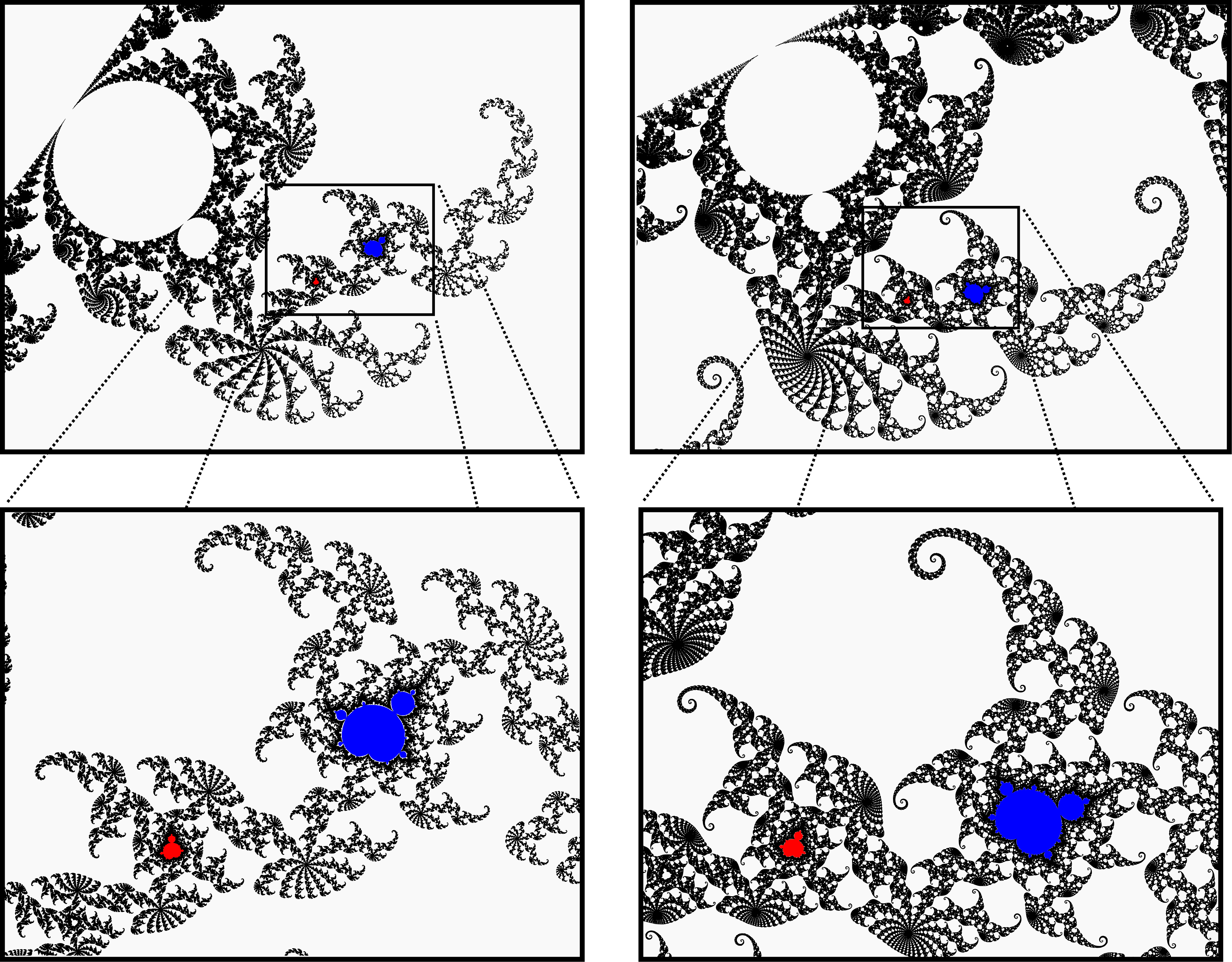
		\caption{The boundary of the $\MM$ zoomed in near the limbs $\hat{M}_{1/10}$ (left) and $\hat{M}_{1/20}$ (right). The small Mandelbrot sets 
         highlighted in red and blue belong to the same parabolic tails respectively.}
		\label{fig:parabolic tails}
	\end{center}
\end{figure}
$$\setofst{\Phi_{n}\circ \Phi_{n-1}\circ \dots \circ \Phi_{q}(M)}{n\geq q}$$
for some $q\geq 2$ and type $M\subset \hat{M}_{1/q}$ (see Figure \ref{fig:parabolic tails}).

We say that a family of types is \textit{parabolically bounded}  if 
it is a finite union of types and  parabolic tails. Our main result is:

\begin{thmA}\label{thm: A}
	Every parabolically bounded family of prime primitive types has beau bounds. 
\end{thmA}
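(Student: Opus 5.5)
The plan is to argue by contradiction in the near-degenerate regime, following the scheme of \cite{K06} and \cite{KL:molecules}. Suppose the family $\CC$ does not have beau bounds. Then there are infinitely renormalizable maps with combinatorics in $\CC$ whose renormalizations $f_n$ have arbitrarily small moduli $\mmod(V_n\sm\overline{U_n})$ at deep levels. Equivalently, the extremal width $W_n$ of the family of curves connecting the little Julia set $K_n$ to its complement in the appropriate dynamical annulus becomes arbitrarily large. The goal is a definite improvement: if $W_n \geq K$ is large enough, depending only on $\CC$, then some earlier level $m<n$, at bounded combinatorial distance, has $W_m \geq 2W_n$ or similar. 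Iterating this improvement downward contradicts the bound given by $\mmod(V_0\sm\overline{U_0})$. Beau bounds then follow by the standard compactness argument.

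When $\CC$ is a finite set of primitive types, this is exactly \cite{K06} and \cite{KL:molecules}. So I would reduce to a single parabolic tail $\{\Phi_n\circ\dots\circ\Phi_q(M)\}_{n\ge q}$ with $n\to\infty$. In this regime the little Julia sets lie far out in the $1/n$-limb, and the $\alpha$-fixed point is nearly parabolic with rotation number $1/n$.

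The key steps, in order, are as follows.

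\begin{enumerate}
\item Use the Thin-Thick Decomposition to decompose the large-width curve family around $K_{m+1}\subset K_m$. This splits it into a \emph{thick} part, which is well separated from the $\alpha$-fixed point and is handled as in the bounded primitive case via Quasi-Additivity and the Covering Lemma, and a \emph{thin} part, which runs through the parabolic petals near $\alpha$.
\item Use the Value Calculus to bookkeep how width is transferred under the $p_m$ pullbacks, tracking the degree of the Covering Lemma along each orbit.
\item Use the Wanderers Theorem to show that the little Julia sets $f^i(K_{m+1})$ that wander through the $n$ nearly parabolic petals cannot absorb the width. Their mutual separation is controlled uniformly in $n$, because the embedding $\Phi_n$ inserts the new limb far from the critical limb, so the combinatorics up to the insertion are independent of $n$.
\item Apply the Wave Lemma to the thin part. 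Width concentrated near the parabolic cycle propagates as a ``wave'' around the $n$ petals. Since the type is prime primitive and the critical piece is far from the inserted limbs, this wave must eventually either return and amplify width around $K_m$, or be transferred into the thick part. Either way the definite improvement follows.
\end{enumerate}

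The main obstacle is the fourth step: obtaining estimates uniform in $n$ near the parabolic point. Here the number of petals, and so the number of nearly parallel curve families, grows without bound, and the naive Covering Lemma loses a factor depending on $n$. My first attempt would be to compare with the limiting parabolic map $c_{0}$ (the cusp), which is possible because rescaled near-parabolic dynamics converge (Fatou coordinates). I would show that width in the thin part is governed by a fixed model, depending only on $M$, that is independent of $n$ up to bounded error. This reduces the wave estimate to a compactness argument over the tail. Finally, I would feed the uniform improvement into the contradiction scheme above.
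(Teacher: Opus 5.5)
Your proposal has a genuine gap, and it sits in your fourth step, where all the difficulty lies.

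\textbf{The uniformity in $q$.} You cannot obtain uniformity in $q$ by comparing with the cusp map via Fatou coordinates and a compactness argument over the tail. For the same reason, step 3 cannot rely on a ``mutual separation'' of the little Julia sets that is controlled uniformly in $q$. Three obstructions make these fail:
\begin{itemize}
\item You are in the near-degenerate regime, where $\WW(\RR F)$ is large by assumption.
\item The hybrid conjugacy to the polynomial gives no control of extremal widths in $\UU\sm\KK$; that control is exactly the \emph{a priori} bounds being proved.
\item The topology of $\UU\sm\KK$ grows with the period $\per\ge q$, so there is no fixed model for a compactness argument to land on.
\end{itemize}
In the paper the uniformity comes instead from the parabolically $\bound$-bounded combinatorics. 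On the translation region $U_{(\bound,q-\bound)}$ the map shifts limbs with degree one. This yields translation markings whose long arcs carry weight bounded by the standard width (Theorem \ref{thm:bounded short}). It also lets waves be pushed forward by unbounded iterates $f^N$ with bounded degree (Proposition \ref{prop:fast forward}). Cloning (Theorem \ref{thm:clone-eigen}) then gives $2\le\lambda_F^{\per}\le C(\bound)$ with uniformly comparable eigenweights.

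\textbf{The roles of the tools.} The argument is organized by the dichotomy of Proposition \ref{prop:trichotomy}: amplification, a tame surface, or wide controllable waves. The tools you name play different roles from the ones you assign them.
\begin{itemize}
\item The Thin--Thick decomposition is what gives $\per\WW(\RR F)\le 4\|X\|+O(\per)$.
\item The value calculus is a valuation built from Perron--Frobenius eigenvectors of $T_F$ and $T_F^\top$. It rules out a nearly self-dominating horizontal WAD of bounded combinatorial length under $g=f^{\per}$.
\item The Wanderers Theorem bounds the weight dominated by chains containing many bad arcs, in bounded-degree subsurface covers.
\item The wave argument is run on the widest controllable wave in $\UU\sm\TT$, using breaking, the Shift Lemma, and a bounded number of critical visits.
\end{itemize}

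\textbf{Large $q$ as a gain, and the reduction.} You also miss the mechanism that makes large $q$ a gain rather than an obstacle, and with it the correct reduction. Since $\WW(F)\ge\|X^{\ver}\|$ while $\per\WW(\RR F)\le 4\|X\|+O(\per)$, any definite proportion of vertical weight gives amplification of order $\per\ge q$. Accordingly, the paper proves super-optimism (Theorem D: $\WW(F)\ge M\WW(\RR F)$ for every $M$ after discarding finitely many types), not merely a factor-$2$ improvement.

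This strength is needed. A single renormalization sequence may alternate between tail types and the finitely many exceptional types. For the exceptional types, \cite{K06} only gives the lossy estimate $\WW(F)\ge 2^n\eps\WW(\RR^nF)$ with a universal $\eps$. A bounded gain at the tail levels does not offset this loss over short bounded stretches, whereas a gain $M\ge 2/\eps$ does (Lemma \ref{lem:asymptotic optimism}). So ``reduce to a single parabolic tail'' is not a valid reduction as stated.
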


While the beau bounds in Theorem A provide some control for an infinitely renormalizable quadratic polynomials with parabolically bounded prime primitive combinatorics, for example it follows from \cite{McMullen94} that  such polynomials support no invariant line fields on their Julia sets, they are not enough on their own to establish MLC. Let us say that a family of types $\CC$ is \textit{rigid} if MLC holds at any infinitely renormalizable quadratic polynomial in $\MM$ with combinatorics in $\CC$. The rigidity theorem in \cite{lyubich1997dynamics} asserts that any family of types $\CC$ with beau bounds and satisfying a certain \textit{secondary limbs condition} is rigid. 
However, any family of types containing small Mandelbrot sets in infinitely many distinct limbs does not satisfy the secondary limbs condition.
We observe that the argument in \cite{lyubich1997dynamics} actually provides a weaker condition for families of types, which we call \textit{bounded {puzzle} Teichm\"uller diameter}, that suffices for rigidity:
\begin{thmB}
	Every family of prime primitive types with beau bounds and bounded {puzzle} Teichm\"uller diameter is rigid.
\end{thmB}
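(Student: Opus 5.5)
The plan is to follow Lyubich's rigidity argument from \cite{lyubich1997dynamics}, showing that the secondary limbs condition enters only through a bound on puzzle geometry, and that this bound is exactly what \emph{bounded puzzle Teichm\"uller diameter} provides. Let $c$ be an infinitely renormalizable parameter with combinatorics in $\CC$. By Yoccoz's theorem, MLC at $c$ is equivalent to the shrinking of the parameter puzzle pieces (or small Mandelbrot copies) containing $c$. Since the renormalization types are prime primitive, the small copies $M_n(c)$ around $c$ are nested, and it suffices to show $\diam M_n(c)\to 0$. We do this by showing that any two parameters $c,\tilde c$ with the same combinatorics are quasiconformally conjugate, and then invoking the absence of invariant line fields. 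That absence follows from the beau bounds together with \cite{McMullen94}, and it makes the conjugacy conformal, so $c=\tilde c$.

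The key steps, in order, are as follows.
\begin{enumerate}
\item Use the beau bounds to choose, at every renormalization level $n$, quadratic-like restrictions $f_n\colon U_n\to V_n$ with $\mod(V_n\sm\overline{U_n})\ge\eps(\CC)$ for $n$ large. The same holds for $\tilde f_n$.
\item For each level, build a Yoccoz puzzle for $f_n$ truncated at the renormalization depth. Construct a $K$-qc map between the depth-$0$ puzzles of $f_n$ and $\tilde f_n$ that respects the boundary marking. The $K$ here should depend only on $\CC$, and this is exactly where bounded puzzle Teichm\"uller diameter is used: the Teichm\"uller distance between the marked configurations of first-level puzzle pieces (through the primitive renormalization's principal nest up to the first return) is bounded uniformly over types in $\CC$.
\item Pull this map back through the principal nest down to the renormalization domain, as in \cite{lyubich1997dynamics}. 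The linear growth of moduli in the principal nest is what keeps the dilatation bounded in the non-renormalizable stretches; for primitive types this growth holds with constants depending on the a priori bounds and on the initial geometry. Glue the resulting maps with the qc conjugacies on the little Julia sets using the uniform collars from step 1. This produces a combinatorially correct qc map with dilatation bounded independently of level.
\item Use a pullback argument (the Sullivan--McMullen style compactness for $K$-qc maps) to upgrade the result to a genuine qc conjugacy between $f_0$ and $\tilde f_0$, and conclude via \cite{McMullen94}.
\end{enumerate}

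The main obstacle is step 2, together with its propagation in step 3. In \cite{lyubich1997dynamics} the uniform dilatation bound is obtained by noting that the SLC forces the combinatorics into finitely many secondary limbs, which implies bounded initial geometry. We must check that the only output actually used is a uniform bound on the Teichm\"uller distance between the initial puzzle configurations, so that "SLC" can be replaced verbatim by "bounded puzzle Teichm\"uller diameter." The first thing to try is to isolate the lemma in \cite{lyubich1997dynamics} where the starting qc map is constructed on the puzzle pieces of depth one. We then verify that every later estimate there (moduli growth in the principal nest, Koebe distortion for central returns, gluing dilatation) depends only on the dilatation of this initial map and on the a priori bounds, and not on the rotation number of the limb.
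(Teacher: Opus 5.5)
Your overall strategy matches the paper's: run Lyubich's rigidity argument, isolate where the secondary limbs condition enters, substitute the new hypothesis, and finish with McMullen's absence of invariant line fields. The paper's proof is exactly the observation that \cite[Section 10]{lyubich1997dynamics} applies verbatim, while the sole purpose of \cite[Section 11, Main Lemma]{lyubich1997dynamics} is to show that SLC implies bounded puzzle Teichm\"uller diameter. However, you make the substitution at the wrong point, and the verification you single out as the main obstacle would not go through.

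Bounded puzzle Teichm\"uller diameter is the \emph{conclusion} of the Main Lemma, not the starting map that feeds into it. By definition it provides, uniformly over $M\in\CC$, a $K$-qc map of $\C$ agreeing with the canonical map $I^n(f)\to I^n(g)$, where $n$ is the renormalization level of $M$. This map already respects every puzzle piece down to the level $n$ at which the return map $Y^{n+p}(f)\to Y^n(f)$ is quadratic-like. So your step 3, which pulls an initial map down the principal nest, has nothing left to do and should be dropped. What remains is two things: transfer this map to the renormalized quadratic-like maps via straightenings whose dilatation is controlled by the beau bounds, and run the Section 10 gluing and pullback argument, checking only that it uses SLC solely through the Main Lemma's conclusion.

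Your last paragraph proposes something different: keep only a qc map between depth-one puzzle pieces and push it down the principal nest using Lyubich's estimates. In that version, the claim that the moduli growth ``depends only on the dilatation of this initial map and on the a priori bounds, and not on the rotation number of the limb'' is not justified. The constant in Lyubich's linear growth of moduli depends on a lower bound for the modulus of the first annulus $V^0\setminus V^1$ of the principal nest. That lower bound is one of the things the truncated secondary limbs supply. Neither hypothesis of Theorem B provides it uniformly over infinitely many limbs:
\begin{itemize}
\item the beau bounds concern the renormalizations, i.e.\ the bottom of the nest, not its first annulus;
\item a uniform bound on the Teichm\"uller distance between the puzzles of $f$ and $\tilde f$ is compatible with both moduli being small.
\end{itemize}
So the part of Lyubich's argument you propose to re-verify under weaker input is precisely the part that genuinely uses SLC. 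The theorem is obtained by bypassing that part entirely, since the hypothesis hands you its output directly.
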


Parabolic tails are shown to have bounded puzzle Teichm\"uller diameter in \cite{DKLP:elephants}, 
so Theorems A and B together yield:
\begin{thmC}
	Any parabolically bounded family of prime primitive types is rigid.
\end{thmC}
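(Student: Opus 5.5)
Theorem C follows formally from Theorems A and B, so the plan is to verify their hypotheses for a parabolically bounded family $\CC$ of prime primitive types.

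First, Theorem A applies directly: $\CC$ has beau bounds.

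Second, I would check that $\CC$ has bounded puzzle Teichm\"uller diameter. By definition, $\CC$ is a finite union of individual types $M^1,\dots,M^k$ and parabolic tails $\mathcal T^1,\dots,\mathcal T^l$. For each parabolic tail, \cite{DKLP:elephants} gives a uniform bound on the puzzle Teichm\"uller diameter over all its members. For a single type $M^i$, the bound holds trivially, since the diameter is finite for each fixed type; this is the situation covered by the usual Yoccoz puzzle argument for a fixed primitive type. Bounded puzzle Teichm\"uller diameter is a supremum over the members of the family. Hence it is stable under finite unions, with the bound for $\CC$ equal to the maximum of the finitely many individual bounds.

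The only point needing care is this stability under finite unions. If the definition of bounded puzzle Teichm\"uller diameter involves a condition along the whole renormalization sequence rather than typewise, I would check that it is still uniform in the combinatorics. That is, I would confirm that it only requires, at each level, a bound depending on the type $M_n\in\CC$. Then taking the maximum over the finitely many pieces of $\CC$ suffices.

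With both hypotheses in hand, Theorem B yields that $\CC$ is rigid. This means MLC holds at every infinitely renormalizable parameter whose combinatorics lie in $\CC$. There is no substantial obstacle; the work lies entirely in Theorems A and B and in \cite{DKLP:elephants}.
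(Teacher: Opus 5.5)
Your proposal is correct and follows the paper's own route: Theorem C is deduced directly from Theorem A (beau bounds) and Theorem B, with bounded puzzle Teichm\"uller diameter of parabolic tails supplied by \cite{DKLP:elephants}. Your remark about finite unions fills in a detail the paper leaves implicit. Since the definition is the typewise supremum $\sup_{M\in\CC}\mathrm{diam}(M)$, adjoining finitely many single types, each of finite diameter (e.g.\ via holomorphic motion of the puzzle, or Lyubich's Main Lemma for finite families), does not affect boundedness.
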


We remark that our results can also be combined with those in \cite{dudko2023mlcfeigenbaumpoints}, providing beau bounds and rigidity for families of types containing finitely many primitive or satellite types and finitely many primitive parabolic tails. 
However, such a combination requires transferring between quadratic-like maps and the $\psi^\bullet$-quadratic-like maps introduced in \cite{dudko2023mlcfeigenbaumpoints}, which is beyond the scope of this article.

\subsection{Outline of the proof}

For a quadratic-like map $f\from U'\to U$ with connected filled Julia set $\FK$, the \textit{width} of $f$ is defined to be $\WW(f) = \frac{1}{\mmod(U\sm \FK)}$.
If $f$ is renormalizable with renormalization $\RR f$,
{we  want to consider } the width $\WW(\RR f)$.
However, $\RR f$ is not uniquely defined as a quadratic-like map: 
we can for instance restrict $\RR f$ to make $\WW(\RR f)$ arbitrarily large. 
We want to choose the renormalization so that
the domain of $\RR f$ is ``near-maximal''
(and therefore $\WW(\RR f)$ is {near-minimal})
this is achieved by considering {\em pseudo-quadratic-like ($\psi$-ql) maps,} 
generalizations of quadratic-like maps introduced in \cite{K06} 
{that admit} canonical renormalizations. 

In \cite{K06}, beau bounds for bounded primitive combinatorics follow from the main technical theorem there, known as ``if it's Bad Now, it was Worse Earlier'' (BNWE):
if $f$ is an infinitely renormalizable \pql map,
with combinatorics in some bounded family of primitive types,
and the width is large at some renormalization level, 
then the width was twice as large at some recent earlier level.
  Indeed, \textit{a priori} bounds follow immediately by considering the {\em record} renormalization levels 
  with width larger than all previous levels.

As in \cite{KL:decorations} and \cite{KL:molecules},
we will produce beau bounds here by showing ``if it's Bad Now, it was Much Worse Earlier"(BNMWE).
Despite its sardonic nature
we will call a family with BNWE \emph{optimistic}%
\footnote{ BNWE can also be easily turned into ``if it's bad now, it will be better in the future''}, 
and one with BNMWE, \emph{super-optimistic}.
More precisely,
we say that a family of  types $\CC$ is super-optimistic
if for all $M>0$, 
there is some finite family $\CC'\subset \CC$ such that any renormalizable \pql map $f$ with combinatorics in $\CC\sm \CC'$ and $\WW(\RR f)\largegiven{\CC, M}$ satisfies $\WW(f)\geq M\WW(\RR f)$.
Using BNWE from \cite{K06}, 
we have the following fact which is proven  implicitly in \cite{KL:decorations}:
\begin{lemmaA}
    Any super-optimistic family of primitive types  has beau bounds.
\end{lemmaA}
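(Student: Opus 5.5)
We combine super-optimism with the BNWE theorem of \cite{K06}, applied to a bounded subfamily, and then run a ``record level'' argument. Let $\CC$ be super-optimistic, and let $f=f_0$ be an infinitely renormalizable \pql map with combinatorics in $\CC$ and canonical renormalizations $f_n=\RR^n f$. We need a uniform $\bar\WW=\bar\WW(\CC)$ with $\WW(f_n)\le \bar\WW$ for all $n\ge n_0$, where $n_0$ depends only on $\WW(f_0)$. Passing from \pql maps back to genuine quadratic-like restrictions with definite modulus is standard (as in \cite{K06}), since a \pql map of bounded width contains a ql restriction with modulus bounded below.

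\textbf{Step 1 (fix constants).} Apply super-optimism with $M=2$ to get a finite subfamily $\CC'\subset\CC$ and a threshold $W_1$. Then any level with combinatorics in $\CC\sm\CC'$ and $\WW(f_{n+1})\ge W_1$ satisfies $\WW(f_n)\ge 2\WW(f_{n+1})$. For the finite family $\CC'$ of primitive types, BNWE from \cite{K06} holds, since a finite family is bounded. It gives $W_2$ and a lag $L$: if $\WW(f_m)\ge W_2$ at a level whose combinatorics lies in $\CC'$ (more precisely, along a stretch of levels handled by \cite{K06}), then some level $m-\ell$ with $1\le\ell\le L$ has $\WW(f_{m-\ell})\ge 2\WW(f_m)$. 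We also need the elementary a priori estimate that one renormalization step cannot improve width by more than a bounded factor depending on the type: $\WW(f_{n+1})\ge c\,\WW(f_n)$. For types outside the finite set, this must come from the super-optimism hypothesis itself or from a crude Koebe/monotonicity bound, $\WW(\RR f)\gtrsim \WW(f)$, which holds for any \pql renormalization.

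\textbf{Step 2 (record argument).} Set $\bar\WW=\max(W_1,W_2)\cdot C$ for a suitable constant $C$. Suppose $\WW(f_m)>\bar\WW$ for some large $m$. If level $m-1$ has combinatorics outside $\CC'$, super-optimism gives $\WW(f_{m-1})\ge2\WW(f_m)$. Otherwise BNWE gives a level $m-\ell$ with $\ell\le L$ and $\WW(f_{m-\ell})\ge 2\WW(f_m)$. In either case we find an earlier level with at least double the width. Iterating, we obtain a chain of levels $m>m_1>m_2>\dots$ with $\WW(f_{m_k})\ge 2^k\WW(f_m)$, and the chain reaches level $0$ after at most $m$ steps. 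This yields $\WW(f_0)\ge 2^{k}\bar\WW$ with $k\gtrsim m/L$, which is impossible once $m\ge n_0:=L\log_2(\WW(f_0)/\bar\WW)+L$. So widths are eventually bounded by $\bar\WW$, and the bound depends only on $\CC$.

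\textbf{Main obstacle.} The delicate point is interleaving the two mechanisms. BNWE in \cite{K06} is stated for maps whose combinatorics over a whole recent window lie in a bounded family, whereas here levels in $\CC'$ and in $\CC\sm\CC'$ alternate arbitrarily. Before applying \cite{K06}, I would handle this by first reducing to the case where the window $[m-L,m]$ lies entirely in $\CC'$: if some level in that window is outside $\CC'$, I would use super-optimism at that level, combined with the bounded width-change estimate across the intervening $\CC'$ levels. Making this precise requires choosing $M$ in super-optimism large, say $M=2c^{-L}$, to absorb the bounded loss $c^{L}$ incurred by passing through up to $L$ levels of $\CC'$. This choice of $M$ is what fixes $\CC'$, and hence $L$. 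The circularity ($M$ depends on $L$, which depends on $\CC'$, which depends on $M$) must be broken. I would first try to break it by taking $L$ and $c$ from \cite{K06} to depend only on the combinatorial bound $\max p_n$, and then exploiting the freedom that super-optimism holds for every $M$.
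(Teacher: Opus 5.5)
Your overall plan, a backward chain argument combining super-optimism with Kahn's bounded-combinatorics optimism, with $M$ chosen large enough to absorb the loss across consecutive $\CC'$-levels, is the right one and is how the paper argues. However, the step you flag as the main obstacle is left unresolved, and your proposed fix does not work. You need $M=2c^{-L}$, but $\CC'$ is produced by super-optimism only after $M$ is fixed. So the lag $L$ and factor $c$ you take from \cite{K06} for $\CC'$ must not depend on $\CC'$ at all. Letting them depend on $\max p_n$ over $\CC'$ does not help, because that maximum is determined by $\CC'$ and hence by $M$.

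Your per-step estimate also points the wrong way. In the chain you need $\WW(f_{j+1})\ge c^{m-j-1}\WW(f_m)$, i.e.\ $\WW(f_n)\ge c\,\WW(f_{n+1})$: an earlier level is not much better than a later one. You wrote $\WW(f_{n+1})\ge c\,\WW(f_n)$ instead. Neither direction is an ``elementary Koebe/monotonicity'' fact for $\psi$-ql renormalization; the direction you actually need is itself a weak form of BNWE.

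The missing idea is that Kahn's theorem is uniform, as stated in Theorem \ref{thm:bounded optimism}. There is a universal $\eps>0$ such that for every bounded family and every $k$, $\WW(F)\ge 2^k\eps\,\WW(\RR^kF)$ once $\WW(\RR^kF)\gg_{\CC',k}1$. So the loss across a run of $\CC'$-levels of any length is at most the universal factor $\eps$. One therefore fixes $M=2/\eps$ first, then obtains $\CC'$ from super-optimism, then a universal window $N$ with $2^N\eps\ge 2$. Now suppose level $n$ is wide.
\begin{itemize}
\item If $F_{n-N},\dots,F_{n-1}$ all have types in $\CC'$, then $\WW(F_{n-N})\ge 2^N\eps\,\WW(F_n)\ge 2\WW(F_n)$.
\item Otherwise, let $m\in[n-N,n)$ be the last level whose type lies outside $\CC'$. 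Then $\WW(F_{m+1})\ge 2^{n-m-1}\eps\,\WW(F_n)\ge\eps\,\WW(F_n)$, and hence $\WW(F_m)\ge M\,\WW(F_{m+1})\ge 2\WW(F_n)$.
\end{itemize}
All thresholds depend only on $\CC$ and $N$, so there is no circularity, and your record argument then finishes the proof.
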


Theorem A  then follows immediately from:
\begin{thmD}
	Any parabolically bounded family of prime primitive types is super-optimistic.
\end{thmD}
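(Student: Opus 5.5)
Theorem D is the main technical statement of the paper. I would prove it by reducing to the near-parabolic tails and then running a degeneration analysis in the near-degenerate regime.

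\emph{Reduction to tails.} A parabolically bounded family $\CC$ is a finite union of types and parabolic tails. Super-optimism only constrains $\CC\sm\CC'$ with $\CC'$ finite. So for each $M>0$ we may discard finitely many types, in particular all the isolated types and any initial segment of each tail. It therefore suffices to show the following. For a single parabolic tail $\{\Phi_n\circ\dots\circ\Phi_q(M_0)\}_{n\ge q}$ and any $M>0$, there is $n_0$ such that every renormalizable \pql map $f$ with type $M_n:=\Phi_n\circ\dots\circ\Phi_q(M_0)$, $n\ge n_0$, and $\WW(\RR f)$ large satisfies $\WW(f)\ge M\,\WW(\RR f)$.

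\emph{Main argument.} Fix such an $f$ with renormalization period $p_n$. The combinatorics of $M_n$ lives in the limb $\hat M_{1/(n+1)}$. The dynamics of $f$ thus has an $\alpha$-fixed point with rotation number $1/(n+1)$, close to parabolic, with $n+1$ petals (cycles of limbs). The renormalization Julia set $\FK(\RR f)$ sits in a far limb, and its orbit makes a long excursion of length $\asymp n$ around the $\alpha$-fixed point before returning.

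Assume $\WW(\RR f)=W$ is huge. I would decompose the degeneration of $U\sm\FK(\RR f)$ via the Thin-Thick Decomposition into wide rectangles (families of proper arcs). Using the Covering Lemma of \cite{KL:QA} and Quasi-Additivity, these rectangles are pushed forward and pulled back along the orbit $\FK(\RR f), f(\FK(\RR f)),\dots$. There are two cases.
\begin{enumerate}
\item If a definite part of the width is \emph{vertical}, i.e.\ connects $\FK(\RR f)$ to $\partial U$ through $\FK(f)$ in an essentially primitive way, the standard BNWE of \cite{K06} gives $\WW(f)\gtrsim W$. Primitivity then contributes an additional factor coming from the many ($\asymp n$) disjoint little Julia sets $f^j(\FK(\RR f))$ lying in distinct petals. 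Their widths add by Quasi-Additivity, giving $\WW(f)\gtrsim n\,W\ge MW$ once $n\ge n_0(M)$.
\item Otherwise the width is mostly \emph{horizontal}, concentrated between neighboring little Julia sets, or between them and the near-parabolic cycle of limbs. Here I would use the Value Calculus to track how rectangles transform under the parabolic-like return. The Wanderers Theorem shows that wide rectangles cannot wander through the $n+1$ petals without losing a definite proportion of width at each step. The Wave Lemma then shows that horizontal degeneration propagating along the petals must produce a wave of width at least comparable to $W$ times a quantity tending to infinity with $n$, landing in $U\sm\FK(f)$. This again yields $\WW(f)\ge MW$.
\end{enumerate}

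\emph{Main obstacle.} I expect the hardest step to be the horizontal, near-parabolic case. As $n\to\infty$ the maps approach the cusp, so the anti-molecule condition of \cite{KL:molecules} fails: degeneration can hide in the parabolic petals rather than being forced outward. I would first try to quantify, via Value Calculus, the transfer of width across one petal-turn as a factor bounded below uniformly in $n$. Summing these over $\asymp n$ turns with the Wave Lemma should give growth linear in $n$. The delicate point is ensuring that the near-parabolic geometry (the Fatou-coordinate-like behavior near $\alpha$) does not absorb this width. This should be controlled using the bounded puzzle geometry of the tail coming from the Branner–Douady surgery construction $\Phi_q$.
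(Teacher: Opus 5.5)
\textbf{Verdict: genuine gap.} Your reduction to tails and your case (1) are fine. Case (1) does not even need BNWE or Quasi-Additivity: $\WW(F)\ge\|X^\ver\|$, and the thick--thin decomposition gives $\per\WW(\RR F)\le 4\|X\|+O(\per)$. So an $\epsilon$-fraction of vertical canonical weight already gives amplification $\gtrsim\epsilon\per\ge\epsilon q$.

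\textbf{The horizontal case.} Case (2) is the entire content of Theorem D, and the mechanism you propose cannot work. There is no factor bounded below uniformly in $n$ to be gained per petal-turn. The only source of loss of horizontal weight is the core entropy, and for parabolically bounded combinatorics $\lambda_F\to 1$ as $q\to\infty$; the cloning estimate gives only $2\le\lambda_F^{\per}\le C(\bound)$ over a whole period. So the valuation yields a definite loss only for the full return $g=f^{\per}$. The paper uses it as a contradiction: if vertical weight were negligible, $X^\hor$ would be nearly self-dominating under $g^*$, while $\val(g^*Z)\le\frac12\val(Z)$.

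The real difficulty, which your sketch does not address, is that every error term in such an argument scales with the topology of $\UU\sm\KK$, which is unbounded. To make the errors independent of $q$, the paper:
(i) shows most weight sits on a good set of arcs of bounded combinatorial length in the translation region $U_{(\bound,q-\bound)}$, using translation markings and the bound on long arcs;
(ii) localizes to a typical interval $I$ of bounded length and lifts $g$ to subsurface covers, where its degree is bounded;
(iii) uses the cloning theorem to get uniformly comparable Perron--Frobenius eigenweights across the translation region;
(iv) uses the Wanderers Theorem to discard pullback chains through bad arcs. That theorem is a $1/m$ bound on the weight of chains visiting $m$ little Julia sets, not a per-step loss.

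\textbf{The unaligned case.} Even the argument above works only when the canonical WAD is aligned with $\TT$ and the flux through $\HH^\hor$ is bounded. With unbounded combinatorics, weight can live on arcs crossing $\TT$ many times, and then no entropy bookkeeping is possible. The paper's dichotomy converts this failure of tameness, by pigeonholing, into a wide controllable wave in $\UU\sm\TT$.

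The Wave Lemma is then used not to grow a wave linearly in $n$, but to turn a given wide wave into vertical width. One takes a nearly maximal controllable wave $A$ with $\WW(A)\ge N\WW(\RR F)$ and proves it is terminal. By maximality this forces $\WW(\UU\sm\TT)>\delta\WW(A)\ge M\WW(\RR F)$. Proving terminality needs the steps of Section~\ref{sec:waves}:
\begin{itemize}
\item breakable waves gain width;
\item unbreakable waves are mostly short, via the Shift Lemma in the translation region and the critical-limb bound by $\WW(\RR F)$;
\item short waves reach a critical visit with small loss, even through $\approx q$ iterates;
\item critical visits cannot chain more than boundedly often, because crests grow under $F_*$.
\end{itemize}

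\textbf{Misplaced control of the near-parabolic geometry.} Your appeal to bounded puzzle geometry of the Branner--Douady tails does not belong here. Bounded puzzle Teichm\"uller diameter enters only in rigidity (Theorems B and C). What Theorem D actually uses is a combinatorial consequence of parabolic boundedness: at most $\bound$ little Julia sets per limb, and those in $\FL_0$ map into $\bigcup_{|j|\le\bound}\FL_j$. This makes $F$ act as a translation on $U_{(\bound,q-\bound)}$, which is what all the tools above rely on.
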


To prove Theorem D, our analysis starts off similarly to that in \cite{K06}. If $f: U'\to U$ is a renormalizable \pql map with period $p$, then its renormalization $\RR f$ defines a finite collection $\KK$ of small filled Julia sets in the dynamical plane of $f$. The set $U\sm \KK$ is a hyperbolic Riemann surface; its
 thick-thin decomposition induces a lamination on $U\sm \KK$ called the \textit{canonical lamination} $\FF$. 
For any proper arc $\gamma$ in $U\sm \KK$, its \textit{canonical width} is the extremal width of all the leaves of $\FF$ homotopic to $\gamma$; the function $X$ taking proper arcs to their canonical width is the \textit{canonical weighted arc diagram} of $U\sm \KK$. If $Y$ is a restriction of $X$, then the \textit{magnitude} of $Y$ is defined by $\|Y\|= \sum_\gamma Y(\gamma)$.

A proper arc $\gamma$ in $U\sm \KK$ is \textit{vertical}  if it connects $\dd\KK$ to $\dd U$,  and \textit{horizontal} if it connects $\dd \KK$ to
 itself; we denote 
 the restrictions of $X$ to vertical and horizontal curves by $X^\ver$ and $X^\hor$ respectively. 
 We can directly relate $X$ to the widths of $f$ and $\RR f$ by 
 \begin{equation}\label{eq:intro width f}
 	\WW(f)\geq \|X^\ver\|
 \end{equation}
and 
\begin{equation}\label{eq:intro width Rf}
	p\WW(\RR f) \asymp \|X^\ver\|+\|X^\hor\|+O(p).
\end{equation}
Thus{\color{Green},}  if \begin{equation}\label{eq:intro def vert}
	\|X^\ver\|\geq 2\eps\|X^\hor\|
\end{equation}
for some $0< \eps < 1/2$, then \eqref{eq:intro width f} and \eqref{eq:intro width Rf} imply that 
\begin{equation}\label{eq:intro optimism}
	\WW(f)\succeq p\eps \WW(\RR f)\geq 2\WW(\RR f) 
\end{equation}
when $\WW(\RR f)\largegiven{1}$ and $p\largegiven{\epsilon}.$
In \cite{K06}, \eqref{eq:intro def vert} is achieved for a universal $\eps$ when $\WW(\RR f)\largegiven{p}$. The central idea is that we can pull back $X$ by $f$ and apply the Gr\"otzsch inequality to see some loss of horizontal weight dictated by the \textit{core entropy} of $f$; that is the entropy of $f$ restricted to its Hubbard tree $\TT$. For primitive combinatorics the core entropy is always positive, so after applying several pullbacks the loss of horizontal weight can be made definite. The resulting definite vertical weight is then pushed forward by the Covering Lemma of \cite{KL:QA}.

With unbounded combinatorics, we immediately 
{encounter} a problem: if the geometry of $U\sm \KK$ does not respect the combinatorial structure of $\TT$, for example lots of weight in $X$ is supported on arcs that intersect $\TT$ many times, then we cannot relate the loss of horizontal weight to the core entropy of $f$. This possibility is excluded in \cite{K06}, where it is shown that after some restriction $X$ is ``aligned" with $\TT$, but the argument relies on the bounded topology of $U\sm \KK$. 
We split our analysis into two cases: the ``tame" case where the geometry of $U\sm \KK$ suitably respects $\TT$, and the ``wave" case where it does not. 
In both cases, the key to our analysis is that the parabolically bounded condition forces the unbounded part of the topology to occur where the dynamics is incredibly simple: away from a finite collection of components of $\KK$ the map $f$ looks like a translation.

In the tame case, our argument differs from that in  \cite{K06}. 
We suppose that \eqref{eq:intro def vert} does not hold for any definite $\eps$, so $\|X^\ver\|$ is almost negligible.
Pulling back $X^\hor$ by $g= f^p$, we therefore get a weighted arc diagram $g^*X^\hor$ that, up to some error, \textit{dominates} $X^\hor$.
Using the intersection arcs with the Hubbard tree $\TT$, we can assign \textit{values} $\val(X^\hor)$ and $\val(g^*X^\hor)$ to the weighted arc diagrams. The domination relation implies that 
{ $\val(g^*X^\hor)\geq \val(X^\hor)$,} and the positive entropy of $g$ acting on $\TT$ implies that $\val(g^*X^\hor)\leq \val(X^\hor)/2$; this is our contradiction. The technical difficulty is that we have error terms that depend on the topology of $U\sm \KK$. Our parabolically bounded condition allows us to replace $U\sm \KK$ with a bounded topology subsurface, completing the argument in this case.

In the wave case, the wild geometry of $U\sm \KK$ produces a particular type of lamination in the complement of $\TT$ called a \textit{wave}, which was first introduced in \cite{dudko2023mlcfeigenbaumpoints}. 
For bounded combinatorics, 
the \emph{Wave Lemma} in \cite{dudko2023mlcfeigenbaumpoints} asserts that when we push forward a wave by the dynamics of $f$, we produce either some definite vertical width or a wider wave; by considering the widest wave we must therefore have definite vertical width. 
In our setting we show that the same holds; the main difference is that we want to push forward by an unbounded iterate of our map. 
The parabolically bounded condition allows us to observe first, that most of a wave is localized to a single limb in $\TT$, and second, that for waves localized to a single limb we can use the near-translational part of the dynamics to treat large iterates similarly to a single iterate; this completes the argument in this case.

This article is organized as follows. In Section \ref{sec:prelim}, we recall the theory of quadratic-like and pseudo-quadratic-like maps; we also introduce the theory of \emph{value} for weighted arc diagrams, which is a tool that can rule out dynamically invariant weighted arc diagrams for quadratic-like maps.  In Section \ref{sec:life improvement}, we outline of the proof of Theorem D. 
In Section \ref{sec:trichotomy}, we show that the analysis can be divided into the ``tame" and ``wave" cases, which are studied in Sections \ref{sec:aligned} and \ref{sec:waves} respectively.
In Section \ref{sec:rigidity}, we prove Theorem B.
To streamline the organization of our argument, there are several self-contained discussions which we delay to the appendices.  
In Appendix \ref{app:markings} we recall the theory of ideal markings, canonical laminations and weighted arc diagrams, and a new tool: a {\em Uniform Thick-Thin Decomposition} for bordered Riemann surfaces
{(initially developed by Kahn and Lyubich in \cite{KL:eyes}).} 
In Appendix \ref{app:transl-mark} we introduce \textit{translation markings}, a marked disk with near-translational dynamics, {and bound the width of combinatorially long arcs in these markings}. 
{In Appendix \ref{app:clones}, we introduce the concept of \emph{cloning} a \emph{cross section} of a directed graph, and bound the leading eigenvalue and corresponding eigenvector of the result.}
In Appendix \ref{app:wanderers}, we state and prove the \textit{Wanderers Theorem}, a variant of the Covering Lemma.
The results from Appendices \ref{app:transl-mark}, \ref{app:clones}, and \ref{app:wanderers} are all used in Section \ref{sec:aligned}. {After the appendices, we include an index of the terms and notation used throughout this article for the readers convenience.}

\subsection{Future perspective}\label{sec:future perspective}
To fully resolve MLC, rigidity must be shown for all families of combinatorial types;
as a general {principle}    
this should follow from proving (properly defined) \emph{a priori} bounds.
Here we produce a brief ontology of renormalization types and then discuss methods of intermediate renormalization that we hope and expect to be sufficient for the general case.

For a prime satellite type $M_{p/q}$,
we let  $[0; a_{1}, \dots, a_{k}]$ be the continued fraction expansion of $p/q$; 
we set $$h(M) = h(p/q) = \sup_{n}a_{n}\text{ and }d(M) = d(p/q) = k.$$

Recalling that $\MM$ \index{Mandelbrot set $\MM$} is the set of all $c$ such that the orbit of $0$ under $z\mapsto z^2+c$ is bounded, the \textit{root} and \textit{principal tip} of $\MM$ are the parameters $c = 1/4$ and $c= -2$ respectively. For any small Mandelbrot set $M$, its root and principal tip are the images of the root and principal tip of $\MM$ under the tuning homeomorphism $\MM\to M$. 
For $r$ the root of $M$,  the \textit{limb} $\hat M$ of $M$ is defined to be the {closure of the} component of $\MM\sm \{r\}$ intersecting $M$. For $t$ the principal tip of $M$, the \textit{principal decorations} of $M$ are the {closures of the} components of $\MM\sm \{t\}$ avoiding $M$; we denote the union of the principal decorations by $\hat M^{1/2}$. 

For any prime and primitive type $M$, there is a unique infinite sequence $(M_n)_{n=0}^\infty$ such that $M_0 = \MM$, $M_n\subset M_{n-1}$ is a small Mandelbrot set mapped to some satellite $M_{p_n/q_n}$ under the straightening homeomorphism $M_{n-1}\to \MM$ for all $n\geq 1$, and $M\subset \hat M_n$ for all $n$.
When $n$ is sufficiently large, we  have $M\subset \hat M^{1/2}_N$ for all $N\geq n$;
we let $\l(M)$ be the minimal such value of $n$,
and then $p_n/q_n = 1/2$ for all $n > \l(M)$.
We let $h(M)$ and $d(M)$ be the maximum of $h(p_n/q_n)$ and $d(p_n/q_n)$ over all $n \le \l(M)$. 
To complete our definitions, we let $\l(M) = 0$ when $M$ is prime and satellite.

For any family of prime types $\CC$, we can define
$\l(\CC)$, $h(\CC)$, and $d(\CC)$ to be the supremum over all $M\in \CC$ of $\l(M)$,  $h(M)$, and $d(M)$ respectively.
We classify $\CC$ into four different cases:
\begin{enumerate}
	 \item \textit{Anti-molecule (and bounded satellite):} $l(\CC), h(\CC), d(\CC) < \infty$.
	\item \textit{Near-parabolic:} $h(\CC)= \infty$ and $l(\CC), d(\CC) < \infty$.
	\item \textit{Near-neutral:} $d(\CC) = \infty$ and $l(\CC) < \infty$.
	\item \textit{Virtually-satellite:} $l(\CC) = \infty$.
\end{enumerate}


Corresponding to these four cases are four renormalization theories 
(which we may think of as four fingers to be combined with the thumb of quadratic-like renormalization):
\begin{itemize}
\item
puzzle (or generalized) renormalization, 
\item
near-parabolic {renormalization}, 
\item
sectorial (neutral and satellite {unified as ``molecule"}) renormalization, 
\item
and finally:  virtual (near-molecule primitive) renormalization. 
\end{itemize}
The bounds in \cite{K06} were extended to the bounded satellite case in \cite{dudko2023mlcfeigenbaumpoints}
(preceded by \cite{DL_pacmen} on some special satellite combinatorics of  high type\footref{incompatibility})
yielding the  general quadratic-like case for bounded combinatorics.

The puzzle renormalization 
was introduced in \cite{lyubich1997dynamics} to prove bounds%
\footnote{\label{incompatibility} {This work gave an important insight into the problem.}
  However, it was performed in the {\em non-degenerate regime} 
that cannot be directly combined with results in the {\em near-degenerate regime.} 
{On the other hand, all the pieces handled in the near degenerate regime
can naturally be unified (so far).}}
for special types of high primitive  combinatorics,
using estimates of moduli 
that build on the work of Yoccoz showing MLC in the non-infinitely renormalizable case. 
It was then used {in} the near-degenerate regime in \cite{KL:decorations,KL:molecules} to show super-optimism for renormalization types bounded away from the main molecule of $\MM$,
thereby completing case (1) above. 
It will also appear as an endgame in the other three cases.


Near-parabolic renormalization began with the work of Lavaurs and reached maturity with the 
near-parabolic {\em a priori} bounds (once again assuming high type)
in the work of Inou and Shishikura \cite{IS}\smash{\footref{incompatibility}}
{that found numerous application (see e.g., \cite{BC12,AL22,CS_satellite}).}
The general neutral case,
combining near-parabolic and sectorial renormalization,
was recently handled in \cite{DLneutral} 
and the authors of that paper believe it can be promoted to all forms of unbounded satellite renormalization.

What remains are the primitive classes with $h(\CC)$, $d(\CC)$, or $l(\CC)$ infinite.
The current paper dealing with the primitive near-parabolic combinatorics of  
parabolically bounded type is the first case of this kind, 
resolving a subcase of the case where $h(\CC) = \infty$ and $d(\CC) = l(\CC)=1$.
We believe the techniques developed here will be the first step to the general near-parabolic case.

The reader may also wish to consult the excellent survey \cite{dudkoMLC} for a more detailed discussion of renormalization theory with applications to MLC. 
The sectorial renormalization of \cite{DLneutral} is discussed in greater detail there, 
and the virtual renormalization to appear in \cite{DKL:virtual} is briefly sketched as well.
{It is feasible that this strategy can bring the story to a completion.}


\subsection{Terminology and notation}
Let us collect here some of main terminology and notation used throughout this article. 
We denote:
\begin{itemize}
    \item $x\oplus y= (x^{-1}+y^{-1})^{-1}$ the harmonic sum; 
    \item $\WW(\FF)$ the extremal width of a path family $\FF$;
    \item $A\largergiven{x_1, \dots, x_n}B$ for $A\geq C B$ for a large constant $C$ depending only on the parameters $x_1, \dots, x_n$;
    \item the ``big-O" notation $A\largergiven{}O(A)$;
    \item $\ddi V$ the ideal boundary of a Riemann surface $V$;
    \item $\dcara X$ the Caratheodory boundary of a set $X\subset \C$.
\end{itemize}

By default, paths are considered up to reparameterization, and in a Riemann surface $V$ a path $\gamma: (0, 1)\to V$ is proper if it extends continuously to a path $\gamma: [0, 1]\to V\cup \ddi V$ with $\gamma(0), \gamma(1)\in \ddi V$; we will almost exclusively consider proper paths.

\subsection{{Long story of this paper}} 
This project was initially undertaken by Kahn and Lyubich, around 2007--2011, with some additional work up to 2015. 
These two authors were able to completely treat the case of one small Julia set per limb \cite{KL:eyes}, 
and, in doing so, developed most of what is now in Appendices A and B, along with some of its application in Section \ref{sec:aligned}. More than one sketch was produced for how to handle the general case. The project was resumed {and completed}  by Kahn and Kapiamba in 2024, based originally on one such sketch,
but then pivoting to use the theory of waves that had recently been developed by Dudko and Lyubich \cite{dudko2023mlcfeigenbaumpoints}. 

\subsection{Acknowledgment}
We thank the NSF for their continuous support of the MLC project. {The second author material was supported by the National Science Foundation under Grant No DMS 2303168}
{We also gratefully acknowledge the support of MSRI (now SLMath) and the organizers of the MSRI Spring 2022 Program in Complex Dynamics. In particular, the first and second authors began a collaboration that evolved into the current project (along with a complementary one on rigidity for parabolically bounded maps with \emph{a priori} bounds).}

\section{Pseudo-quadratic-like maps} \label{sec:prelim}
We recall some standard preliminaries for the ``modern theory of \emph{a priori} bounds'', such as pseudo-quadratic-like maps, renormalization, and Hubbard trees. For more details, we refer the reader to \cite{K06} or \cite{dudko2023mlcfeigenbaumpoints}.

\subsection{Pseudo-quadratic-like maps}\label{sec:pql maps}

A \textit{quadratic-like map} $f: U'\to U$ is a holomorphic double branched covering between two Jordan disks $U'\Subset U\subset \C$. 
The annulus $U\sm \overline{U'}$ is called the \textit{fundamental annulus} of $f$,
and the \textit{modulus} of $f$ is defined by $$\mmod f :=\mmod(U\sm \overline{U'}).$$
The \textit{filled Julia set} of $f$ is the set $\FK$ of nonescaping points; that is 
$$\FK:= \setofst{z}{f^n(z)\in U' \text{ for all }n\geq 0}.$$
The filled Julia set $\FK$ is connected if and only if the unique critical point of $f$ belongs to $\FK$.

More generally, a \textit{pseudo-quadratic-like ($\psi$-ql) map} \index{pseudo-quadratic-like ($\psi$-ql) map} is a pair of holomorphic maps $$F= (f, \iota): (\UU', \FK')\toto (\UU, \FK)$$ between two conformal disks $\UU', \UU$ such that:
\begin{enumerate}
	\item $f: \UU'\to \UU$ is a double branched covering;
	\item $\iota: \UU'\to \UU$ is an immersion;
	\item $\FK$ and $\FK'$ are compact connected full sets satisfying 
	$$\iota^{-1}(\FK) = f^{-1}(\FK)= \FK'.$$
\end{enumerate}
We will call $\FK$ \index{pseudo-quadratic-like ($\psi$-ql) map!filled Julia set $\FK$} the filled Julia set of $F$.
The \textit{width} of $F$ \index{pseudo-quadratic-like ($\psi$-ql) map!width $\WW(F)$} is defined by
$$\WW(F) := \WW(\UU\sm \FK) = \frac{1}{\mmod (\UU\sm \FK)}.$$

While \pql maps are more abstract than quadratic-like maps, they are closely related. Indeed, note that any quadratic-like map $f:U'\to U$ with connected filled Julia set $\FK$ is a \pql map $(f, \iota): (U', \FK)\toto(U, \FK)$ with $\iota$ the identity. The following fact from \cite[Theorem 7.3]{K06} ensures that any \pql map restricts to a quadratic-like one near its filled Julia set:

\begin{theorem}[Kahn]\label{thm:induced quadratic-like map}
	For any $M>0$ there exists $\mu>0$ such that: if  $$F= (f, \iota): (\UU', \FK')\toto (\UU, \FK)$$ is a \pql map with $\WW(F)\leq M$, then there exist neighborhoods $U'\subset U$ of $\FK$ such that the restriction $f\circ \iota^{-1}: U'\to U$ is a quadratic-like map with $\mmod f \geq  \mu$.
\end{theorem}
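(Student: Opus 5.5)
The plan is to lift everything to the universal cover of the annulus $A=\UU\sm\FK$ and to use compactness of the space of \pql maps with bounded width. Normalize by the Riemann map $\phi\colon \UU\to\D$ with $\phi(c)=0$, where $c$ is the critical value of $f$. Since $\FK$ is full and connected, the bound $\mmod(\UU\sm\FK)\geq 1/M$ gives (via Teichmüller's extremal problem) a radius $r=r(M)<1$ with $\phi(\FK)\subset \D_r$ up to a bounded Möbius normalization. The preimage $\FK'=f^{-1}(\FK)$ is connected because $f\colon\UU'\to\UU$ is a double branched cover and $\FK$ contains the critical value. To check the latter, note that $f^{-1}(\FK)=\iota^{-1}(\FK)$ and $\iota$ is an immersion. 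If the critical value were outside $\FK$, then $\FK'$ would consist of two disjoint homeomorphic copies of $\FK$, and each copy would be mapped into $\FK$ by $\iota$. Following the argument of \cite{K06}, one derives a contradiction from the Koebe-type distortion of $\iota$ near $\FK$.

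Next I would construct the quadratic-like restriction. Since $\iota$ is an immersion with $\iota^{-1}(\FK)=\FK'$ compact, $\iota$ restricted to a small neighborhood $N'$ of $\FK'$ is injective; this holds because $\iota|_{\FK'}\to\FK$ is a covering of degree one, $\FK$ being full. So $g=f\circ(\iota|_{N'})^{-1}$ is defined on $N=\iota(N')\supset\FK$. Take $U$ to be an equipotential-type disk, for instance $U=\phi^{-1}(\D_{\rho})$ with $r<\rho<1$ in the hyperbolic metric of $A$, and take $U'$ to be the component of $f^{-1}(U)$ containing $\FK'$, transported by $\iota$. Then $g\colon \iota(U')\to U$ is a degree-two branched cover. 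It remains to show $\iota(U')\Subset U$ with a definite modulus $\mu(M)$.

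For that step I would argue by compactness. Normalized \pql maps with $\WW(F)\leq M$ form a precompact family: the maps $\phi\circ f$ and $\phi\circ\iota$ in uniformizing coordinates of $\UU'$ are bounded holomorphic functions. Their limits remain \pql, because degeneration of $\FK$ to a point or to all of $\UU$ is excluded by the width bound. Now suppose for contradiction that $\mmod(g)\to0$ along a sequence. In the limit, $\iota(U')$ would touch $\partial U$ for every choice of $\rho$. But the Schwarz–Pick lemma applied to $\iota\circ f^{-1}$ shows that this branch strictly contracts the hyperbolic metric of $A$ near $\FK$, since $\iota$ is not a covering of $A$ when $f$ has degree two. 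This yields $\iota(U')\subset \phi^{-1}(\D_{\rho'})$ with $\rho'<\rho$, which is the contradiction.

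I expect the main obstacle to be this uniform strict contraction. One must rule out limits in which $\iota$ becomes an isometry on collars of $\FK$. I would handle this through the degree mismatch: $f$ is two-to-one on the collar while $\iota$ is one-to-one, so the induced map on the core geodesic of $A$ has degree two. Hence its lift cannot be a hyperbolic isometry, and the contraction is quantitative by compactness.
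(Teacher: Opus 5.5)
The paper gives no proof of this statement; it quotes it from \cite[Theorem 7.3]{K06}. Judged on its own, your argument has a genuine gap at the decisive step.

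\textbf{The Schwarz--Pick step does not work.} For $\iota\colon A'=\UU'\sm\FK'\to A=\UU\sm\FK$, Schwarz--Pick gives only pointwise strict contraction, and this contraction is never uniform near $\FK$. The map $\iota$ sends a collar of $\FK'$ conformally onto a collar of $\FK$. So in uniformizing coordinates it extends analytically across the boundary circle. Both hyperbolic densities are asymptotic to $1/\mathrm{dist}$ to that circle, hence $\|D\iota\|_{\mathrm{hyp}}\to 1$ at $\FK'$ for \emph{every} \pql map. The degeneration you hope to exclude by compactness is therefore always present.

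More fundamentally, even uniform contraction by the branch $\iota\circ f^{-1}$ would not give $\iota(U')\subset\phi^{-1}(\D_{\rho'})$. The disks $\phi^{-1}(\D_\rho)$ come from the Riemann map of $\UU$, not from the geometry of $A$. Metric contraction alone says nothing about moving points toward $\FK$, which is what nesting requires. Your ``degree two on the core geodesic'' remark only recovers $\mmod A'=\frac12\mmod A$, i.e.\ that $\iota$ is not a covering.

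\textbf{The surrounding steps also have holes.}
\begin{itemize}
\item $\WW(F)\le M$ is a \emph{lower} bound on the modulus, so $\FK$ may shrink to a point in your normalization.
\item The relation $\iota^{-1}(\FK)=f^{-1}(\FK)$ is not closed under limits; in general only $f^{-1}(\FK_\infty)\subset\iota_\infty^{-1}(\FK_\infty)$ survives.
\item Injectivity of $\iota$ on the component of $f^{-1}(\phi^{-1}(\D_\rho))$ is asserted, not shown. You only know it on an uncontrolled neighborhood of $\FK'$.
\item Connectedness of $\FK'$, which you try to prove, is a hypothesis.
\end{itemize}

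\textbf{What is missing} is a height (harmonic-measure) comparison rather than a metric contraction, with $U$ chosen round in the uniformization of $A$, not of $\UU$.
\begin{itemize}
\item Write $A\cong\{0<\log|z|<L\}$ with $\FK$ at $\log|z|=0$ and $L=2\pi\,\mmod A\ge 2\pi/M$. Write $A'\cong\{0<\log|w|<L/2\}$ with $f(w)=w^2$.
\item Then $u=\log|\iota(w)|$ is positive and harmonic on $A'$, vanishes on the inner circle, and is $<L$.
\item Since $\iota$ has degree one on the inner boundary, the $s$-derivative of the circular mean of $u$ over $\log|w|=s$ is the winding number $1$. Hence that mean equals $s$.
\item The maximum principle gives $u\le 2\log|w|$.
\item Boundary Harnack, applied to $u$ and to $2\log|w|-u\ge 0$ (both with mean $s$), gives $c's\le u\le (2-c)s$ for $s\le L/4$, with $c,c'>0$ depending only on $M$.
\end{itemize}

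Now take $U=\FK\cup\{\log|z|<\tau\}$ with $\tau$ a small $M$-dependent multiple of $L$, and pick $\sigma$ with $(1-c/2)\tau<\sigma<c'L/4$.
\begin{itemize}
\item The lower bound gives $u\ge c'L/4>\sigma$ on $\log|w|=L/4$. So $\iota$ is proper, hence injective, from the component of $\iota^{-1}(\FK\cup\{\log|z|<\sigma\})\cap\{\log|w|<L/4\}$ containing $\FK'$ onto $\FK\cup\{\log|z|<\sigma\}$. That component contains $f^{-1}(U)$.
\item The upper bound gives $U'=\iota(f^{-1}(U))\subset\FK\cup\{\log|z|\le(1-c/2)\tau\}$.
\end{itemize}
Thus $f\circ\iota^{-1}\colon U'\to U$ is quadratic-like with $\mmod(U\sm\overline{U'})\ge c\tau/(4\pi)$, and no compactness argument is needed.
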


Theorem \ref{thm:induced quadratic-like map} allows us to iterate a \pql map $F= (f, \iota): (\UU', \FK')\toto (\UU, \FK)$ in a neighborhood of the filled Julia set $\FK$; by abuse of notation we will write $F = f\circ \iota^{-1}$ there.
To iterate $F$ globally, setting $\UU^0 = \UU$ and $\UU^1 = \UU'$, for all $m>1$ we can inductively define the \textit{restriction} 
$$F= (f, \iota): \UU^m\toto\UU^{m-1} \text{ with } \UU^m = \{(x, y)\in \UU^{m-1}\times \UU^{m-1}: f(x) = \iota(y)\},$$
where $f$ and $\iota$ are the component-wise projections. These restrictions induce the \textit{iterations} \index{pseudo-quadratic-like ($\psi$-ql) map!iterates} $$F^m = (f^m, \iota_m): \UU^m\to \UU^0$$
given by $f^m =  f^{m-1}\circ f$ and $\iota_m = \iota_{m-1} \circ \iota$. 

We will say that the \pql-map $F$ is \emph{periodically repelling} \index{pseudo-quadratic-like ($\psi$-ql) map!periodically repelling} if every periodic cycle in $\FK$ is repelling.

\subsection{Localization}\label{sec:localization}

Let us recall the idea of ``localization" from \cite[Section 7.4.1]{K06}. 
Let $\UU$ be a hyperbolic Riemann surface, let $\gamma$ be a homotopically non-trivial Jordan curve in $\UU$, and let $U$ be a component of $\UU\sm \gamma$. For $\pi: \tilde \UU\to \UU$ the universal cover with deck transformation group $\Delta$, let 
$\tilde \gamma$ be a lift of $\gamma$ by $\pi$ and let 
 $\Delta_\gamma$ be the stabilizer of $\tilde \gamma$.
 The \emph{subsurface cover} \index{subsurface cover}of $\UU$ corresponding to $U$ is the Riemann surface $\tilde U= \tilde \UU/ \Delta_\gamma$. 
 The curve $\tilde \gamma$ descends to a copy of $\gamma$ inside the subsurface cover.
 Cutting $\tilde U$ along this copy of $\gamma$ yields one component that is not mapped to $U$ by the covering $\tilde U\to U$; gluing this component to $U$ along $\gamma$ yields the \emph{localization} \index{localization} of $U$ inside $\UU$.

\subsection{Renormalization}\label{sec:renorm}

A quadratic-like map $f: U'\to U$ is called \textit{quadratic-like renormalizable}  with period $\per> 1$ if some restriction $f^\per: V'\to V$ is a quadratic-like map with connected filled Julia set $K$; we additionally require that $K$ contains the critical point of $f$.
The restriction is called a \textit{quadratic-like renormalization} of $f$ of period $\per$. 
The renormalization is called \textit{prime} if there 
is no renormalization of period $1< m < \per$, and \textit{primitive} if the sets $f^j(K)$ with $1\leq j < \per$ are pair-wise disjoint.
Note that for a given period $\per$, there is no canonically defined quadratic-like renormalization of $f$ of period $\per$, indeed there are infinitely many different possible choices of $U'$ and $U$. However, when the renormalization is prime the small filled Julia set $K$ is uniquely determined. One benefit of  working with \pql maps instead of quadratic-like maps is that
we can use the uniqueness of the small filled Julia set to construct a canonical renormalization.

Let $F= (f, \iota): (\UU', \FK')\toto (\UU, \FK)$ be a \pql map. We will say that $F$ is \textit{($\psi$-ql) renormalizable} with period $\per$ if it has a quadratic-like restriction (as in Theorem \ref{thm:induced quadratic-like map}) that is quadratic-like renormalizable with period $\per$, and the corresponding renormalization is prime. Let us additionally assume that the renormalization is primitive.
For $K$ the  corresponding small filled Julia set and $\KK = \bigcup_{j=0}^{\per-1}F^j(K)$ the cycle of small filled Julia sets, 
let $\VV$ be the localization of a small neighborhood of $K$ inside $\UU\sm (\KK\sm K)$. 
For the iterate $F^\per = (f^\per, \iota_\per): \UU^\per\toto \UU^0$ and $\KK^\per = (f^\per)^{-1}(\KK)$, let $\VV'$ be the localization of $K$ inside  $\UU^\per\sm (\KK^\per\sm K)$.
The pair of maps $(f^\per, \iota_\per): \UU^\per\to \UU$ lift to a \pql map $$\RR F= (g, \jmath): (\VV', K') \toto (\VV, K),$$ which we call the \textit{($\psi$-ql) renormalization} \index{pseudo-quadratic-like ($\psi$-ql) map!renormalization $\RR F$} of $F$.
With this construction, the renormalization $\RR F$ is uniquely defined by $F$. We will say that $F$ has \textit{$M$-amplification} \index{pseudo-quadratic-like ($\psi$-ql) map!amplification} for some $M\geq 1$ if 
$$\WW(F)\geq M \WW(\RR F).$$

\subsection{Limbs and rotation number}\label{sec:limbs}

Let  $F: (\UU', \FK')\toto (\UU, \FK)$ be a primitively renormalizable periodically repelling \pql map with cycle of small filled Julia sets $\KK$. \index{pseudo-quadratic-like ($\psi$-ql) map!renormalization $\RR F$!cycle of small filled Julia sets $\KK$}

There is a unique fixed point of $F$, called $\beta_F$, \index{pseudo-quadratic-like ($\psi$-ql) map!filled Julia set $\FK$!$\beta_F$} such that $\FK\sm \{\beta_F\}$ is connected.
Removing the other fixed point, which is called $\alpha_F$\index{pseudo-quadratic-like ($\psi$-ql) map!filled Julia set $\FK$!$\alpha_F$}, splits $\FK$ into $q\geq 2$ connected components. 
These components are the \textit{limbs} \index{pseudo-quadratic-like ($\psi$-ql) map!filled Julia set $\FK$!limbs $\FL_j$ and $\FL_I$} of $\FK$, and we will label them by $(\FL_j)_{j\in \Z/q\Z}$ according to their counter-clockwise circular ordering around $\alpha_F$, chosen so that $\FL_0$ contains the critical point. There is some $0 \leq p < q$ coprime to $q$ so that $F$ maps $\FL_j$ homeomorphically to $\FL_{j+p}$ for all $j \neq 0$; the fraction $p/q$ is called the \textit{combinatorial rotation number of $F$ at $\alpha_F$}\index{pseudo-quadratic-like ($\psi$-ql) map!combinatorial rotation number}. As $F$ maps $\FK$ to itself as a double branched cover, it follows that the unique critical value of $F$ is in $\FL_p$ and  $F(\FL_0)= \FK$. Note that the limbs do not contain $\alpha_F$, and $\overline{\FL_j} = \FL_j\cup \{\alpha_F\}$ for all $j$.

For any interval $I\subset \R$ we can associate a subset of $\Z/q\Z$ by first intersecting with $\Z$ and then projecting; by slight abuse of notation we will also call this subset $I$. For any such interval, we denote $\FL_I := \bigcup_{j\in I}\FL_j.$
Supposing now that $F$ is renormalizable with small filled Julia sets $\KK$, for any interval $I$ we set $\KK_I = \KK\cap \FL_I$. We let $U_I\subset \UU$ be an open Jordan neighborhood of $\FL_I$, chosen only up to isotopy rel $\KK$. \index{pseudo-quadratic-like ($\psi$-ql) map!subsurface $U_I$}

\subsection{Hubbard trees}\label{sec:hubbard trees}

Let  $F: (\UU', \FK')\toto (\UU, \FK)$ be a primitively renormalizable and periodically repelling \pql map with cycle of small filled Julia sets $\KK$.

We suppose for now that $\FK$ is locally connected. 
In this case, we define the \textit{Hubbard tree} \index{Hubbard tree $\TT$} of $F$ to be the smallest connected set $\TT\subset \FK$ containing $\KK$; it is  compact and forward invariant. 
We set $\LL_j = \FL_j\cap \TT$ for all $j$ and $\LL_I = \FL_I\cap \TT$ for any interval $I\subset \R$. \index{Hubbard tree $\TT$!limbs $\LL_j$ and $\LL_I$}
A point $x\in \TT\sm \KK$ is called a \textit{branch point} of $\TT$ if $\TT\sm \{x\}$ has at least three components. The branch points of $\TT$ are all repelling periodic points of $F$; if $F$ has combinatorial rotation number $p/q$ with $q>2$ then $\alpha_F$ is also a branch point of $\TT$.
We denote by $\hat\KK$ the union of $\KK$ with all the branch points of $\TT$ and $\alpha_F$. We will call the  components of $\hat\KK$ the \textit{vertices} of $\TT$ and denote the set of vertices by $V(\TT)$\index{Hubbard tree $\TT$!vertices $V(\TT)$}. The complement of the vertices in $\TT$ is a union of disjoint paths in $\UU\sm \hat\KK$; we will call each such path an \textit{edge} of $\TT$ and denote the set of edges by $E(\TT).$ \index{Hubbard tree $\TT$!edges $E(\TT)$} These sets of vertices and edges form an abstract tree, which we will call the \textit{combinatorial Hubbard tree}.
The \textit{combinatorial distance} between two vertices of $\TT$ is length of the shortest path in the combinatorial Hubbard tree connecting them.

We can similarly view $f^{-1}(\TT)$ as an abstract tree whose vertices are the components of $f^{-1}(\hat\KK)$ and whose edges are the components of $f^{-1}(E(\TT))$.
Thus we have a 2-to-1 map $f:f^{-1}(E(\TT))\to E(\TT)$.
 {Moreover, the tree $f^{-1} (\TT)$ contains a subtree $i^{-1} (\TT) $
whose edges map to the edges of $\TT$ under $i$.   
It induces a natural surjection $i:  E(i^{-1} (\TT)) \ra \TT$}.

The composition $f\circ \iota^{-1}$ induces a linear transformation $T_F: \R^{E(\TT)}\to \R^{E(\TT)}$. As the renormalization of $F$ is prime,  $T_F$ is primitive and has a unique Perron-Frobenius eigenvalue $\lambda_F\geq 1$; as the renormalization is primitive we have $\lambda_F>1$. We will call $\lambda_F$ the \emph{core eigenvalue} \index{Hubbard tree $\TT$!core eigenvalue $\lambda_F$ of $T_F$}of $F$ (the quantity $\log \lambda_F$ is called the \emph{core entropy} of $F$). 
The inverse composition $\iota\circ f^{-1}$ induces the transpose linear transformation $T_F^\top$, which has the same Perron-Frobenius eigenvalue $\lambda_F$.
For any $n> 1$ the compositions $f^n\circ \iota_n^{-1}$ and $\iota_n\circ f^{-n}$ similarly induces a linear transformation $\R^{E(\TT)}\to \R^{E(\TT)}$, these are exactly $T_F^n$ and $(T_F^\top)^n$.

Let us now suppose instead that $\FK$ is not locally connected. In this case, $\FK$ admits a locally connected \textit{combinatorial model} $\FK_\com$ obtained as the quotient of the unit disk by a rational lamination. In this case, there is a natural projection $\pi: \FK\to \FK_\com$ and $F$ descends to a map  $F_\com: \FK_\com\to \FK_\com$.
The combinatorial model has an analogously defined Hubbard tree $\TT_\com$ with vertices $V(\TT_\com)$, edges $E(\TT_\com)$, linear transformation $T_F$ and eigenvalue $\lambda_F$; we can lift by $\pi$ to get the corresponding objects for $F$.

\subsection{Hubbard arc-diagrams}

Let  $F: (\UU', \FK')\toto (\UU, \FK)$ be a primitively renormalizable and periodically repelling \pql map with cycle of small filled Julia sets $\KK$ and Hubbard tree $\TT$. 
We equip $\UU\sm \KK$ with the minimal ideal marking as in Appendix \ref{app:ideal markings and waves} and recall the definition weighted arc-diagrams from Appendix \ref{app:WAD}.

We will say that a proper path or arc  in $\UU\sm \KK$ is \textit{horizontal}, \textit{vertical}, or \textit{bivertical}
if both, one, or neither of its  endpoints lie in $\dcara \KK$ respectively.
We can define three arc-diagrams in $\UU\sm \KK$ associated to the Hubbard tree as follows (see Figure \ref{arc-diagrams}). We define the \textit{vertical Hubbard arc-diagram} \index{Hubbard tree $\TT$!vertical Hubbard arc-diagram $\HH^\ver$} $\HH^\ver$ to be the unique maximal AD in $\UU\sm \KK$ consisting of vertical arcs that can be realized in $\UU\sm \TT$. 
We define the \textit{horizontal Hubbard arc-diagram} \index{Hubbard tree $\TT$!horizontal Hubbard arc-diagram $\HH^\hor$} $\HH^\hor$ to be the unique maximal AD in $\UU\sm \KK$ consisting of horizontal arcs that can be realized in $\UU\sm \TT$ without intersecting $\HH^\ver$.
For any edge $\gamma\in E(\TT)$, there is a unique bivertical arc in $\UU\sm \KK$ that intersects $\TT$ only in $\gamma$;  we define the \textit{bivertical Hubbard arc-diagram} \index{Hubbard tree $\TT$!bivertical Hubbard arc-diagram $\HH^\biv$}
{$\HH^\biv\equiv \HH^\biv (\TT)$}
to be the set of all such arcs.  
These three arc-diagrams are closely related, for example we can relate the intersection numbers of any arc with $\hv$ and $\hperp$:

\begin{figure}
	\begin{center}
		\def\svgwidth{4in}
		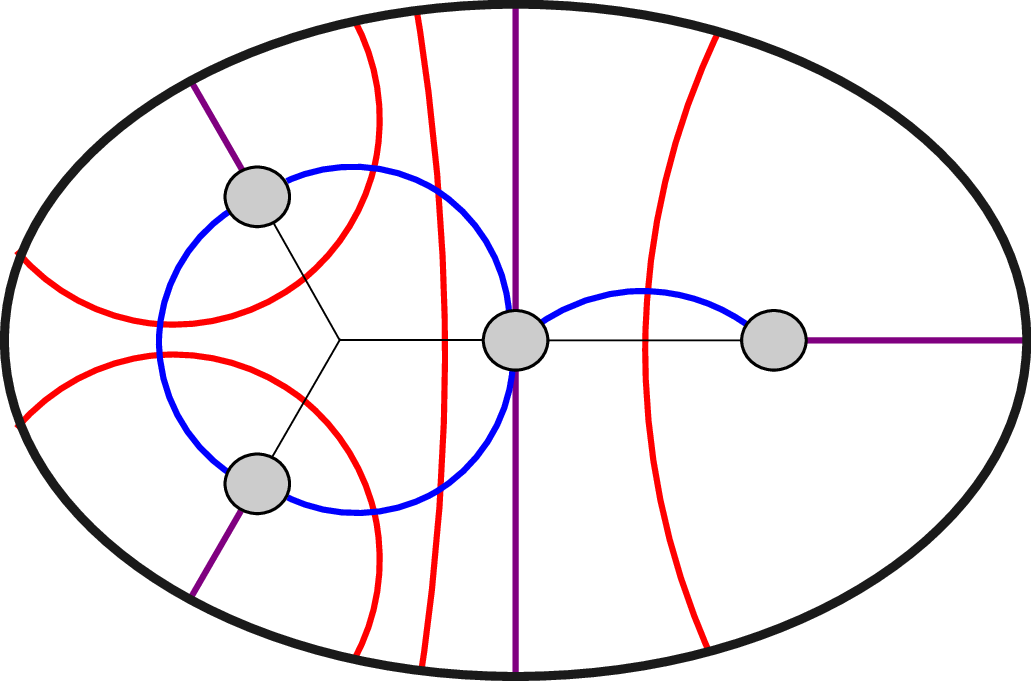
		\caption{The disk $\UU$ with small filled Julia sets represented as disks. The horizontal, vertical, and bivertical Hubbard arc-diagrams are drawn in blue, purple, and red respectively.}
		\label{arc-diagrams}
	\end{center}
\end{figure}
\begin{lemma} \label{lem:compare}
If each limb of $\TT$ contains at most $\bound$ many components of $\KK$, then 
we have
\begin{equation} \label{eq:compare}
\pair \alpha \hperp \le 2\bound (\pair \alpha \hv + 1)
\end{equation}
for any arc $\alpha$ in $\UU\sm \KK$.
\end{lemma}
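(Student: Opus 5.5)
The plan is to put $\alpha$, $\hv$ and $\hperp$ simultaneously in minimal position, cut $\alpha$ along $\hv$, and bound how many bivertical arcs each resulting segment can cross.

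\emph{Minimal position.} First choose hyperbolic geodesic (or otherwise minimal) representatives in $\UU\sm\KK$ with its ideal marking. Then $\pair \alpha \hv$ and $\pair \alpha \hperp$ are realized as actual numbers of intersection points. This also ensures that no two arcs of these diagrams, nor $\alpha$ with either diagram, bound a bigon or a half-bigon with the ideal boundary. The arcs of $\hv$ are disjoint from $\TT$ and from each other. Each arc of $\hperp$ crosses $\TT$ exactly once, in its edge, and is disjoint from $\hv$ after isotopy, since both can be realized in the complementary pieces of $\TT$. The points of $\alpha\cap\hv$ cut $\alpha$ into exactly $\pair\alpha\hv+1$ segments, and each segment lies in the closure of a single component $R$ of $\UU\sm(\KK\cup\hv)$.

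\emph{Structure of the regions.} The key step is to describe these complementary regions $R$. Because $\hv$ is maximal among vertical arcs realizable in $\UU\sm\TT$, every complementary sector of $\TT$ at each vertex component of $\KK$ carries a vertical arc from that component to $\dd\UU$. Hence each $R$ is a disk bounded by two arcs of $\hv$, pieces of $\dd\UU$ and pieces of $\dd\KK$. The part of $\TT$ meeting $R$ is a union of edges lying in a single limb $\LL_j$, together possibly with the edge adjacent to $\alpha_F$. Since $\LL_j$ contains at most $\bound$ components of $\KK$ and all branch points of $\TT$ separate components of $\KK$, the subtree spanned there has at most $\bound$ leaves. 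It therefore has fewer than $2\bound$ edges, counting the edge to $\alpha_F$. It follows that at most $2\bound$ arcs of $\hperp$ meet $R$.

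\emph{Counting.} Fix a segment $\sigma$ of $\alpha$ in $R$. By minimal position, $\sigma$ crosses each arc of $\hperp$ inside $R$ at most once. Indeed, two crossings would create a bigon between $\alpha$ and that arc inside the disk $R$, or a half-bigon with a boundary piece of $R$ lying in $\hv$, and either could be removed by isotopy. Thus $\sigma$ contributes at most $2\bound$ intersections with $\hperp$. Summing over the $\pair\alpha\hv+1$ segments gives \eqref{eq:compare}.

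\emph{Main obstacle.} I expect the hardest step to be the region description: checking carefully that each complementary region of $\hv$ only sees edges of one limb. This is where the hypothesis on limbs enters, and the region containing $\alpha_F$ needs separate treatment. Another delicate point is verifying that simultaneous minimal position is possible for the ideal-marked surface. I would first try handling both points via geodesic representatives in the hyperbolic metric of the subsurface.
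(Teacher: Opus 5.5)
There is a genuine gap. It is in the region description, and it breaks the argument exactly in the region you set aside.

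\textbf{Where the count fails.} The components of $\UU\sm(\KK\cup\hv)$ are not the pieces of $\UU\sm(\TT\cup\hv)$. Those pieces are the disks bounded by two arcs of $\hv$ that you describe. The components of $\UU\sm(\KK\cup\hv)$ are obtained by regluing those pieces along $\TT\sm\KK$, so they correspond to the components of $\TT\sm\KK$. In particular, the region $R_0$ containing $\alpha_F$ contains the edge issuing from $\alpha_F$ into each of the $q$ limbs. Hence $R_0$ contains at least $q$ arcs of $\hperp$. For example, if every limb contains one small Julia set, then $\TT$ is a star centred at $\alpha_F$, and $\UU\sm(\KK\cup\hv)$ is a single disk containing all $q$ arcs of $\hperp$. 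So the claim that at most $2\bound$ arcs of $\hperp$ meet $R$ is false for $R_0$. Counting arcs per region then only gives $\pair\alpha\hperp\le q(\pair\alpha\hv+1)$. That is useless here: the whole content of the lemma is a bound independent of $q$, and $q\to\infty$ is the regime of interest.

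\textbf{The missing idea.} A single segment does not see the whole region; it only sees a path in the tree. In the disk $R$, each arc of $\hperp$ is a separating proper arc. Let $K$ and $K'$ be the small Julia sets at which a segment $\sigma$ enters and leaves $R$: the one it lands on, or the one whose $\hv$-arc it crosses. In minimal position, $\sigma$ crosses the arc dual to an edge $e$ exactly when $e$ lies on the tree path from $K$ to $K'$. Thus $\pair\sigma\hperp$ equals the combinatorial distance from $K$ to $K'$ in the Hubbard tree.

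This distance is at most $2\bound$, because every vertex of $\TT$ is within distance $\bound$ of $\alpha_F$. Indeed, each vertex on the path from a vertex in $\LL_j$ to $\alpha_F$ is either a component of $\KK$ in $\LL_j$, or a branch point with a side branch containing its own component of $\KK$ in $\LL_j$. Replace your per-region count with this per-segment bound, which is the paper's argument. Summing over the $\pair\alpha\hv+1$ segments then gives the lemma, and no separate treatment of $R_0$ is needed.
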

\begin{proof}
We can use $\hv$ to cut $\alpha$ into $\pair \alpha \hv + 1$ segments $\alpha_i$. 
If an endpoint of some $\alpha_i$ lies on a small Julia set $K$, then we will call $K$ a  \emph{combinatorial endpoint} of $\alpha_i$. If an endpoint of some $\alpha_i$ lies on an arc of $\hv$ that connects a small Julia set $K$ to $\dcara \UU$, then we will also call $K$ a combinatorial endpoint. 
Each $\alpha_i$ has exactly two combinatorial endpoints, and by homotoping $\alpha_i$ close to $\TT$ we observe that  $\pair{\alpha_i}{\hperp}$ is equal to the combinatorial distance between its combinatorial endpoints.

As each limb of $\TT$ contains at most $\bound$ components of $\KK$, the combinatorial distance between any vertex of $\TT$ and $\alpha_F$ is at most $\bound$. Hence the combinatorial distance between any two vertices of $\TT$ is at most $2\bound$; \eqref{eq:compare} immediately follows. 
\end{proof}

{Notice in conclusion that the bivertical Hubbard arc-diagram $\HH^\biv \equiv \HH^\biv (\TT)$ lifts
  under $f$ to  the bivertical Hubbard arc diagram  $f^* (\HH^\biv)$ rel the tree $f^{-1} (\TT)$
and it lifts under $i$ to the bivertical Hubbard arc diagram  $i^* (\HH^\biv)$ rel the tree $i^{-1} (\TT)$.
Moreover, there are well defined push-forwards  
$$ f_*: \HH^\biv(f^{-1} (\TT)) \ra \HH^\biv (\TT) ,  \quad  i_*: \HH^\biv(i^{-1} (\TT)) \ra \HH^\biv (\TT) , $$
and we will use the following natural conventions:
$$f^* \beta = \{\alpha \in \HH^\biv(f^{-1} (\TT)) \ | \ f_* \alpha = \beta\}, \quad
 i^* \beta = \{\alpha \in \HH^\biv(i^{-1} (\TT)) \ |\ i_* \alpha = \beta \} . $$
}

\subsection{Values for weighted arc-diagrams} \label{sec:value}

\input{value1}

\section{Super-optimism}\label{sec:life improvement}

Let us say that a family of combinatorial types $\CC$ is \textit{$\eps$-optimistic} \index{family $\CC$ of types!optimistic (BNWE)}if any $n$-times renormalizable \pql map $F$
with combinatorics in $\CC$ and $\WW(\RR^n F)\largegiven{\CC, n}$ satisfies $\WW(F)\geq 2^n\eps\WW(\RR^nF).$
The optimism principle from  \cite{K06} can be precisely stated as:
\begin{theorem}[Kahn]\label{thm:bounded optimism}
	There is some universal $\eps>0$ such that any bounded family of primitive types is $\eps$-optimistic. 
\end{theorem}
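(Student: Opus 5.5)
This statement is Kahn's theorem from \cite{K06}, and the plan is to reconstruct its proof from the ``Bad Now, Worse Earlier'' mechanism in the outline. By induction it suffices to get a one-step estimate. We show there is a universal $\eps_0>0$ such that for a bounded family $\CC$ and any renormalizable \pql map $F$ with combinatorics in $\CC$ and $\WW(\RR F)\largegiven{\CC}$, either $\WW(F)\ge 2\WW(\RR F)$, or the same inequality holds at some level at most $N(\CC)$ steps earlier with a definite loss. Then we iterate along record levels. Along the record sequence the widths at least double at each step. At non-record levels the bounded combinatorics give $\WW(\RR F)\le C(\CC)\,\WW(F)$, from the $O(p)$ term in \eqref{eq:intro width Rf} together with a lower bound for the pushforward of vertical width. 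This gives $\WW(F)\ge 2^n\eps\,\WW(\RR^n F)$, with the loss at the start of the chain absorbed into $\eps$.

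For the one-step estimate, let $X$ be the canonical weighted arc diagram of $\UU\sm\KK$. Bounded combinatorics give a period $p\le p(\CC)$, and the topology of $\UU\sm\KK$ is bounded. By \eqref{eq:intro width f} and \eqref{eq:intro width Rf} it is enough to show $\|X^\ver\|\ge 2\eps_0\|X^\hor\|$, since this yields \eqref{eq:intro optimism}. Suppose instead that $\|X^\ver\|$ is negligible compared to $\|X^\hor\|$. Bounded topology lets us restrict $X^\hor$, at cost $O(1)$ in width, to a diagram aligned with the Hubbard tree $\TT$. Concretely, every arc of definite weight meets $\HH^\biv$ a bounded number of times; we get this from Lemma~\ref{lem:compare} and the fact that arcs crossing $\HH^\ver$ carry width bounded by $\|X^\ver\|$.

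Next we pull back by $F^p$, or by $F$ repeatedly. The Gr\"otzsch inequality makes $g^*X^\hor$ dominate $X^\hor$ up to errors of size $O(\|X^\ver\|)+O_{\CC}(1)$. On the other hand, tracking crossings with $\HH^\biv$ under the pullback conventions $f^*\beta$, $i^*\beta$ multiplies the intersection vector by $T_F^\top$. Since the renormalization is primitive, $\lambda_F>1$, and for a bounded family $\lambda_F\ge\lambda(\CC)>1$. After $k(\CC)$ iterates, the value $\val$ of the pulled-back diagram is therefore at most half the original. Domination gives the opposite inequality, and the contradiction produces definite vertical weight in some pullback. The Covering Lemma \cite{KL:QA} then pushes this vertical width forward with bounded degree loss, giving definite vertical width for $X$ itself at an earlier level. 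This is the source of the ``Worse Earlier''.

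The main obstacle is making $\eps_0$ universal rather than dependent on $\CC$. Every constant above depends on $p$ and $\lambda_F$; only the final doubling must be uniform. The plan is to let all $\CC$-dependence enter only through the threshold $\WW(\RR^nF)\largegiven{\CC,n}$. Once the degenerate regime is deep enough, the error terms $O_\CC(1)$ are swamped, and the factor $p$ in \eqref{eq:intro optimism} compensates for any small $\eps_0$. The only delicate point is the comparison $\WW(\RR F)\lesssim \WW(F)$ at non-record levels, where the constant would be absorbed at the first step of the chain.
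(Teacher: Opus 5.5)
The paper does not prove this statement. It quotes it from \cite{K06}, and the introduction records the mechanism: \eqref{eq:intro def vert} holds with a \emph{universal} $\eps$ once $\WW(\RR f)\largegiven{p}$. Measured against that, your proposal has a structural gap.

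Your reduction (``by induction it suffices to get a one-step estimate'', then iteration along record levels) cannot give $\WW(F)\ge 2^n\eps\,\WW(\RR^nF)$ with $\eps$ universal. A one-level estimate only yields $\WW(F)\ge c\,\eps_0\,p\,\WW(\RR F)$. For bounded periods (primitive prime periods can be as small as $3$) the factor $c\eps_0 p$ need not exceed $1$. Compounding over $n$ levels then gives $(c\eps_0)^n p_0\cdots p_{n-1}$, not $2^n\eps$. If doubling only happens at some level within $N(\CC)$ steps, you get roughly $2^{n/N(\CC)}$, and each non-record step costs a factor $C(\CC)$, so nothing universal survives.

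Moreover, the non-record inequality $\WW(\RR F)\le C(\CC)\WW(F)$ is asserted without argument. It is an estimate of the same nature as the one being proved, since by \eqref{eq:intro width Rf} it requires controlling $\|X^\hor\|$ by $\WW(F)$. Records are the device for passing from optimism to \emph{a priori} bounds (as in Lemma \ref{lem:asymptotic optimism}), not for proving optimism.

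The missing idea is to apply the one-level estimate \emph{once}, to the composite renormalization. $\RR^nF$ is a primitive (non-prime) renormalization of $F$ of period $P=p_0\cdots p_{n-1}\ge 2^n$, with a cycle $\KK$ of $P$ small Julia sets. Suppose $\|X^\ver\|\ge\eps\|X\|$ for the canonical diagram of $\UU\sm\KK$. Then $\WW(F)\ge\|X^\ver\|$ together with $P\,\WW(\RR^nF)\le 4\|X\|+O(P)$ (as in Lemma \ref{lem:width to weight}) gives $\WW(F)\ge(\eps/8)\,P\,\WW(\RR^nF)\ge 2^n(\eps/8)\,\WW(\RR^nF)$ once $\WW(\RR^nF)\gg_P 1$. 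Since $P$ is bounded in terms of $\CC$ and $n$, this is exactly the $(\CC,n)$-dependent threshold in the definition of $\eps$-optimism.

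This route works only if the vertical proportion $\eps$ is independent of the combinatorics, including the composite combinatorics of period $P$; otherwise it depends on $n$. Your value/domination sketch does not provide this. The proportion it yields depends on:
$\lambda_F$; the eigenweight ratio $u_{\max}/u_{\min}$ (cf.\ Proposition \ref{prop:value estimate}); the number $k(\CC)$ of pullbacks; the $O_{\CC}(1)$ alignment errors; and the degree loss in the Covering Lemma.

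Your proposed fix, that the factor $p$ in \eqref{eq:intro optimism} ``compensates for any small $\eps_0$'', is backwards. The factor $p$ (or $P$) is the \emph{source} of the $2^n$, and it survives only if $\eps_0$ does not decay with the combinatorics. Establishing that uniformity is the real content of \cite{K06}, and your proposal does not address it.

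Finally, pulling back by $g=F^p$ and pushing forward with the Covering Lemma produces vertical width in $\UU\sm\KK$ at the \emph{same} renormalization level. ``Worse Earlier'' refers to comparing $\WW(F)$ with $\WW(\RR^nF)$, not to a level reached by pullback.
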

Let us recall from the introduction that a family of types $\CC$ is \textit{super-optimistic} \index{family $\CC$ of types!super-optimistic (BNMWE)} if for all $M>0$, there is a finite set $\CC'\subset \CC$ such that any renormalizable \pql map $F$ with combinatorics in $\CC\sm \CC'$ and $\WW(\RR F)\largegiven{\CC, M}$ satisfies $\WW(F)\geq M\WW(\RR F)$. Said more plainly: we can achieve arbitrary amplification by removing a finite set of combinatorial types. 
Using Theorem \ref{thm:bounded optimism}, we can prove the following lemma from the introduction:
\begin{lem}\label{lem:asymptotic optimism}
	Every super-optimistic family of prime primitive types has beau bounds.
\end{lem}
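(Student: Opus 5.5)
We prove beau bounds by a ``record level'' argument that combines super-optimism with Theorem \ref{thm:bounded optimism} on the finite exceptional set. Let $\CC$ be super-optimistic and let $F=F_0$ be an infinitely renormalizable \pql map with combinatorics in $\CC$, with renormalizations $F_n=\RR^n F$. Write $W_n=\WW(F_n)$. We will produce a constant $\bar W=\bar W(\CC)$ such that $W_n\le \bar W$ for all $n$ large enough (depending on $W_0$). Theorem \ref{thm:induced quadratic-like map} then turns this into quadratic-like restrictions with modulus bounded below by $\mu(\bar W)$, which is the beau bound.

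\emph{Step 1 (choosing constants).} Let $\eps$ be the universal constant of Theorem \ref{thm:bounded optimism}. Fix $N$ with $2^N\eps\ge 4$. Set $M=4$ and let $\CC'\subset\CC$ be the finite set that super-optimism provides for this $M$. Then $\CC'$ is a bounded family of primitive types, so it is $\eps$-optimistic. For every $m\le N$, any $m$-times renormalizable map with combinatorics in $\CC'$ and $\WW(\RR^m G)\ge C_1(\CC',m)$ satisfies $\WW(G)\ge 2^m\eps\,\WW(\RR^m G)$. We also need a bounded-distortion fact, \emph{a priori} in the opposite direction: for any single renormalization, $\WW(\RR G)\le C(p)\,\WW(G)+C(p)$, where $p$ is the period. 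I would expect this to hold because the \pql renormalization is a localization of a covering of degree bounded in terms of $p$, so the width grows at most by a factor of order $p$. I would take this from \cite{K06}. It is only needed for types in the finite set $\CC'$, so the constant is uniform there.

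\emph{Step 2 (looking back from a bad level).} Suppose $W_n\ge \bar W$ with $n$ large. We look back at most $N$ levels. If the level $n-1$ has type in $\CC\sm\CC'$ and $W_n\ge C(\CC,4)$, super-optimism gives $W_{n-1}\ge 4W_n$. Otherwise, let $k\le N$ be the length of the maximal run of $\CC'$-types ending at level $n-1$. If $k=N$, the $\eps$-optimism of $\CC'$ applied to $F_{n-N}$ gives $W_{n-N}\ge 2^N\eps W_n\ge 4W_n$. If $k<N$, the level $n-k-1$ is in $\CC\sm\CC'$. Step 1 bounds the loss over the $k$ levels in $\CC'$: $W_{n-k}\ge c(N)\,W_n - C$. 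Super-optimism, applied with a larger $M'$ (whose exceptional set we have already absorbed into $\CC'$ by choosing $M'$ first), then gives $W_{n-k-1}\ge 4W_n$. In every case, a level with $W_n\ge\bar W$ is preceded, within $N+1$ levels, by a level with at least four times the width. Here $\bar W$ is the maximum of the thresholds involved, which all depend only on $\CC$.

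\emph{Step 3 (records).} Iterating Step 2 from a level $n$ with $W_n\ge \bar W$ gives a backward chain of levels $n>n_1>n_2>\dots$ with $W_{n_j}\ge 4^j W_n$, and this chain can only stop at a level $\le N$. Hence $4^{n/(N+1)-1}W_n\lesssim \max_{i\le N+1}W_i$. The right-hand side is controlled by $W_0$, using the forward bound of Step 1 together with super-optimism in its forward form. So for $n$ large in terms of $W_0$ we get $W_n<\bar W$. This proves the bounds.

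The main obstacle is the bookkeeping in Step 2: the exceptional finite set depends on $M$, while the required $M$ depends on the distortion across $\CC'$-runs. I would handle this by fixing $N$ first, then the distortion constant over at most $N$ levels of bounded types in the union of the exceptional sets, and only then choosing $M$. Some care may be needed, and possibly a nested choice: take $\CC'$ for $M=4$ and then $M'$ large relative to $C(\CC')^{N}$, where the exceptional set for $M'$ may grow. If it grows, I would repeat the argument with the larger finite set, which is still bounded, so Theorem \ref{thm:bounded optimism} still applies with the same universal $\eps$ and hence the same $N$. Because $N$ depends only on $\eps$, this iteration closes after one round.
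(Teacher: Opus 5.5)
Your argument is the paper's own argument, run backwards from level $n$ instead of forwards. The skeleton is correct: either a full $\CC'$-run of length $N$ gives $W_{n-N}\ge 2^N\eps W_n$, or a shorter $\CC'$-run preceded by one exceptional level gives a definite factor through super-optimism, and you then iterate back to the first $N$ levels.

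The one thing to fix is the constant bookkeeping you flag as the main obstacle. The reverse-distortion estimate $\WW(\RR G)\le C(p)\WW(G)+C(p)$ is not needed. Moreover, if you let the loss across a $\CC'$-run depend on $C(\CC')^N$, your proposed iteration is not guaranteed to close. Enlarging the exceptional set to $\CC'(M')$ can enlarge $C(\CC')$, which would force a still larger $M'$, and so on.

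The circularity disappears because the $\eps$-optimism you already recorded in Step 1 controls the loss across a run. If $F_{n-k},\dots,F_{n-1}$ have types in $\CC'$, then $W_{n-k}\ge 2^k\eps W_n\ge \eps W_n$, with the universal $\eps$, whatever finite set $\CC'$ is. So fix $M=4/\eps$ first and only then take $\CC'=\CC'(M)$. This is exactly the paper's choice, which uses $M=2/\eps$ together with $2^N\eps\ge 2$ and gets $\WW(F_n)\ge 2^{m-n}\eps\,\WW(F_m)\ge \eps M\,\WW(F_{m+1})$.

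Similarly, in Step 3 the bound on $\max_{i\le N+1}W_i$ in terms of $W_0$ needs no distortion estimate. It follows from $\eps$-optimism with $m=1$ on $\CC'$-levels and from super-optimism on the other levels. This dependence on $W_0$ is a point the paper glosses over by simply noting that each $\WW(F_j)$ is finite, so your treatment of it is slightly more complete.
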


\begin{proof}
	Let $0<\eps<1$ be the universal quantity in Theorem \ref{thm:bounded optimism}, and set $M = 2/\eps$. Let $\CC$ be a super-optimistic family of primitive types, so by definition there is some finite set $\CC'\subset \CC$ such that any renormalizable \pql map $F$ with combinatorics in $\CC\sm \CC'$ and $\WW(\RR F)\largegiven{\CC}$ satisfies $\WW(F)\geq M\WW(\RR F)$. 
	Let $N\geq 1$ be an integer large enough so that $2^N\eps \geq 2$.
	
	Let $F$ be an infinitely renormalizable \pql map whose renormalizations $(F_0 = F, \dots, F_n=\RR^nF, \dots)$ have combinatorics in $\CC$.
	For any $n\geq 0$, either  $(F_n, \dots, F_{n+N})$ has combinatorics in $\CC'$, or there is some $n\leq  m < n+N$
	so that $(F_n, \dots, F_{m})$ has combinatorics in 
	$\CC'$ and $(F_{m}, F_{m+1})$ has combinatorics in $\CC\sm \CC'$. 
	In the former case it follows from Theorem \ref{thm:bounded optimism} that $$\WW(F_n)\geq 2^N\eps\WW(F_{n+N})\geq 2\WW(F_{n+N})$$ 
	when $\WW(F_{n+N})\largegiven{\CC}$, and in the latter case it follows from Theorem \ref{thm:bounded optimism} and our choice of $M$ that $$\WW(F_n)\geq 2^{m-n}\eps\WW(F_m)\geq \eps M\WW(F_{m+1})= 2\WW(F_m)$$
	when $\WW(F_m)\largegiven{\CC}.$

	It follows from the above that there is some $C>0$ depending only on $\CC$ such that if $\WW(F_n)> C$ for some $n\geq N$, then $\WW(F_m)\geq 2\WW(F_n)$ for some $n-N\leq m < n$. Hence if $\WW(F_n)> C$ for some $n\geq kN$, then $$\sup_{0\leq j < N}\WW(F_j)>2^kC.$$
	As each $\WW(F_j)$ is finite, it follows that $\WW(F_n)\leq C$ for all $n\largegiven{\CC}$, so $\CC$ has beau bounds.
\end{proof}

\begin{rem}
    While it is not explicitly stated there, a version of Lemma \ref{lem:asymptotic optimism} first appeared in the proof of \cite[Corollary 5.17]{KL:decorations} in the context of combinatorial types satisfying the \emph{decoration condition} there; we include the slightly more general formulation here for completeness.
\end{rem}

\subsection{Parabolically bounded combinatorics}

For some $q\geq 2$, let $f'$ and $f$ be quadratic polynomials corresponding to  parameters in $\hat{M}_{1/q}$ and $\hat{M}_{1/(q+1)}$  respectively. Let $\FK'$ and $\FK$ be the filled Julia sets of $f$ and $\tilde f$ with limbs $\FL_j'$ and $\FL_j$ respectively. 
For  $1\leq k< q$, a \textit{$k$-limb insertion}  from $f'$ to $f$ is a continuous embedding 
$\phi: \FK'\to {\FK}$ that is holomorphic on $\Int \FK'$ and satisfies:
\begin{enumerate}
	\item For all $k-q<  j<   k$, $\phi(\FL_j') \subset \FL_j$ and $\phi \circ f' = f\circ \phi$ on $\FL_j'$.
	\item $\phi(\FL_{k}') \subset \phi(\FL_{k})$ and $\phi \circ f' = f^2\circ \phi$ on $\FL_k'$.
\end{enumerate}
A \textit{$k$-limb insertion} 
$\Phi: \hat{M}_{1/q}\to \hat{M}_{1/(q+1)}$
\index{Mandelbrot set $\MM$!limb-insertion}
is a continuous embedding 
such that for any quadratic polynomial $f'$ corresponding to a parameter $c\in\hat{M}_{1/q}$ and $f$ the quadratic polynomial corresponding to $\Phi(c)$,  there is a $k$-limb insertion from $f'$ to $f$.
A $1$-limb insertion from $\hat{M}_{1/2}$ to $\hat{M}_{1/3}$ was first  constructed in \cite{BD86} via quasiconformal surgery, and their argument can be easily generalized to produce $k$-limb insertions from $\hat{M}_{1/q}\to \hat{M}_{1/(q+1)}$ for all $q$ and $k$.

For all $q\geq 2$, let $\Phi_{q}: \hat{M}_{1/q}\to \hat{M}_{1/(q+1)}$ be a $k_q$-limb insertion. Let us say that the sequence of limb insertions $(\Phi_q)$ is \textit{balanced} if $k_q\to \infty$ and $q-k_q\to \infty$ when $q\to \infty$; for example we can take $k_q = \floor{q/2}.$ For any balanced sequence $(\Phi_q)$ of limb insertions, we define a \textit{parabolic tail} \index{family $\CC$ of types!parabolic tail} to be a combinatorial type $\CC$ of the form
$$\CC = \setofst{\Phi_n\circ \dots \Phi_q(M)}{n\geq q}$$
for some small Mandelbrot set $M\subset \hat{M}_{1/q}$. 

A family of {combinatorial types} $\CC$ is \textit{parabolically bounded} \index{family $\CC$ of types!parabolically bounded} if it is a union of a finite set of types and finitely many parabolic tails. This definition is a little hard to work with directly, so let us give a more combinatorial characterization. 

Let $F: (\UU', \FK')\to (\UU, \FK)$ be a periodically repelling renormalizable \pql map and let $\KK$ be the corresponding cycle of small filled Julia sets. 
For any $\bound\geq 1$, we will say that the renormalization of $F$  is \textit{parabolically $\bound$-bounded} if:
\begin{enumerate}
	\item $F$ has combinatorial rotation number $1/q$ for some $q\geq 2$; 
	\item each limb $\FL_j$ of $\FK$ contains at most $\bound$ many components of $\KK$; and
	\item any component of $\KK$ in $\FL_0$ is mapped into $\bigcup_{|j|\leq \bound}\FL_j$ by $F$.
\end{enumerate}

\begin{prop}\label{prop:parabolically bounded combinatorics}
	If $\CC$ is a parabolically bounded family of types, then there is a finite set $\CC'\subset \CC$ and $\bound\geq 1$ so that: if the renormalization of a periodically repelling  \pql map is in $\CC\sm \CC'$, then the renormalization is parabolically $\bound$-bounded.
\end{prop}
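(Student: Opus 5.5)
Since $\CC$ is a finite union of types and finitely many parabolic tails, it suffices to handle a single parabolic tail $\{\Phi_n\circ\dots\circ\Phi_q(M)\}_{n\ge q}$. Put all the finitely many isolated types, together with finitely many initial members of each tail, into $\CC'$, and take $\bound$ to be the maximum of the bounds obtained for the tails. The plan is to track, by induction on $n$, how the three conditions of parabolic $\bound$-boundedness behave under one limb insertion, and to check that they persist with a constant independent of $n$ once $n$ is large.

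\textbf{Step 1 (rotation number).} The type $M_n:=\Phi_n\circ\dots\circ\Phi_q(M)$ lies in $\hat M_{1/(n+1)}$. For any map whose renormalization has type $M_n$, the $\alpha$-fixed point has combinatorial rotation number $1/(n+1)$, since the rotation number is constant on limbs of $\MM$. Condition (1) follows. Because rotation number and the distribution of small Julia sets among the limbs are combinatorial invariants, it is enough to verify the remaining conditions for the quadratic polynomial $f_c$ with $c$ in $M_n$ (or for its combinatorial model $\FK_\com$). Hybrid equivalence transports the conclusion to any periodically repelling \pql map.

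\textbf{Step 2 (tracking the cycle under a $k$-limb insertion).} Let $\phi:\FK'\to\FK$ be the $k$-limb insertion from $f'$ (the map at level $n$) to $f$ (at level $n+1$). By properties (1)--(2) of the definition, $\phi$ conjugates $f'$ to $f$ on $\FL'_j$ for $k-q<j<k$, and conjugates $f'$ to $f^2$ on $\FL'_k$. Hence the cycle $\KK'$ maps into the cycle $\KK$. An orbit passing through $\FL'_k$ picks up exactly one extra point in the inserted new limb, which I index as $\FL_{k+1}$, with the remaining labels shifting. Small Julia sets in a limb $\FL'_j$ thus correspond bijectively to those in the image limb, and the new limb receives copies only of the sets passing through $\FL'_k$.

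From this, the number of components of $\KK$ in any limb at level $n+1$ is either the count in some limb at level $n$, or the count in $\FL'_k$. By induction, the maximum over limbs is bounded by the maximum at the base level $q$, which gives condition (2) with $\bound$ equal to the largest limb count for $M$.

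For condition (3), the image under $F$ of a component in $\FL_0$ lies in a limb with index $j$ which, in the cyclic labelling, is shifted only when the insertion happens between $0$ and $j$. This is where balance is used. Since $k_n\to\infty$ and $n-k_n\to\infty$, every insertion eventually occurs far from the indices $|j|\le \bound_0$, where $\bound_0$ bounds the relevant indices at the base level. So once $n$ is large, the images of $\FL_0$-components stay within $\bigcup_{|j|\le \bound}\FL_j$, and we discard the early levels into $\CC'$.

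\textbf{Main obstacle.} The hard part is the bookkeeping in Step 2. One must confirm that the limb indices of the images of $\FL_0$-components, measured relative to the critical limb and the counter-clockwise labelling, shift only by insertions occurring between them. For the labels near $0$ this has to hold \emph{uniformly}, which requires that for all large $n$ the insertion index $k_n$ stays away from the bounded window of indices near $0$, in the sense of both $k_n$ and $n-k_n$. I would first make the labelling convention explicit: new limb at position $k+1$, limbs $\FL_j$ with $j\le k$ fixed, limbs with $j>k$ shifted by one, read mod $q+1$ as negative indices. With that convention the bounded window $[-\bound,\bound]$ is literally preserved once $k_n>\bound$ and $n-k_n>\bound$.
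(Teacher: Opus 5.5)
\paragraph{Verdict.} Your outline follows the paper's route, and the paper's proof has the same gap you do. The shared idea is this: one $k$-limb insertion copies limb counts, so (2) persists with the same $\bound$. When $k,q-k>\bound$, the insertion also fixes every index in $[-\bound,\bound]$ and commutes with the dynamics on $\FL'_0$, so (3) persists. One then runs along each tail after discarding finitely many types. Step 1 and your treatment of (2) are fine. The gap is in \emph{starting} the induction for (3). Your sentence ``every insertion eventually occurs far from the indices $|j|\le\bound_0$ \dots\ so once $n$ is large, the images of $\FL_0$-components stay within $\bigcup_{|j|\le\bound}\FL_j$'' does not follow. The paper's two-line argument (``up to removing a finite set'') passes over the same point.

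\paragraph{Why the initialization fails.} Call $j\in\{0,\dots,q-1\}$ a relevant index at level $q$ if some component of $\KK\cap\FL_0$ is mapped into $\FL_j$. With your convention, an insertion at $k$ sends $j$ to $j$ if $j\le k$ and to $j+1$ if $j>k$. So the circular distance $\min(j,q-j)$ grows by one whenever the new limb lands in the short arc between $\FL_0$ and $\FL_j$. Let $q_0$ be the base level and $n_1$ the level after which $\min(k_n,n-k_n)>\bound_0$. The insertions between $q_0$ and $n_1$ may already have pushed the relevant indices out to distance $\bound_0+(n_1-q_0)$. Enlarging the window to cover them only moves the problem to a later level, where they may have moved again.

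\paragraph{Balance does not close this loop.} Take the balanced sequence $k_q=\lfloor q/2\rfloor-(q\bmod 2)$ and a relevant index with $j=q_0/2$ at an even base level $q_0$. One checks inductively that it evolves as $j_q=\lfloor q/2\rfloor$ for all $q\ge q_0$. So if the base type has a component of $\KK\cap\FL_0$ mapping into that middle limb, (3) fails along the tail for every $\bound$.

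\paragraph{A repair.} What is missing is a stabilization statement for the finitely many relevant indices, and this needs some regularity of $(k_q)$ beyond balance. Suppose $k_{q+1}-k_q\in\{0,1\}$ for $q\ge q_1$, as for $k_q=\lfloor q/2\rfloor$. Then $j_q\le k_q$ gives $j_{q+1}=j_q\le k_{q+1}$, and $j_q>k_q$ gives $j_{q+1}=j_q+1>k_{q+1}$. Hence for each relevant index, either $j$ or $q-j$ is frozen from level $q_1$ on, and all circular distances stay below $q_1$. So (3) holds at every level $\ge q_1$ with $\bound$ the larger of $q_1$ and the maximal base-level limb count. This replaces the appeal to balance in your Step~2.
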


\begin{proof}
	Let $f'$  be renormalizable quadratic polynomial corresponding to a parameter in $\hat{M}_{1/q}$ 
	with filled Julia set $\FK'$, limbs $\FL_j'$, and cycle of small filled Julia sets $\KK'$. 
	Let $\phi$ be a $k$-limb insertion from $f'$ to a polynomial $f$ corresponding to a parameter in $\hat{M}_{1/(q+1)}$ with filled Julia set $\FK$ and limbs $\FL_j$. Setting $\KK = \phi(\KK')\cup f(\phi(\KK))$, it follows that $\KK$ is a cycle of small filled Julia sets for $f$. If every limb of $\FK'$ contains at most $\bound$ many components of $\KK'$, then every limb of $\FK$ also contains at most $\bound$ many components of $\KK.$ If $k, q-k> \bound$ and every component of $\KK'$ in $\FL_0'$ is mapped into $\bigcup_{|j|\leq \bound}\FL_j'$ by $f$, then every component of $\KK$ in $\FL_0$ is mapped into $\bigcup_{|j|\leq \bound}\FL_j'$ by $f$.
	 Hence if $k, q-k>\bound$, then $f$ is renormalizable with parabolically $\bound$-bounded combinatorics. 
	
	It follows from the above that, up to removing a finite set, any parabolic tail corresponds to parabolically $\bound$-bounded renormalizations; the proposition immediately follows.
\end{proof}

\begin{rem}
	Conversely, any $\bound$ the maximal family of types containing parabolically $\bound$-bounded renormalizations is parabolically bounded.
\end{rem}

\subsection{Parabolic optimism}\label{sec:parabolic optimism}

Let us now turn to the proof of Theorem D, which we restate here:
\begin{thmD}
	Any parabolically bounded family of prime primitive types is super-optimistic.
\end{thmD}

Let $F$ be a \pql map with parabolically $\bound$-bounded renormalization $\RR F$ and combinatorial rotation number $1/q$ at $\alpha_F$. For any quantities $x_1, \dots, x_n$, we will say that $F$ is  \textit{$[x_1, \dots, x_n]$-high-type-near-degenerate} \index{high-type-near-degenerate (HTND)}(abbreviated HTND) if $q, \WW(\RR F)\largegiven{x_1, \dots, x_n}.$ Let us also recall from Section \ref{sec:renorm} that $F$ has \emph{$M$-amplification} for some $M\geq 1$ when $\WW(\RR F)\geq M\WW(F)$.
Theorem D will be an immediate consequence of:
\begin{theorem}\label{thm:life improvement}
For any $M \geq 1$ and $\bound\geq 1$, if the renormalization of $F$  is {parabolically $\bound$-bounded} and $F$ is $[M, \bound]$-HTND, then $F$ has $M$-amplification.
\end{theorem}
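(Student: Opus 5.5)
We argue by contradiction. Fix $M,\bound$ and suppose $F$ is $[M,\bound]$-HTND with parabolically $\bound$-bounded renormalization but $\WW(F)<M\WW(\RR F)$. (Note the definition of amplification in Section \ref{sec:renorm} is $\WW(F)\ge M\WW(\RR F)$; this is what we prove.) Let $X$ be the canonical weighted arc diagram of $\UU\sm\KK$, with vertical and horizontal parts $X^\ver,X^\hor$. The inequalities \eqref{eq:intro width f} and \eqref{eq:intro width Rf} give $\WW(F)\ge\|X^\ver\|$ and $\per\WW(\RR F)\asymp \|X^\ver\|+\|X^\hor\|+O(\per)$. The period satisfies $\per\ge q\to\infty$, and $\WW(\RR F)\gg 1$. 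So it suffices to prove $\|X^\ver\|\ge 2\eps\|X^\hor\|$ for some $\eps=\eps(\bound)>0$. Indeed this yields $\WW(F)\succeq \eps\per\WW(\RR F)\ge M\WW(\RR F)$ once $q\largegiven{M,\bound}$. We therefore assume $\|X^\ver\|\le\eps\|X^\hor\|$ with $\eps$ tiny, and derive a contradiction.

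The first step is a dichotomy, carried out in Section \ref{sec:trichotomy}. Either most of $X^\hor$ is \emph{tame}, meaning it crosses the bivertical Hubbard diagram $\HH^\biv$ a bounded number of times in each limb and respects the tree combinatorially. Or a definite portion of $X^\hor$ forms a \emph{wave}: a lamination in $\UU\sm\TT$ whose arcs travel combinatorially far along the long chain of limbs $\FL_j$, $|j|>\bound$. The parabolically bounded hypothesis enters here in two ways. Each limb carries at most $\bound$ small Julia sets, so Lemma \ref{lem:compare} controls intersections with $\hperp$ by intersections with $\hv$. Moreover, by condition (3), away from the limbs $\FL_j$ with $|j|\le\bound$ the map $F$ acts as the translation $\FL_j\to\FL_{j+1}$. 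Using the translation markings of Appendix \ref{app:transl-mark}, arcs that are combinatorially long inside this translational region have controlled width. This lets us replace $\UU\sm\KK$ by a subsurface of topology bounded in terms of $\bound$, up to errors that are small relative to $\|X^\hor\|$.

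In the tame case we run the value argument of Section \ref{sec:value}. We pull back $X^\hor$ by $g=F^\per$. By Gr\"otzsch, and since the vertical weight is negligible, $g^*X^\hor$ dominates $X^\hor$ up to the error terms, so $\val(g^*X^\hor)\ge\val(X^\hor)-\text{error}$. On the other hand, the value of a pullback is governed by $T_F^\top$ acting on edge weights, so positive core entropy should give $\val(g^*X^\hor)\le\val(X^\hor)/2$. The difficulty is that $\lambda_F$ may tend to $1$ as $q\to\infty$. To handle this, we first collapse each translational chain of limbs into a single edge, using the cloning estimates of Appendix \ref{app:clones}, so that the eigenvalue is bounded away from $1$ in terms of $\bound$ alone. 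We then use the Wanderers Theorem (Appendix \ref{app:wanderers}) to push any vertical weight created along the way forward with bounded loss. Together these give a contradiction.

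In the wave case we follow the Wave Lemma of \cite{dudko2023mlcfeigenbaumpoints}. Pushing a wave forward by the dynamics produces either definite vertical width, which contradicts the smallness of $\|X^\ver\|$, or a strictly wider wave; taking a widest wave excludes the second alternative. The new issue is that a wave localized in one limb needs an unbounded iterate, up to about $q$, to return. Because the dynamics there is a translation, the intermediate iterates act as conformal isomorphisms of the relevant marked region, so the push-forward loses no width. One checks that most of any wave's width is localized in a single limb, again by condition (3) and the translation markings, with the remainder bounded.

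I expect the main obstacle to be the tame case: keeping all error terms in the value comparison uniform in $q$. This requires the reduction to bounded topology together with a uniform lower bound on the core eigenvalue after cloning.
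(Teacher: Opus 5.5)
Your outline has the paper's architecture (a tame/wave dichotomy, an expanding valuation in the tame case, a Wave Lemma in the wave case), but the tame case does not close as you describe it. The value argument can only be run on a WAD supported on arcs of combinatorial length $\le L$. From $\|Z-Y\|\le\delta\|Z\|$ one gets $\val(Z-Y)\le C(\bound)L^2\delta\,\val(Z)$ (Lemma \ref{lem:compare} and Proposition \ref{prop:value estimate}), so the defect must satisfy $\delta\ll L^{-2}$. The fraction $\epsilon$ of weight off the good arcs that the localization step tolerates is far smaller still: in the proof of Proposition \ref{prop:most-well-dominated} it is of order $(12\bound)^{-O(1/\delta)}$. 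Tameness plus the translation-marking bound, however, only make the weight of arcs longer than $L$ of size $O(M/L^2)\|X\|$. These two facts have opposite quantifier dependence and cannot be combined directly. The paper breaks the circle with good sets of arcs and translation classes. All arcs in a class have the same length, and a good set of bounded length meets at most $3\bound$ classes (Lemma \ref{lem:bounded good}). One then alternates Corollary \ref{cor:length-bound-from-weight} and Lemma \ref{lem:hammer} $3\bound$ times with constants fixed in advance (proof of Proposition \ref{prop:controlled WAD}).

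Your proposed remedies target other things. The convergence $\lambda_F\to1$ is harmless, since you pull back by $g=F^\per$ and $\lambda_F^\per\ge2$ for every prime primitive type. What cloning actually supplies is comparability of the eigenweights $v,w$ across the translation region (Proposition \ref{prop:eigenweights}), which is what converts norm errors into value errors. The real uniformity problem is that $g$ has unbounded degree, so the error in $g^*\wcan\dominates\wcan-C$ is uncontrolled. The paper localizes to a typical interval (Lemma \ref{lem:I-typical}) and lifts to subsurface covers where $\deg g$ is bounded (Lemma \ref{lem:pullback-degree}). There the Wanderers Theorem bounds the part of $\hat X_I$ dominated by long chains of pulled-back arcs; it is not used to push vertical weight forward.

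The wave case has gaps of the same kind.

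\emph{Getting a wide wave.} Your dichotomy (``a definite portion of $X^\hor$ forms a wave'') does not yield what the wave argument needs: a single wave of width $\ge N\WW(\RR F)$ with $N$ large. A definite fraction of $\|X\|\asymp\per\WW(\RR F)$ may be spread over $\asymp\per$ intervals. The paper gets a wide wave in one of two ways. One is large flux through an arc of $\HH^\hor$ (Proposition \ref{prop:bounded flux}). The other is failure of alignment (Proposition \ref{prop:aligned wcan}): a leaf crossing $\HH^\hor$ $n$ times contributes $\ge n/2$ paths in $\UU\setminus\TT$, and Lemma \ref{lem:harmonic and geometric sum} gives an $n^2$ gain that survives pigeonholing over $O(\bound\per)$ intervals.

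\emph{Choosing a widest wave.} This must be done inside the class of $\eta$-controllable waves with $\eta$ universal (Lemma \ref{lem:pushing controllable}, Corollary \ref{cor:universal terminal}). The errors in the push-forward analysis are $O(k)$ for a $k$-controllable wave. An arbitrary wave may have $\asymp q$ localized pieces, and HTND gives no relation between $q$ and $\WW(\RR F)$, so those errors cannot be absorbed.

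\emph{The return trip.} The return through the translation region is not width-preserving. $F_*$ is improper near $\BL_0$, so over the unbounded number of iterates the restriction by $\iota_N$ breaks leaves near the critical limb, where $f^N$ is not injective. The paper splits the wave as $A^\ver\cup A^{\out}\cup A^\lo\cup A^\amp$ (Proposition \ref{prop:fast forward}). The iteration terminates only through the critical-visit mechanism and Lemma \ref{lem:bounded visits}: crests must lengthen after boundedly many returns to $\BL_0$ because $\lambda_F>1$.

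Finally, the wave argument produces large $\WW(\UU\setminus\TT)$, not large $\|X^\ver\|$. This still gives amplification via $\WW(F)\ge\WW(\UU\setminus\TT)$, but it is not the contradiction you state.
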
 

Indeed, assuming Theorem \ref{thm:life improvement} for now, let us prove Theorem D:
\begin{proof}[Proof of Theorem D]
	Let $\CC$ be a parabolically bounded family of prime primitive types.
  Proposition \ref{prop:parabolically bounded combinatorics} implies that there is some $\bound\geq 2$ and a finite set $\CC'$ such that any renormalizable periodically repelling \pql map with combinatorics in $\CC\sm \CC'$ is parabolically $\bound$-bounded. 
 For any $M> 0$, it then follows from Theorem \ref{thm:life improvement} that if $F$ is a renormalizable periodically repelling \pql map with combinatorics in $\CC\sm \CC'$, combinatorial rotation number $1/q$ at $\alpha_F$, and if $q, \WW(\RR F)\largegiven{\CC, M}$, then $\WW(F)\geq M\WW(\RR F).$ By replacing $\CC'$ with a larger finite subset of $\CC$, we can ensure that $q\largegiven{\CC, M}$ for all such $F$, hence $\CC$ is super-optimistic.	
\end{proof}

The proof of Theorem \ref{thm:life improvement} will occupy the next three sections. For the remainder of this section and the next three sections, we fix some  $\bound\geq 1$ and \pql map $F$ with parabolically $\bound$-bounded renormalization $\RR F$.
We will implicitly assume that all constants depend on $\bound$, for example 
we will just say that $F$ is $[x]$-HTND whenever it is $[x, \bound]$-HTND.

Our first step in proving Theorem \ref{thm:life improvement} is reducing the problem into two cases (with precise definitions appearing in \S\ref{sec:trichotomy}):

\begin{proposition}\label{prop:trichotomy}
    For any $M\geq 1$, if $\WW(\RR F)\largegiven{M}$, then either 
	$F$ has $M$-amplification, $\UU\sm \KK$ is $M$-tame, or 
	$\UU\sm \TT$ has $M$-wide controllable waves.
\end{proposition}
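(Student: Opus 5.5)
The plan is to run a dichotomy on the canonical weighted arc diagram $X$ of $\UU\sm\KK$. Write $X^\ver$ and $X^\hor$ for its vertical and horizontal parts. By \eqref{eq:intro width f} and \eqref{eq:intro width Rf}, if $\|X^\ver\|\ge 2\eps\|X^\hor\|$ for some $\eps$ with $p\eps$ large compared to $M$, then \eqref{eq:intro optimism} gives $\WW(F)\ge M\WW(\RR F)$ once $\WW(\RR F)\largegiven{M}$. So we get $M$-amplification in that case. The period satisfies $p\ge q$, which is large under the HTND hypothesis, and this lets $\eps$ be taken of order $M/p$. It therefore remains to handle the situation where the vertical weight is negligible, $\|X^\ver\|\le \eps\|X^\hor\|$. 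In that case most of the width of $\UU\sm\KK$ (hence, via \eqref{eq:intro width Rf}, a definite fraction of $p\WW(\RR F)$) is carried by horizontal arcs.

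Next I would measure how each horizontal arc $\gamma$ in the support of $X^\hor$ meets the Hubbard tree, using the intersection number $\pair{\gamma}{\hperp}$. By Lemma~\ref{lem:compare} and parabolic $\bound$-boundedness, this number is controlled by $\pair{\gamma}{\hv}$. I would fix a threshold $N=N(M)$ and split $X^\hor=X^\hor_{\le N}+X^\hor_{>N}$ according to whether $\pair{\gamma}{\hperp}\le N$.

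If the short part $X^\hor_{\le N}$ carries most of the weight, these arcs cross only boundedly many edges of $\TT$. Since each limb contains at most $\bound$ components of $\KK$, such arcs are supported in a bounded-topology subsurface of $\UU\sm\KK$: a union of boundedly many neighborhoods $U_I$ of consecutive limbs around $\alpha_F$. I would take this to be exactly the definition of $M$-tameness in \S\ref{sec:trichotomy}, namely that $X$ is dominated, up to an error of order $\|X\|/M$, by a diagram aligned with $\TT$ on a subsurface of bounded complexity.

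Otherwise the long arcs carry a definite portion of $\|X^\hor\|$. Each long arc crosses $\hperp$ many times, and so passes through many consecutive limbs $\FL_j$ near $\alpha_F$. Combining this with the Thick-Thin decomposition (Appendix~\ref{app:markings}), the corresponding leaves of the canonical lamination restrict in $\UU\sm\TT$ to families of parallel subarcs running from $\TT$ back to $\TT$ and wrapping around $\alpha_F$; these are waves. Pigeonholing over the at most $2\bound$ positions of combinatorial endpoints produces one wave of width $\succeq \|X^\hor\|/\bound^{O(1)}$. This is $\ge M\WW(\RR F)$ when $p\largegiven{M}$.

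The hard step is showing these waves are \emph{controllable}: localized to a single limb and lying in the translation region where $F$ acts as $\FL_j\mapsto\FL_{j+1}$. Here I would use condition (3) of parabolic boundedness. Only components of $\KK$ in $\FL_0$ jump far, to within $\bound$ of $\FL_0$. A long wave must therefore spend most of its combinatorial length in limbs $\FL_j$ with $\bound<|j|<q-\bound$. Truncating the wave at its first and last $\bound$ limbs costs only a bounded fraction of its width, by the Grötzsch inequality and a subdivision of the lamination into boundedly many pieces. I expect this truncation, and the bookkeeping of error terms depending on $\bound$ but not on $q$, to be the main obstacle; the rest follows by choosing $\eps$, $N$, and then $\WW(\RR F)\largegiven{M}$.
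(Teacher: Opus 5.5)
Your instinct that leaves crossing the tree many times restrict to waves in $\UU\sm\TT$ is the right one, but the argument has concrete gaps.

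\textbf{You do not work with the paper's notion of tameness.} You substitute your own (``dominated up to $\|X\|/M$ by an aligned diagram on a bounded-complexity subsurface''). In \S\ref{sec:trichotomy}, $\UU\sm\KK$ is $M$-tame when three things hold:
(i) $\flux(\alpha)=O(M\WW(\RR F))$ for every $\alpha\in\HH^\hor$;
(ii) $X$ is $M$-aligned with $\TT$, i.e.\ $n^2\|X|_\AA\|=O(M\|X\|)$ for \emph{every} $n\ge 0$ and every set $\AA$ of horizontal or vertical arcs each meeting $\HH^\hor$ at least $n$ times;
(iii) there are no $M$-wide controllable waves.
So the proposition amounts to: no $M$-amplification and no $M$-wide controllable waves imply (i) and (ii). The paper proves these separately as Propositions \ref{prop:bounded flux} and \ref{prop:aligned wcan}.

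Measured against this definition, your proposal falls short in three ways. It never addresses (i). A single threshold $N(M)$ separating short from long arcs cannot give the $n^2$-decay of (ii) uniformly in $n$; that uniform decay is what later produces the $O(M\|X\|)/(L-1)^2$ term in the proof of Lemma \ref{lem:bounded-combinatorial}. And (ii) is phrased via $\HH^\hor$ and includes vertical arcs, not just $X^\hor$ measured against $\hperp$.

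Likewise, ``controllable'' only means a wave over $\dcara\LL_0$ or a wave localized to a single interval of $\II$. Nothing requires it to lie in the translation region. So the truncation you flag as the main obstacle is unnecessary. As described it would not even yield a wave, since truncated leaves no longer end on $\dcara\TT$.

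\textbf{The quantitative mechanism producing a wide wave is missing.} Pigeonholing over ``$2\bound$ positions'' to get a wave of width $\succeq\|X^\hor\|/\bound^{O(1)}\asymp\per\WW(\RR F)$ is unjustified. The restricted leaves may end at any of the up to $6\bound q$ intervals of $\II$ (Lemma \ref{lem:comb size of limbs}), so localizing costs a factor of order $\bound\per$.

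What pays for this is the quadratic gain under restriction. A leaf meeting $\HH^\hor$ at least $n$ times restricts to at least $n/2$ nontrivial paths in $\UU\sm\TT$ (Lemma \ref{lem:intersection with H}). Let $\FF_n$ be the canonical leaves with at least $n$ such crossings and $\GG$ their restriction. The series law and Lemma \ref{lem:harmonic and geometric sum} give $n^2\WW(\FF_n)\le 4\WW(\GG)$. Combine this with $\per\WW(\RR F)\le 4\|X\|+O(\per)$ (Lemma \ref{lem:width to weight}). Then failure of (ii) yields an interval $\BI\in\II$ whose leaves $\GG(\BI)$ have width at least $3M\WW(\RR F)$. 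Hence there is a localized wave $S\subset\GG(\BI)$ with $2\WW(S)+\WW(\UU\sm\TT)\ge\WW(\GG(\BI))$.

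For (i), take a lamination crossing $\alpha\in\HH^\hor$ of width at least $\flux(\alpha)-2$. It restricts in $\UU\sm\TT$ to paths with an endpoint in a multi-interval $I$ corresponding to $\alpha$, with $\ell(I)\le 2\bound$. Each such path either reaches $\ddi\UU$ or is a wave over an endpoint of $I$. This yields a localized wave of width at least $(\flux(\alpha)-2-\WW(\UU\sm\TT))/(4\bound)$.

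In both cases the only input from non-amplification is $\WW(\UU\sm\TT)\le\WW(F)<M\WW(\RR F)$. So your vertical/horizontal split is not needed. Neither is $p\largegiven{M}$, which is in any case unavailable: the proposition assumes only $\WW(\RR F)\largegiven{M}$, and the factor $\per$ cancels in the estimate above.
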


Theorem \ref{thm:life improvement} will then immediately follow by showing that all cases in Proposition \ref{prop:trichotomy} produce amplification:

\begin{prop}\label{prop:controlled WAD}
	For any $M$,  if 
	$\UU\sm \KK$ is $M$-tame and $F$ is $[M]$-HTND, 
	then $F$ has $M$-amplification.
\end{prop}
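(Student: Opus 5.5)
We argue by contradiction, following the outline in the introduction. Suppose $\UU\sm\KK$ is $M$-tame and $F$ is $[M]$-HTND but $\WW(F)<M\WW(\RR F)$. Let $X$ be the canonical weighted arc diagram of $\UU\sm\KK$, split as $X^\ver+X^\hor$. By \eqref{eq:intro width f} and \eqref{eq:intro width Rf} we have $\|X^\ver\|\le \WW(F)$ and $\per\WW(\RR F)\asymp \|X^\ver\|+\|X^\hor\|+O(\per)$. The period $\per$ grows with $q$, since each limb carries at least one component of $\KK$ in the cycle. So once $q\largegiven{M}$ we get $\|X^\ver\|\le \eps\|X^\hor\|$ with $\eps\to0$ as $q\to\infty$, and $\|X^\hor\|\succeq \per\WW(\RR F)$ is huge. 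In other words, the vertical weight is negligible. The rest of the argument derives a contradiction from a horizontal diagram that is almost invariant.

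\emph{Step 1 (bounded-topology reduction).} We use $M$-tameness, which I expect to mean that all but a negligible fraction of $X^\hor$ is carried by a subsurface $S\subset\UU\sm\KK$ of topology bounded in terms of $\bound$ (and $M$), aligned with $\TT$. The natural choice is a neighborhood of $\FL_I$ for $|I|\le\bound$ near the critical limb, together with translation-marking pieces. On the far limbs $F$ acts as a near-translation $\FL_j\to\FL_{j+1}$. There the results of Appendix \ref{app:transl-mark} bound the width of combinatorially long arcs, so arcs crossing many limbs carry width $O(1)$ per crossing and can be absorbed into error terms. Restricting to $S$, we obtain a diagram $Y\le X^\hor$ with $\|X^\hor-Y\|\le \delta\|X^\hor\|$ and intersection numbers with $\HH^\biv$ bounded. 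Lemma \ref{lem:compare} converts bounds on $\pair{\cdot}{\hv}$ into bounds on $\pair{\cdot}{\hperp}$.

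\emph{Step 2 (pullback and domination).} Pull $Y$ back by $g=F^\per$, or by $f$ and $\iota$ along the cycle. Here we use the Gr\"otzsch inequality and the fact that horizontal leaves of the canonical lamination pull back to horizontal leaves, except for those that escape to vertical ones. Since the vertical weight is negligible, this shows that $g^*Y$ dominates $Y$ up to an error of $O(\delta+\eps)\|X^\hor\|$ plus $O(1)$ per arc. The cost of an error term $O(1)$ per arc class is controlled by the bounded topology from Step 1. Controlling the escaping portion needs the Wanderers Theorem of Appendix \ref{app:wanderers}, a variant of the Covering Lemma. It ensures that weight pushed forward along the long near-translational orbit through the far limbs does not lose or gain width beyond a bounded factor.

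\emph{Step 3 (value contradiction).} Apply the value calculus of \S\ref{sec:value}. Domination gives $\val(g^*Y)\ge \val(Y)-\text{error}$. Positive core entropy gives $\val(g^*Y)\le \lambda^{-1}\val(Y)$, since pulling back by $\iota\circ f^{-1}$ acts through $T_F^\top$. The danger is that $\lambda_F\to1$ as $q\to\infty$, so the contraction must be measured on the cloned graph. The cloning estimates of Appendix \ref{app:clones} handle this: the long translation chain of limbs is a cloned cross section, so its leading eigenvalue and eigenvector stay uniformly bounded in terms of $\bound$. This gives the inequality with a contraction factor $1/2$ after a bounded number of iterates. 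Combining the two inequalities forces $\val(Y)\lesssim$ error, while $\val(Y)\succeq\|Y\|\succeq\per\WW(\RR F)$. This is a contradiction once $\WW(\RR F)\largegiven{M}$.

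The main obstacle is Step 3 together with the error bookkeeping in Step 2. The value must be normalized by a Perron--Frobenius eigenvector so that the contraction estimate is uniform in $q$, even though the eigenvalue degenerates. I would first try weighting edges of $\TT$ in the far limbs by the cloned eigenvector from Appendix \ref{app:clones}, and verify that domination errors from translation regions are summable against these weights.
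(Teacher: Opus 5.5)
Your core mechanism is the one the paper uses in Proposition \ref{prop:no-near-self-dominating}: a horizontal diagram nearly dominated by its own $g$-pullback contradicts an expanding valuation whose eigenweights are uniformly comparable in the translation region thanks to cloning. However, two load-bearing steps fail as you state them.

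\textbf{Step 1 is false as stated.} A subsurface whose topology is bounded in terms of $\bound$ and $M$ contains boundedly many components of $\KK$. The horizontal arcs it carries therefore have total weight $O(\WW(\RR F))$. But $\|X\|\succeq \per\WW(\RR F)\ge q\WW(\RR F)$, and this weight is spread over all $q$ limbs, so no such subsurface carries most of it. If you add the translation region to capture the weight, the topology is no longer bounded. Then $g=f^{\per}$ has degree of order $2^{\per}$ on the preimage, and the ``$O(1)$ per arc'' Gr\"otzsch domination errors are uncontrolled.

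The paper never tries to put most of the weight in one piece, and it localizes in the translation region, not near the critical limb:
\begin{enumerate}
\item By averaging, it picks a \emph{typical} interval $I$ of length $L_1$ there (Lemma \ref{lem:I-typical}).
\item Since $g$ shifts translation-region limbs by at most $N(\bound)$ (Corollary \ref{cor:bounded-gamma}), a component of $g^{-1}(U_I)$ lies between a trimmed copy of $U_I$ and $U_I$ itself.
\item It passes to the localizations $V_I,\hat V_I$, on which $g$ has bounded degree (Lemma \ref{lem:pullback-degree}).
\item It proves near self-domination only for $Z=X^g|_I$ (Propositions \ref{prop:most-well-dominated} and \ref{prop:interval-self-dominating}); the value contradiction needs nothing more.
\end{enumerate}
The Wanderers Theorem is used there to bound by $\frac{4}{m}\|X_I\|$ those pulled-back arcs whose dominating chain first meets a bad arc only after $m$ steps. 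Early bad arcs are handled by counting backward walks. Its role is not controlling width along translational orbits.

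\textbf{Step 3 has a quantifier conflict you do not address.} The error is not additive, so taking $\WW(\RR F)$ large does not help. Suppose the discarded part has norm $\delta\|Z\|$ and $L$ bounds combinatorial length. Its value can be of order $\bound^2L^2\delta\|Z\|$, so you need $\delta L^2\ll 1$. In fact the domination step needs the bad fraction $\epsilon$ to be exponentially small in $1/\delta$. Your only source of a length bound is $M$-alignment, which guarantees a discarded fraction of order $M/L^2$ and nothing better. So $\epsilon L^2$ is only known to be $\lesssim M$, whereas you need it tiny. This is exactly the ``opposite dependence on quantifiers'' between Lemma \ref{lem:bounded-combinatorial} and Lemma \ref{lem:hammer}.

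The paper resolves it with an idea absent from your sketch: \emph{good} arcs and \emph{translation classes}.
\begin{enumerate}
\item Combinatorial length is constant on a translation class. Hence every class of weight at least $\tilde\eps$ has length at most $L(\tilde\eps)$ (Corollary \ref{cor:length-bound-from-weight}).
\item A good set of bounded length meets at most $3\bound$ classes (Lemma \ref{lem:bounded good}).
\item An induction over the heaviest classes applies Lemma \ref{lem:hammer} only with length bounds $\tilde L_n$ fixed \emph{before} the final deficiency $\delta=\min\eps_n/2$ is chosen. This reaches a contradiction after at most $3\bound$ steps.
\end{enumerate}
Without this, or some substitute that decouples the length bound from the discarded fraction, your final inequality does not close.
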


\begin{prop}\label{prop:very wide waves}
	For any $M$ there exists $N$ such that: if $\UU\sm \TT$ has $N$-wide controllable waves and $F$ is $[M]$-HTND,  
	then $F$ has $M$-amplification.
\end{prop}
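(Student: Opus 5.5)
The plan is to argue by contradiction through a Wave Lemma in the style of \cite{dudko2023mlcfeigenbaumpoints}, adapted to the unbounded iterates that appear in the parabolic regime. Suppose $F$ is $[M]$-HTND but fails to have $M$-amplification. By \eqref{eq:intro width f} and \eqref{eq:intro width Rf}, the vertical part of the canonical weighted arc diagram then satisfies $\|X^\ver\|\le \eps\|X^\hor\|$, where $\eps\to 0$ as $q\to\infty$. So the vertical weight is negligible compared to $p\WW(\RR F)$. The goal is to show that an $N$-wide controllable wave in $\UU\sm\TT$ forces definite vertical weight, of order $N^{-1}$ times the wave width or better, as soon as $N$ is large in terms of $M$. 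This contradicts the smallness of $\|X^\ver\|$.

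First I localize the wave. A controllable wave is a lamination in $\UU\sm\TT$ whose leaves start and end on $\TT$, crossing over a segment of the tree. Using parabolic $\bound$-boundedness, each limb $\LL_j$ carries at most $\bound$ components of $\KK$, and only limbs with $|j|\le\bound$ receive images of $\FL_0$. I will show that, after discarding a fraction of the width bounded by $O_\bound(1)$, a wave of width $N$ is supported near a single limb $\LL_j$ with $|j|$ large. Away from $\FL_{[-\bound,\bound]}$ the map $F$ acts as the translation $\FL_j\to\FL_{j+1}$ of limbs around $\alpha_F$. This gives a translation marking in the sense of Appendix \ref{app:transl-mark}, and it controls the width of combinatorially long arcs there.

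Second, I push the localized wave forward under the iterate $F^m$, where $m\approx -j$ is the number of steps needed to carry $\FL_j$ back to the limbs near $\FL_0$. Since $F^m$ is univalent on a neighborhood of the wave while it travels through the translational region, the pushforward preserves extremal width up to a bounded factor. The near-translational structure is what lets me treat the large iterate $F^m$ like a single iterate. Once the wave lands near $\FL_0$, I apply one more step of $F$ together with the Covering Lemma of \cite{KL:QA}, or the Wanderers Theorem of Appendix \ref{app:wanderers}. The resulting dichotomy, as in the bounded Wave Lemma, is that the image either contains definite vertical width, or a wave of width at least $(1+\delta)N$, with $\delta$ depending only on $\bound$.

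Finally, choose the wave of near-maximal width among controllable waves. Widths are bounded by $O(\|X^\hor\|)$, so a maximum exists up to a factor. The second alternative then contradicts maximality, so the first must hold. This yields $\|X^\ver\|\succeq N$, which for $N$ large in terms of $M$ gives $\WW(F)\ge M\WW(\RR F)$.

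The main obstacle is the localization and transport step. I must show that the leaves of a controllable wave do not spread over many limbs and do not cross the $\bound$ non-translational limbs in a way that destroys univalence of $F^m$. My first attempt would bound the width of leaves crossing between different limbs via the long-arc estimates of Appendix \ref{app:transl-mark}, and absorb them into the $O(\|X^\ver\|)$ error.
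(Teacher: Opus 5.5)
Your closing step is the same as the paper's: take a near-maximal controllable wave $A$ and play the dichotomy ``definite vertical width or a $(1+\delta)$-wider wave'' against maximality to get $\WW(F)\ge\WW(\UU\sm\TT)>\delta\WW(A)\ge M\WW(\RR F)$. The difficulty is that this dichotomy is Lemma~\ref{lem:terminal}, which carries the entire content of the proposition, and your sketch gives no mechanism that produces it.

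\textbf{No source of gain.} An unbreakable wave pushes forward to a wave of essentially the same width: Lemma~\ref{lem:wave} gives $\WW(A')\le\WW(A)+O(1+\log d)$. So at any single step neither alternative is forced. Neither the Covering Lemma nor the Wanderers Theorem yields a gain for a wave. Gain appears only when leaves \emph{break}, through harmonic sums; for example $\WW(B^{\inbr})\ge 2\WW(A^{\inbr})$ in Proposition~\ref{prop:small breaking}. So you must show that a wide wave cannot be transported unbroken indefinitely, and that argument is missing. The paper supplies it in three steps:
\begin{itemize}
\item[(i)] The $N$-unbreakable, non-short part of a $k$-controllable wave has width $O(k+\WW(\RR F))$. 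Waves over $\BL_0$ must break because $F_*\BL_0=\dcara\TT$. Waves away from $\BL_0$ are handled by the Shift Lemma. Waves ending at $\BL_0$ are aligned with $\TT^{\bound+2}$, which produces a lamination between small Julia sets of width at most $2\WW(\RR F)$.
\item[(ii)] Short waves are carried to critical visits.
\item[(iii)] There are only $O(\bound^2)$ consecutive critical visits. After boundedly many returns to $\BL_0$ the crest length must grow (Proposition~\ref{prop:combinatorial growth}, which rests on $\lambda_F>1$), contradicting shortness.
\end{itemize}
Step (iii), the use of expansion of the tree dynamics, is the real reason the process must end in breaking. Your proposal has no counterpart to it, nor to (i).

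\textbf{Localization and transport.} This step also fails as stated, for three reasons.
\begin{itemize}
\item There is no reason a wide controllable wave concentrates near a single limb with $|j|$ large. The controllable class explicitly contains a piece cresting over $\BL_0$, and the decisive dynamics (breaking, critical visits) happens near $\BL_0$.
\item $F^m$ is not univalent near a wave in the translation region. Pulling back by $F^m$ creates new pieces of $\TT^m$ there: preimages of limbs whose orbits pass through $\BL_0$, on which $f^m_*$ has degree up to $2^{M+1}$. This is why the proof of Proposition~\ref{prop:fast forward} splits the wave into $A^{\ver}$, $A^{\out}$, $A^{\lo}$ and $A^{\amp}$.
\item ``Width preserved up to a bounded factor'' is fatal for a maximality argument, since any fixed multiplicative loss swamps a $(1+\delta)$ gain. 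The paper allows only additive losses, absorbed by wideness, or factors $\lambda_n$ arbitrarily close to $1$, compounded over a universally bounded number of critical visits.
\end{itemize}

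\textbf{Class closure.} The wider wave must lie in the class you maximized over, i.e.\ be $\eta$-controllable for a \emph{universal} $\eta$. Every wave is $O(\bound q)$-controllable, so ``controllable'' with unspecified $k$ is vacuous. The paper tracks this through Lemmas~\ref{lem:pushing controllable} and~\ref{lem:fast pushing controllable} and Corollary~\ref{cor:universal terminal}.

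\textbf{Minor points.} The vertical width you need is $\WW(\UU\sm\TT)\le\WW(F)$, not $\|X^\ver\|$. Also, vertical width ``of order $N^{-1}$ times the wave width'' would give only about $\WW(\RR F)$, not $M\WW(\RR F)$.
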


Indeed, assuming Propositions  \ref{prop:trichotomy}, \ref{prop:controlled WAD}, and \ref{prop:very wide waves}, we can prove Theorem \ref{thm:life improvement}:

\begin{proof}[Proof of Theorem \ref{thm:life improvement}]
	For any $M$, let $N\geq M$ be the corresponding value in Proposition \ref{prop:very wide waves}. If $F$ is $[M]$-HTND, then it follows from Proposition \ref{prop:trichotomy} that either $\UU\sm \KK$ is $N$-tame,  $\UU\sm \TT$ has $N$-wide controllable waves, or $F$ has  $N$-amplification. In the first two cases, Propositions \ref{prop:controlled WAD} and \ref{prop:very wide waves} respectively imply that $F$ has $M$-amplification; as $N\geq M$ in the third case $F$ also has $M$-amplification.
\end{proof}

We will prove Propositions \ref{prop:trichotomy}, \ref{prop:controlled WAD}, and \ref{prop:very wide waves} in Sections \ref{sec:trichotomy}, \ref{sec:aligned}, and \ref{sec:waves} respectively.

\section{Dichotomy}\label{sec:trichotomy}

In this section we prove the dichotomy in Proposition \ref{prop:trichotomy}.
We will use the definitions of  ideal markings, canonical laminations and weighted arc diagrams, and waves detailed in Appendix \ref{app:markings}.

We equip the complement of the Hubbard tree $\UU\sm \TT$ with the ideal marking $\II$ induced by the sides of $\hat\KK$ in $\ddi \TT$. For any $M\geq 1$, we will say that $\UU\sm \TT$ has \emph{$M$-wide controllable waves} \index{dichotomy!wave case}if there is a wave $S$ in $\UU\sm \TT$
with 
$$\WW(S)\geq M \WW(\RR F)$$
that is either a wave over the Caratheodory boundary of $\LL_0$ 
or is localized to an interval in $\II$.

For any $M\geq 1$, we will say that the horizontal Hubbard arc-diagram $\HH^\hor$ has  \textit{$M$-bounded flux} \index{dichotomy!tame case!bounded flux} if 
$$\flux(\alpha)= O( M \WW(\RR F))$$
for every $\alpha\in \HH^\hor$.
Denoting the restriction of $\wcan(\UU\sm \KK)$ to horizontal and vertical arcs by $X$, we will say that $X$ is \textit{$M$-aligned with $\TT$} \index{dichotomy!tame case!aligned with $\TT$}if for all $n\geq 0$, 
$$n^2\|X|_\AA\|= O(M \|X\|)$$
for every set of arcs $\AA$ satisfying $\min_{\alpha\in \AA}\langle \alpha, \HH^\hor\rangle \geq n$.
We will say that $\UU \sm \KK$ is \textit{$M$-tame} \index{dichotomy!tame case} if $\HH^\hor$ has $M$-bounded flux, $X$ is $M$-aligned with $\TT$, and $\UU\sm \TT$ does not have $M$-wide controllable waves.

We will prove Proposition \ref{prop:trichotomy} in two steps. The first step is showing that we have either bounded flux, wide waves, or amplification:

\begin{prop}\label{prop:bounded flux}
    For any $M\geq 1$, if $\WW(\RR F)\largegiven{M}$, then either $\HH^\hor$ has $M$-bounded flux, $\UU\sm \TT$ has $M$-wide controllable waves, or $F$ has $M$-amplification.
\end{prop}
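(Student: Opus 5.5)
We argue by contraposition. Suppose $\WW(\RR F)\largegiven{M}$, that $F$ does not have $M$-amplification, so $\WW(F)< M\WW(\RR F)$, and that some arc $\alpha\in\HH^\hor$ has $\flux(\alpha)\geq CM\WW(\RR F)$ for a large constant $C$ (depending only on $\bound$). From this we will build an $M$-wide controllable wave in $\UU\sm\TT$.

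The first step uses the failure of amplification. By \eqref{eq:intro width f}, $\|X^\ver\|\le \WW(F)< M\WW(\RR F)$. So the vertical part of the canonical lamination cannot carry the flux through $\alpha$; only a portion of size $O(M\WW(\RR F))$ can be attributed to vertical leaves. The remaining flux, at least $(C-1)M\WW(\RR F)$, must come from leaves of $\wcan(\UU\sm\KK)$ that cross $\alpha$ and are horizontal. Each such leaf intersects $\TT$ at least once and winds around the arc $\alpha$ of $\UU\sm\TT$.

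The second step converts these crossings into a wave. We cut the crossing leaves along $\TT$ and pass to $\UU\sm\TT$ with the ideal marking $\II$. The cut pieces crossing $\alpha$ form a lamination of arcs that start and end on $\ddi\TT$ and pass across $\alpha$. By the definition of waves in Appendix \ref{app:ideal markings and waves}, this is a family of nested arcs over a segment of $\ddi \TT$. Using the Uniform Thick-Thin Decomposition, we extract from it a sublamination that is a genuine wave $S$, losing only a factor $O(1)$ plus an additive $O(\WW(\RR F))$. The additive error accounts for arcs that only see the $O(1)$ small Julia sets near the ends. This gives $\WW(S)\succeq C M\WW(\RR F)$, and choosing $C$ large makes $\WW(S)\geq M\WW(\RR F)$.

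The third step, which I expect to be the main obstacle, is showing that $S$ is controllable. That is, $S$ should either be a wave over $\dcara\LL_0$ or be localized to an interval in $\II$. Here parabolic $\bound$-boundedness is used. Each limb $\LL_j$ contains at most $\bound$ components of $\KK$, so a wave whose base stays inside the union of a bounded number of limbs away from $\LL_0$ is localized to an interval. The remaining danger is a wave whose base spans many limbs. I would first try to show that such a wave forces many leaves to cross $\hv$ near $\alpha_F$ in an essential way. By Lemma \ref{lem:compare} those crossings translate into vertical width, contradicting $\|X^\ver\|<M\WW(\RR F)$, unless the base wraps around $\LL_0$, in which case $S$ is a wave over $\dcara\LL_0$. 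If this dichotomy needs more room, I would split $S$ by pigeonholing over the $O(\bound)$ possible base types. One piece of width at least $\WW(S)/O(\bound)$ is then controllable, and we absorb the loss into $C$.
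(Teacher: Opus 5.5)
There is a genuine gap. It sits where you expected the difficulty, but the missing piece is an observation that makes controllability automatic.

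Realize $\alpha\in\HH^\hor$ in $\UU\sm\TT$. It cuts off a multi-interval $I$ of $\ddi\TT$ with $\ell(I)\le 2\bound$. Take a lamination $\FF$ of leaves crossing $\alpha$ with $\flux(\alpha)\le\WW(\FF)+2$; this comes straight from the definition of flux, so no canonical leaves and no thick-thin are needed. Every leaf of $\FF$ restricts to a path in $\UU\sm\TT$ with \emph{one endpoint in $I$}. That path either reaches $\ddi\UU$, or it is a wave over one of the two endpoints of $I$. Paths of the first kind have total width at most $\WW(\UU\sm\TT)\le\WW(F)<M\WW(\RR F)$. ``Localized'' only requires one end to lie in a single interval of $\II$. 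So sort the leaves by which interval of $I$ contains that endpoint and by which endpoint of $I$ they crest over. This gives at most $4\bound$ classes, hence a localized wave $S$ with $\WW(\FF)\le 4\bound\,\WW(S)+\WW(\UU\sm\TT)$. That is the whole argument, with flux threshold $(4\bound+3)M\WW(\RR F)$.

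Your Step 3 does not reach this, for four reasons.
\begin{itemize}
\item The claim that a wave whose base lies in boundedly many limbs is localized is false. Such a wave is only a union of boundedly many localized waves.
\item The worry about bases spanning many limbs is beside the point, since the far end and the crest are unconstrained.
\item Lemma~\ref{lem:compare} only compares the intersection numbers $\langle\cdot,\HH^\biv\rangle$ and $\langle\cdot,\hv\rangle$. It cannot turn crossings of $\hv$ into vertical width.
\item Your fallback of pigeonholing over ``$O(\bound)$ base types'' is unjustified. The base of a wave crossing $\alpha$ also depends on its far end, which can be any of $O(\bound q)$ intervals. A loss of order $q$ is fatal: $C$ may depend only on $\bound$, and the hypothesis gives no control of $\WW(\RR F)$ relative to $q$. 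The pigeonhole must be over the near end in $I$.
\end{itemize}

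Step 1 has a related problem. The flux is not a priori carried by leaves of $\wcan(\UU\sm\KK)$. Asserting that would need an error estimate that could depend on the topology of $\UU\sm\KK$, i.e.\ on $q$. Also, $\|X^\ver\|$ is the wrong quantity for bounding the vertical part of a flux-realizing lamination; use $\WW(\UU\sm\TT)\le\WW(F)$ instead.
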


\begin{proof}
	We assume that there is an arc $\alpha\in \HH^\hor$ with 
	$$
	\flux(\alpha)> (4\bound +3)M\WW(\RR F);
	$$
	otherwise $\HH^\hor$ has $M$-bounded flux.
	We also assume that $\WW(\UU\sm \TT)\leq M \WW(\RR F)$; otherwise $F$ has $M$-amplification.

	It follows from the definition of $\flux(\alpha)$ that there is a lamination $\FF$ on $\UU\sm \KK$ such that every leaf of $\FF$ intersects $\alpha$ and $\flux(\alpha)\leq \WW(\FF)+2.$
	Let $I$ be one of the multi-intervals in $\II$ corresponding to $\alpha$; every leaf of $\FF$ restricts to a path in $\UU\sm \TT$ that has one endpoint in $I$. Such paths either have an endpoint in $\ddi \UU$ or are waves over one of the endpoints of $I$. As $\ell(I)\leq 2\bound$, it follows that there is a localized wave $S$ in $\UU\sm \TT$ satisfying
	$$\WW(\FF)\leq 4\bound \WW(S)+ \WW(\UU\sm \TT).$$
	Thus
	$$
	\WW(S)\geq \frac{\WW(\FF)- \WW(\UU\sm \TT)}{4\bound}\geq M\WW(\RR F),
	$$
	so $\UU\sm \TT$ has $M$-wide controllable waves.
\end{proof}

Our second step is  showing  that we have either an aligned canonical WAD or wide waves:
\begin{proposition}\label{prop:aligned wcan}
	For any $M\geq 1$, if $\WW(\RR F)\largegiven{M}$, then either $X$ is $M$-aligned with $\TT$,  $\UU\sm \TT$ has $M$-wide controllable waves, or $F$ has $M$-amplification.
\end{proposition}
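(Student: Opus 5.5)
We argue by contraposition, in the style of Proposition \ref{prop:bounded flux}. Assume $F$ does not have $M$-amplification, so $\WW(\UU\sm\TT)\le \WW(\UU\sm\KK)\le \WW(F)+O(\WW(\RR F))\le (M+O(1))\WW(\RR F)$; in particular $\|X^\ver\|$ is controlled by $M\WW(\RR F)$. Assume also that $\UU\sm\TT$ has no $M$-wide controllable waves. We must show that for every $n$ and every family $\AA$ of arcs with $\langle\alpha,\HH^\hor\rangle\ge n$ for all $\alpha\in\AA$, we have $n^2\|X|_\AA\|=O(M\|X\|)$. Since $p\,\WW(\RR F)\asymp\|X\|+O(p)$ by \eqref{eq:intro width Rf}, it suffices to prove $n^2\|X|_\AA\| = O(M p\,\WW(\RR F))$, or more crudely a bound of the form $n\,\|X|_\AA\|\le C\cdot(\text{total weight available})/n$.

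The plan is to look at the leaves of the canonical lamination $\FF$ realizing $X|_\AA$ and cut each leaf along $\TT$ (equivalently along $\HH^\biv$, using Lemma \ref{lem:compare} to pass between $\langle\cdot,\HH^\hor\rangle$ and $\langle\cdot,\HH^\biv\rangle$ up to the factor $2\bound$). A leaf crossing $\HH^\hor$ at least $n$ times crosses $\TT$ at least about $n/(2\bound)$ times, so it splits into about $n$ subpaths in $\UU\sm\TT$. Each such subpath is one of three kinds:
\begin{itemize}
\item it has an endpoint on $\ddi\UU$;
\item it returns to a single vertex side of $\hat\KK$ in $\ddi\TT$;
\item it is a wave over some point of $\TT$.
\end{itemize}
The main combinatorial step is to show that, because $\langle\alpha,\HH^\hor\rangle$ is large, most of these subpaths (all but $O(\bound)$ per leaf) must be waves. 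Their bases are then either localized to an interval of $\II$ of length at most $2\bound$, or lie over the Carath\'eodory boundary of $\LL_0$, since in the parabolically $\bound$-bounded setting every limb other than $\LL_0$ carries only boundedly many vertices.

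Next we use nesting. A leaf with $n$ wave-pieces contributes nested families: the $k$-th crossing from either end lies under at least $k$ others. Grouping the pieces over each of the $O(\bound)$ possible base configurations, we obtain waves $S_k$ whose width is at least the width of the sub-lamination of leaves crossing at least $k$ times. The Gr\"otzsch/series-parallel inequality then shows that a single wave over a given base stacking $k$ layers has width at least $k$ times the per-layer width. This is where the quadratic factor comes from: roughly $n\|X|_\AA\|$ of width is spread over about $n$ nested layers, and by parallel subadditivity this produces a wave of width $\gtrsim n^2\|X|_\AA\|/(\bound\,\|X\|)\cdot\WW(\RR F)$ after normalizing. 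The absence of $M$-wide controllable waves then gives $n^2\|X|_\AA\|\le O(M\|X\|)$.

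The main obstacle is making the nesting and stacking estimate rigorous. Waves from different leaves may interleave, and the pieces of one leaf may be waves over different bases, so the extremal-width lower bound for $S_k$ needs care. I would first try to restrict to a sub-lamination where all leaves share the same itinerary of wave bases (losing a factor depending on $\bound$), and then apply the series law along the layers. The parabolic-tail crossings inside $\LL_0$ are handled by the allowance for waves over $\dcara\LL_0$. The leftover vertical and returning pieces are absorbed using $\WW(\UU\sm\TT)\le (M+O(1))\WW(\RR F)$, as in the proof of Proposition \ref{prop:bounded flux}.
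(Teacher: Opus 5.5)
Your setup is right: restrict the canonical leaves to $\UU\sm\TT$, get linearly many pieces per leaf, use the series law for the quadratic factor, and absorb the vertical pieces into $\WW(\UU\sm\TT)$. The gap is in the step that is supposed to produce a single wide \emph{controllable} wave.

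\textbf{What fails.} You propose to group the pieces ``over each of the $O(\bound)$ possible base configurations''. Failing that, you propose to pass to a sublamination whose leaves share a common itinerary of bases, ``losing a factor depending on $\bound$''. Neither is available.
\begin{itemize}
\item Pieces of the restricted lamination can end at any of the $\asymp\bound q$ intervals of $\II$ and can crest over arbitrarily long multi-intervals. So the number of base configurations and itineraries grows with $q$ and $n$, and any such restriction loses an uncontrolled factor.
\item ``Localized'' means every leaf ends at one interval of $\II$, not that the base is short. Your claim that bases are localized to intervals of length at most $2\bound$ or lie over $\dcara\LL_0$ conflates the two.
\item Grouping into $O(\bound)$ waves would give width $\gtrsim n^2\|X|_\AA\|/\bound$, with no factor $\WW(\RR F)/\|X\|\asymp 1/\per$. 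That factor in your target is exactly the cost of spreading the width over $\asymp\per$ intervals, and your argument never produces it; ``after normalizing'' hides the key step.
\item The nesting claim (``the $k$-th crossing lies under at least $k$ others'') is unjustified and not needed.
\end{itemize}

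\textbf{The missing idea is a plain pigeonhole over intervals.}
\begin{itemize}
\item Let $\FF_n$ be the horizontal and vertical canonical leaves with $\langle[\beta],\HH^\hor\rangle\ge n$, and let $\GG$ be the restriction of all horizontal and vertical canonical leaves to $\UU\sm\TT$.
\item Each leaf of $\FF_n$ contributes at least $n/2$ pieces by Lemma \ref{lem:intersection with H}. Lemma \ref{lem:compare} is the wrong tool here: it bounds $\HH^\biv$-crossings \emph{above} in terms of $\HH^\ver$-crossings.
\item Lemma \ref{lem:harmonic and geometric sum} (series law plus Cauchy--Schwarz) then gives $n^2\WW(\FF_n)\le 4\WW(\GG)$. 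This is where the quadratic factor comes from, with no stacking or nesting.
\item $\II$ has at most $6\bound\per$ intervals by Lemma \ref{lem:comb size of limbs}. So some $\BI$ satisfies $\WW(\GG(\BI))\ge\WW(\GG)/(6\bound\per)$, where $\GG(\BI)$ is the set of pieces with an endpoint in $\BI$.
\item The loss of $\per$ is exactly compensated by $\per\WW(\RR F)\le 4\|X\|+O(\per)$ from Lemma \ref{lem:width to weight}.
\item $\UU\sm\TT$ is an annulus whose inner ideal boundary is a circle, so every horizontal piece is a wave. Hence $\GG(\BI)$ splits into a vertical part of width at most $\WW(\UU\sm\TT)\le\WW(F)$ and two waves localized to $\BI$.
\item Absent $M$-amplification, one of these two waves has width at least $M\WW(\RR F)$ as soon as $n^2\WW(\FF_n)\ge C\bound M\|X\|$ for a suitable constant $C$.
\end{itemize}
No control of itineraries, bases, or interleaving of pieces from different leaves is needed.
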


Before proving Proposition \ref{prop:aligned wcan}, we need the a few facts. 
First, we need to compare $\WW(\RR F)$ with $\|X\|$:
\begin{lemma}\label{lem:width to weight}
	We have:
	\begin{equation}\label{eq:width to weight}
		\per \WW(\RR F)\leq 4\|X\|+ O(\per).
	\end{equation}
\end{lemma}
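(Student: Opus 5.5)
The plan is to bound the width of the renormalization from above by the width of the annulus $\VV\sm K$, and to control that annulus by the canonical lamination of $\UU\sm\KK$. Since $\RR F=(g,\jmath):(\VV',K')\toto(\VV,K)$ and $\VV$ is the localization of a neighborhood of $K$ inside $\UU\sm(\KK\sm K)$, we have $\WW(\RR F)=\WW(\VV\sm K)$. This is the extremal width of the family of curves in the localized surface that join $\dd K$ to the ideal boundary of $\VV$. I will do this for each of the $\per$ small Julia sets $K_j=F^j(K)$ in turn. The renormalizations at the different $K_j$ are conformally conjugate up to bounded distortion, via the covering $F^{j}$ and the iterated immersions, so I expect $\WW(\RR F)\le \WW(\VV_j\sm K_j)+O(1)$ for each $j$. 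Summing over $j$ then gives
\[
\per\,\WW(\RR F)\le \sum_j \WW(\VV_j\sm K_j)+O(\per).
\]
If instead the renormalization is only conjugate at one level, I would use the weaker comparison obtained by pushing the lamination at $K$ forward by $F^j$; this is the step to check most carefully.

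Next, for a fixed $j$, I decompose the canonical lamination of the localization $\VV_j\sm K_j$, using the Thick-Thin decomposition of Appendix \ref{app:markings}. A leaf starting on $\dd K_j$ and viewed in $\UU\sm\KK$ either reaches $\dd\UU$, in which case it is a vertical arc of $X$, or it reaches some other $K_i$, in which case it is a horizontal arc of $X$. The localization unwraps such leaves, so they project homotopically to proper arcs in $\UU\sm\KK$ that begin at $K_j$. The thick part contributes only $O(1)$ width, because the ideal marking of the localization near $K_j$ has bounded complexity: its boundary is $\dd K_j$ and a single ideal boundary circle. Hence
\[
\WW(\VV_j\sm K_j)\le \sum_{\gamma \text{ with an endpoint on } K_j} X(\gamma)+O(1).
\]
Here I use that the canonical width of the lift is at most the canonical width in $\UU\sm\KK$, since lifting a lamination under a covering preserves extremal width.

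Finally I sum over $j$. Each horizontal arc has two endpoints, each lying on some $K_i$, and an arc may pass through a lift twice, once from each side of the localization. So each arc of $X$ is counted at most $4$ times, which gives $\per\WW(\RR F)\le 4\|X\|+O(\per)$.

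The main obstacle is the middle step. One has to justify that the canonical widths upstairs, in the localization, are bounded by the canonical widths downstairs with only an additive $O(1)$ error per $K_j$, uniformly in the (unbounded) topology of $\UU\sm\KK$. This is exactly where the Uniform Thick-Thin Decomposition must be used instead of naive bounds. Those naive bounds would produce errors proportional to the number of arcs, and the number of arcs grows with $q$.
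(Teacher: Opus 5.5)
Your skeleton is the paper's: local widths at each $K_k$, a comparison between the levels, and then the Thick-Thin Decomposition. However, your first step fails as stated, and the constant $4$ comes out only because two errors cancel.

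\textbf{The comparison between levels.} There is no additive bound $\WW(\RR F)\le \WW(\VV_j\sm K_j)+O(1)$ in general. A conjugacy between the small filled Julia sets says nothing about the maximal annuli. Moreover, the first return passes through the critical component $K_0$, where $F$ has degree $2$. What holds, and what the paper quotes from \cite[Theorem 9.3]{McMullen94} or \cite[(3.24)]{dudko2023mlcfeigenbaumpoints}, is the multiplicative comparison
\[
\WW(\BJ_0)/2\le \WW(\BJ_k)\le \WW(\BJ_0),\qquad \BJ_k=\dcara K_k,\quad \WW(\RR F)=\WW(\BJ_0).
\]
Here $\WW(\BJ_k)$ is the local width, i.e.\ the width of the annular cover of $\UU\sm\KK$ around $K_k$. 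For $k\neq 0$, $f$ identifies the annular cover around $K_k$ in $\UU^1\sm f^{-1}(\KK)$ with the one around $K_{k+1}$ in $\UU\sm\KK$. Meanwhile $\iota$ immerses the former into the annular cover around $K_k$ in $\UU\sm\KK$, which gives $\WW(\BJ_k)\le \WW(\BJ_{k+1})$. At $K_0$ the degree is $2$, which gives $\WW(\BJ_0)\le 2\WW(\BJ_1)$. Your fallback of pushing forward by $F^j$ yields at best this same factor $2$. So the correct first step is $\per\WW(\RR F)\le 2\sum_k\WW(\BJ_k)$.

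\textbf{The count and the thick-thin step.} The factor $2$ must be paid for by a sharp count in the last step. The sum $\sum_k\wcan(\BJ_k)$ counts each horizontal arc of $X$ exactly twice (once from each endpoint, as oriented arcs) and each vertical arc once, so it is at most $2\|X\|$. Your ``once from each side of the localization'' does not correspond to anything, and your multiplicity $4$ combined with the correct factor $2$ would only give $8\|X\|$.

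The middle step also cannot be done one $K_j$ at a time. The localization being an annulus does not bound the thick part of the geodesic around $K_j$ in $\UU\sm\KK$. Per interval, the available bound \eqref{eq:total width vs total weight single} has error $O(|\chi|+|\II|)=O(\per)$, not $O(1)$. In addition, the lifting argument you cite gives only the easy direction $\wcan(\BJ_k)\le\WW(\BJ_k)$, not the one you need.

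The right statement is global. Theorem \ref{main estimate}, applied to $\UU\sm\KK$ with its minimal marking, gives $\WW(\UU\sm\KK,\II)\le 2\|\wcan\|+O(\per)$. Discard the outer boundary component, whose local weight is at most its local width. What remains is $\sum_k\WW(\BJ_k)\le \sum_k\wcan(\BJ_k)+O(\per)\le 2\|X\|+O(\per)$. Combined with the factor $2$ above, this yields $\per\WW(\RR F)\le 4\|X\|+O(\per)$.
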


\begin{proof}
	For all $0\leq k < \per$, we set $\BJ_k= \dcara K_j$.
	It follows from Theorem \ref{main estimate}
	that 
	\begin{equation}\label{eq:width to weight 1}
		\sum \WW(\BJ_k)\leq 2\|X\|+O(\per).
	\end{equation}
	It follows from the definitions that $\WW(\RR F)= \WW(\BJ_0).$
	For all $k$ we have
	\begin{equation}\label{eq:width to weight 2}
		\WW(\BJ_0)/2\leq \WW(\BJ_k)\leq \WW(\BJ_0),
	\end{equation}
	see for example \cite[Theorem 9.3]{McMullen94} or \cite[(3.24)]{dudko2023mlcfeigenbaumpoints}. Combining \eqref{eq:width to weight 1} and \eqref{eq:width to weight 2} yields \eqref{eq:width to weight}.
\end{proof}

Next, we need an estimate how paths in $\UU\sm \KK$ restrict to $\UU\sm \TT$:
\begin{lemma}\label{lem:intersection with H}
	For any $n\geq 0$, if $\alpha$ is a path in $\UU\sm \KK$ with $\langle [\alpha], \HH^\hor\rangle\geq n$, then $\alpha$ restricts to  at least $n/2$  paths in $\UU\sm \TT$.
\end{lemma}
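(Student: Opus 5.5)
Here $\langle[\alpha],\HH^\hor\rangle$ is the geometric intersection number of the homotopy class of $\alpha$ with the arcs of $\HH^\hor$. The plan is to turn each essential crossing of $\HH^\hor$ into a crossing of $\TT$, since it is exactly these crossings of $\TT\sm\KK$ that cut $\alpha$ into paths in $\UU\sm\TT$. The key topological fact is that every arc $\beta\in\HH^\hor$ lies in $\UU\sm\TT$ and connects sides of $\KK$. Together with a piece of $\TT$ (the segment of the tree between the two small Julia sets it joins), $\beta$ therefore bounds a region $D_\beta\subset\UU$. If $\beta$ is realized in a thin neighborhood of $\TT$ hugging the tree from one side, as is possible by maximality of $\HH^\hor$, then $D_\beta$ is a bigon between $\beta$ and an arc of $\TT$, containing no points of $\KK$ in its interior.

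First I put $\alpha$ in minimal position with respect to $\HH^\hor$ and with respect to $\TT$ simultaneously; for instance, I realize everything geodesically in a hyperbolic structure on $\UU\sm\hat\KK$ with $\TT$ isotoped close to geodesic edges. Then $\alpha$ meets $\HH^\hor$ in at least $n$ points. Each such intersection point $x\in\alpha\cap\beta$ has the subarc of $\alpha$ entering $D_\beta$ across $\beta$. The subarc cannot exit through $\beta$ again, since that would give a bigon between $\alpha$ and $\beta$, contradicting minimal position. It also cannot terminate inside $D_\beta$, because $D_\beta$ contains no ideal boundary of $\UU\sm\KK$ apart from the sides of $\KK$ along the tree segment, and reaching those already requires crossing or touching $\TT$. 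Hence it exits through the tree side of $D_\beta$, and so crosses $\TT$ (or ends at $\KK$ after passing $\TT$).

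This gives a map from intersection points with $\HH^\hor$ to crossings of $\alpha$ with $\TT$. I expect the main obstacle to be the bound on its fibres: I want to show each crossing of $\TT$ is assigned at most twice. The point is that a given crossing point on an edge of $\TT$ is approached from one of the two sides of the edge. The arcs of $\HH^\hor$ hugging that side are pairwise disjoint and nested, so only the innermost bigon is entered last before the crossing, and any further arcs are separated by innermost-ness. To make this rigid I would choose, for each side of each edge, the single parallel copy of the relevant arcs of $\HH^\hor$ and argue that the arcs of $\HH^\hor$ hugging a given side of a given edge are all homotopic rel $\KK$ to one another once one passes to the innermost one. The two sides then give the factor $2$. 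Care is needed at branch points and at $\alpha_F$, where bigons from different edges meet; here I would use that $\HH^\hor$ was chosen disjoint from $\HH^\ver$, so bigons do not wrap around vertices reaching $\dcara\UU$.

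Granting the factor $2$, $\alpha$ crosses $\TT$ at least $n/2$ times. These crossings cut $\alpha$ into at least $n/2+1\geq n/2$ subpaths, each of which is a path in $\UU\sm\TT$ with endpoints on $\TT$ or on $\ddi\UU$, which is the claim.
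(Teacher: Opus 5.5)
Your argument is essentially the paper's. The paper cuts $\alpha$ by a representative $H$ of $\HH^\hor$ in minimal position with $\alpha$ into $n+1$ segments, notes that no two adjacent segments can both lie in the components of $\UU\sm(\KK\cup H)$ not touching $\dd\UU$, and that each segment outside those components gives a distinct restriction to $\UU\sm\TT$, so there are at least $\lfloor (n+1)/2\rfloor\ge n/2$ of them. This alternation count replaces your fibre bound, which is in any case immediate: the arcs of $\HH^\hor$ are not nested (each is realized in its own component of $\UU\sm(\TT\cup\HH^\ver)$), so the bigons $D_\beta$ are pairwise disjoint and a crossing of $\TT$ has at most one preimage from each direction along $\alpha$. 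One small correction: a subarc entering $D_\beta$ can reach $\dcara\KK$ along $\dd D_\beta$ without meeting $\TT$. That case is excluded by minimal position with endpoints allowed to slide along $\dcara\KK$ (it would form a half-bigon with $\beta$), not by the reason you give.
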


\begin{proof}
	Let $H$ be a representative of $\HH^\hor$ such that $\langle \alpha, H\rangle = \langle [\alpha], \HH^\hor\rangle = n$ and let $D$ be the union of the components of $\UU\sm (\KK\cup H)$ that do not touch $\dd \UU$. The paths in $H$ cut $\alpha$ into $n+1$ paths $\alpha_0, \dots, \alpha_n$. Each $\alpha_m$ not in $D$ corresponds to a distinct restriction of $\alpha$ to a path in $\UU\sm \TT$.
	As $\langle \alpha, H\rangle = \langle [\alpha], \HH^\hor\rangle$, two adjacent $\alpha_m$ cannot  both lie in $D$. The number of $\alpha_m$ not in $D$ is therefore at least 
	$\lfloor (n+1)/{2}\rfloor\geq n/2.$
\end{proof}

For each $j\in \Z/q\Z$, the Caratheodory boundary of $\LL_j$ is a multi-interval in $\II$ which we will call the \textit{$j$-th limb of $\II$} and denote by $\BL_j$. For any interval $I\subset \R$, we similarly denote by $\BL_I$ the smallest multi-interval containing $\dcara \LL_I$. The last fact we will need bounds the combinatorial lengths of limbs:

\begin{lemma}\label{lem:comb size of limbs}
	For all $j$, $|\BL_j| \leq 6\bound$.
\end{lemma}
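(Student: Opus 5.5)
The plan is to bound $|\BL_j|$ by counting, in terms of the combinatorial Hubbard tree, how many sides of vertices of $\hat\KK$ are visible from the Carathéodory boundary of the limb $\LL_j$. Here $|\cdot|$ denotes the combinatorial length of a multi-interval in the ideal marking $\II$, i.e.\ the number of marked intervals (sides of components of $\hat\KK$ in $\ddi\TT$) it contains. Going around $\dcara\LL_j$ once, we traverse each edge of the subtree $\LL_j\cup\{\alpha_F\}$ twice, once from each side. The marked intervals we pass through are the sides of the vertices of that subtree, and a vertex $v$ of valence $d_v$ in the subtree contributes at most $d_v$ sides, one for each complementary sector at $v$. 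The vertex $\alpha_F$ contributes only one side, the sector facing $\LL_j$. So
\[
|\BL_j| \;\le\; \sum_{v\in V(\overline{\LL_j})} d_v \;=\; 2\,\#E(\overline{\LL_j}) \;\le\; 2\,\#V(\overline{\LL_j}),
\]
with any additive constant coming from the boundary interval at $\alpha_F$.

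It remains to count the vertices of $\overline{\LL_j}$. By condition (2) of parabolic $\bound$-boundedness, $\LL_j$ contains at most $\bound$ components of $\KK$. The other vertices are branch points of $\TT$ lying in $\LL_j$. In a finite tree whose endpoints lie in $\KK\cup\{\alpha_F\}$, the number of branch points is at most the number of endpoints minus $2$. The subtree $\overline{\LL_j}$ is the convex hull of $\KK_j\cup\{\alpha_F\}$, so it has at most $\bound+1$ endpoints and hence at most $\bound-1$ branch points. The total vertex count is therefore at most $2\bound+1$ including $\alpha_F$, and the number of edges is at most $2\bound$. This gives $|\BL_j|\le 4\bound + O(1)$. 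The slack in the constant $6\bound$ absorbs the boundary terms at $\alpha_F$, and also absorbs the possibility that a component of $\KK$ has several sides: each component of $\KK$ that is an interior point of the tree has one side per incident edge, and this is already counted by valence.

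The main obstacle is bookkeeping. One has to fix the precise convention for what $|\cdot|$ counts for a multi-interval in $\II$, and how the marking treats sides of $\KK$-components versus sides of branch points. One also has to treat the non-locally-connected case, which I would handle by passing to the combinatorial model $\TT_\com$ as in \S\ref{sec:hubbard trees}. There the same tree-counting applies verbatim, since the ideal marking only depends on the combinatorial tree. Once the convention is fixed, the inequality $\#\{\text{branch points}\}\le\#\{\text{endpoints}\}-2$ combined with the double traversal of edges gives the bound.
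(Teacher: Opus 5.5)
There is a genuine gap in the central counting step: you have misidentified which pieces of $\dcara\LL_j$ are intervals of $\II$. The marking is induced by the sides of $\hat\KK$. The sides of the branch points and of $\alpha_F$ are single points, so they only produce \emph{endpoints} of $\II$. Each side of an edge, by contrast, becomes an interval.

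Write $\GG$ for your subtree $\overline{\LL_j}$. Let $V_1$ consist of $\alpha_F$ and the components of $\KK$ in $\LL_j$, and let $V_2$ be the set of branch points. The intervals of $\BL_j$ are then the sides of the components $K$ of $\KK$ in $\LL_j$, together with the two sides of every edge of $\GG$:
\[
|\BL_j| \le 2|E(\GG)| + \sum_{K\subset \LL_j} d_K .
\]
Your inequality $|\BL_j|\le\sum_v d_v=2|E(\GG)|$ counts the sectors at branch points and at $\alpha_F$, which are not intervals, and it omits the edge sides entirely. It is false already when $\LL_j$ contains a single small Julia set $K$ joined to $\alpha_F$ by one edge. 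Going around, one sees an edge side, the side of $K$, and the other edge side, so $|\BL_j|=3>2=2|E(\GG)|$. By the handshake identity your count falls short by exactly $\sum_K d_K$, which is of order $\bound$, not an $O(1)$ boundary term. So the appeal to the slack between $4\bound$ and $6\bound$ does not prove anything as written.

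To repair this you need one input beyond your tree count. There are two ways to supply it.
\begin{itemize}
\item The paper uses that each component of $\KK$ has at most two sides (valence at most $2$ in $\TT$), so $|\BL_j|\le 2\bigl(|E(\GG)|+|V_1|-1\bigr)$. Combined with $|V_1|\le\bound+1$ and $|E(\GG)|\le 2\bound-1$ (Lemma~\ref{lem:graph edges}), this gives $6\bound-2$. Lemma~\ref{lem:graph edges} is exactly your leaves-versus-branch-points count; note that this count gives at most $2\bound$ vertices, not $2\bound+1$.
\item Alternatively, you can stay within your valence bookkeeping. The handshake lemma, with $d_{\alpha_F}=1$ and $d_b\ge 3$ at branch points, gives $\sum_K d_K\le 2|E(\GG)|-1-3|V_2|$. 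Since $|E(\GG)|=|V_1|+|V_2|-1$, this yields $|\BL_j|\le 4|V_1|+|V_2|-5\le 5\bound-2$.
\end{itemize}
Without one of these refinements, the naive bound $\sum_K d_K\le 2|E(\GG)|$ only gives about $8\bound$.
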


The proof of Lemma \ref{lem:comb size of limbs} relies on the following easy  graph theory exercise which we leave to the reader:

\begin{lem}\label{lem:graph edges}
	Let $G$ be an abstract tree with vertices $V$ and edges $E$. 
	If  $V$ can be decomposed into $V_1\cup V_2$ such that
	\begin{enumerate}
		\item $|V_1|\leq  b+1$ for some $b\geq 1$, and
		\item every vertex in $V_2$ has valence at least 3,
	\end{enumerate}
	then $|E| \leq 2b-1$.
\end{lem}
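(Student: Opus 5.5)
We prove the bound by counting degrees, using the handshake identity for trees, $|E|=|V|-1$, together with $\sum_{v\in V}\deg v = 2|E|$.

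First dispose of the degenerate case. If $|V|=1$ then $|E|=0\le 2b-1$, since $b\ge 1$. So assume $|V|\ge 2$. Then every vertex has valence at least $1$.

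Write $n_1=|V_1|$ and $n_2=|V_2|$, where the decomposition $V=V_1\cup V_2$ is taken disjoint. If $V_1$ and $V_2$ overlap, replace $V_2$ by $V_2\sm V_1$; this keeps both hypotheses. Each vertex of $V_1$ contributes at least $1$ to the degree sum and each vertex of $V_2$ contributes at least $3$. Hence
$$2(n_1+n_2-1)=2|E|=\sum_{v}\deg v\ \ge\ n_1+3n_2 .$$
Rearranging gives $n_2\le n_1-2$. Therefore
$$|E|=n_1+n_2-1\le 2n_1-3\le 2(b+1)-3=2b-1 .$$

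The only point needing care is the degenerate case $|V|=1$, where the lone vertex has valence $0$ and the bound "valence at least $1$" fails. It was handled separately above, so there is no real obstacle.
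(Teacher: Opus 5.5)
Your proof is correct. The count $2(n_1+n_2-1)\ge n_1+3n_2$ gives $n_2\le n_1-2$, hence $|E|\le 2n_1-3\le 2b-1$, and you rightly treat the one-vertex case separately.

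The paper gives no proof of this lemma; it is left to the reader as an exercise. Your handshake-lemma count is the standard intended argument.

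The one assumption you use silently, and which the statement also leaves implicit, is that the tree is finite. Without it the lemma fails: an infinite $3$-regular tree with $V_1=\emptyset$ is a counterexample. Finiteness does hold where the lemma is applied, namely the hull $\GG$ in the proof of Lemma \ref{lem:comb size of limbs}.
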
 

\begin{proof}[Proof Lemma \ref{lem:comb size of limbs}]
	Fixing some $j$, let $\GG$ be the hull in the combinatorial Hubbard tree of $\alpha_F$ and all the vertices of $\TT$ contained in $\LL_j$. 
	Let $V_1(\GG)$ denote the set of all vertices of $\GG$ corresponding to either $\alpha_F$ or a component of $\KK$, 
	 let $V_2(\GG)$ denote the remaining vertices, and let $E(\GG)$ denote the edges of $\GG$. As each vertex $v\in V_2(\GG)$ is a branch point of $\TT$, the valence in $\GG$ of any such $v$ is at least 3.
	 
	 Every interval of $\BL_j$ corresponds to a side in $\TT$ of some $v\in V_1(\GG)\sm \{\alpha_F\}$ or $e\in E(\GG)$. Each such vertex or edge has at most $2$ sides, hence 
	 \begin{equation}\label{eq:size of limbs}
	 	|\BL_j|\leq 2\left(|E(\GG)| +|V_1(\GG)|-1\right).
	 \end{equation}
	 As the renormalization of $F$ is parabolically $\bound$-bounded, $|V_1(\GG)|\leq \bound +1$. 
	 The desired bound then follows immediately from \eqref{eq:size of limbs} and Lemma \ref{lem:graph edges}.
	 \end{proof}

With the above facts, we are ready to prove Proposition \ref{prop:aligned wcan}.
\begin{proof}[Proof of Proposition \ref{prop:aligned wcan}]
	We denote by $\FF$ the set of all horizontal and vertical leaves in $\FF_{\can}(\UU\sm \KK)$, so $\WW(\FF) = \|X\|$. For all $n\geq 0$, let $\FF_n\subset \FF$ denote the sublamination consisting of leaves $\beta$ with  $\langle [\beta],\HH^\hor \rangle\geq n.$ Thus if $\AA$ is a set of arcs with $\min_{\alpha\in \AA}\langle \alpha, \HH^\hor\rangle\geq n$,  then 
	$$\|X|_{\AA}\|\leq \WW(\FF_n).$$
	We assume that \begin{equation}\label{eq:not aligned}
		{n^2\WW(\FF_{n})}\geq  {360 \bound M}{\WW(\FF)}
	\end{equation}
	for some $n\geq 0$; otherwise $X$ is $M$-aligned with $\TT$ by definition. We also assume that $\WW(\UU\sm \TT)\leq M\WW(\RR F)$; otherwise $F$ has $M$-amplification.

	Let $\GG$ be the restriction of $\FF$ by the inclusion $\UU\sm \TT\subset \UU\sm \KK$.
	Lemma \ref{lem:intersection with H} implies that every leaf of $\FF_n$ contributes at least $n/2$ leaves to $\GG$, so
	\begin{equation}\label{eq:hor_weight_totals}
		n^2\WW(\FF_n)\leq 4\WW(\GG)
	\end{equation}
    by Lemma \ref{lem:harmonic and geometric sum}.
	
	For every interval $\BI\in \II^0$, let $\GG(\BI)\subset \GG$ denote the sublamination consisting of leaves with an endpoint in $\BI$. 
	As $\II$ contains at most $6\bound q\leq 6\bound \per$ intervals by Lemma \ref{lem:comb size of limbs},
	there is an interval  $\BI\in \II$ with
	\begin{equation}\label{eq:large_wave}
		6\bound\per\WW({\GG}(\BI))\geq \WW(\GG).
	\end{equation}
	As 
	$$\per\WW(\RR F)\leq 4\WW(\FF)+ O(1)$$
	by Lemma \ref{lem:width to weight}, 
	 \eqref{eq:not aligned}, \eqref{eq:hor_weight_totals}, and \eqref{eq:large_wave} imply that 
	\begin{equation*}
		\frac{\WW({{\GG}}(\BI))}{\WW(\RR F)}\geq \frac{n^2\WW(\FF_{n})}{120\bound\WW(\FF)}\geq 3M
	\end{equation*}
	when $\WW(\RR F)\gg 1$. 
	There is a wave $S\subset \GG(\BI)$ with 
	$$2\WW(S)+ \WW(\UU\sm \TT)\geq \WW(\GG (\BI));$$
	hence $\WW(S)\geq M \WW(\RR F)$, so $\UU\sm \TT$ has $M$-wide controllable waves.
\end{proof}

Combining Propositions \ref{prop:bounded flux} and \ref{prop:aligned wcan} immediately yields Proposition \ref{prop:trichotomy}.

\begin{rem}
    Note that the only part of the definition of parabolically bounded used in the proof of Proposition \ref{prop:trichotomy} is the bound $\bound$ on the number of small filled Julia sets in each limb of $\FK$. Hence Proposition \ref{prop:trichotomy} holds more generally; this observation may be useful in producing \emph{a priori} bounds for other families of combinatorial types.
\end{rem}

\section{The tame case}\label{sec:aligned}

In this section, we will prove Proposition \ref{prop:controlled WAD}. 
Recall that we have fixed some parabolically $\bound$-bounded \pql map in Section \ref{sec:life improvement}, and that we implicitly assume all constants to depend on $\bound$.

We define the \textit{translation region} \index{translation region}
to be the subsurface $U_{(\bound, q-\bound)}.$
Any arc $\alpha$ in the translation region has a unique lift $F^*\alpha$ that is also an arc, and usually in the translation region. 
We can then form the equivalence relation on arcs in the translation region generated by $\alpha\sim F^*\alpha$; we call each resulting equivalence class a \emph{translation class}\index{translation region!translation class}. 
We will say that a set of arcs $\AA$ in the translation region is \textit{good} \index{translation region!good arcs} if it contains only horizontal arcs and for any $\alpha, \beta\in \AA$, no arc in the translation class of $\alpha$ intersects an arc in the translation class of $\beta$. 
When a certain good set of arcs is clear from the context, 
we'll say that an arc is good if it belongs to this good set.

For an arc $\alpha$ in $\UU\sm \KK$,  
we define its \textit{combinatorial length} \index{translation region!combinatorial length}to be 
$$\ell(\alpha) = 1+\langle \alpha, \HH^\hor\rangle.$$
For any set of arcs $\AA$, we denote $\ell(\AA) = \sup_{\alpha\in \AA}\ell(\alpha).$

We will arrange our whole discussion so as to consider only horizontal and vertical arcs (in particular, we ignore bivertical arcs);
we accordingly denote the horizontal and vertical part of the canonical WAD by $X = \wcan^{h+v}(\UU\sm \KK)$. 
Our first step towards proving Proposition \ref{prop:controlled WAD} will be observing that most of the canonical weight is supported on a good set of arcs with bounded combinatorial length:
\begin{lem} \label{lem:bounded-combinatorial}	
	For any $M$ and $\epsilon$ there exists $L$ such that: 
	if $F$ is $[M, \epsilon]$-HTND and $\UU\sm \KK$ is $M$-tame, then either $F$ has $M$-amplification or there is a good set of arcs $\AA$  with $\ell(\AA)\leq L$ and $$\|X|_\AA\|> (1-\eps)\|\xhv\|.$$
\end{lem}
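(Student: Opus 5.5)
We peel off three small-weight pieces from $X=\wcan^{h+v}(\UU\sm\KK)$: combinatorially long arcs, vertical arcs, and arcs that are not horizontal arcs of the translation region. What remains is then shown to be good. Throughout we assume $F$ does not have $M$-amplification, so in particular $\WW(\UU\sm\TT)\le \WW(F)< M\WW(\RR F)$.

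\textbf{Step 1: long arcs.} Since $\UU\sm\KK$ is $M$-tame, $X$ is $M$-aligned with $\TT$. Applying the definition with $\AA_n=\{\alpha:\langle\alpha,\HH^\hor\rangle\ge n\}$ gives $\|X|_{\AA_n}\|=O(M\|X\|/n^2)$. Choosing $L\gg_{M,\eps}1$ with $n=L-1$, the arcs of combinatorial length $>L$ carry at most $\frac{\eps}{3}\|X\|$.

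\textbf{Step 2: vertical arcs and arcs outside the translation region.} Since $\WW(F)\ge\|X^\ver\|$, we get $\|X^\ver\|<M\WW(\RR F)$. Lemma \ref{lem:width to weight} gives $\per\WW(\RR F)\le 4\|X\|+O(\per)$, so
\[
\|X^\ver\|\le \frac{4M}{\per}\|X\|+O(M).
\]
Since $\per\ge q\gg_{M,\eps}1$ and $\WW(\RR F)\gg 1$, this is at most $\frac\eps3\|X\|$.

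For the horizontal arcs of length $\le L$, both endpoints lie on some $K\subset\KK$. The arc lies in the translation region unless it comes within combinatorial distance $L$ of $\FL_{[-\bound,\bound]}$. There are only $O_{\bound,L}(1)$ components of $\KK$ in $\FL_{[-2\bound-L,\,2\bound+L]}$, and arcs of length $\le L$ meeting them have only $O_L(1)$ homotopy classes per component. Hence the total weight of these arcs is $O_{L}(\WW(\RR F))$, using that each class has weight at most $\WW(\dcara K)\le\WW(\RR F)$. Compared with $\|X\|\gtrsim \per\WW(\RR F)$ and $\per\ge q\gg_{L}1$, this weight is at most $\frac\eps3\|X\|$.

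\textbf{Step 3: goodness (the main obstacle).} Let $\AA$ be the remaining horizontal arcs of length $\le L$ in the translation region. We must show that translation classes of distinct arcs in $\AA$ do not cross. Suppose $\alpha,\beta\in\AA$ and some $F^{*k}\alpha$ intersects $\beta$.

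Since $F$ acts on the translation region as an almost-translation $\FL_j\to\FL_{j+1}$ with conformal lifts, $F^{*k}$ of a canonical leaf family for $\alpha$ is a lamination in the class $F^{*k}\alpha$. By the Grötzsch inequality its width is at least that of $\alpha$, up to $O(1)$ errors from the thick–thin decomposition (Theorem \ref{main estimate}). So both $F^{*k}\alpha$ and $\beta$ would carry definite canonical-type width while crossing, which contradicts the disjointness of the leaves of the canonical lamination unless one of the two widths is $O(1)$.

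To make this rigorous, I would discard the arcs of weight $O(1)$, which cost at most $O(\per)\le\frac\eps3\|X\|$ for $\WW(\RR F)$ large. For the rest, I would argue that arcs of weight $\gg1$ in the canonical lamination are pulled back by $F$ to thin parts of $\UU\sm\KK$. This means the canonical lamination itself is nearly $F^*$-invariant on the translation region, so it is compatible with translation classes.

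The delicate point is controlling the pullback through $\iota$, which is not injective globally, and checking that bounded combinatorial length keeps $F^{*k}\alpha$ inside the translation region for the relevant $k$. Adjusting the thirds to absorb constants then yields $\|X|_\AA\|>(1-\eps)\|X\|$.
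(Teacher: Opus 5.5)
Your Step 1, the vertical-arc bound in Step 2, and the key observation in Step 3 all match the paper. That observation is that canonical weight does not drop under pullback in the translation region, so a heavy translate $(F^*)^k\alpha$ lies in $\supp X$ and cannot cross $\beta$; the paper uses exactly $X(F^*\alpha)\ge X(\alpha)$. However, the argument has two genuine gaps.

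\textbf{Goodness needs confinement, and bounded combinatorial length does not give it.} Step 3 relies on ``bounded combinatorial length keeps $F^{*k}\alpha$ inside the translation region for the relevant $k$'', but this is false. An arc with $\ell(\alpha)=1$ can run across arbitrarily many limbs, either inside the component of $\UU\sm(\KK\cup\HH^\hor)$ containing $\alpha_F$ or inside the outer component. Such arcs, which have a long segment in $(\D,\II_\inbr)$ or $(\D,\II_\outbr)$, cross their own translates. So they are never good and must be discarded. For the remaining arcs you must also know that only boundedly many shifts can produce a crossing. Otherwise the $O(1)$ losses per pullback accumulate over up to $\sim q$ steps, and the translates leave the region where monotonicity is available.

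The paper supplies exactly this. It bounds the weight of arcs with long segments via Theorem~\ref{thm:bounded short} and Propositions~\ref{prop:inner translation marking} and~\ref{prop:outer translation marking}, whose standard widths are $O(M\WW(\RR F))$ by tameness. It then uses Proposition~\ref{prop:comb-length-bound} to confine each remaining arc with $\ell\le L$ to some $U_I$ with $|I|\le 2L$. Finally it keeps only arcs in $P(L)=U_{(\bound+3L,\,q-\bound-3L)}$, so that only the shifts $0\le k<3L$ can create crossings, and all of them stay in the translation region.

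You could instead use your monotonicity to show that a heavy arc has no long segment, since a long segment forces $\alpha$ to cross $F^*\alpha$. You would still need Proposition~\ref{prop:comb-length-bound} to turn this into confinement. Your proposal contains neither step, and the ``nearly $F^*$-invariant'' heuristic does not replace them.

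\textbf{Step 2 misses arcs that pass through or around the critical limb.} You only bound arcs with an endpoint on a component of $\KK$ near $\FL_0$. An arc with both endpoints in the translation region and $\ell\le L$ can still leave it in two ways:
\begin{itemize}
\item by crossing arcs of $\HH^\hor$ inside $U_{[-\bound,\bound]}$;
\item by running through the outer component around $\FL_0$, in which case its restriction to $\UU\sm\TT$ is a wave over $\BL_0$.
\end{itemize}
Nothing in your argument controls the weight of these arcs. The paper bounds them, together with everything else outside $P(L)$, by $O(LM\WW(\RR F))$. It does so using the $M$-bounded flux of $\HH^\hor$ and the absence of $M$-wide controllable waves, which are the two parts of $M$-tameness your proposal never invokes.

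A smaller point: in the outer component a single component of $\KK$ is joined to $\sim\per$ others by arcs of length $1$, so ``$O_L(1)$ classes per component'' is false. The per-component total is still at most the local width, hence at most $\WW(\RR F)$, so that count can be repaired.
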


Conversely, we will also see that a good set of arcs with bounded combinatorial length cannot have most of the canonical weight:

\begin{lem}\label{lem:hammer}
	For any $L$ there exists $\epsilon$ such that: if $F$ is $[L]$-HTND and $\AA$ is a good set of arcs with $\ell(\AA)\leq L$, then $$\|X|_\AA\|\leq (1-\eps)\|\xhv\|.$$
\end{lem}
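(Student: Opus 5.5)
We argue by contradiction, using the value calculus of Section \ref{sec:value}: an almost-invariant weighted arc diagram would have to gain value under pullback, whereas positive core entropy forces it to lose value. Fix $L$ and suppose that $\AA$ is a good set of arcs with $\ell(\AA)\le L$ and $\|X|_\AA\|>(1-\eps)\|\xhv\|$, with $\eps$ small and $q,\WW(\RR F)\largegiven{L}$. Set $Y=X|_\AA$; it is horizontal, since good sets consist of horizontal arcs. Since $Y$ carries almost all of the weight, the vertical part satisfies $\|X^\ver\|\le\eps\|\xhv\|$. By Lemma \ref{lem:width to weight}, $\|\xhv\|\succeq \per\WW(\RR F)$, which is large.

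The first step is to pull back $Y$ by the first-return map $g=F^{\per}$, or more economically by $F$ together with the translation structure. Since $\AA$ lies in the translation region, each $\alpha\in\AA$ has a distinguished lift $F^*\alpha$. Goodness means that the whole translation classes are pairwise disjoint. Hence iterated lifts of distinct good arcs never cross, and the pulled-back laminations can be assembled into a single weighted arc diagram without intersection losses. The Gr\"otzsch inequality then shows that the canonical diagram $X$ dominates $g^*Y$, up to an error. This error comes from the weight of $X$ outside $\AA$, from the vertical weight, and from an additive $O(\per)$ term; all of these are $O(\eps\|\xhv\|)+O(\per)$. Two tools keep the error from depending on the unbounded topology of $\UU\sm\KK$: the translation markings of Appendix \ref{app:transl-mark}, which bound the width of combinatorially long arcs, and the Wanderers Theorem of Appendix \ref{app:wanderers}, which controls lifts that wander through the translation region for many iterates. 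The key point is that each arc has combinatorial length at most $L$, so only boundedly many vertices of $\TT$ are relevant near each translation class.

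The second step is to compare values. Domination gives $\val(g^*Y)\ge \val(Y)-O(\eps)\val(Y)$. On the other hand, every arc of $Y$ crosses $\HH^\biv$ in a pattern of length at most $L$. The value transforms under pullback by the transpose transition matrix $T_F^\top$, so we would like the factor $\lambda_F^{-\per}$, or at least a definite contraction. The difficulty is that $\lambda_F\to1$ as $q\to\infty$, since the core entropy degenerates in the parabolic limit. To handle this, we plan to use the cloning estimates of Appendix \ref{app:clones}. The Hubbard tree decomposes into a bounded piece near $\LL_{[-\bound,\bound]}$ and roughly $q$ near-identical clones of a cross section, one for each translated limb. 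The appendix controls the leading eigenvector under this decomposition, and choosing values weighted by this Perron--Frobenius eigenvector then gives, over one full return of length $\per$, a definite decrease $\val(g^*Y)\le c\,\val(Y)$ with $c<1$ depending only on $L$ and $\bound$. Comparing this with the domination bound forces $\eps\succeq 1-c$. Taking $\eps$ smaller than this threshold gives the contradiction.

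The main obstacle is the domination step. The errors there must be measured in value rather than in total weight. They must also be shown to remain small even though $g$ has unbounded period and the surface has unbounded topology. This requires good arcs never to accumulate overlap along their translation classes, together with the Wanderers Theorem to absorb lifts that leave the translation region, each contributing only $O_L(1)$ relative loss.
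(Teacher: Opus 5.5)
Your architecture matches the paper's: Lemma \ref{lem:hammer} is obtained by playing a near-self-domination statement (Proposition \ref{prop:interval-self-dominating}) against the expanding valuation of Theorem \ref{thm:value} (Proposition \ref{prop:no-near-self-dominating}). The gap is in the domination step. You flag it as the main obstacle but do not carry it out, and the mechanism you suggest for it would not work.

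First, you pull back by $g=f^{\per}$ on all of $\UU\sm\KK$. There $g$ has degree $2^{\per}$, and $\UU^{\per}\sm g^{-1}(\KK)$ has exponentially many punctures. So neither the additive constant in the K06/Gr\"otzsch domination $g^*\wcan\dominates\wcan-C$ nor the constant in \eqref{eq:wanderers}, which depends on the topology and on $D=\deg g$, is $O(\per)$. Since HTND makes $q$ and $\WW(\RR F)$ large independently of each other, these constants cannot be absorbed into $\norm{X}\asymp\per\WW(\RR F)$. Neither goodness nor translation markings fixes this; the latter are used in the paper for Lemma \ref{lem:bounded-combinatorial}, not here.

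The missing idea is localization. Choose a \emph{typical} interval $I$ of bounded length $L_1$ in the translation region (Lemma \ref{lem:I-typical}). Pass to the localization $V_I$ and to the component $\hat U_I\subset U_I$ of $g^{-1}(U_I)$ with its subsurface cover $\hat V_I$. On this piece $g$ has degree at most $2^{\bound L_1}$ (Lemma \ref{lem:pullback-degree}), so every additive error is $C(\bound,L_1)\ll L_1\WW(\RR F)\asymp\norm{X_I}$. At the end you must transfer the domination from $\hat V_I\sm\hat\KK_I$ back to $V_I\sm\KK_I$ through the immersion; this is Theorem \ref{thm:immer-decomp}. Your sketch has no counterpart of either step.

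Second, the contribution of bad arcs is not $O(\eps\norm{X})$. An arc of $\hat X_I$ is dominated by an \emph{ordered} concatenation of pulled-back arcs, and a single bad arc can occur in the concatenations for many arcs. The paper sorts $\hat X_I$ by the position $k$ of the first non-good arc:
\begin{itemize}
\item Counting backward walks along good arcs gives $\norm{\hat X^{gb}_{k+1}}\le 4(6\bound)^k\norm{X^b_I}+C(\bound,k,L_1)$. This uses that at most $6\bound$ good arcs end at each small Julia set, and each has only two pullbacks starting at a component of $\KK$.
\item Leaves with $k\ge m$ begin at $\hat\KK_I$ and visit at least $m-1$ components of $g^{-1}(\KK)$. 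The Wanderers Theorem bounds them by $(4/m)\norm{X_I}$. This is its actual role, not absorbing lifts that leave the translation region.
\end{itemize}
Taking $m\approx 12/\delta$ and $\eps\ll\delta(12\bound)^{-m}$ yields $g^*(X^g|_I)\dominates Y$ with $\norm{X^g|_I-Y}\le\delta\norm{X^g|_I}$.

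Two smaller corrections to your first step:
\begin{itemize}
\item The domination must go in this direction; you wrote ``$X$ dominates $g^*Y$'', which is backwards.
\item Goodness is not what prevents intersection losses, since lifts under the covering $g$ are automatically disjoint. The losses arise when restricting to the surface with more punctures.
\end{itemize}

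The value step, by contrast, is easier than you suggest. The contraction $\lambda_F^{-\per}\le\frac12$ comes directly from Theorem \ref{thm:value} and the elementary bound $\lambda_F^{\per}\ge 2$. Cloning (Proposition \ref{prop:eigenweights}) is needed only to bound $u_{\max}/u_{\min}$ over the translation region. Together with $\pair{\alpha}{\HH^\biv}\le 2\bound L$, this converts the norm error into $\val(Z-Y)\le 4\delta\bound^2L^2(u_{\max}/u_{\min})^2\val(Z)<\frac12\val(Z)$, which gives the contradiction.
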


Together Lemmas \ref{lem:bounded-combinatorial} and \ref{lem:hammer} are almost contradictory, however they have opposite dependence on quantifiers. To produce a contradiction, we first note that Lemma \ref{lem:bounded-combinatorial} implies that  translation classes with a definite proportion of weight have bounded combinatorial length:

\begin{cor} \label{cor:length-bound-from-weight}
	For any $M$ and $\eps$ there exists $L$ such that: if  $F$ is $[M, \eps]$-HTND, $\UU\sm \KK$ is $M$-tame,  and $\AA$ is a translation class with $\|X|_\AA\|\geq \eps \|\xhv\|$, then either $F$ has $M$-amplification or  $\ell(\AA)\leq L$. 
\end{cor}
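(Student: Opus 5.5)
The corollary should follow from Lemma \ref{lem:bounded-combinatorial} applied with a smaller error parameter, together with the observation that a translation class cannot be partly long and partly short in a way that keeps weight. Given $M$ and $\eps$, apply Lemma \ref{lem:bounded-combinatorial} with $M$ and $\eps' = \eps/2$, and obtain $L' = L'(M,\eps/2)$. If $F$ is $[M,\eps]$-HTND (hence $[M,\eps/2]$-HTND up to renaming constants) and $\UU\sm\KK$ is $M$-tame, then either $F$ has $M$-amplification, in which case we are done, or there is a good set $\AA'$ with $\ell(\AA')\le L'$ and $\|X|_{\AA'}\| > (1-\eps/2)\|\xhv\|$.

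Now let $\AA$ be a translation class with $\|X|_\AA\|\ge\eps\|\xhv\|$. Since the weights of $\AA$ and of $\AA\cap\AA'$ are sums over arcs, we get
\[
\|X|_{\AA\cap\AA'}\|\ \ge\ \|X|_\AA\| - \|X|_{\AA\sm\AA'}\| \ \ge\ \eps\|\xhv\| - \tfrac{\eps}{2}\|\xhv\| \ >\ 0 .
\]
So $\AA$ contains at least one arc $\alpha\in\AA'$, and this arc has $\ell(\alpha)\le L'$. This is not yet a bound on $\ell(\AA)$, which is a supremum over the whole class. The remaining step is to show that combinatorial length is comparable within a translation class, at least among arcs carrying weight. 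I would take $L$ to be $L'$ plus a bounded amount and use the structure of the translation region $U_{(\bound,q-\bound)}$: the lift $F^*$ acts near-translationally, moving arcs from limb $j$ to limb $j-1$. For a horizontal arc in the translation region, $\langle\alpha,\HH^\hor\rangle$ is therefore preserved under $\alpha\mapsto F^*\alpha$, except near the ends of the region, where the number of crossings changes by $O(\bound)$ (using Lemma \ref{lem:comb size of limbs}, which bounds limbs by $6\bound$ intervals).

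The main obstacle is exactly this invariance claim. If it only holds up to additive error at each step, errors could accumulate along the class. I would first try to prove it exactly: for any arc $\alpha$ in the translation region, $F^*$ conjugates the restriction of $\HH^\hor$ to the limbs $\FL_j$ with $\bound<j<q-\bound$ onto itself, shifted by one limb. This uses condition (3) of parabolic boundedness, which says that components of $\KK$ in $\FL_0$ map into limbs with $|j|\le\bound$, so no dynamics other than the shift enters the region.

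If exact invariance fails at the boundary, a fallback is to define $\ell$ on the class via a canonical representative. Another option is to refine the choice of $\AA'$ by replacing it with the union of full translation classes that meet it, and to re-invoke goodness (non-intersecting classes) to keep this union good. One would then apply Lemma \ref{lem:bounded-combinatorial} to conclude that all members of such classes have length at most $L$.
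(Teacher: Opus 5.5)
Your proposal is the paper's argument. Lemma \ref{lem:bounded-combinatorial} forces some arc of $\AA$ to lie in the good set, so that arc has $\ell\le L$; the paper applies the lemma with $\eps$ itself rather than $\eps/2$, which already works because the inequality $\|X|_{\AA'}\|>(1-\eps)\|\xhv\|$ is strict, and then uses that $\ell$ is constant on translation classes. The paper states that constancy without comment, and your exact-shift justification is the right one: on $\FL_{[\bound,q-\bound-2]}$ the preimages of $\KK$ and $\TT$ agree with $\KK$ and $\TT$ by condition (3), as in the proof of Proposition \ref{prop:improper intervals}, so the lift carries the relevant part of $\HH^\hor$ to its shift, and your fallbacks and the extra slack in $L$ are unnecessary.
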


The last observation we need is that a good set of arcs with bounded combinatorial length is contained in a bounded number of translation classes:

\begin{lem}\label{lem:bounded good}
	For any $L$, if $F$ is $[L]$-HTND and $\AA$ is a good set of arcs with $\ell(\AA)\leq L$, then $\AA$ is contained in at most $3\bound$ translation classes.
\end{lem}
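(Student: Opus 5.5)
Since the arcs in $\AA$ are horizontal and have combinatorial length at most $L$, the plan is to reduce every translation class to a normal form anchored near the fundamental region and count the possible normal forms using parabolic $\bound$-boundedness. The translation region $U_{(\bound,q-\bound)}$ consists of the limbs $\FL_j$ with $\bound<j<q-\bound$. On these limbs $F$ acts as the rotation $\FL_j\mapsto \FL_{j+1}$ (rotation number $1/q$), and $F^*$ shifts limb indices by $-1$. A horizontal arc $\alpha$ with $\ell(\alpha)\le L$ crosses at most $L-1$ arcs of $\HH^\hor$. The combinatorial distance bound from Lemma \ref{lem:compare} then confines $\alpha$ to a union $\BL_{[j,j+L']}$ of at most $L'=L'(L,\bound)$ consecutive limbs. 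When $q\gg L$ (the $[L]$-HTND hypothesis), the arc $\alpha$ sits well inside the translation region, or at least meets it in a window of bounded width. Applying $F^*$ or its inverse repeatedly, each translation class contains a representative whose leftmost limb index is a fixed value $j_0$ (say $j_0=\bound+1$). Arcs in the class that wander into the nontranslational limbs may leave the region, but the class is determined by the representative anchored at $j_0$.

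Next I will use goodness to force the anchored representatives of distinct classes to be pairwise disjoint and to live near a single limb. The key observation is that if two arcs $\alpha,\beta$ lie in different classes, then the anchored representatives, together with all their translates by powers of $F^*$, are pairwise non-intersecting. Because the translates of one class sweep across all the limbs of the translation region, a nonintersecting family of arcs that is invariant under the shift forces each anchored arc to essentially stay within one limb, possibly together with the adjacent side near $\alpha_F$. The reasoning runs as follows. Suppose $\alpha$ spans limbs $\FL_j$ and $\FL_{j'}$ with $j\ne j'$, and suppose both endpoints are on small Julia sets in different limbs. Then $\alpha$ separates. Consider its translate by $j'-j$ limbs. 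In the circular order around $\alpha_F$ its endpoints interlace with those of $\alpha$: one endpoint coincides in limb index, and a planar topology argument shows the two arcs are linked. So $\alpha$ intersects a member of its own class, which contradicts the disjointness required of a good set of arcs in the same class.

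Hence each good class has an anchored representative with both endpoints on components of $\KK\cap \FL_{j_0}$, and possibly arcs running around the $\alpha_F$ side of the limb. Since $\FL_{j_0}$ contains at most $\bound$ components of $\KK$, the horizontal arcs in $\overline{U_{j_0}}\setminus\KK$ that are pairwise disjoint and nonhomotopic form an arc system on a sphere with at most $\bound+1$ punctures/boundaries. Such a system contains at most about $3\bound$ arcs (the standard count of a triangulation), which gives the bound of $3\bound$ translation classes.

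The main obstacle is the second step: rigorously ruling out classes whose arcs cross several limbs. The difficulty is handling arcs that pass near $\alpha_F$ while staying in $\UU\setminus\KK$, and making the linking argument precise in terms of the ideal boundary $\II$. I would attempt this via intersection numbers with the bivertical diagram $\HH^\biv$, using Lemma \ref{lem:comb size of limbs}. This would let me encode each arc by its sequence of limb intervals and check that a shift by a nonzero amount produces forced crossings.
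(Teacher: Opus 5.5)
Your second step is false, and the count in your last step depends on it. A horizontal arc with endpoints in two different limbs need not meet any of its translates. Take the arc $\gamma_j$ of $\HH^\hor$ that passes between $\FL_j$ and $\FL_{j+1}$ near $\alpha_F$, joining a component of $\KK$ in $\FL_j$ to one in $\FL_{j+1}$.

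\begin{itemize}
\item In the translation region $F^*\gamma_{j+1}=\gamma_j$.
\item All the $\gamma_j$ lie in the arc diagram $\HH^\hor$, so they are pairwise disjoint.
\item Hence $\{\gamma_j\}$ is a good set with $\ell=1$, and its class spans adjacent limbs.
\end{itemize}

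Your ``interlacing'' claim fails here. The endpoints of $\gamma_j$ and $\gamma_{j+1}$ do not interlace in the circular order around $\alpha_F$; at most they end on the same small Julia set, and that forces no crossing. So you cannot reduce to anchored arcs with both ends in one limb, and the arc-system count in $\overline{U_{j_0}}\sm\KK$ does not bound the number of classes. For example, when $\bound=1$ each limb contains at most one component of $\KK$, so your local system is empty, yet the class of $\gamma_j$ is good.

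Your first step also has a gap. Bounded $\langle\alpha,\HH^\hor\rangle$, or bounded $\langle\alpha,\HH^\biv\rangle$ via Lemma~\ref{lem:compare}, does not confine $\alpha$ to boundedly many consecutive limbs. An arc can cross the central region around $\alpha_F$ from $\FL_j$ to $\FL_{j+m}$ without meeting $\HH^\hor$, while meeting only $O(\bound)$ arcs of $\HH^\biv$, for any $m$. The confinement you need is Proposition~\ref{prop:comb-length-bound}, which requires that $\alpha$ have no long segment. That is where goodness enters: an arc with a long segment meets an arc of its own translation class.

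The missing idea is to count globally rather than classify the possible classes.

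\begin{itemize}
\item By Proposition~\ref{prop:comb-length-bound}, each arc of $\AA$ lies in some $U_I$ with $|I|\le 2L$.
\item So its translation class contains at least $q-2\bound-3L$ distinct shifts inside the translation region.
\item By goodness, all arcs in the $k$ classes meeting $\AA$ are pairwise disjoint. Together they form an arc diagram in $\UU\sm\KK$ with at least $(q-2\bound-3L)k$ arcs.
\item Any arc diagram there has at most $3\bound q$ arcs.
\item Hence $k\le 3\bound q/(q-2\bound-3L)<3\bound+1$ once $q$ is large compared with $L$ and $\bound$, which is what the HTND hypothesis supplies.
\end{itemize}

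Averaging over the roughly $q$ shifted copies of each class is what produces the sharp constant $3\bound$. A count inside one limb, or inside any fixed window, gives this only after it is made shift-invariant in exactly this way.
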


By playing Lemma \ref{lem:bounded-combinatorial} and Corollary \ref{cor:length-bound-from-weight} back and forth against each other to control the heaviest $n$ translation classes, and using Lemma \ref{lem:bounded good} to bound $n$, 
we can prove Proposition \ref{prop:controlled WAD}:

\begin{proof}[Proof of Proposition \ref{prop:controlled WAD}]
	Setting $N = 3\bound$,  $\eps_0 = 1$, and fixing some $M$, there exist sequences $(L_n)_{n=1}^N$ and $(\eps_n)_{n=1}^N$ such that each $L_n$ depends on $$\tilde\eps_n:=\frac{\eps_{n-1}}{2(N-n+1)}$$ as in Corollary \ref{cor:length-bound-from-weight}, and each $\eps_n$ depends on $\tilde{L}_n:=\sup_{i\leq n}L_i$ as in Lemma \ref{lem:hammer}. We set $\delta= \min \eps_n/2$ 
	and let $L$ depend on $\delta$ as in Lemma \ref{lem:bounded-combinatorial}. Note that all the above constants depend only on $\bound$ and $M$, thus when $F$ is $[M]$-HTND it follows that $F$ is HTND relative to each $\tilde \epsilon_n$ and $\tilde L_n$.
	We assume for the sake of contradiction that $F$ does not have $M$-amplification.
	
	It follows from Lemma \ref{lem:bounded-combinatorial} that there is a good set of arcs $\AA$  with $\ell(\AA)\leq L$ and
	\begin{equation}\label{eq:good arcs}
		\|X|_\AA\|> (1-\delta) \|\xhv\|.
	\end{equation} It then follows from Lemma \ref{lem:bounded good} that we can write $\AA=\AA_1\cup \dots \cup\AA_N$, where each $\AA_n$ is contained in a single translation class and $\|X|_{\AA_n}\|\geq \|X|_{\AA_{n+1}}\|.$ 
	
	Let us suppose that for some $0\leq n < N$ we have 
	\begin{equation}\label{eq:good arcs 2}
		\sum_{i = 1}^{n}\|X|_{\AA_i}\|\leq (1-\eps_n)\|\xhv\|
	\end{equation}
	and  $\ell(\AA_i)\leq L_i$ for all $1\leq i \leq n$; both of these facts hold trivially for $n = 0$.  
	As we have ordered the $\AA_i$ in decreasing order by weight, it then follows from \eqref{eq:good arcs} and \eqref{eq:good arcs 2} that 
	$$\frac{\|X|_{\AA_{n+1}}\|}{\|\xhv\|}\geq \frac{(1-\delta)- (1-\eps_n)}{N-n}= \frac{\eps_n-\delta}{N-n}\geq \frac{\eps_n}{2(N-n)}=\tilde\eps_{n+1}.$$
	Corollary \ref{cor:length-bound-from-weight} therefore implies that $\ell(\AA_{n+1})\leq L_{n+1}$. 
	As $\AA_1\cup \cdots \cup \AA_{n+1}$ is a good set of  arcs with combinatorial length at most $\tilde L_{n+1}$, it then follows from Lemma \ref{lem:hammer} that 
	$$\sum_{i= 1}^{n+1}\|X|_{\AA_n}\|\leq(1-\eps_{n+1})\|\xhv\|.$$
	By induction on $n$, we can conclude that 
	$$\|X|_\AA\| = \sum_{i=1}^N\|X|_{\AA_i}\|\leq (1-\eps_N)\|\xhv\|\leq  (1-2\delta)\|\xhv\|,$$
	which contradicts \eqref{eq:good arcs}.
\end{proof}

We will prove Lemmas \ref{lem:bounded-combinatorial}, \ref{lem:bounded good}  and Corollary \ref{cor:length-bound-from-weight} in Section \ref{sec:good arcs} after some preparation in Section \ref{sec:translation markings}. We will outline the proof of Lemma \ref{lem:hammer} in Section \ref{align:ggc}; the details of the argument will then occupy the rest of this section.
\subsection{Translation markings} \label{sec:translation markings}
In this section we apply the theory of translation markings introduced in Appendix \ref{app:transl-mark} to our setting. For simplicity we will assume that $\FK$ is locally connected; the general case can be handled similarly.

Let us think of $\D$ as the universal cover of $\UU^0 \sm \KK^0$, with $0\in \D$ mapping to the fixed point $\alpha_F$.
Simultaneously viewing $\D$ as the universal cover of $\UU^1 \sm \KK^1$, with $0\in \D$ mapping to the fixed point $\alpha_f$, the maps $f$ and $\iota$ lift to a holomorphic isomorphism $f_\inbr: \D\to \D$ and a holomorphic immersion $\iota_\inbr: \D\to \D$ respectively; we denote $F_\inbr = (f_\inbr, \iota_\inbr)$.

We consider the components of $\KK$ that are ``closest" to the fixed point $\alpha_F$, that is those that are connected to $\alpha_F$ by a path in $\TT$ that avoids $\KK$. 
Each such path 
lifts to a path in $\D$ connecting $0$ to an interval in $\dD$ that covers the ideal boundary of a component of $\KK$.  
We define the \textit{inner translation marking} \index{translation marking!inner $(\FF_\inbr, \II_\inbr)$} $\II_\inbr$ to be the ideal marking of $\D$ induced 
by all intervals that can be obtained  this way.
\begin{prop}\label{prop:inner translation marking}
	The pair $(F_\inbr, \II_\inbr)$ is an $(n, p)$-translation marking with $n \leq q$ and $p\leq \bound$. If $\UU\sm \KK$ is $M$-tame, then   
	$$\WW(F_\inbr, \II_\inbr)\leq O(M\WW(\RR f)).$$
\end{prop}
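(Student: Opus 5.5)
The plan has two parts: first verify the combinatorial axioms of an $(n,p)$-translation marking from Appendix \ref{app:transl-mark}, then bound the width using the tameness hypotheses.

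\textbf{Combinatorial part.} The intervals of $\II_\inbr$ come from components of $\KK$ that are joined to $\alpha_F$ by a path in $\TT$ avoiding $\KK$.
\begin{itemize}
\item Since $F$ has rotation number $1/q$ and $\alpha_F$ is a repelling fixed point, $F$ permutes the $q$ local arms of $\TT$ at $\alpha_F$ cyclically, shifting $\LL_j$ to $\LL_{j+1}$ for $j\neq 0$.
\item Lift to $\D$ with $0\mapsto\alpha_F$. The lifted map $F_\inbr=(f_\inbr,\iota_\inbr)$ then acts on the ideal boundary near the lifted arms as a rotation-like shift. The deck transformation of $\UU\sm\KK$ around $\alpha_F$ supplies the periodicity.
\item Each limb $\LL_j$ contains at most $\bound$ components of $\KK$ (parabolic $\bound$-boundedness). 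Hence in each limb at most $\bound$ components are ``closest'' to $\alpha_F$, and on each lifted arm the marked intervals form blocks of size $p\le\bound$.
\item There are $q$ limbs, so the number of blocks in a fundamental domain is $n\le q$.
\item By condition (3) of parabolic boundedness, away from the limbs $\LL_j$ with $|j|\le\bound$ the map sends the closest components of $\LL_j$ to the closest components of $\LL_{j+1}$, preserving their order. This is exactly the translation property (each interval maps to the corresponding interval one block over). Only boundedly many limbs near $\LL_0$ are exceptional, and the definition of translation marking should tolerate this.
\end{itemize}
I expect this part to be bookkeeping: match the abstract definition against the cyclic order at $\alpha_F$ and confirm that $\iota_\inbr$ is compatible with the marking.

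\textbf{Width part.} I would bound $\WW(F_\inbr,\II_\inbr)$ by the width of the laminations it pushes down to in $\UU\sm\KK$.
\begin{enumerate}
\item The relevant width, as defined in the appendix, is the width of proper arcs in $\D$ joining the marked intervals. Projecting to $\UU\sm\KK$, each family becomes either a family of horizontal arcs between closest components or a family of vertical arcs.
\item Horizontal families crossing a Hubbard arc $\alpha\in\HH^\hor$ have width bounded by $\flux(\alpha)$. The $M$-bounded-flux part of tameness gives $O(M\WW(\RR F))$ for these.
\item The remaining families do not cross $\HH^\hor$ in this way. They stay near $\TT$ on one side, so they restrict to laminations in $\UU\sm\TT$ with endpoints in a bounded interval of $\II$. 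By Lemma \ref{lem:comb size of limbs} these intervals have length $\le6\bound$.
\item A wide family among these would contain a localized wave, as in the proof of Proposition \ref{prop:bounded flux}. The no-$M$-wide-controllable-waves part of tameness caps its width at $M\WW(\RR F)$.
\item Vertical contributions are bounded by $\WW(\UU\sm\TT)$. We may assume $\WW(\UU\sm\TT)\le M\WW(\RR F)$, since otherwise we are in the amplification case; this is presumably absorbed into the tame setting.
\end{enumerate}
Summing over the boundedly many (in terms of $\bound$) families per translation step gives $O(M\WW(\RR F))$.

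\textbf{Main obstacle.} The hardest step is making the width transfer precise. Extremal width in the universal cover $\D$ is not obviously controlled by width downstairs, because many lifts could combine. I would handle this by using that each relevant arc in $\D$ projects injectively near its endpoints. I would then compare the two widths through the localization construction of \S\ref{sec:localization}: the subsurface cover is a bounded-degree piece, so widths change by at most a bounded factor.
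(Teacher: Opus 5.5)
Your combinatorial part is essentially the paper's argument. The exceptional block around $\BI_0$ is built into the definition of a translation marking. The shift $f_\inbr\colon\BI_j\to\BI_{j+2p}$ and the fact that $\iota_\inbr$ fixes the odd intervals both come from two facts for $\bound\le j<q-\bound-1$: $F^{-1}(\TT)\cap\FL_j=\TT\cap\FL_j$, and $f$ maps it homeomorphically onto $\TT\cap\FL_{j+1}$. Condition (3) gives you exactly this. One inaccuracy: there is no deck transformation ``around $\alpha_F$''. The point $\alpha_F$ is an interior point of $\UU\sm\KK$, and $\II_\inbr$ is a finite, non-periodic marking.

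The genuine gap is in the width part. By definition $\WW(F_\inbr,\II_\inbr)=\sum_{|j|\le 2p}\WW(\BI_j)$ is a sum of $O(\bound)$ local widths in $(\D,\II_\inbr)$, not the width of ``arcs joining marked intervals''. The missing idea is that each of these local widths is \emph{already} a quantity of $\UU\sm\KK$, so nothing has to be transferred from the universal cover.
\begin{itemize}
\item If $\BI_j$ is a gap interval between two lifted closest components $\BI_{j\pm1}$, use conformal duality in the quadrilateral with sides $\BI_{j-1},\BI_j,\BI_{j+1}$ and the rest of $\dd\D$. This gives $\WW(\BI_j)=1/\overline{\WW}(\BI_{j-1},\BI_{j+1})$, which is by definition the flux through the arc of $\HH^\hor$ joining these two components near $\alpha_F$. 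Bounded flux then gives $O(M\WW(\RR F))$.
\item If $\BI_j$ covers $\dcara K$ for a component $K$ of $\KK$, its local width is at most $\WW(\RR F)$, since it is controlled by the width around a single small Julia set (as in the proof of Lemma \ref{lem:width to weight}).
\item The exceptional interval $\BI_0$ is handled by passing to the finer marking $\II_\inbr'$ induced by all closest components. In that marking $\BI_0$ is a multi-interval of length $O(\bound)$ made of intervals of the two kinds above.
\end{itemize}

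Your route does not close, for three reasons.
\begin{itemize}
\item The step you flag as the main obstacle is resolved incorrectly. The map $\D\to\UU\sm\KK$ is an infinite-degree cover, and the localization of \S\ref{sec:localization} does not make it bounded-degree. So there is no bounded-factor comparison of widths to invoke, and, as above, none is needed.
\item Your treatment of the ``remaining families'' via localized waves is unjustified. Curves from a lifted component interval can wander arbitrarily in $\UU\sm\KK$; they need not stay on one side of $\TT$ or end in a bounded interval of $\II$. Waves play no role for the inner marking; they enter only for the outer marking, through waves over $\dcara\LL_0$.
\item You bound vertical contributions by $\WW(\UU\sm\TT)\le M\WW(\RR F)$, which is the no-amplification hypothesis. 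This is not part of $M$-tameness and is not assumed in the statement. It happens to be available where the proposition is used in Lemma \ref{lem:bounded-combinatorial}, but the paper's proof does not need it: vertical curves from a single component are already covered by the bound $\WW(\RR F)$ on that component's interval.
\end{itemize}
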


\begin{proof}
	We can write $\II_\inbr = (\BI_j)_{j\in \Z/N\Z}$ for some $N\geq 1$, labeled so that $\BI_j$ covers a component of $\KK$ for all odd $j$ and so that the counter-clockwise circular ordering of the intervals is the ordering on $\Z/N\Z$.
    We set $\TT_j:= \TT\cap \FL_j$ and $\TT_j':= F^{-1}(\TT)\cap \FL_j$. 
	As $\TT_j'= \TT_j$ for all $\bound \leq j < q-\bound -1$, it follows that $\iota_\inbr$ extends to a proper map fixing $\BI_j$ for all odd $j$. 
	As $f$ maps $\TT_j'$ homeomorphically to $\TT_{j+1}$ for all $\bound \leq j < q-\bound -1$, we can choose the labeling  so that  $f_\inbr$ extends to a homeomorphism from $\BI_j$ to $\BI_{j+2p}$ for some integer $p$ and all $0< j < N-2p$.
	Thus  $(F_\inbr, \II_\inbr)$ is a translation marking. 
	As each limb of $\FK$ contains at most $\bound$ components of $\KK$, it follows that $p\leq \bound$ and $2(n+1)p = N$ for $n \leq q-2\bound$.
	
	Let $\II_\inbr'$ be the ideal marking of $\D$ induced by all the intervals corresponding to the components of $\KK$ closest to $\alpha_F$, not just the ones in the translation region.
	Thus $\II_\inbr'$ is a refinement of $\II_\inbr$: if  $j\neq 0$ then $\BI_j$ is an interval in $\II_\inbr'$, and $\BI_0$ is a multi-interval in $\II_\inbr'$ of length at most $2(2\bound +1)+1$.
	If  $ \BI'\in  \II_\inbr'$  covers the Caratheodory  boundary of a component of $\KK$, then we have $$\WW( \BI')\leq \WW( \RR F).$$
	Otherwise, $\WW(
	 \BI')$ is exactly the flux through some arc $\HH^\hor$; as $\UU\sm \KK$ is  $M$-tame it then follows that 
	$$\WW( \BI')\leq O(M\WW(\RR F)).$$
	Thus 
	\begin{align*}
		\WW(F_\inbr, \II_\inbr)& = \sum_{|j|\leq 2p}\WW(\BI_j)\\
		& \leq 
		(4p+2(2\bound+1)+1)(\WW(\RR F)+O(M\WW(\RR F)))\\
		& = O(M\WW(\RR f))
	\end{align*}
	as desired.
\end{proof}

Let us now view $\D$ as the universal cover of $\UU^0\sm \KK^0$, with $0\in \D$ mapping to a point $*\in \UU^0 \sm \FK$ in the translation region. 
Simultaneously viewing $\D$ as the universal cover of $\UU^1\sm \KK^1$ with $0\in \D$ mapping to the same base point, the maps $f$ and $\iota$ lift to a holomorphic isomorphism $f_\outbr: \D\to \D$ and a holomorphic immersion $\iota_\outbr: \D\to \D$ respectively; we denote $F_\outbr = (f_\outbr, \iota_\outbr)$.

For every side of a component of $\KK$, any path in $\UU\sm \FK$ connecting $*$ to that side lifts to a path in $\D$ connecting $0$ to an interval in $\dD$ that covers the ideal boundary of a component of $\KK$.  We define the \textit{outer translation marking} $\II_\outbr$ \index{translation marking!outer $(\FF_\outbr, \II_\outbr)$} to be the ideal marking of $\D$ induced by all the intervals that can be obtained in this way from paths inside the translation region.

\begin{prop}\label{prop:outer translation marking}
	The pair $(F_\outbr, \II_\outbr)$ is a cardinality $\leq 4\bound q$ translation marking. If $\UU\sm \KK$ is $M$-tame, then 
	\begin{equation}\label{eq:outer translation marking}
		\WW(F_\outbr, \II_\outbr)\leq O(M\WW(\RR f)).
	\end{equation}	
\end{prop}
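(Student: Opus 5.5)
I will follow the proof of Proposition \ref{prop:inner translation marking} closely, replacing the base point $\alpha_F$ by the base point $*$ in the translation region. The plan has three steps: describe the intervals of $\II_\outbr$ combinatorially, check the translation-marking axioms for $F_\outbr$, and then bound the width in the same two-case way as for the inner marking.

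\textbf{Counting and labelling the intervals.} A path from $*$ to a side of a component $K$ of $\KK$ that stays in the translation region $U_{(\bound,q-\bound)}$ and avoids $\FK$ is determined up to homotopy by the side it reaches. In each limb $\FL_j$ with $\bound< j< q-\bound$ there are at most $\bound$ components of $\KK$. Each component has at most a bounded number of sides seen from $\UU\sm\FK$, and one can arrange this number to be at most $2$ by ordering the sides along $\dcara \FL_j$. This gives at most $4\bound q$ intervals (the factor $2$ allows slack for the complementary gaps in the induced ideal marking).

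I label the intervals of $\II_\outbr$ by $\BI_j$ following the counter-clockwise order of the limbs around $\alpha_F$. For $\bound\le j<q-\bound-1$ the preimage tree $F^{-1}(\TT)\cap\FL_j$ agrees with $\TT\cap\FL_j$. Also, $f$ maps $\FL_j$ homeomorphically onto $\FL_{j+1}$, since the rotation number is $1/q$. Consequently:
\begin{itemize}
\item $\iota_\outbr$ extends to fix each interval covering a side of $\KK$;
\item $f_\outbr$ shifts these intervals by the number of sides in one limb, in the pattern required in Appendix \ref{app:transl-mark}.
\end{itemize}
These facts are exactly the translation-marking axioms. The only places the shift fails are the ends of the translation region, near $j=\bound$ and $j=q-\bound$, and the definition of a translation marking allows this.

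\textbf{Width bound.} I split $\WW(F_\outbr,\II_\outbr)$ into its defining sum of widths of the intervals in a bounded window. Intervals covering $\dcara K$ for a component $K$ of $\KK$ have width at most $\WW(\RR F)$, by Lemma \ref{lem:width to weight} (more precisely by \eqref{eq:width to weight 2}). The remaining intervals are gaps between consecutive sides, and each gap corresponds either to an arc of $\HH^\hor$ or to a vertical arc running out to $\ddi\UU$.
\begin{itemize}
\item For gaps corresponding to an arc of $\HH^\hor$, the width equals the flux through that arc, which is $O(M\WW(\RR F))$ by the $M$-bounded-flux clause of $M$-tameness.
\item For gaps corresponding to a vertical arc, the width is at most $\WW(\UU\sm\TT)$.
\end{itemize}

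\textbf{Main obstacle.} The vertical gaps are the difficult part, because their width is only bounded by $\WW(\UU\sm\TT)$, not by flux. I would try to bound $\WW(\UU\sm\TT)$ by $O(M\WW(\RR F))$ using the no-wide-waves clause of tameness together with flux. If that fails, I would instead note that when $\WW(\UU\sm\TT)> M\WW(\RR F)$ we already have $M$-amplification, since $\WW(F)\ge \WW(\UU\sm\TT)$, and argue that this case is excluded in the intended application.

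The window in the definition of $\WW(F_\outbr,\II_\outbr)$ contains $O(\bound)$ intervals. Summing the bounds above then gives \eqref{eq:outer translation marking}.
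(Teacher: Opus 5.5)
Your translation-marking part matches the paper, which simply says the argument is identical to Proposition \ref{prop:inner translation marking}. The width bound, however, has a genuine gap. You claim that every non-component interval of $\II_\outbr$ is a gap dual either to an arc of $\HH^\hor$ or to a vertical arc. That is false for the interval that matters most: the special gap $\BI_0$ between the last side (in $\BL_{q-\bound-1}$) and the first side (in $\BL_{\bound+1}$). This is exactly where $f_\outbr$ fails to translate, and the window $|j|\le 2p$ in the standard width is centered on it.

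Seen from $*$, $\BI_0$ contains everything outside the translation stretch: the direct lift of $\partial\UU$, the sides and gaps of the $O(\bound)$ adjacent non-translation limbs, and, further out, the lifts reached by going past the critical limb in either direction. Its local width is the width of all paths from $\BI_0$ to the interior of the translation stretch. Besides vertical paths, this family includes horizontal paths that run from a translation side around $\LL_0$ back to a translation side. Restricted to $\UU\sm\TT$, these contain leaves of waves over $\dcara\LL_0$, one for each direction around. Such waves can be very wide while every arc of $\HH^\hor$ has bounded flux, since their leaves can be drawn outside all $\HH^\hor$ arcs. They can also be wide while $\WW(\UU\sm\TT)$ is small. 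So neither bounded flux, nor $\WW(\RR F)$, nor $\WW(\UU\sm\TT)$ controls $\WW(\BI_0)$.

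The missing step is the one the paper takes. Split the paths contributing to $\WW(\BI_0)$ (and to its neighbours in the window) according to where they land in $\BI_0$:
\begin{enumerate}
\item on one of the $O(\bound)$ components in the adjacent non-translation limbs, each contributing at most $\WW(\RR F)$;
\item across one of $O(\bound)$ arcs of $\HH^\hor$ there, controlled by bounded flux;
\item past $\BL_0$, in which case they feed one of two waves over $\dcara\LL_0$; the no-$M$-wide-controllable-waves clause of tameness bounds each by $M\WW(\RR F)$.
\end{enumerate}
In your proposal the wave clause is instead aimed at the vertical width. It cannot control that, since waves are horizontal laminations in $\UU\sm\TT$.

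Your separate observation about vertical paths is fair. The direct lift of $\partial\UU$ does sit in $\BI_0$, and those paths are bounded only by $\WW(\UU\sm\TT)$; the paper's sketch does not list this term. Excluding $M$-amplification, as in Lemma \ref{lem:bounded-combinatorial} where this proposition is used, is the natural way to dispose of it. But that handles only one of the contributions to $\BI_0$, not the wave contribution.
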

\begin{proof}
	The argument that $(F_\outbr, \II_\outbr)$ is a translation marking is identical to the proof in Proposition \ref{prop:inner translation marking}. The computation of the standard width is similar; the only difference is now the standard width is bounded by the width to $O(\bound)$ many intervals that cover a component of $\KK$, the flux through $O(\bound)$ many arcs in $\HH^\hor$, and the width of two waves over the $\dcara\LL_0$. 
	When $\UU\sm \KK$ is $M$-tame, so $\UU\sm \TT$ does not have $M$-wide controllable waves, we similarly recover \eqref{eq:outer translation marking}.
\end{proof}

\subsection{Good arcs}\label{sec:good arcs}

In this subsection we study good sets of arcs and prove Lemma \ref{lem:bounded-combinatorial}, Corollary \ref{cor:length-bound-from-weight}, and Lemma \ref{lem:bounded good}. We recall the definition of a long arc for a translation marking from Appendix \ref{app:transl-mark}.

Let us say that an arc in the translation region  \textit{has a long segment} \index{translation region!long segment} if its lift into either $(\D, \II_\inbr)$ or $(\D, \II_\outbr)$ contains a long arc.
Note that any arc with a long segment must intersect an arc in its translation class, and hence cannot be good. 
We observe that arcs with bounded combinatorial length and without long segments stay in a bounded part of the translation region:

\begin{prop}\label{prop:comb-length-bound}
	If $\alpha$ is a horizontal arc in the translation region with $\ell(\alpha)\leq L$ and $\alpha$ has no long segments, then there is an interval $I\subset \R$ with $|I|\leq 2L$ such that $\alpha$ is contained in $U_I$.
\end{prop}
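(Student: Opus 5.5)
We will prove the statement by cutting $\alpha$ along $\HH^\hor$ and tracking, piece by piece, which limbs the arc can visit. Realize $\HH^\hor$ and $\alpha$ in minimal position, so that $\alpha$ crosses $\HH^\hor$ exactly $\ell(\alpha)-1\le L-1$ times. The crossings cut $\alpha$ into at most $L$ segments $\alpha_0,\dots,\alpha_m$. Each $\alpha_i$ lies in a single complementary component of $\KK\cup\HH^\hor$ inside the translation region.

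The key geometric input is a description of these complementary components. In the translation region, each limb $\FL_j$ with $\bound\le j<q-\bound$ contains at most $\bound$ components of $\KK$, and the arcs of $\HH^\hor$ near $\FL_j$ separate $U_{\{j\}}$ from the other limbs, except through two channels. The first channel is the ``inner'' region around $\alpha_F$, encoded by $\II_\inbr$. The second is the ``outer'' region near $\dd\UU$ through $\UU\sm\FK$, encoded by $\II_\outbr$.

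We first show that a segment $\alpha_i$ which does not pass through either channel stays inside a single $U_{\{j\}}$. The remaining segments run through the inner or outer region. For these, we lift the segment to $(\D,\II_\inbr)$ or $(\D,\II_\outbr)$ respectively. A lifted segment connects two intervals of the translation marking. By the definition of a long arc in Appendix \ref{app:transl-mark}, an arc that is not long connects intervals whose indices differ by a bounded amount (at most one translation step, i.e. $2p$ intervals). In limb terms, this means the endpoints of the segment lie in limbs $\FL_j$ and $\FL_{j'}$ with $|j-j'|\le 1$. Since $\alpha$ has no long segments, every $\alpha_i$ is therefore contained in $U_{\{j_i,j_i+1\}}$ for some $j_i$.

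Next we chain the segments together. Consecutive segments $\alpha_i,\alpha_{i+1}$ share an endpoint on a single arc of $\HH^\hor$. Because $\HH^\hor$ consists of horizontal arcs that can be realized in $\UU\sm\TT$, that arc lies within one limb neighborhood, so the limb intervals of consecutive segments overlap. By induction, $\alpha_0\cup\dots\cup\alpha_i$ is contained in $U_{I_i}$ for an interval $I_i$ with $|I_i|\le 2(i+1)$. Taking $I=I_m$ gives $|I|\le 2L$. The hypothesis that $F$ is HTND with $q$ large compared to $L$ (implicitly available via Lemma-level HTND) ensures that $I$ does not wrap around $\Z/q\Z$ and does not reach $\FL_0$.

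The main obstacle is the step identifying ``non-long'' lifted segments with adjacency of limbs. This requires matching the interval indexing of $\II_\inbr$ and $\II_\outbr$ from Propositions \ref{prop:inner translation marking} and \ref{prop:outer translation marking} with the limb labels. In particular, we must check that the $p\le\bound$ intervals per translation step correspond exactly to one limb, and handle segments that pass near $\FL_0$ or the boundary of the translation region.
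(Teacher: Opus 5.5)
Your proposal is correct and follows the paper's own argument. You cut $\alpha$ along $\HH^\hor$ into $\ell(\alpha)$ segments; each segment either stays in a single limb or lifts to an arc in $(\D,\II_\inbr)$ or $(\D,\II_\outbr)$, depending on whether it lies in the central component $D\ni\alpha_F$; the absence of long segments gives $|I_n|\le 2$; and summing gives $|I|\le 2\ell(\alpha)$.

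Two small remarks. First, the arcs of $\HH^\hor$ bounding $D$ join closest components in adjacent limbs $\FL_j,\FL_{j+1}$, so not every arc of $\HH^\hor$ lies in a single limb neighborhood. Your chaining, however, only needs consecutive $I_n$ to overlap or be adjacent, which is exactly what the paper's bound $|I|\le\sum_n|I_n|$ uses. Second, no HTND hypothesis is needed, since $\alpha\subset U_{(\bound,q-\bound)}$ already rules out wrap-around.
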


\begin{proof}
	For all intervals $I\subset \R$, let $\HH_I^\hor\subset \HH^\hor$ denote the subset consisting of arcs in $U_I$. 
	The arcs in $\HH^\hor$ cut $\alpha$ into segments $\alpha_0, \dots, \alpha_N$, where $$N = \langle \alpha, \HH^\hor\rangle= \ell(\alpha)-1.$$ 
	For each $n$, let $I_n$ be the minimal interval so that $\alpha_n$ is contained in $U_{I_n}$. 
	Thus $\alpha$ is contained in $U_I$ for some interval $I$ with 
	\begin{equation}
		|I| \leq \sum_{n=0}^N|I_n|.
	\end{equation}

	Let $D$ be the component of $\UU\sm (\KK\cup \HH^\hor)$ containing $\alpha_F$ when $\HH^\hor$ is chosen in $\UU\sm \TT$. Any segment $\alpha_n$ in $D$ corresponds to a lift of $\alpha$ to an arc in $(\D, \II_\inbr)$; if $|I_n|> 3$ then then the lifted arc must be long. Any segment $\alpha_n$ not in $D$ either satisfies $I_n = 1$ or corresponds to a lift of $\alpha$ to an arc in $(\D, \II_\outbr)$; again if $|I_n|> 3$ then the lifted arc is long. 
	As $\alpha$ has no long segments, it follows that $|I_n|\leq 2$ for all $n$. Hence $|I|\leq 2 (N+1)=2 \ell(\alpha)$.
\end{proof}

With Proposition \ref{prop:comb-length-bound}, we can prove Lemma \ref{lem:bounded good}:
\begin{proof}[Proof of Lemma \ref{lem:bounded good}]
	Let $\AA$ be a good set of arcs with $\ell(\AA)\leq L$ and let $k$ be the number of distinct translation classes that intersect $\AA$.
	Proposition \ref{prop:comb-length-bound} implies that the union of all these translation classes form an arc diagram in $\UU\sm \KK$  that contains at least $(q-2\bound-3L)k$ arcs. The total number of arcs in an arc-diagram in $\UU\sm \KK$ is at most $3\bound q$, so
	$$k \leq \frac{3\bound q}{q-2\bound -3L}\leq 3\bound+1/2$$
	when $q\largegiven{L}$. As $k$ is an integer, the proof is complete.
\end{proof}

We can now prove Lemma \ref{lem:bounded-combinatorial} and Corollary \ref{cor:length-bound-from-weight}. 

\begin{proof}[Proof of Lemma \ref{lem:bounded-combinatorial}]
	We assume that $F$ is $M$-tame, and that $F$ does not have $M$-amplification.	
	Let $\Gamma$ denote the set of all arcs in the support of $X$. For all $L\geq 1$, we denote $P(L)= U_{(\bound+3L, q-\bound-3L)}$. Let $\Gamma_L\subset \Gamma$ be the subset consisting of horizontal arcs in $P(L)$, and let  $G_L\subset \Gamma_L$ be the subset consisting of arcs with combinatorial length at most $L$
	and without long segments. Theorem \ref{main estimate} implies that 
	$$\|\xhv\|\geq \per (\WW(\RR F)/2-O(1))\geq (q/3)\WW(\RR F)$$
	when $\WW(\RR F)\gg 1$.

	Any arc in $\Gamma\sm \Gamma_L$ must contribute its weight in $X$ to either $\WW(F)$, the flux through some arc in  $\HH^\hor$ outside $P(L)$, the total width of some component of $\KK$ outside $P(L)$, or the width of a wave over $\dcara\LL_0$ in $\UU\sm \TT$. As we assume that $\UU\sm \KK$ is $M$-tame and $F$ does not have $M$-amplification, it follows that
	\begin{equation}\label{eq:arcs outside translation region}
		\|X|_{\Gamma\sm \Gamma_L}\|  = 
		O(L M \WW(\RR F))\leq (\eps/2)\|X\|
	\end{equation}
	when $F$ is $[M, \eps, L]$-HTND.

	For any $L\geq 1$ and $\gamma\in \Gamma_L\sm G_L$, 
	then either $\ell(\gamma)> L$ or $\gamma$ has a long segment. 
	As $\UU\sm \KK$ is $M$-tame, so $\HH^\hor$ is $M$-aligned with $\TT$, it therefore follows from 
	Propositions \ref{prop:inner translation marking}, \ref{prop:outer translation marking} and Theorem \ref{thm:bounded short} 
	that 
	\begin{equation}\label{eq:arcs inside translation region}
		\|X|_{\Gamma_L\sm G_L}\|\leq \frac{O(M\|X\|)}{(L-1)^2}+ O(M\WW(\RR F)+q)\leq (\eps/2)\|X\|
	\end{equation}
	when $L\largegiven{\eps}$ and $F$ is $[M, \eps]$-HTND.
	Combining \eqref{eq:arcs outside translation region} and \eqref{eq:arcs inside translation region} yields 
	$$\|X|_{\Gamma\sm G_L}\|= \|X|_{\Gamma\sm \Gamma_L}\|+ \|X|_{\Gamma_L\sm G_L}\|\leq \eps\|X\|.$$
	
	Now we observe that $X(F^*\alpha)\geq X(\alpha)$ for any arc $\alpha$ in the translation region with $F^*\alpha$ defined.
	Thus for any $\alpha, \beta\in G_L$, we have $\langle (F^*)^k\alpha, \beta\rangle = 0$ for all $0\leq k < 3L$. 
	As $\alpha, \beta$ have combinatorial length $\leq L$ and no long segments, it then follows from Proposition \ref{prop:comb-length-bound} 
	that the translation classes of $\alpha$ and $\beta$ are disjoint. Hence $G_L$ is good.
\end{proof}

\begin{proof}[Proof of Corollary \ref{cor:length-bound-from-weight}]
	Fixing some $M$ and $\eps$, let $L$ be the corresponding value from Lemma \ref{lem:bounded-combinatorial}. If $F$ is $[M, \eps]$-HTND, $\UU\sm \KK$ is $M$-tame, $F$ does not have $M$-amplification, and $\AA$ is a translation class with $\|X|_\AA\|\geq  \eps\|X\|$, then Lemma \ref{lem:bounded-combinatorial} implies that some arc in $\AA$ has combinatorial length at most $L$. As every arc in $\AA$ has the same combinatorial length, it follows that $\ell(\AA)\leq L$.
\end{proof}

\input{not-all-good2}

\section{The wave case}\label{sec:waves}

In this section, we will prove Proposition \ref{prop:very wide waves}.
We recall from Section \ref{sec:parabolic optimism} that $F: \UU'\to \UU$ is a parabolically $\bound$-bounded \pql map, with Hubbard tree $\TT$ and vertices $\hat\KK$ as in Section \ref{sec:hubbard trees}. 
We also recall the definition of ideal markings, waves from  Appendix \ref{app:ideal markings and waves}.
We equip $\UU\sm \TT$ with the ideal marking  $\II$  induced by the sides of $\hat\KK$ in $\TT$.
We will say a wave $A$ in $\UU\sm \TT$ is \textit{$[x_1, \dots, x_n]$-wide} \index{waves!wide} if $$\WW(A)\largergiven{x_1, \dots, x_n}\WW(\RR F).$$ 
We will implicitly assume that all constants in this section depend on $\bound$, for example if a wave is $[\bound, x]$-wide then we will usually just say that it is $[x]$-wide.

Recall from Appendix \ref{app:ideal markings and waves} that a wave is \emph{localized} if it ends at a single interval.
For any $k\geq 1$, we will say that a wave $A$ in $\UU\sm \TT$ is \emph{$k$-controllable} \index{waves!controllable} if we can write it as
$A = A_0\cup \dots\cup A_k$, where $A_0$ crests over $\BL_0$ and each $A_j$ with $1\leq j \leq k$ is a localized wave. 
For any $0< \delta< 1$ and $k\geq 0$, let us say that a wave $A$ in $\UU\sm \TT$ is \textit{$(\delta, \eta)$-terminal} \index{waves!terminal} if either  
$$\WW(\UU\sm \TT)> \delta\WW(A) \text{ or }\WW(B)> (1+\delta)\WW(A)$$ for some $\eta$-controllable wave $B$. 
{Thus, a terminal wave produces either definite vertical width or a definitely wider controllable wave.}  

Our goal in this section is prove that all wide controllable waves are terminal (this is the analogue in our setting to the \emph{Wave Lemma} from \cite[Lemma 5.1]{dudko2023mlcfeigenbaumpoints}):
\begin{lem}\label{lem:terminal}
	There exists some universal $\delta$ and $\eta$ such that: for all $k$,  any $[k]$-wide $k$-controllable wave is $(\delta, \eta)$-terminal.
\end{lem}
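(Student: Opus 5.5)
We follow the strategy of the Wave Lemma \cite[Lemma 5.1]{dudko2023mlcfeigenbaumpoints}: push a wide controllable wave forward by the dynamics and show that either a definite part of its width escapes to $\ddi\UU$ (vertical width in $\UU\sm\TT$) or the image contains a definitely wider controllable wave. Let $A=A_0\cup A_1\cup\dots\cup A_k$ be a $[k]$-wide $k$-controllable wave, with $A_0$ cresting over $\BL_0$ and each $A_j$ localized. Since $\WW(A)\ge \sum_j \WW(A_j)$ up to a harmonic-sum loss, we pass to a component $A_j$ carrying at least a $1/(k+1)$ share of the width. Because $A$ is $[k]$-wide, this component is still wide, i.e. $\WW(A_j)\gg\WW(\RR F)$. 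The widths of the intervals covering components of $\KK$ are bounded by $\WW(\RR F)$, so an error of order $O(k)\,\WW(\RR F)$, from the $O(\bound)$ intervals at the base of each piece, is negligible compared with $\delta\WW(A)$.

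\emph{Localization to a limb.} We first show that, up to negligible width, each localized $A_j$ ends at an interval lying in a single limb $\BL_i$. For the crest piece $A_0$, its two feet lie on either side of $\BL_0$. The mechanism is Lemma \ref{lem:comb size of limbs}: each $\BL_i$ has at most $6\bound$ intervals, so a wave passing over many limbs must cross many short "gates" near $\alpha_F$. The width through such gates is controlled either by $\WW(\RR F)$ or by the width reaching $\ddi\UU$, and either of these is a gain.

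\emph{Pushing forward.} We then treat two cases.

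\textbf{Crest case.} If $A_0$ is heavy, we apply $F$ once. $F$ is two-to-one on $\FL_0$ and sends the crest over $\BL_0$ to a family whose image under $\iota$ covers a wave over $\BL_{1}$ and its neighbours. We then apply the Covering Lemma / Wanderers Theorem (Appendix \ref{app:wanderers}), exactly as in \cite{dudko2023mlcfeigenbaumpoints}. Because the degree is $2$ over the crest, the image lamination has width roughly $2\WW(A_0)$ minus losses. The losses consist of leaves that become vertical, which gives $\WW(\UU\sm\TT)>\delta\WW(A)$, and leaves whose images hit $\KK$, which contribute only $O(\WW(\RR F))$. The surviving image is a wave localized near $\BL_1$ of width at least $(1+\delta)\WW(A)$, and in particular it is $1$-controllable.

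\textbf{Translational case.} If a localized $A_j$ in a limb $\BL_i$ is heavy, we iterate $F$ until the wave returns to $\FL_0$. This takes about $q-i$ steps, which is unbounded. We use the translation region: for $\bound\le i<q-\bound$, the maps $F$ act as the translation markings of Propositions \ref{prop:inner translation marking} and \ref{prop:outer translation marking}, so each step maps localized waves to localized waves injectively, with width loss only to vertical or $\KK$-adjacent leaves. We compose these steps as a single near-translation, so that there is no per-step loss. After the wave leaves the translation region and enters $\FL_{[-\bound,\bound]}$, at most $O(\bound)$ further iterates bring it to a crest over $\BL_0$, and the crest case then applies. The final wave is controllable with $\eta$ depending only on the bounded number of non-translational steps.

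\emph{Main obstacle.} The key difficulty is the translational case: ensuring that the unbounded number of iterates does not accumulate losses. The approach we would try first is to lift to the universal cover and use the fact that $f_\outbr$ restricted to the marking is a genuine translation. This should let the whole run of steps be treated as a single step, which is the one-step analysis of \cite{dudko2023mlcfeigenbaumpoints}, with $\delta,\eta$ universal once constants are allowed to depend on $\bound$.
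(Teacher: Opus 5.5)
Your proposal has genuine gaps. The first concerns where the width gain comes from.

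\textbf{The source of the width gain is misidentified.} Extremal width cannot increase under push-forward by a holomorphic map of any degree; Lemmas \ref{lem:push forward laminations} and \ref{lem:wave} only bound the \emph{loss}. So the claim that the degree $2$ of $f$ on $\FL_0$ makes the image of the crest piece roughly $2\WW(A_0)$ wide is false. The Covering Lemma and the Wanderers Theorem do not supply the gain either; in the paper the latter is used only in the tame case.

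In the paper the gain comes from \emph{breaking} under restriction by $\iota$. A breaking leaf either reaches $\ddi(\UU'\sm\TT')$, which gives vertical width. Or its restriction contains two waves, one over each endpoint of $\iota^*I$. Or it contains a wave over $\iota^*I$ together with a wave ending at one of the boundedly many improper intervals. Lemma \ref{lem:domination} then yields \eqref{eq:inbr} and \eqref{eq:out}; this is Proposition \ref{prop:small breaking}.

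\textbf{Passing to a heavy piece loses too much.} You cannot pass to a piece $A_j$ carrying only a $1/(k+1)$ share. Terminality demands $\WW(B)>(1+\delta)\WW(A)$ or $\WW(\UU\sm\TT)>\delta\WW(A)$ with $\delta$ independent of $k$. A definite gain on $A_j$ gives only bounds of order $\WW(A)/(k+1)$. The paper instead pushes forward the whole wave: the unbroken part is transported with additive loss only (Lemma \ref{lem:wave}), and the broken part supplies the gain. It never discards more than a fraction $1-\lambda_n$, with $\lambda_n$ close to $1$.

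\textbf{Most importantly, nothing in your scheme forces a wave to break.} Your crest case handles only waves cresting over all of $\BL_0$. For those the unbroken part is indeed $O(1)$, since $F_*\BL_0=\dcara\TT$ (Proposition \ref{prop:over critical}). A short wave returning from the translation region, however, has a crest that merely \emph{meets} $\BL_0$. It may push forward unbroken and almost losslessly, so your assertion that $O(\bound)$ further iterates bring it to a crest over $\BL_0$ is unjustified. As written, such a wave could circulate through the limbs indefinitely without any gain.

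The missing ingredient is the expansion of $F$ on the Hubbard tree. By Proposition \ref{prop:combinatorial growth}, which uses $\lambda_F>1$, a crest can meet $\BL_0$ only boundedly often before its combinatorial length grows. Hence Lemma \ref{lem:bounded visits} bounds the length of any chain of critical visits. Once the crest is long the wave is no longer short. Propositions \ref{prop:over critical}, \ref{prop:away from critical} and \ref{prop:into the critical limb} then show that the non-short part is essentially all breakable, and therefore terminal. The last of these uses the nested preimages $K_0^n$ to produce a lamination between components of $\KK^{N-1}$, at a cost of $O(\WW(\RR F))$.

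The paper's proof is accordingly organized as follows:
\begin{enumerate}
\item Wide controllable waves are short or terminal (Proposition \ref{prop:mostly short}).
\item Short waves have a critical visit or are terminal (Proposition \ref{prop:critical visit}).
\item Chains of critical visits have bounded length (Lemma \ref{lem:bounded visits}).
\item The losses are calibrated so that $(1+\delta_n)\lambda_{n-1}^n>1+\delta$.
\end{enumerate}

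Your idea of treating the long translational run as a single iterate is the right one. It matches Proposition \ref{prop:fast forward}, which uses $(f^N,\iota_N)$ with degree at most $2^{M+1}$ on $\iota_N^*J$. But that step only delivers a critical visit, not termination.
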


Proposition \ref{prop:very wide waves} then follows immediately from 
Lemma \ref{lem:terminal}:
\begin{proof}[Proof of Proposition \ref{prop:very wide waves}]
	Let $\delta$ and $\eta$ be as in Lemma \ref{lem:terminal}.
	Let $A$ be a $\eta$-controllable wide wave; we choose $A$ to be nearly maximal in the sense that sny $\eta$-controllable wave has width less than $(1+\delta)\WW(A)$.
    As any $1$-controllable wave is $\eta$-controllable, if $\UU\sm \TT$ has $N$-wide controllable waves, then we can choose $A$ so that
	$\WW(A)\geq N \WW(\RR f)$.
	
	By Lemma \ref{lem:terminal}, if $N\geq M/\delta$ is sufficiently large, then $A$ is $(\delta, \eta)$-terminal. 
	The maximality of $A$ then implies that 
	$$\WW(f) \geq \WW(\UU\sm \TT)> \delta\WW(A)\geq M\WW(\RR f),$$
	so $f$ has $M$-amplification.
\end{proof}

Our proof of Lemma \ref{lem:terminal} comes in three parts. 
Let us say that a wave  in $\UU\sm \TT$ is \textit{short} if no leaf crests over a limb of $\II$. 
By Lemma \ref{lem:comb size of limbs}, the combinatorial length of each limb of $\II$ is at most $6\bound$, so any short wave is automatically $12\bound$-controllable: if an end of a wave $A$ has combinatorial length at least $12\bound$, then that end must contain a limb of $\II$, hence $A$ is not short.
We define the \textit{short part} $A^\sh$ of a wave $A$ to be the unique maximal short subwave.
For any $0< \lambda< 1$, we will say that $A$ is \textit{$\lambda$-short} \index{waves!short} if $\WW(A^\sh)> \lambda\WW(A).$
Our first 
step towards Lemma \ref{lem:terminal} is showing that wide controllable waves are either short or terminal (see Section \ref{sec:breaking} for the proof):
\begin{prop}\label{prop:mostly short}
	For any $\lambda$ there exists $\delta$ such that: for all $k$, any $[k, \lambda]$-wide $k$-controllable wave is either $\lambda$-short or $(\delta, \eta)$-terminal for some universal $\eta$.
\end{prop}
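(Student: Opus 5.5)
We will argue by contrapositive: assume the $[k,\lambda]$-wide $k$-controllable wave $A$ is not $\lambda$-short, so the non-short part $A\sm A^\sh$ carries width at least $(1-\lambda)\WW(A)$. We then aim to produce a definitely wider $\eta$-controllable wave, or definite vertical width $\WW(\UU\sm\TT)>\delta\WW(A)$. The key observation is that each leaf of $A\sm A^\sh$ crests over an entire limb $\BL_j$ of $\II$. By Lemma \ref{lem:comb size of limbs} there are at most $6\bound$ intervals per limb, and the ends of $A$ are a bounded number ($\le k+1$) of localized or $\BL_0$-cresting pieces. So after discarding a fraction depending only on $\bound$, we can extract a subwave $B\subset A\sm A^\sh$ whose leaves all crest over the same limb $\BL_j$ with $j\neq 0$, and with $\WW(B)\succeq_\bound (1-\lambda)\WW(A)/k$. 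Here the wideness hypothesis $[k,\lambda]$ absorbs the $k$-dependence, since we only need $\WW(B)\gg\WW(\RR F)$.

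Next we transport $B$ by the dynamics. For $j\neq 0$, $F$ maps $\FL_j$ homeomorphically onto $\FL_{j+1}$, since the rotation number is $1/q$. So pulling $B$ back by $F^{-(q-j)}$, through limbs that avoid the critical limb $\FL_0$ until the last step, or pushing it forward $q-j$ times, moves the crest to a wave cresting over $\BL_0$ or over the critical value limb. The width is controlled at each step. Pullbacks along univalent branches do not decrease width. For pushforwards we would instead apply the Covering Lemma (or the Wanderers Theorem of Appendix \ref{app:wanderers}), with degree $\le 2$ occurring only once. At the first passage through $\FL_0$, the leaves of $B$ must either
\begin{enumerate}
\item escape to $\ddi\UU$, which gives vertical width and hence the first alternative of terminality; or
\item reorganize around the critical value into a wave over $\BL_0$ together with boundedly many localized pieces, which gives an $\eta$-controllable wave $B'$ with $\eta=O(\bound)$ universal.
\end{enumerate}
Because a full limb lies underneath every leaf, a leaf crossing $F^{-1}(\TT)$ in $\FL_0$ has two preimage strands. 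The Gr\"otzsch/harmonic-sum comparison (Lemma \ref{lem:harmonic and geometric sum}) then gives $\WW(B')\ge 2\WW(B)(1-o(1))$, or correspondingly vertical width.

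Finally we balance constants. We need $\WW(B')>(1+\delta)\WW(A)$ while $\WW(B)$ is only a definite fraction $c(\bound,\lambda)$ of $\WW(A)$. To handle this, we would rather take $B$ to be all non-short leaves grouped limb by limb, and transport each limb-group by the appropriate iterate. The translational structure of the translation region $U_{(\bound,q-\bound)}$ makes these iterates essentially translations, so the images of different groups stack in the same region over $\BL_0$. Their widths then add up via the Covering Lemma, giving $\WW(B')\ge(2-O(\lambda))\WW(A\sm A^\sh)$, which exceeds $(1+\delta)\WW(A)$ for small $\delta=\delta(\lambda)$.

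The main obstacle is this step: controlling width loss under the unbounded iterate $F^{q-j}$. Only boundedly many of these steps should fall outside the translation region, and within it we would rely on the translation-marking estimates of Appendix \ref{app:transl-mark} to show there is no essential loss.
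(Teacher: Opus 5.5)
Your proposal has a genuine gap at the transport step. A non-short wave cannot be carried by $F_*^{q-j}$ with controlled loss, because its leaves break almost immediately:
\begin{itemize}
\item for a localized wave subordinate to $\dcara\TT\sm\BL_0$ that crests over a full limb, the Shift Lemma (Lemma \ref{lem:shift}) gives $\WW(A^\nbr)\le \WW(A^\sh)+O(1)$ (Proposition \ref{prop:away from critical});
\item a wave over $\BL_0$ has $\WW(A^\nbr)=O(1)$, since $F_*\BL_0=\dcara\TT$ (Proposition \ref{prop:over critical});
\item a wave ending at $\BL_0$ has $\WW(A^{N\mathhyphen\nbr})\le\WW(A^\sh)+O(\WW(\RR F))$ for $N=\bound+3$ (Proposition \ref{prop:into the critical limb}).
\end{itemize}
So essentially all of your subwave $B$ breaks within a bounded number of steps, and the proposal never says what the broken leaves become. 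Even apart from breaking, each application of Lemma \ref{lem:wave} costs $O(1)$, so $q-j$ steps cost $O(q)$. The $[k,\lambda]$-wideness compares $\WW(A)$ only with $\WW(\RR F)$, so it does not absorb this loss. You also cannot insist on $j\neq 0$: the piece of a $k$-controllable wave that crests over $\BL_0$ may carry all of the non-short width.

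The stacking step fails as well. Images of different limb groups under different iterates do not form one lamination and may cross, and neither the Covering Lemma nor Theorem \ref{thm:wanderers} makes their widths add. More seriously, you discard $A^\sh$. If $A$ is not $\lambda$-short, $\WW(A\sm A^\sh)$ may be only about $(1-\lambda)\WW(A)$. Then $(2-O(\lambda))\WW(A\sm A^\sh)>(1+\delta)\WW(A)$ fails for $\lambda$ near $1$, which is exactly the regime in which the proposition is used in Lemma \ref{lem:terminal}.

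What is needed is a single pushforward of the \emph{whole} wave by a bounded iterate. It keeps $\iota^*A^\nbr$ (short part included) up to $O(1)$ and gains on $A^\br$: with $I$ the crest of $A$, breaking leaves give either vertical width, or a doubled wave over an endpoint of $\iota^*I$, or a gain of order $\WW(A^{\out})^2/\WW(A)$. This is Proposition \ref{prop:small breaking}, with Lemma \ref{lem:pushing controllable} supplying controllability. The bounds listed above show that the unbreakable non-short part has width $O(k)+O(\WW(\RR F))$, so $[k,\lambda]$-wideness absorbs it (Proposition \ref{prop:nonbreaking implies short}); combining the two yields the statement.

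The independence of $\delta$ and $\eta$ from $k$ comes from applying Proposition \ref{prop:small breaking} to $A^{N\mathhyphen\br}\cup A^\sh$. Its $N$-unbreakable part is short and hence $12\bound$-controllable. Splitting by limbs, as you do, instead costs a factor $1/k$.
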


For any $0< \lambda< 1$ and wave $A$ in $\UU\sm \TT$ with crest $I$, we will say that a wave $B$ in $\UU\sm \TT$ is a 
\textit{critical $\lambda$-visit} \index{waves!critical visit} of $A$ if $\WW(B)> \lambda\WW(A)$ and 
$B$ is a short wave over $F_*^nI$ for some $n\geq 1$ such that $F_*^nI$ intersects $\BL_0$.

Our second step in proving Lemma \ref{lem:terminal} is showing that short waves either have a critical visit or are terminal (see Section \ref{sec:visits} for the proof):
\begin{prop}\label{prop:critical visit}
	For any $\lambda$ there exists $\delta$ such that: any $[\lambda]$-wide short wave either has a critical $\lambda$-visit or is $(\delta, \eta)$-terminal for some universal $\eta$.
\end{prop}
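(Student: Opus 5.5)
Let $A$ be a $[\lambda]$-wide short wave in $\UU\sm\TT$ with crest $I$. The plan is to push $A$ forward along the orbit $F_*I, F_*^2I,\dots$ until the crest first meets the critical limb $\BL_0$, and to check at each step that width is lost only through the terminal alternatives. Because $A$ is short, $I$ contains no full limb of $\II$. Hence while $F_*^nI$ stays away from $\BL_0$, the map $F$ is a homeomorphism near the wave: it is conformal on $\FL_j$ for $j\neq 0$, and it carries limbs $\BL_j$ to $\BL_{j+1}$ since the rotation number is $1/q$.

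\textbf{Step 1 (one step of transport).} Set $A_0=A$ and suppose $A_n$ is a short wave over $F_*^nI$ with $F_*^nI\cap\BL_0=\emptyset$. I would lift $A_n$ to a wave in $\UU^1\sm F^{-1}(\TT)$ by $\iota$, or equivalently pull back a neighbourhood of $\iota^{-1}$. On the complement of $\FL_0$ the map $f$ is univalent, so pushing forward by $f$ gives a lamination of width at least $\WW(A_n)$.

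\textbf{Step 2 (sorting the pushed leaves).} The pushed lamination lands in $\UU\sm\TT$ only after discarding the leaves that hit the extra parts of $\TT\sm f(\iota^{-1}\TT)$. Those parts are near $F(\BL_0)$, away from the crest, so they cost a controlled amount. Each surviving leaf is one of three kinds:
\begin{itemize}
\item a leaf that becomes vertical, whose total width contributes to $\WW(\UU\sm\TT)$;
\item a leaf that crests over a limb, so the pushed wave is no longer short and becomes, by Lemma \ref{lem:comb size of limbs}, a localized or $\BL_0$-cresting wave;
\item a leaf of a short wave $A_{n+1}$ over $F_*^{n+1}I$.
\end{itemize}
If either of the first two kinds carries a $\delta$-fraction of $\WW(A)$, then $A$ is $(\delta,\eta)$-terminal with $\eta=O(\bound)$. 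If not, $\WW(A_{n+1})\ge(1-O(\delta))\WW(A_n)$.

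\textbf{Step 3 (controlling the iteration).} The danger is that the loss compounds over the possibly $\sim q$ steps before the crest reaches $\BL_0$. The remedy is to iterate in one block rather than step by step. In the translation region $U_{(\bound,q-\bound)}$ the map acts like a translation, so the composite $F^n$ restricted to a neighbourhood of the short wave is still univalent and nearly a translation. I would therefore apply the pushforward once, using $F^n$ for the first $n$ with $F_*^nI\cap\BL_0\neq\emptyset$, and not $n$ times. The intermediate trees $F^{-k}(\TT)$ add branches only near $\FL_0$ and $\FL_{\pm\bound}$, which the short wave never crosses. So the leaves are lost only once, through the same three cases as in Step 2.

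At the first critical time this yields a short wave $B$ over $F_*^nI$ with $F_*^nI\cap\BL_0\neq\emptyset$ and $\WW(B)\ge(1-O(\delta))\WW(A)$. Taking $\delta$ small relative to $1-\lambda$ gives $\WW(B)>\lambda\WW(A)$, so $B$ is a critical $\lambda$-visit. The wideness hypothesis absorbs the additive errors of size $O(\WW(\RR F))$ coming from the components of $\KK$ the wave passes by. If the orbit of $I$ never meets $\BL_0$, the same argument continues until the width is forced to grow or turn vertical, which is again terminal.

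The main obstacle is Step 3: justifying that unboundedly many iterates cost only a bounded total loss. It relies on the fact that the lifted branches of $\TT$ avoid the crest throughout the translation region.
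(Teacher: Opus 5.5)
There is a genuine gap, and it sits exactly at the step you flagged in Step 3.

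\textbf{The claim about the trees $\TT^n$ is false.} You assert that the trees $\TT^n=f^{-n}(\TT)$ acquire extra branches only near $\FL_0$ and $\FL_{\pm\bound}$. Suppose instead that a limb $\FL_i$ is carried homeomorphically onto $\FL_0$ by $F^k$ with $k<n$. Then $\TT^n\cap\FL_i$ is the pullback of $\TT^{n-k}\cap\FL_0$. That set contains preimages of essentially the whole tree, and $f^n_*$ has degree $2^{n-k}$ on that limb.

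With your choice of $n$ (the first time $F^n_*I$ meets $\BL_0$), these limbs start within a limb or two of the crest $I\subset\BL_{[j-1,j+1]}$. They are also next to the end of $A$ in $\BL_{[j,j+1]}$. The degrees on them grow without bound.

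\textbf{The wave can cross these branches.} Even where such branches are combinatorially far from the base, you cannot assume the wave avoids them. A wave is subordinate to its base only up to homotopy in $\UU\sm\TT$. An actual leaf can cross a hair of $\TT^n\sm\TT$ essentially and be cut into several pieces; that is exactly what breaking means. So "cost a controlled amount" (Step 2) and "lost only once" (Step 3) have no mechanism behind them.

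\textbf{The missing idea} is that breaking must be converted into terminality. If every leaf of a subfamily is cut into $M$ disjoint pieces, the harmonic-sum bound of Lemma \ref{lem:domination} produces a family of pieces of width at least $M$ times that of the subfamily. That is a definitely wider wave. Similarly, a leaf split into two waves over the endpoints of the lifted crest doubles the width, as in Proposition \ref{prop:small breaking}.

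\textbf{How the paper does it.} The paper, in Propositions \ref{prop:critical visit} and \ref{prop:fast forward}, does push forward in a single block through the translation region, as you propose. But it stops $\bound+2$ steps short. For a crest in $\BL_j$ with $\bound<j<q-\bound-M-1$ it takes $N=q-\bound-j-2$. Then $f^N_*$ has degree one on $\BL_{[1,\bound+j+2]}$, and only the $M$ limbs $\BL_{q+m-N}$, $1\le m\le M$, enter the analysis, with degree at most $2^{M+1}$. Using Proposition \ref{prop:itineraries}, it splits $A=A^\ver\cup A^{\out}\cup A^\lo\cup A^\amp$:
\begin{itemize}
\item $A^\ver$ gives vertical width, up to the factor $2^{M+1}$.
\item $A^\lo$ is negligible: since $F^N_*\BL_{m-N}=\dcara\TT$, Lemma \ref{lem:wave} forces the pushed waves to be empty.
\item Every leaf of $A^\amp$ meets all $M$ limbs, so some $B^\amp_m$ has width at least $M\WW(A^\amp)$. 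This is terminal once $M\ge 2/\eps$.
\item Only $A^{\out}$ is transported, with small loss.
\end{itemize}
The last $\bound+2$ steps into $\BL_0$ are then taken one at a time via Corollary \ref{cor:short pushforward}, which is where Proposition \ref{prop:small breaking} enters.

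\textbf{A second gap, in Step 2.} A definite fraction of pushed leaves cresting over a limb does not by itself make $A$ terminal. What you need is Proposition \ref{prop:mostly short}: a wide controllable wave is either mostly short or terminal, with the terminal constant depending on the shortness threshold. Its proof is substantial. It treats waves over $\BL_0$, uses the Shift Lemma for waves away from $\BL_0$, and bounds waves ending at $\BL_0$ by $\WW(\RR F)$.
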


The final step of our proof is observing that waves can have only a bounded number of critical visits (see Section \ref{sec:intervals} for the proof):
\begin{lemma}\label{lem:bounded visits}
	For some universal $N> 1$, there is no sequence of  waves $(A_n)_{n=1}^N$  such that each $A_{n+1}$ is a critical visit of $A_n$. 
\end{lemma}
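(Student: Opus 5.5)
We argue by a combinatorial pigeonhole on crests. The key point is that a critical visit is a short wave over $F_*^nI$ that meets $\BL_0$, so its crest is an interval of combinatorial length at most $12\bound$ touching the critical limb. We will attach to each $A_n$ a combinatorial invariant taking only boundedly many values, and show that this invariant strictly decreases (or at least cannot repeat) along a chain of critical visits.

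First, we normalize. Since each $A_{n+1}$ with $n\ge 1$ is short and its crest $I_{n+1}$ meets $\BL_0$, and $|\BL_0|\le 6\bound$ by Lemma \ref{lem:comb size of limbs}, every crest $I_{n+1}$ lies in the $12\bound$-neighborhood of $\BL_0$ in $\II$. That neighborhood contains only $O(\bound)$ intervals, so the crests range over a finite set $\mathcal{S}$ of sub-multi-intervals with $|\mathcal S|=O_\bound(1)$. Since constants are allowed to depend on $\bound$, "universal" here means dependent on $\bound$ only.

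Second, we show that the relation "$J$ is a crest of a critical visit of a wave with crest $I$" imposes strict dynamical progress. By definition, $I_{n+1}$ is a short interval over $F_*^{m}I_n$ with $m\ge1$, and $F_*^{m}I_n$ meets $\BL_0$. Push-forwards of intervals near $\BL_0$ by $F$ are governed by the Hubbard tree dynamics: $F$ maps $\FL_0$ onto $\FK$, and by condition (3) of parabolic $\bound$-boundedness its components of $\KK$ land in $\bigcup_{|j|\le\bound}\FL_j$. The other limbs then translate by $+1$ until they return to $\FL_0$ after roughly $q$ steps. We would encode $I_n$ by the first return data of the vertices of $\hat\KK$ it touches to $\LL_0$. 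Because the renormalization is primitive and the cycle $\KK$ has period $\per$, iterating along a chain means following the orbit of the vertices under $F$, each time returning to $\LL_0$.

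The main obstacle is showing that this orbit cannot run indefinitely through short crests. The idea is that shortness forces the crest to shrink or the vertex set to stay inside an interval of bounded combinatorial length at each critical return. Meanwhile, the pushed-forward crest $F_*^mI_n$ covers the image of an edge path. Primitivity gives positive core entropy $\lambda_F>1$, so the combinatorial length of $F_*^{m}I$ grows under long iterates before a return. We would show that after a bounded number $N_0(\bound)$ of consecutive critical returns the image must cover an entire limb $\BL_0$, since the bounded tree in $\LL_0$ is expanded by $F$ onto a tree containing $\TT$. Any wave over such an interval crests over a limb and is therefore not short. This contradicts the requirement that the next visit be short over $F_*^mI$.

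Concretely, we would define $\ell_n$ as the number of edges of the hull of the vertices spanned by $I_n$. The plan is to prove $\ell_{n+1}\ge \ell_n+1$ unless $I_{n+1}$ is contained in a proper sub-hull of the preimage, using that the short wave over $F_*^mI_n$ must already see all of $F_*^m I_n$. Since $\ell_n\le 12\bound$, chains have length at most $N=12\bound+2$. The delicate point is justifying monotonicity when a visit passes through the translation region. There we would use that the translation region maps edges bijectively, which preserves $\ell$, and that the only changes occur at $\LL_0$, where the double cover strictly enlarges the hull.
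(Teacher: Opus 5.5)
You have the right frame: crests of critical visits are short, meet $\BL_0$, satisfy $\ell<12\bound$, and satisfy $I_{n+1}\supset F_*^{m_{n+1}}I_n$. But the step that carries your argument is not available. You need strict growth of your invariant at \emph{every} critical visit, which is what gives $N=12\bound+2$, and ``the double cover strictly enlarges the hull at $\LL_0$'' does not establish it. Combinatorial length grows under $F_*$ only where $\TT'=f^{-1}(\TT)$ has vertices or attached branches along the crest that $\TT$ lacks. Nothing in the hypotheses prevents a short crest from meeting $\BL_0$ only along stretches where this does not happen, and then $\ell(F_*J)=\ell(J)$. Your chosen invariant, the number of edges in the hull of the vertices spanned, is also not monotone. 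At the critical component $K_0$ the map can fold two branches of $\TT$ onto one, so the hull for $F_*J$ can have fewer edges than that for $J$. The quantity to track is the length $\ell(I_n)$ of the crest in $\II$. It is non-decreasing along the chain because $f$ is a covering on ideal boundaries, so $\ell(F_*J)\ge \ell(J)$, and because $I_{n+1}\supset F_*^{m_{n+1}}I_n$.

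What is missing is the pigeonhole argument that replaces per-visit growth; this is Proposition~\ref{prop:combinatorial growth}. Suppose $\ell(I_n)=k$ stays constant over $6\bound+k\le 18\bound$ consecutive visits.
\begin{enumerate}
\item Each containment $I_{n+1}\supset F_*^{m_{n+1}}I_n$ is then an equality, so the crests lie on a single $F_*$-orbit.
\item All these crests are among the at most $6\bound+k$ multi-intervals of length $k$ meeting $\BL_0$, using $|\BL_0|\le 6\bound$.
\item So some crest recurs. This gives a multi-interval $J$ with $F_*^rJ=J$ for some $r\ge 1$, which is incompatible with $\lambda_F>1$, i.e.\ with primitivity of $T_F$.
\end{enumerate}
Hence $\ell$ must increase at least once every $18\bound$ visits, while shortness keeps it below $12\bound$. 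This gives $N=O(\bound^2)$; the paper takes $N=216\bound^2$.

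Your remark that the invariant ``cannot repeat'' and your appeal to positive core entropy point in this direction, but the proof as written rests on the per-visit increase. Similarly, you claim that after boundedly many returns the crest must cover all of $\BL_0$ because the tree in $\LL_0$ is expanded onto a tree containing $\TT$. That concerns the whole limb, not the short sub-arc a crest sees. What you need, and all you get, is $\ell\ge 12\bound$, which forces the crest to contain some limb and so contradicts shortness.
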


Assuming the Propositions \ref{prop:mostly short}, \ref{prop:critical visit} and Lemma \ref{lem:bounded visits} for now, we can prove Lemma \ref{lem:terminal}:

\begin{proof}[Proof of Lemma \ref{lem:terminal}]
    Let $N$ be the universal constant in Lemma \ref{lem:bounded visits}, and let $\eta$ be the maximum of the universal constants in Propositions \ref{prop:mostly short} and \ref{prop:critical visit}.
    Let us fix a sequence $$0< \lambda_N< \lambda_{N-1}< \cdots <\lambda_0 < \lambda_{-1} = 1$$
    so that for $\delta_n$ depending on $\lambda_n$ as in Propositions \ref{prop:mostly short} or \ref{prop:critical visit}, we have
    $$(1+\delta_n)\lambda_{n-1}^n> (1+\delta)$$
    for some $\delta>0$ and all $n$. Note that we can choose $\delta$ and the sequence $(\lambda_n)_{n=0}^{N-1}$ universally; in particular, we may assume that any $[k]$-wide wave is $[\lambda_n]$-wide for all $n$.

    Now let $A=A_0$ be a $k$-controllable $[k]$-wide wave. Proposition \ref{prop:mostly short} implies that  $A_0$ is either $\lambda_0$-short or $(\delta_0, \eta)$-terminal.
	We set $A_1= A^\sh$, so if $A_0$ is $\lambda_0$-short then $A_1$ is $[k]$-wide.
	
    For some $1\leq n <  N$, suppose $A_n$ is a short $[k]$-wide wave. Proposition \ref{prop:critical visit} implies that $A_n$ either has a critical $\lambda_{n}$-visit $A_{n+1}$ or is $(\delta_n, \eta)$-terminal. In the former case, the wave $A_{n+1}$ is automatically short and $[k]$-wide.
	By induction on $n$, it follows that either there is a  $(\delta_n, \eta)$-terminal wave $A_n$ satisfying
	$$\WW(A_n)> \WW(A)\prod_{m=0}^{n-1}\lambda_m>  \lambda_{n-1}^n\WW(A),$$
	for some $0\leq n < N$, 
	or there is a sequence of waves $(A_n)_{n=1}^{N}$ such that each $A_{n+1}$ is a critical visit of $A_n$. 
	The latter case is impossible by Lemma \ref{lem:bounded visits}, and in the former case our choice of the sequence $(\lambda_n)_{n=0}^N$ implies that $A$ is $(\delta, \eta)$-terminal.
\end{proof}

We will prove the above Propositions by studying the dynamics of $F$. More precisely, setting $\TT' = f^{-1}(\TT)$, so $f^{-1}(\UU\sm \TT)= \UU' \sm \TT'$, the maps $f$ and $\iota$ induce a covering and an immersion from $\UU'\sm \TT'$ to $\UU\sm \TT$; by abuse of notation we will also call these induced maps $f$ and $\iota$ respectively and denote the pair by $F = (f, \iota)$.
In Section \ref{sec:intervals}, we will study the action of this $F$ on intervals in $\II$ and prove Lemma \ref{lem:bounded visits}. 
In Section \ref{sec:breaking}, we will study the action of $F$ on waves in $\UU\sm \TT$ and prove Proposition \ref{prop:mostly short}.
In Section \ref{sec:visits}, we will study the action of large iterates of $F$ on waves and prove Proposition \ref{prop:critical visit}.

\subsection{Interval dynamics}\label{sec:intervals}

In this subsection we will study the action of $F_*$ on $\II$; {we recall the definition of this action from Appendix \ref{app:dynamics}}.  We first prove several facts that will be used in Sections \ref{sec:breaking} and \ref{sec:visits}, and then prove Lemma \ref{lem:bounded visits}.
{Let us first briefly outline the proof of Lemma \ref{lem:bounded visits} now for the reader's convenience. If $A_1, \dots, A_N$ is a sequence of critical visits and $I_n$ is the crest of each $A_n$, then the expanding action of $F$ on the Hubbard tree forces the combinatorial length of $I_n$ to grow after a bounded number of visits; eventually the crest becomes long enough to contradict the requirement that $A_N$ is short.}

First, we need to understand exactly how $F$ acts on $\II$. 
As the immersion $\UU'\to \UU$ is injective in a neighborhood of $\FK$, we can identify $\TT'$ with a subset of $\FK$; by abuse of notation we will also denote this subset by $\TT'$.
We will implicitly use this identification throughout this section. 
In particular, it provides us with the following description of the action of $\iota^*$ on intervals:

\begin{figure}
	\begin{center}
		\def\svgwidth{5.5in}
		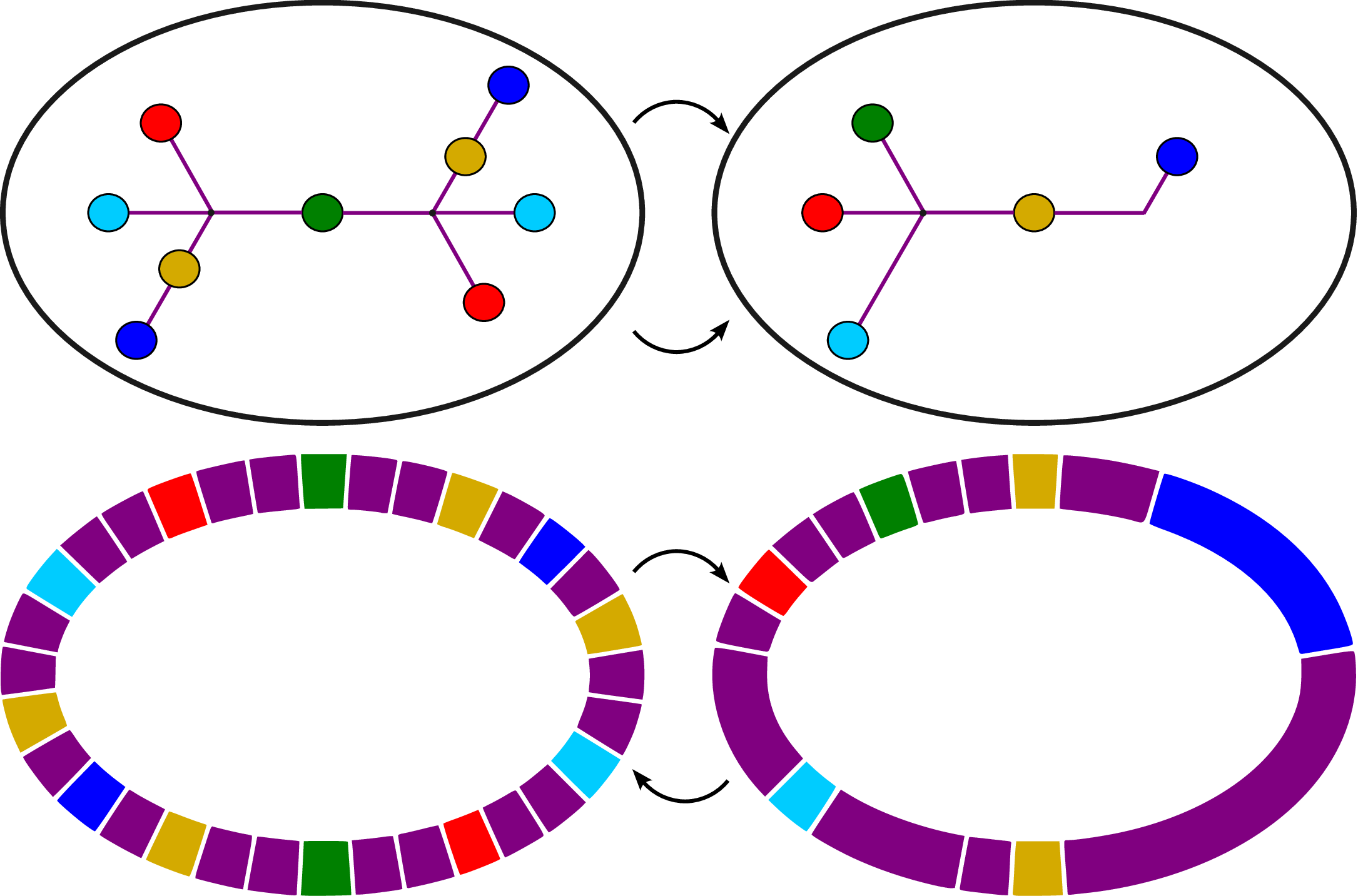
		\caption{The action of $f$ and $\iota$ from $\UU'\sm \TT'$ to $\UU\sm \TT$ and the induced actions of $f_*$ and $\iota^*$ on the ideal boundaries of $\TT'$ and $\TT$. Small filled Julia sets and intervals in $\TT'$ and $\ddi \TT'$ are mapped by $f$ to those of the same color in $\TT$ and $\ddi\TT$ respectively. Intervals in $\ddi \TT$ are lifted by $\iota$ to those in the same position.}
		\label{fig:dynamic-intervals}
	\end{center}
\end{figure}

\begin{prop}\label{prop:pullback description}
    If $I\subset \dcara \TT$ is either a multi-interval in $\II$ or the closure of a multi-interval in $\II$, then $\iota^*I$ is the union of $\iota^{-1}(I)$ and the Caratheodory boundaries of the components of $\TT'\sm \TT$ that meet $\TT$ at $I$.
\end{prop}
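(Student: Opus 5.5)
We unwind the definition of the pullback action $\iota^*$ on the ideal boundary from Appendix \ref{app:dynamics}, which we take as the basic tool. For a multi-interval $I\subset\ddi(\UU\sm\TT)$, the set $\iota^*I$ consists of the points of $\ddi(\UU'\sm\TT')$ reached by lifting, via the immersion $\iota\colon \UU'\sm\TT'\to\UU\sm\TT$, proper paths in $\UU\sm\TT$ that land on $I$. So the task is to see which points of $\dcara\TT'$ are endpoints of such lifted paths.

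The first step uses the identification from the paragraph preceding the statement. Near $\FK$, $\iota$ is injective, and we regard $\TT'=f^{-1}(\TT)$ as a subset of $\FK$ containing $\TT$. Hence $\UU'\sm\TT'$ is identified, near $\FK$, with $(\UU\sm\TT)\sm(\TT'\sm\TT)$. The complement $\TT'\sm\TT$ is a finite union of relatively open subtrees ("decorations"), each attached to $\TT$ at a single point or side. Because $\TT'$ is a tree containing $\TT$ and both are full, each component $D$ of $\TT'\sm\TT$ meets $\overline{\TT}$ at exactly one attachment point $x_D$. On the ideal boundary, $D$ sits inside a well-defined gap of $\dcara\TT$, namely a point of some interval of $\II$ lying on one side of $x_D$.

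Next we take a proper path $\gamma$ in $\UU\sm\TT$ landing at a point $t\in I$ and lift it by $\iota$. There are two cases.

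\emph{Case 1.} No decoration is attached at $t$. Then the lift lands at the point $\iota^{-1}(t)$ of $\dcara\TT'$, which contributes $\iota^{-1}(I)$.

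\emph{Case 2.} A decoration $D$ is attached at $t$. Then a small neighbourhood of $t$ in $\UU\sm\TT$ is cut by $D$. Lifts of paths approaching $t$ from the various directions land at the points of $\dcara D$; they sweep out the whole Carathéodory boundary of $D$, with its two extremal sides glued to the neighbours of $t$ in $\iota^{-1}(I)$.

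Conversely, every point of $\dcara D$ is reached this way. Pushing a path landing on $\dcara D$ forward by $\iota$ gives a path landing at $x_D$, and $x_D$ lies in $I$ exactly when $D$ meets $\TT$ at $I$. This gives the reverse inclusion.

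For the closure case, we add the endpoints of $I$. Decorations attached at an endpoint of $\overline I$ are included, consistent with "meet $\TT$ at $I$" for the closed interval, whereas for open $I$ these decorations belong to the neighbouring interval. This requires only checking the endpoint sides.

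The main obstacle is to make Case 2 rigorous in the Carathéodory and prime-end sense. We must show that the lifts of paths converging to a single prime end $t$ separate into exactly the prime ends of $\dcara D$ and nothing else. In particular, nearby decorations attached at other points of $I$ must not interfere, and multi-intervals must not wrap. We would handle this by working in the locally connected combinatorial model $\FK_\com$, where $\TT'$ is a genuine finite topological tree, and then lifting via the projection $\pi$ as in Section \ref{sec:hubbard trees}. There, a local planar picture around each attachment point settles the claim.
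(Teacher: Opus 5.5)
\textbf{There is a genuine gap.} Your argument rests on a characterization of $\iota^*$ that is not the paper's, and the steps that carry the weight fail under it.

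In Appendix~\ref{app:dynamics}, $\iota^*X$ is the \emph{maximal} subset of $\ddi(\UU'\sm\TT')$ with the following property: whenever a proper path $\gamma$ in $\UU\sm\TT$ has a \emph{unique} restriction $\gamma'\in\iota^*\gamma$, and $\gamma'$ is subordinate to $\iota^*X$, then $\gamma$ is subordinate to $X$. This is a condition on homotopy classes and subordination, not on landing points. Your reading (``points reached by lifting proper paths landing on $I$'') does not produce the claimed set:
\begin{enumerate}
\item Suppose $\gamma$ lands at the prime end $t$ where a component $D$ is attached, and eventually avoids $D$. Then its restriction lands at one of the two prime ends of $\UU'\sm\TT'$ at $x_D$ flanking $D$, not at the points of $\dcara D$. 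So your Case 2 is not what happens.
\item If instead you count all pieces of $\iota^{-1}(\gamma)$, then, since $\gamma$ is arbitrary away from its endpoint, those pieces can land on $\dcara\XX$ for components $\XX$ attached far outside $I$. The inclusion $\iota^*I\subset\iota^{-1}(I)\cup\bigcup\dcara\XX$ would then fail. The paper's definition excludes such paths precisely because their restriction is not a single path.
\item Your ``converse'' step is wrong as written. The image under $\iota$ of a path in $\UU'\sm\TT'$ landing on $\dcara D$ ends at a point of $D$, which lies in the interior of $\UU\sm\TT$ because $D\cap\TT=\emptyset$. It is not a proper path of $\UU\sm\TT$, and it does not land at $x_D$.
\end{enumerate}

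What is actually needed, and what the paper does, has three parts:
\begin{enumerate}
\item $\iota^{-1}(I)\subset\iota^*I$, which is immediate from the definition.
\item For each component $\XX$ of $\TT'\sm\TT$ meeting $\TT$ at $I$, take a path running around $\XX$ from one side of its attachment point to the other. It is proper in both $\UU\sm\TT'$ and $\UU\sm\TT$. It is subordinate to $\overline{\dcara\XX}$ in the first surface and to $I$ in the second, which forces $\overline{\dcara\XX}\subset\iota^*I$.
\item Maximality: the union $I'$ from the first two parts is an interval, and any path proper in both surfaces that is subordinate to an interval strictly larger than $I'$ is not subordinate to $I$.
\end{enumerate}
The third part is exactly what you flag as the ``main obstacle'' and leave to an unspecified local picture in $\FK_\com$. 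In the subordination framework it is a one-line observation, and no prime-end analysis or passage to the combinatorial model is required.

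Your endpoint remark is also inaccurate. A component attached at an endpoint of an open $I$ (an endpoint of $\II$) belongs to the pullback of neither adjacent open interval; it belongs to $\iota^*$ of that endpoint. This is why the closed case is stated separately; compare $\Int\iota^*x$ in the proof of Proposition~\ref{prop:stretching interval}.
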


\begin{proof}
	We have that $\iota^{-1}(I)\subset \iota^*I$ by definition.
	If $\XX$ is a component of $\TT'\sm \TT$ that meets $\TT$ at $I$, then there is a proper path in $\UU\sm \TT'$ that is subordinate to $\overline{\dcara \XX}$ and is also a proper path in $\UU\sm \TT$ subordinate to $I$; hence $\overline{\dcara\XX}\subset \iota^*I$ by definition. The union of $\iota^{-1}(I)$ and the Caratheodory boundaries of all such $\XX$ is an interval $I'\subset \dcara\TT'$. To see that $I'=\iota^*I$, we observe that if $\gamma$ is a proper path in $\UU\sm \TT'$ that is also a proper path in $\UU\sm \TT$, and $\gamma$ is subordinate to an interval strictly larger than $I'$, then $I$ is not subordinate to $I$.
\end{proof}
Recalling the definition of combinatorial action for $F_*$ from Appendix \ref{sec:dynamics}, using Proposition \ref{prop:pullback description} we observe:
\begin{prop}\label{prop:comb action}
	$F_*$ acts combinatorially on $\dcara \TT$.
\end{prop}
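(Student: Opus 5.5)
The plan is to check the two properties that Appendix \ref{sec:dynamics} requires of a combinatorial action. I expect these to be: (i) $F_*$ sends intervals (and closures of multi-intervals) of $\II$ to intervals or multi-intervals of $\II$; (ii) $F_*$ is monotone and respects the circular order, with the pullback $F^* = \iota^*$ composed with the $f$-lift behaving as a generalized inverse. Since $F_* = f_*\circ \iota^*$ on $\dcara\TT$, I will verify these properties separately for $\iota^*$ and for $f_*$ and then compose them.

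For $\iota^*$, Proposition \ref{prop:pullback description} gives an explicit formula. If $I$ is a multi-interval in $\II$, or the closure of one, then $\iota^*I$ is $\iota^{-1}(I)$ together with the Carath\'eodory boundaries of the components of $\TT'\sm\TT$ attached to $\TT$ along $I$.
\begin{itemize}
\item The ideal marking on $\dcara\TT'$ is induced by the sides of the components of $f^{-1}(\hat\KK)$. These vertices contain the vertices of $\TT$, viewed inside $\TT'$ via the identification near $\FK$.
\item Hence $\iota^{-1}(I)$ is a union of intervals of the induced marking $\II'$.
\item The boundary of each attached component $\XX$ is also a union of intervals of $\II'$, because $\XX$ is a union of edges and vertices of $\TT'$.
\item The combined set is connected: each such $\XX$ meets $\TT$ at a vertex side or edge side lying in $I$, so its boundary is inserted between consecutive pieces of $\iota^{-1}(I)$.
\end{itemize}
It follows that $\iota^*I$ is again a multi-interval (respectively the closure of one) in $\II'$, and that inclusion $I\subset J$ implies $\iota^*I\subset \iota^*J$.

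For $f_*$, I will use that $f\colon \UU'\sm\TT'\to\UU\sm\TT$ is a covering whose boundary extension maps sides of vertices of $f^{-1}(\hat\KK)$ onto sides of vertices of $\hat\KK$, and sides of edges onto sides of edges. These maps are homeomorphisms on intervals, except at the critical vertex, where $f$ is a double cover. Therefore the image of a multi-interval of $\II'$ is a union of consecutive intervals of $\II$. Composing with the $\iota^*$ step gives property (i), and monotonicity follows from the monotonicity of each factor.

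The main obstacle I expect is the critical component and the $\alpha_F$ vertex, where connectedness and circular-order preservation could fail. Near the critical value, $f$ folds two sides onto one, and an interval in $\LL_0$ may have its image wrap past $\alpha_F$. I would handle this using the rotation-number structure from Section \ref{sec:limbs}: $F$ maps $\FL_j$ homeomorphically to $\FL_{j+1}$ for $j\neq0$ and $\FL_0$ onto $\FK$. An interval crossing the critical point therefore maps onto a connected arc of $\dcara\TT$, possibly all of it, which is still a closed multi-interval. Closures are allowed exactly to accommodate this case.
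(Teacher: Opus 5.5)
Your proof targets the wrong definition. In Appendix~\ref{sec:dynamics}, ``$F$ acts combinatorially on $X$'' has two parts. First, $F$ acts on $(\VV,\II)$: the pullbacks $\iota^*\BI$, $\BI\in\II$, are defined and pairwise disjoint, so that $\iota^*\II$ exists as an induced marking, and $f^*\II$ refines $\iota^*\II$. Second, for every $\BI\in\II$ contained in $X$, $\iota$ is \emph{end-proper} on $\iota^*\BI$, meaning proper on the end-intervals (in $f^*\II$) of $\iota^*\BI$. Monotonicity, circular order, and the shape of $f_*$-images are not part of the definition. Your check that $\iota^*I$ is a union of consecutive intervals of $\II'=f^*\II$ does give the refinement. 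However, you never show that the $\iota^*\BI$ are pairwise disjoint. More importantly, you never address end-properness, which is the actual content of ``combinatorially''. It is what gets used later, for instance in Proposition~\ref{prop:nonbreaking}, Lemma~\ref{lem:shift}, and the case analysis in the proof of Proposition~\ref{prop:small breaking}.

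The missing step is short, and one of your bullet points nearly contains it. For an open interval $\BI\in\II$, each component $\XX$ of $\TT'\sm\TT$ counted in $\iota^*\BI$ attaches at a point of the open interval $\BI$. So $\dcara\XX$ sits strictly between two pieces of $\iota^{-1}(\BI)$, and both end-intervals of $\iota^*\BI$ lie in $\iota^{-1}(\BI)\subset\iota^{-1}(\dcara\TT)$. Since $\iota$ is injective near $\FK$ and $\TT\subset\TT'$, $\iota$ is proper there. This is exactly the paper's one-line argument. Disjointness follows similarly: each attached component is assigned to the unique interval or endpoint of $\II$ containing its attaching point.

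Note that your ``inserted between consecutive pieces'' claim is false for closures $\overline I$ when $\XX$ attaches at an endpoint. That is precisely why end-properness is required only for open intervals, and why $F_*\overline I$ can differ from $\overline{F_*I}$ (compare the stretching interval). Your discussion of the $f$-side, including folding at the critical vertex and wrapping past $\alpha_F$, is not needed for this proposition.
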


\begin{proof}
	It follows immediately from Proposition \ref{prop:pullback description} that the collection $\{\iota^*\BI: \BI\in \II\}$ is a partial ideal marking of $\UU'\sm \TT'$; indeed the intervals are all defined and pair-wise disjoint. Thus $\iota^*\II$ is defined. 
	Moreover, for any interval $\BI\in \II$, each component of $\iota^{-1}(\BI)$ is an interval in $f^*\II$ and the Carathéodory boundary of each component of $\TT'\sm \TT$ meeting $\TT$ at $X$ is a multi-interval in $\II$, so $\II'$ is a refinement of $\iota^*\II$. 
	Thus $F$ acts on $(\UU\sm \TT, \II)$. 
	Note that the end-intervals (in $f^*\II$) of any interval in $\iota^*\II$ are automatically contained in $\iota^{-1}(\dcara \TT)$, so the action is combinatorial.
\end{proof}

We denote $\II' = f^*\II$ and $\tilde \II = \iota^*\II$. It also immediately follows from Proposition \ref{prop:pullback description} that the action of $F_*$ on $\II$ respects the dynamics of $F$ on $\TT$. More precisely, we have that $F_*\BL_j= \BL_{j+1}$ for $j\neq 0$, $F_*\BL_0 = \dcara \TT$, and if $\BJ$ is a side of some component of $K$ of $\KK$, then the end-intervals of  $F_*\BJ$ are sides of $F(K)$.
Recalling the definition of $F_*$ acting properly from Appendix \ref{sec:dynamics},
as $F$ is parabolically bounded we can conclude that $F_*$ acts properly on intervals outside a bounded neighborhood of $\BL_0$:

\begin{prop}\label{prop:improper intervals}
	$F_*$ is proper on $\BL_{[\bound, q-\bound-2]}$.
\end{prop}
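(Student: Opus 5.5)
The plan is to unwind the definition of a proper action from Appendix \ref{sec:dynamics}. Since we do not see it here, we take it to mean the following. For an interval (or multi-interval) $I$, the push-forward $F_*I$ is obtained by pulling $I$ back by $f$ and pushing forward through $\iota$. Properness asks that $F_*$ maps each interval of $\II$ in the given range to a single interval of $\II$, with end-intervals going to end-intervals. Equivalently, $\iota^*$ does nothing beyond taking $\iota^{-1}$: no component of $\TT'\sm\TT$ is attached along the relevant intervals. So the proof reduces to a statement about where $\TT'=F^{-1}(\TT)$ differs from $\TT$, followed by an application of Proposition \ref{prop:pullback description}.

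\textbf{Step 1: $\TT'$ agrees with $\TT$ on the middle limbs.} We show that $\TT'\cap\FL_j=\TT\cap\FL_j$ for $\bound\le j\le q-\bound-2$. This identity was already used in the proof of Proposition \ref{prop:inner translation marking}, and we would justify it as follows. For $j\ne0$, $F$ maps $\FL_j$ homeomorphically onto $\FL_{j+1}$ (rotation number $1/q$), so $\TT'\cap\FL_j$ is the pullback of $\TT\cap\FL_{j+1}$. The tree $\TT\cap\FL_j$ is the hull of $\alpha_F$ and $\KK_j$. Each component of $\KK_j$ is the image of a component of $\KK_{j-1}$, or, for $j$ within $\bound$ of $0$, of a component of $\KK_0$, by condition (3) of parabolic $\bound$-boundedness.

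For $j$ in the stated range, neither $j$ nor $j+1$ receives images from $\KK_0$. This again uses condition (3); we must check the indexing carefully at the ends $j=\bound$ and $j=q-\bound-2$, which is where the shift by $2$ comes from. Hence $F\colon\KK_j\to\KK_{j+1}$ is a bijection. Since $F$ is a homeomorphism of the limbs fixing $\alpha_F$, it carries the hull of $\{\alpha_F\}\cup\KK_j$ onto the hull of $\{\alpha_F\}\cup\KK_{j+1}$. Therefore the pullback of $\TT\cap\FL_{j+1}$ is exactly $\TT\cap\FL_j$.

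\textbf{Step 2: no extra pieces are attached.} By Step 1, no component of $\TT'\sm\TT$ meets $\TT$ inside $\LL_{[\bound,q-\bound-2]}$. We also need to rule out components that meet $\TT$ only at $\alpha_F$ and intrude into sides of intervals in this range. Such components lie in other limbs. Since the intervals of $\BL_j$ are sides inside the limb $\FL_j$, a component attached at $\alpha_F$ within a different limb is subordinate to a different multi-interval. We would make this precise using the circular ordering of the limbs around $\alpha_F$.

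Proposition \ref{prop:pullback description} then gives $\iota^*I=\iota^{-1}(I)$ for every $I\in\II$ contained in $\BL_{[\bound,q-\bound-2]}$. Pushing forward, $f$ maps $\iota^{-1}(I)$ homeomorphically onto a single interval of $\II$ in $\BL_{[\bound+1,q-\bound-1]}$, and end-intervals go to end-intervals. This is the required properness, since the action is already combinatorial by Proposition \ref{prop:comb action}.

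The main obstacle is the boundary bookkeeping: matching the precise definition of ``proper'' from the appendix, and handling the endpoint indices $\bound$ and $q-\bound-2$ together with the behaviour at $\alpha_F$. The limb interior itself is routine once Step 1 is in place.
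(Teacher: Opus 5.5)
This is essentially the paper's proof: parabolic boundedness forces every component of $F^{-1}(\KK)$ lying in $\FL_j$, $\bound\le j\le q-\bound-2$, to be a component of $\KK$. Hence $\TT'$ coincides with $\TT$ on these limbs, and Proposition~\ref{prop:pullback description} gives properness.

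One small correction to your Step 1: condition (3) allows components of $\KK_0$ to map into $\FL_\bound$, so ``neither $j$ nor $j+1$ receives images from $\KK_0$'' is false at $j=\bound$. However, only $j+1\notin[-\bound,\bound]$ is needed for $F\colon\KK_j\to\KK_{j+1}$ to be a bijection, and that condition is exactly what produces the asymmetric range $[\bound,q-\bound-2]$.
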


\begin{proof}
	As $F$ is parabolically bounded, the components of $F^{-1}(\KK)$ in $\LL_{[\bound, q-\bound-2]}$ are all components of $\KK$. Thus $$\TT' \cap \LL_{[\bound, q-\bound-2]}= \TT\cap \LL_{[\bound, q-\bound-2]};$$
	the proof then immediately follows from Proposition \ref{prop:pullback description}.
\end{proof}

Proposition \ref{prop:improper intervals} implies that for most intervals $\BI\in \II$, $\ell(F_*\overline\BI)=1$. 
In fact, 
there is a unique multi-interval of length at most two, which we call the \emph{stretching multi-interval} $I_\str$, that contains the intervals that push forward to long multi-intervals:

\begin{prop}\label{prop:stretching interval}
	There is a multi-interval $I_\str$ in $\II$ with  $\BL_{(\bound, q-\bound)}\subset F_*I_\str$ and $\ell(I_\str)\leq 2$. Moreover, there is some universal $N$ so that 
	$\ell(F_* \BI)\leq N$ for any  interval $\BI\not\subset I_\str$ in $\II$.
\end{prop}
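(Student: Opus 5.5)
The plan is to locate $I_\str$ near the critical limb, using the description of $F_*$ from Proposition \ref{prop:pullback description}. Since $F_*\BL_0=\dcara\TT$ and $F_*\BL_j=\BL_{j+1}$ for $j\neq 0$, the long images can only come from intervals meeting $\BL_0$, or from intervals adjacent to the part of $\TT'$ lying outside $\TT$. By Proposition \ref{prop:improper intervals}, every interval in $\BL_{[\bound,q-\bound-2]}$ is proper, and so has image of length $1$.

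I would work on the tree level. The map $f$ sends $\LL_0\cup\{\alpha_F\}$ two-to-one onto $\TT$, and it sends $\TT'\cap\FL_0$ onto all of $\TT$. The critical point lies in the central component $K_0\subset\FL_0$. The critical value lies in $\FL_1$ (rotation number $1/q$), so $f(K_0)$ is a component of $\KK$ in $\LL_1$.

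\textbf{Defining $I_\str$.} I would consider the two sides of the edge path in $\TT$ from $K_0$ to $\alpha_F$. Under $\iota^*$, followed by $f_*$, the side of this path facing the translation region has to cover all the limbs $\BL_j$ with $j\in(\bound,q-\bound)$. By Condition (3) of parabolic boundedness, every component of $\KK\cap\FL_0$ maps into $\bigcup_{|j|\le\bound}\FL_j$. Hence the preimages of the limbs $\LL_j$ with $\bound<j<q-\bound$ that lie in $\FL_0$ are attached to $\TT$ along the arc from $\alpha_F$ to $K_0$, and not along the other branches.

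These preimage branches $\XX$ are the components of $\TT'\sm\TT$. By Proposition \ref{prop:pullback description} they are absorbed into $\iota^*I$ for the interval $I$ of $\II$ on whose side they attach. Since $f$ restricted to $\FL_0$ is injective on each side of $K_0$, all the middle limbs are covered from at most two intervals: one on each side of the relevant edge, and possibly straddling a side of $K_0$ or $\alpha_F$. I would then set $I_\str$ to be the smallest multi-interval containing these, and check that $\ell(I_\str)\le 2$ and $\BL_{(\bound,q-\bound)}\subset F_*I_\str$.

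\textbf{Bounding every other interval.} For any interval $\BI\not\subset I_\str$, I would split into two cases.
\begin{enumerate}
\item If $\BI\subset\BL_j$ with $j\in[\bound,q-\bound-2]$, then $\ell(F_*\BI)=1$ by Proposition \ref{prop:improper intervals}.
\item Otherwise $\BI$ lies in $\BL_{(-\bound-2,\bound)}$. The components of $\TT'\sm\TT$ meeting $\TT$ at $\BI$ map under $f$ into $\LL_{[-\bound-1,\bound+1]}\cup\LL_0$, because the middle limbs are attached only at $I_\str$. So $F_*\BI$ lies in $\BL_{[-\bound-2,\bound+1]}$. By Lemma \ref{lem:comb size of limbs}, that set has combinatorial length at most $6\bound(2\bound+4)$, a bound depending only on $\bound$. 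This is "universal" in the paper's convention that constants may depend on $\bound$.
\end{enumerate}

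\textbf{Main obstacle.} The hardest step is the attachment claim. I need that the preimage branches covering the middle limbs attach to $\TT$ only along the two sides of the arc from $\alpha_F$ toward $K_0$, and that every other attachment point has preimages landing in the $\bound$-neighborhood. I would argue this from the cyclic order at $\alpha_F$ and its preimage $\alpha'$ inside $\FL_0$. Pulling back the star of $q$ limbs at $\alpha_F$ gives a star at $\alpha'$ whose branches appear in the same circular order, with $\FL_{q-1}$'s preimage adjacent to $\TT$. Condition (3) then forces all branches carrying components of $\KK$ to be within $\bound$ of the branch that returns toward $\alpha_F$. Hence the middle branches form a consecutive block in the circular order, and that block lies between two sides of a single edge of $\TT$.
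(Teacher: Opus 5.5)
\textbf{There is a genuine gap in the construction of $I_\str$.} Your bound for intervals outside $I_\str$ is fine in outline. For $\BI\subset\BL_j$ with $j\neq 0$ it is simplest to use $F_*\BL_j=\BL_{j+1}$ and Lemma \ref{lem:comb size of limbs}. But the construction of $I_\str$, which is the real content of the statement, rests on a wrong picture.

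The preimages in $\FL_0$ of the middle limbs $\LL_j$, $\bound<j<q-\bound$, are branches of $\TT'=f^{-1}(\TT)$ at the second preimage $\alpha'$ of $\alpha_F$; each middle limb has exactly one preimage in $\FL_0$. By the $z\mapsto -z$ symmetry of the quadratic model, with $0\in K_0=-K_0$, the arc from $\alpha_F$ to $\alpha'$ passes through $K_0$. So these branches hang off $\TT$ beyond $K_0$, not along the arc from $\alpha_F$ to $K_0$. For example, with one small Julia set per limb, $\alpha'\notin\TT$, and the component of $\TT'\sm\TT$ containing $\alpha'$ is attached at $K_0$ on the side away from $\alpha_F$. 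Then $I_\str$ is the unique side of the terminal vertex $K_0$, not a side of the edge $[\alpha_F,K_0]$.

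Your assertion that $f$ maps $\LL_0\cup\{\alpha_F\}$ onto $\TT$ is also false. By condition (3) its image lies in $\LL_{[-\bound,\bound]}\cup\{\alpha_F\}$; it is $\TT'\cap\FL_0$ that covers $\TT$.

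More seriously, you assemble $I_\str$ from ``one interval on each side of the relevant edge''. Two intervals on opposite sides of an edge are not adjacent in $\dcara\TT$. The smallest multi-interval containing both must wrap around a vertex, so its length is at least $3$, and $\ell(I_\str)\le 2$ fails.

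\textbf{The missing idea is that all the middle preimage branches reach $\TT$ through a \emph{single} access point $x\in\dcara\TT$.} By condition (3) they contain no component of $\KK$, so they lie in $\TT'\sm\TT$. There are two cases.

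If $\alpha'\notin\TT$, they all lie in the one component of $\TT'\sm\TT$ containing $\alpha'$, which meets $\TT$ at one point and from one side.

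If $\alpha'\in\TT$, the branches at $\alpha'$ have the circular order of the limbs at $\alpha_F$. The branches used by $\TT$ are among those mapped to $\FL_j$ with $|j|\le\bound$. Hence the middle branches form a consecutive run containing no branch of $\TT$, and they lie in a single sector at $\alpha'$.

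Proposition \ref{prop:pullback description} then puts the Carath\'eodory boundaries of all these branches inside $\iota^*\BI$ for the interval $\BI\ni x$, or inside $\Int\iota^*x$ if $x$ is an endpoint of $\II$. Taking $I_\str=\BI$, respectively the two intervals adjacent to $x$, gives $\ell(I_\str)\le 2$ and $\BL_{(\bound,q-\bound)}\subset F_*I_\str$. This is exactly the paper's argument.

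Your final paragraph does contain the consecutive-run observation. However, you conclude from it that the run lies ``between two sides of a single edge'' rather than in one sector, and you never treat the case $\alpha'\notin\TT$.
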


\begin{proof}
	Let $\LL_{[\bound, q-\bound-2]}'$ be the subset of $F^{-1}(\LL_{(\bound, q-\bound)})$ contained in $\FL_0$, and let
	$\BL_{[\bound, q-\bound-2]}'$ 
	be the minimal multi-interval in $\II'$ containing $\dcara \LL_{[\bound, q-\bound-2]}'$; thus $f_*\BL_{[\bound, q-\bound-2]}'=\BL_{(\bound, q-\bound)}.$ The components of $\TT'\sm \TT$ all meet $\TT$ at a single point $x$, so $\BL_{[\bound, q-\bound-2]}'\subset \Int\iota^*x$. If $x$ is contained in an interval $\BI\in \II$, then we set $\tilde \BI_\str = \iota^*\BI\in \tilde \II$. 
	If instead $x$ is an endpoint of $\II$, then we set $\tilde \BI_\str = \Int\iota^*x \in \tilde \II$. In both cases, we have 
	$$f_*\tilde \BI_\str\supset f_*\Int \iota^*x \supset f_*\BL_{[\bound, q-\bound-2]}' = \BL_{(\bound, q-\bound)}.$$
	We define $I_\str$ to be the minimal multi-interval satisfying $\tilde \BI_\str\subset \iota^*I_\str$.

	If $\BI\in \II$ lies in $\BL_j$ for some $j\neq 0$, then 
	$$\ell(F_* \BI)\leq \ell(\BL_{j+1})\leq 6\bound.$$
	If  instead $\BI\subset \BL_0\sm I_\str$, then $F_* \BI$ avoids $\BL_{(\bound, q-\bound)}$.  Hence 
	$$\ell(F_* \BI)\leq (2\bound +1)6\bound,$$
	which completes the proof.
\end{proof}

We now turn our attention to the proof of Lemma \ref{lem:bounded visits}. The key observation we need is that the length of any interval in $\II$ grows after a bounded number of {returns to $\BL_0$}:

\begin{prop}\label{prop:combinatorial growth}
	For any multi-interval $I$ and $n\geq 1$, if there are $6\bound+\ell(I)$  distinct integers $0\leq m \leq n$ such that $F_*^mI$ intersects $\BL_0$, then $\ell(F_*^nI)> \ell(I).$
\end{prop}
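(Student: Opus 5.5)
The plan is to track the combinatorial length $\ell(F_*^mI)$ along the orbit and show it never decreases, and that it strictly increases at least once among the $6\bound+\ell(I)$ returns to $\BL_0$. First I will establish monotonicity, $\ell(F_*J)\geq \ell(J)$ for every multi-interval $J$ in $\II$. This should follow from Proposition \ref{prop:pullback description} and Proposition \ref{prop:comb action}. The pushforward of each interval of $\II$ is a nonempty multi-interval, and consecutive intervals of $J$ push forward to adjacent or overlapping multi-intervals, since $f$ is a local homeomorphism away from the critical point and preserves cyclic order on $\dcara\TT'$. The only way length could drop is if two distinct intervals of $J$ had the same image. That happens only across the critical point, i.e.\ for $J$ wrapping around both sides of $\LL_0$, and such $J$ has image containing almost all of $\dcara\TT$. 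I will treat that degenerate case separately, since then $\ell(F_*J)$ is already maximal.

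Next I will show that a strict increase occurs whenever $F_*^mI$ meets $\BL_0$, except in a bounded number of "bad" situations. If $J=F_*^mI$ intersects $\BL_0$, there are two options. Either $J$ contains an interval of $\BL_0$ whose image under $F_*$ has length $\ge 2$, which forces growth. Or every interval of $J\cap\BL_0$ is mapped with length $1$, i.e.\ properly. For the second option I will use that $F$ restricted to $\LL_0$ realizes the expanding (core eigenvalue $>1$) tree dynamics: the limb $\BL_0$ has $|\BL_0|\le 6\bound$ intervals by Lemma \ref{lem:comb size of limbs}, and $F_*\BL_0=\dcara\TT$. So an interval of $\BL_0$ mapping properly must land on a single interval, and these intervals correspond to sides of vertices or edges of $\TT$ in $\LL_0$. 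After each non-growing visit, the crest's position inside $\BL_0$ is determined by one of at most $6\bound$ intervals. Moreover, for $J$ of fixed length $\ell$, the possible starting positions of $J$ meeting $\BL_0$ number at most $6\bound+\ell-1$, since $J$ is a run of $\ell$ consecutive intervals intersecting the run $\BL_0$.

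The core of the argument, and the step I expect to be hardest, is a pigeonhole. Suppose $\ell$ stays constant over the visits $m_1<\dots<m_{6\bound+\ell(I)}$. Then each $F_*^{m_i}I$ is one of at most $6\bound+\ell(I)-1$ length-$\ell(I)$ windows meeting $\BL_0$, so two visits coincide: $F_*^{m_i}I=F_*^{m_j}I$. Hence the window $J$ is periodic under $F_*$ with constant length throughout the period. Pushing forward by $F_*^{m_j-m_i}$ then acts as a bijection on the finitely many intervals of $J$, and this is incompatible with primitivity. Concretely, it would give a periodic arc in $\UU\sm\TT$ subordinate to finitely many sides, i.e.\ a proper invariant subtree, or a combinatorial renormalization of period dividing $m_j-m_i$, contradicting that $\lambda_F>1$ and that the renormalization is prime and primitive. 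Making this last deduction rigorous is where the real difficulty lies. It requires identifying "length-preserving periodic window" with an invariant finite union of sides of $\hat\KK$, and then invoking that every edge of $\TT$ eventually covers the whole tree under $T_F$ (primitivity of $T_F$). That covering property forces some iterate of the window to contain a stretching interval $I_\str$ (Proposition \ref{prop:stretching interval}) and hence to grow.

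Combining monotonicity with this pigeonhole then yields $\ell(F_*^nI)>\ell(I)$.
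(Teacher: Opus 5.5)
Your proposal is correct and follows essentially the paper's argument. The paper assumes no growth, so that $\ell(F_*^mI)=\ell(I)$ for all $m$ (using implicitly the monotonicity you propose to prove), pigeonholes the $6\bound+\ell(I)$ returns among the length-$\ell(I)$ windows meeting $\BL_0$ to obtain $F_*^sI=F_*^rI$, and concludes that this contradicts $\lambda_F>1$. Your middle paragraph on individual non-growing visits is not needed, and your count of at most $6\bound+\ell-1$ windows is the sharp bound that makes the pigeonhole step go through with $6\bound+\ell(I)$ visits.
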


\begin{proof}
	For all $0\leq m \leq n$ we set $I_m = F_*^mI$.
	We assume for the sake of contradiction that $\ell(I_n)= \ell(I)$, so $\ell(I_m)= \ell(I)=k$ for all $m$.
	Let $(m_j)_{j=1}^{6\bound+k}$ be an increasing sequence of integers in $[0, n]$ such that $I_{m_j}$ intersects $\BL_0$ for all $j$.
	As the number of multi-intervals in $\II$ with length $\ell(I)$ that intersect $\BL_0$ is bounded above by $6\bound+k $, 
	it follows that there is some $1\leq s < r\leq n$ such that $I_{s}= I_{r}.$ This contradicts the fact that $\lambda_F> 1$.
\end{proof}

\begin{proof}[Proof of Lemma  \ref{lem:bounded visits}]
	Setting $N = 216\bound^2$, we assume that there is a sequence $(A_n)_{n=1}^N$ such that $A_n$ is a critical visit of $A_{n-1}$ for all $1< n \leq N$. Let $I_n$ 
	be the  crest of $A_n$. 
	As $A_{n}$ is a critical visit of $A_{n-1}$, and hence short, 
	it follows that $I_n$ intersects $\BL_0$, $\ell(I_n)< 12\bound$,  and there is an integer $m_n\geq 1$ such that $I_n\supset F_*^{m_n}I_{n-1}$. 
	For any $0\leq n \leq N-18\bound$,
	Proposition \ref{prop:combinatorial growth} implies that $\ell(I_{n+18\bound}) >\ell(I_n)$. Hence
	$\ell(I_{N})> 12\bound$, which is a contradiction.
\end{proof}

\subsection{Breakable waves}\label{sec:breaking}

In this section we prove Proposition \ref{prop:mostly short}. Recalling from Appendix \ref{app:dynamics} the definition of ``breaking" a wave, we will first show that breakable waves are terminal, restricting our analysis to unbreakable waves. For long unbreakable waves we have three cases: waves over the critical limb, waves in the translation region, and waves connecting the critical limb to the translation region. 
In these three cases, pushing forward yields respectively either self-intersection, which automatically implies that the width is small, an intersection with a shift of the wave, which implies the width is small by the Shift Lemma, or a lamination connecting components of $\KK$, which implies that the width is bounded by the renormalization width; 
in each case we can conclude that the wave cannot be very wide.

Let us now be more precise. 
We recall from Appendix \ref{app:dynamics} that a path $\gamma$ in $(\UU\sm \TT, \II)$ \textit{breaks} under $F$ if $F_*\gamma$ is not a single path. 
For any wave $A$ in $\UU\sm \TT$ and $n\geq 0$, we define the \textit{$n$-unbreakable part} \index{waves!unbreakable part} $A^{n\mathhyphen\nbr}\subset A$ to be the subwave consisting of leaves
$\alpha$ such that $F_*^m\alpha$ does not break under $F$ for all $0\leq m < n$. We define the \textit{$n$-breakable part} \index{waves!breakable part} of $A$ to be $A\sm A^{n\mathhyphen\nbr}$. 
For any $0< \lambda < 1$, we will say that $A$ is  \textit{$(n, \lambda)$-unbreakable} if $\WW(A^{n\mathhyphen\nbr})\geq \lambda\WW(A)$. 

We split our proof of Proposition \ref{prop:mostly short} into two parts. The first part is showing that controllable waves are either unbreakable or terminal, and the second is showing that unbreakable waves are short. More precisely, we have the following two propositions:
\begin{prop}\label{prop:small breaking}
	For any $n$, $k$, and $\lambda$  there exists $\eta$ and $\delta$ such that: if $A$ is a $[n, \lambda]$-wide wave and $A^{n\mathhyphen\nbr}$ is $k$-controllable, then $A$ is  $(n, \lambda)$-unbreakable or $(\delta, \eta)$-terminal. Moreover, $\eta$ does not depend on  $\lambda$, and $\delta$ does not depend on $k$.
\end{prop}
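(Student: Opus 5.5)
We argue by contraposition: assume $A$ is not $(n,\lambda)$-unbreakable, so the $n$-breakable part $A^{\rm br}:=A\sm A^{n\mathhyphen\nbr}$ satisfies $\WW(A^{\rm br})>(1-\lambda)\WW(A)$. The aim is to show that pushing $A^{\rm br}$ forward produces either definite vertical width or a controllable wave definitely wider than $A$. First we decompose $A^{\rm br}=\bigcup_{m=0}^{n-1}A_m$, where $A_m$ consists of the leaves $\alpha$ for which $F_*^m\alpha$ breaks under $F$ but no earlier iterate does. By pigeonhole there is some $m<n$ with $\WW(A_m)\geq (1-\lambda)\WW(A)/n$. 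Since the leaves of $A_m$ do not break during the first $m$ iterates, $F_*^m$ carries $A_m$ to a lamination of single paths. By the Covering Lemma / Gr\"otzsch-type monotonicity for pushforwards under the covering $f$ and the immersion $\iota$ (Appendix \ref{app:dynamics}), this pushforward has width comparable to $\WW(A_m)$. It is again a wave, or else contributes to $\WW(\UU\sm\TT)$.

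Next we analyze what a single break does. A leaf $\beta=F_*^m\alpha$ that breaks under $F$ is cut by $\TT'\sm\TT$. Concretely, $\iota^*$ of its crest contains a component of $\TT'\sm\TT$, as in Proposition \ref{prop:pullback description}. Its pushforward $F_*\beta$ is therefore a union of at least two paths in $(\UU\sm\TT,\II)$. By Lemma \ref{lem:harmonic and geometric sum}, splitting a path family into $\geq 2$ pieces per leaf multiplies the width by a definite factor: the families of pieces have total width at least $4\WW$ when there are exactly two pieces. Each piece is either vertical, landing in $\ddi\UU$ and so contributing to $\WW(\UU\sm\TT)$, or a wave over an end of a multi-interval. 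By the description of $F_*$ on $\II$ (Propositions \ref{prop:comb action}, \ref{prop:improper intervals}, \ref{prop:stretching interval}), these ends are localized intervals or crest over $\BL_0$, and their number is bounded by a universal constant. This is what makes $\eta$ independent of $\lambda$. The surviving unbroken part $F_*^{m+1}A^{n\mathhyphen\nbr}$ is handled by the hypothesis that $A^{n\mathhyphen\nbr}$ is $k$-controllable; its pushforward is controlled using the same interval dynamics.

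To finish, we combine these estimates. The pieces of $F_*^{m+1}A_m$ together with the pushforward of the rest of $A$ give a family whose width exceeds $\WW(A)$ by the gain $c(1-\lambda)\WW(A)/n$ coming from the splitting. The additive errors of order $O(\WW(\RR F))$ or $O(1)$ from the Covering Lemma, from passing through $\KK$, and from bivertical leaves are absorbed because $A$ is $[n,\lambda]$-wide. A bounded number of sub-waves then carries this gain. We take the widest such sub-wave: either some $\eta$-controllable wave $B$ satisfies $\WW(B)>(1+\delta)\WW(A)$, or the vertical part gives $\WW(\UU\sm\TT)>\delta\WW(A)$, with $\delta\asymp(1-\lambda)/n$ independent of $k$. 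The main obstacle is making the splitting gain beat the loss caused by distributing width among several sub-waves, whose number is bounded independently of $k$. We first try comparing leaf-by-leaf via extremal length, as in the wave lemma of \cite{dudko2023mlcfeigenbaumpoints}. If that fails, we use the $k$-controllable structure of $A^{n\mathhyphen\nbr}$ and reassemble it with the broken pieces into a single wave.
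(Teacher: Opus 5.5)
Your reduction to the breakable part, the option of vertical width, and the idea of getting a gain from split leaves via harmonic sums all point the right way. But the step you flag as the main obstacle is a genuine gap, and your fallback does not close it.

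\textbf{Why the proposed assembly fails.} If the gained width is spread over $K$ sub-waves, the widest one is only guaranteed width about $(1+c)\WW(A)/K$, not $(1+\delta)\WW(A)$. The ``total width $\ge 4\WW$'' of the pieces of broken leaves does not help on its own either: those pieces lie in different families, and the broken part carries only width of order $(1-\lambda)\WW(A)/n$.

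\textbf{The missing idea.} Take the pieces in $\UU'\sm\TT'$, i.e.\ restrict by $\iota$ rather than pushing forward first. Sort them by how they sit relative to $\iota^*I$, where $I$ is the crest of $A$ and $F_*$ is end-proper on $I$. The paper does $n=1$ and inducts. It shows that the $\iota$-restriction of a broken leaf falls into one of three cases:
\begin{enumerate}
\item[(i)] It contains a path joining the two boundary components of $\UU'\sm\TT'$. This part $A^{\ver}$ has width at most $2\WW(\UU\sm\TT)$, so either $A$ is terminal or $A^{\ver}$ is negligible.
\item[(ii)] It contains two waves, one over each endpoint of $\iota^*I$. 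One of them yields a wave $B^{\inbr}$ with $\WW(B^{\inbr})\ge 2\WW(A^{\inbr})$.
\item[(iii)] It contains a wave over $\iota^*I$ together with a companion piece ending at one of the $N=O(\bound^2)$ intervals on which $F_*$ is improper.
\end{enumerate}
The crucial point is that the pieces kept in (ii) and (iii), together with $\iota^*A^{\nbr}$, are disjoint and form a \emph{single} wave $B$ in $\UU'\sm\TT'$. Its width exceeds $\WW(A)-\WW(A^{\ver})$ by a definite amount. Only then is $f_*$ applied, through Lemma \ref{lem:wave} and Lemma \ref{lem:pushing controllable}. Lemma \ref{lem:wave} has degree $\le 2$ and additive loss $O(1)$, which wideness absorbs. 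Lemma \ref{lem:pushing controllable} is what makes $\eta=\eta(n,k)$ independent of $\lambda$.

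\textbf{A dichotomy you are missing in case (iii).} Let $B^{\out}$ be the crest-pieces and $C^{\out}$ the companion pieces. One gets $\WW(A^{\out})\le \WW(B^{\out})\oplus\bigl(2N\,\WW(C^{\out})\bigr)$. This forces a gain for $B^{\out}$ only if $C^{\out}$ is not too wide. The paper handles this as follows:
\begin{itemize}
\item If $\WW(C^{\out})\ge 3\WW(A)$, then $C^{\out}$ alone pushes forward to an $\eta$-controllable wave of width $\ge 2\WW(A)$, so $A$ is terminal.
\item Otherwise $\WW(B^{\out})\ge \WW(A^{\out})+\WW(A^{\out})^2/(6N\WW(A))$.
\end{itemize}
So the gain is quadratic in $1-\lambda$. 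Your asserted $\delta\asymp(1-\lambda)/n$ is not what the argument delivers; for $n=1$ one gets $\delta$ of order $(1-\lambda)^2/N$, independent of $k$.

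Finally, replace your ``comparable width'' Covering-Lemma step by Lemma \ref{lem:wave}. Any multiplicative loss in the pushforward would swamp the $(1+\delta)$ gain.
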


\begin{prop}\label{prop:nonbreaking implies short}
	There is a universal $N$ such that for any $\lambda$ there exists $\lambda'$ satisfying: for all $k$, any $[k, \lambda]$-wide $(N, \lambda')$-unbreakable $k$-controllable wave is $\lambda$-short.
\end{prop}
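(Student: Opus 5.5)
We argue by contraposition. Suppose $A$ is $[k,\lambda]$-wide, $k$-controllable and $(N,\lambda')$-unbreakable, but not $\lambda$-short. Then the non-short part $A\sm A^\sh$ has width at least $(1-\lambda)\WW(A)$, so the unbreakable non-short subwave
$$B:=A^{N\mathhyphen\nbr}\sm A^\sh$$
satisfies $\WW(B)\geq(\lambda'-\lambda)\WW(A)$. We take $\lambda'$ close to $1$, say $\lambda'=(1+\lambda)/2$, so that $\WW(B)\succeq(1-\lambda)\WW(A)$. Every leaf of $B$ crests over a whole limb $\BL_j$ of $\II$, and each limb has combinatorial length at most $6\bound$ by Lemma \ref{lem:comb size of limbs}. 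Since $A$ is $k$-controllable, the leaves of $B$ split into boundedly many classes according to the piece $A_i$ they lie in. Pigeonholing, we get a subwave $B'$ of width $\succeq(1-\lambda)\WW(A)/(k+1)$, which is still $[k,\lambda]$-wide, all of whose leaves crest over a common limb-containing interval. We then split into the three cases announced at the start of \S\ref{sec:breaking}: (a) $B'$ crests over $\BL_0$, so it is a wave over the critical limb; (b) both feet of $B'$ and its crest lie in $\BL_{[\bound,q-\bound-2]}$, the translation region; (c) $B'$ connects $\BL_0$ (or a bounded neighbourhood of it) to the translation region.

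In each case we push $B'$ forward by $F_*^m$ for some $m\leq N$; this is possible without breaking by the unbreakability assumption. The goal is to bound $\WW(B')$ by $O(\WW(\RR F))$, which contradicts wideness.

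Case (a). We have $F_*\BL_0=\dcara\TT$, so $F_*B'$ is an unbroken lamination in $\UU\sm\TT$ whose crest covers everything. Such a family of leaves must cross its own image or connect two sides of the same vertex. From this we bound the width by $O(\WW(\RR F))+\WW(\UU\sm\TT)$. The $\WW(\UU\sm\TT)$ term is harmless, because otherwise $A$ is $\delta$-terminal; we would fold this alternative into the statement by choosing $\lambda$ appropriately, or invoke Proposition \ref{prop:small breaking} beforehand.

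Case (c). One or two pushes send the $\BL_0$ foot through $I_\str$ (Proposition \ref{prop:stretching interval}), stretching it across the translation region. The unbroken image then consists of paths connecting sides of components of $\KK$, and their width is $O(\WW(\RR F))$ by the same estimate as in Lemma \ref{lem:width to weight}.

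Case (b). By Proposition \ref{prop:improper intervals}, $F_*$ acts properly there, essentially as the shift $\BL_j\mapsto\BL_{j+1}$. We push $B'$ forward $N$ times, with $N$ universal, e.g.\ $N\approx 12\bound+2$, comparable to the combinatorial length of a limb. The crest of $B'$ contains a limb, so $F_*^NB'$ and $B'$ have interlaced endpoints and are forced to intersect. The Shift Lemma then bounds $\WW(B')$ by $O(\WW(\RR F))$. If instead the pushes reach $\BL_0$ before $N$ steps, we are in case (a) or (c) after those pushes.

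The main obstacle is case (b): we must make sure that the Shift Lemma applies uniformly, with $N$ independent of $q$, and that the intersection of $B'$ with its shift really is forced. I would first try to use the near-translation structure from Appendix \ref{app:transl-mark} on the outer translation marking, where a wave cresting over a full limb lifts to a long arc; this is the setting where the width bound for long arcs (Theorem \ref{thm:bounded short}) can be applied directly.
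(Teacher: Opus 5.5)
Your overall frame matches the paper's, via Proposition \ref{prop:mostly short in L}: contrapose, isolate the unbreakable non-short part, split it into a wave over $\BL_0$, localized waves away from $\BL_0$, and waves ending at $\BL_0$, and bound each piece by $O(k)+O(\WW(\RR F))$. Pigeonholing over the $k+1$ pieces is harmless. But the case analysis has problems.

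\textbf{Case (a).} As written, it does not prove the statement. You end up with a $\WW(\UU\sm\TT)$ term and propose to trade it for a terminal alternative. The proposition, however, asserts unconditionally that $A$ is $\lambda$-short. Neither a choice of $\lambda$ nor Proposition \ref{prop:small breaking} absorbs a term that may be comparable to $\WW(A)$. In fact the term never arises. By Lemma \ref{lem:wave} ($F$ has bounded degree on $\BL_0$) and Proposition \ref{prop:nonbreaking}, the unbreakable part of a wave over $\BL_0$ pushes forward, with $O(1)$ loss, to a wave over $F_*\BL_0=\dcara\TT$. No wave crests over an entire boundary component, so that width is $O(1)$ (Proposition \ref{prop:over critical}).

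\textbf{Case (b).} You flag this as the main obstacle, but it is the easy case. All of $\dcara\TT\sm\BL_0$ shifts under $F_*$, not only the translation region. So one push and Lemma \ref{lem:shift} give $\WW(Z^\nbr)\le\WW(Z^\sh)+O(1)$ for every localized wave $Z$ away from $\BL_0$. No $N$-fold iteration and no translation markings are needed.

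\textbf{Case (c): the genuine gap.} Waves ending at $\BL_0$ are the only place where a $\WW(\RR F)$ term legitimately enters, and here your argument has no working mechanism.
\begin{itemize}
\item The foot in $\BL_0$ need not meet $I_\str$.
\item Knowing that image leaves join sides of small Julia sets gives no bound independent of $\per$. A lamination whose leaves visit many components of $\KK$ is only controlled by a sum over those components.
\item Lemma \ref{lem:width to weight} bounds $\per\WW(\RR F)$ by $4\|X\|+O(\per)$, which is the opposite direction from what you need.
\end{itemize}
What you need is that every leaf separates one fixed small Julia set $K$ from the rest, with the pushing iterate univalent on $K$. Then Lemma \ref{lem:push forward laminations} compares the width with that of a lamination around $F^{N-1}(K)$, which is at most $2\WW(\RR F)$.

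The paper obtains this in Proposition \ref{prop:into the critical limb}. It pushes forward $N=\bound+3$ times, peeling off at each step the part whose image no longer ends at $\BL_0$; that part costs $O(1)$ by the two other cases. For the persistent part, Lemma \ref{lem:stationary wave alignment} uses the pullbacks $\LL_0^n$ of $\LL_0$ and $K_0^n$ of the central small Julia set inside $\FL_0$. Since $\FL_0$ contains at most $\bound$ components of $\KK$:
\begin{itemize}
\item $\LL_0^n$ misses $\TT$ for $n\ge\bound$;
\item $K_0^n$ separates $\LL_0^{n+1}$ from $\TT$.
\end{itemize}
Hence a leaf that still ends at $\BL_0$ after $\bound+2$ pushes, and never breaks, must pass through $K_0^{\bound}$. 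This is a component of $\KK^{N-1}$ on which $f^{N-1}$ has degree one. Without this argument, or a substitute for it, your case (c) is unproved.
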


Together, Propositions \ref{prop:small breaking} and \ref{prop:nonbreaking implies short} imply Proposition  \ref{prop:mostly short}:
\begin{proof}[Proof of Proposition  \ref{prop:mostly short}]
	Let $N$ be the universal constant in Proposition \ref{prop:nonbreaking implies short}, and let $A$ be a $[k, \lambda]$-wide $k$-controllable wave.

	We set $B= A^{N\mathhyphen\br}\cup A^{\sh}$. 	
	For any $\lambda_1$, if $\WW(A^{N\mathhyphen\nbr})< (1-\lambda_1)\WW(A)$ then 
	\begin{equation}\label{eq:mostly short 1}
		\WW(B)\geq \WW(A^{N\mathhyphen\br})\geq \lambda_1\WW(A).
	\end{equation}
	Choosing $\lambda_1\gg\lambda_2$ for some $\lambda_2$, if $\WW(A^{N\mathhyphen\nbr})\geq (1-\lambda_1)\WW(A)$, then
	Proposition \ref{prop:nonbreaking implies short} implies that $A^{N\mathhyphen\nbr}$ is $\lambda_2$-short, so 
	\begin{equation}\label{eq:mostly short 2}
		\WW(B)\geq \WW(A^{N\mathhyphen\br})+ \lambda_2\WW(A^{N\mathhyphen\nbr})\geq \lambda_2\WW(A).
	\end{equation}
	Combining \eqref{eq:mostly short 1} and \eqref{eq:mostly short 2}, we are guaranteed to have $\WW(B)\geq \lambda_2\WW(A)$.
	
	As $B^{N\mathhyphen\nbr}$ is short, and hence  $12\bound$-controllable, for any $\lambda_3$ it follows from Proposition \ref{prop:small breaking} that $B$ is either $(N, \lambda_3)$-unbreakable or $(\delta_3, \eta_3)$-terminal for some $\delta_3$ and universal $\eta_3$.
	Choosing $\lambda_3\gg\lambda_4$ for some $\lambda_4$, Proposition \ref{prop:nonbreaking implies short} implies that if $B$ is $(N, \lambda_3)$-unbreakable, then it is $\lambda_4$-short. 
	
	By the above,  either $B$ is $(\delta_3, \eta_3)$-terminal or 
	$$\WW(A^\sh) = \WW(B^\sh)\geq \lambda_4\WW(B)\geq \lambda_2\lambda_4\WW(A).$$
	Choosing $\lambda_2\lambda_4\gg_{\delta_3}\lambda$, it follows that $A$ is either $\lambda$-short or  $(\delta, \eta_3)$-terminal for some $\delta$.
\end{proof}

Our first step in proving  Propositions \ref{prop:small breaking} and \ref{prop:nonbreaking implies short} is observing that controllable waves push forward to controllable waves:

\begin{lem}\label{lem:pushing controllable}
	For any $n$ and $k$ there exists $\eta$ such that: if $A$ is a wave and $A^{n\mathhyphen\nbr}$ is  $k$-controllable, then any wave in $F_*^nA$ is  $\eta$-controllable.
\end{lem}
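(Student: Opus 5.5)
The plan is to induct on $n$, so the main work is the case $n=1$: one push-forward of a $k$-controllable unbreakable wave produces only waves that are $\eta(k)$-controllable. Suppose the case $n=1$ is known with some function $\eta_1(k)$. Since every leaf of $A^{n\mathhyphen\nbr}$ does not break under $F_*^m$ for $m<n$, the wave $F_*^{m}A^{n\mathhyphen\nbr}$ is a genuine wave whose leaves are single paths. Applying the one-step statement $n$ times then gives $\eta=\eta_1^{\circ n}(k)$. Any wave in $F_*^nA$ is a subwave of something coming either from $F_*^nA^{n\mathhyphen\nbr}$ or from pieces of broken leaves. I will check that broken pieces land in waves whose ends are localized or cresting over $\BL_0$; if that is not automatic, I will restrict attention, as the statement intends, to the waves generated by the unbreakable part.

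For the one-step case, write $A^{1\mathhyphen\nbr}=A_0\cup A_1\cup\dots\cup A_k$ with $A_0$ cresting over $\BL_0$ and each $A_j$ localized at a single interval $\BI_j\in\II$. For a localized piece $A_j$, its push-forward is a wave whose ends lie in $F_*\BI_j$. If $\BI_j\not\subset I_\str$, Proposition~\ref{prop:stretching interval} gives $\ell(F_*\BI_j)\le N$ for a universal $N$. Splitting $F_*A_j$ according to which interval of $F_*\BI_j$ each leaf ends at (and each leaf has two ends) then expresses it as a union of at most $N^2$ localized waves. If $\BI_j\subset I_\str$, then $F_*\BI_j\supset\BL_{(\bound,q-\bound)}$ may be long. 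In that case I split $F_*\BI_j$ into the part inside $\BL_{[-\bound,\bound]}$, which has bounded length by Lemma~\ref{lem:comb size of limbs}, and the long translation part. Since $\ell(I_\str)\le 2$ and $F$ acts on $\TT$ as described in Section~\ref{sec:intervals}, the long part should be crested over by the image, and I want it absorbed into a single wave over $F_*\BL_0$-type crests. I would try to show it is a wave cresting over $\BL_0$ after noting that $F_*\BL_0=\dcara\TT$.

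For $A_0$, the crest $\BL_0$ maps to all of $\dcara\TT$. The image leaves, which do not break, are paths whose ends lie in the images of the two end-multi-intervals adjacent to $\BL_0$. These are $\BL_{\pm 1}$-type pieces, which push forward with bounded length by Proposition~\ref{prop:improper intervals} and Lemma~\ref{lem:comb size of limbs}. The leaves of $F_*A_0$ then either crest over $\BL_0$ or have both ends in a bounded-length multi-interval, so I can decompose them into one wave cresting over $\BL_0$ plus boundedly many localized waves. Collecting all pieces gives $\eta_1(k)=O(k\,\bound^2N^2)$.

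The main obstacle is the stretching interval. I need to ensure that a localized wave ending in $I_\str$ pushes forward to a union of a wave cresting over $\BL_0$ and boundedly many localized waves, rather than to a wave with a long, uncontrolled crest in the translation region. My first attempt will use the explicit description in Proposition~\ref{prop:pullback description}: the preimage tree $\TT'\sm\TT$ attaches at a single point $x$. Because of this, image leaves of paths subordinate to $\iota^*x$ must surround the whole image of the critical limb, and hence crest over $\BL_0$ or over $\BL_{(\bound,q-\bound)}\cup\BL_0$.
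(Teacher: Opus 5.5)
There is a genuine gap, and it sits in the part you set aside. The lemma assumes something only about $A^{n\mathhyphen\nbr}$ but asserts control of \emph{every} wave in $F_*^nA$. So you must show that images of pieces of leaves that break are automatically controllable. Restricting to ``waves generated by the unbreakable part'' is not what the statement says, and the lemma is applied precisely to such broken pieces. In the proof of Proposition~\ref{prop:small breaking} it is used for $C^{\out}\subset\iota^*A^{\out}$ with $A^{\out}\subset A^{\br}$, and for $f_*B$ where $B$ contains $B^{\inbr}\cup B^{\out}$.

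Your origin-based bookkeeping also does not close for the unbreakable part. If $\BI_j\subset I_\str$, splitting $F_*A_j$ according to the interval of $F_*\BI_j$ at which leaves end produces unboundedly many localized waves, and you leave this case open. For $A_0$ you assume its ends are $\BL_{\pm1}$-type pieces, but a wave cresting over $\BL_0$ may have its ends deep in the translation region.

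The missing idea is to sort the leaves of a wave $B\subset F_*A$ by where they lie rather than by where they came from. Suppose a leaf $\beta$ of $B$ is subordinate to $\BL_{(\bound,q-\bound)}$. Then the leaf $\tilde\alpha$ of $\iota^*A$ with $f_*\tilde\alpha=\beta$ is subordinate to a component of $f^{-1}(\BL_{(\bound,q-\bound)})$. The component lying in $\FL_0$ is contained in a single interval of $\tilde\II=\iota^*\II$; this is what the proof of Proposition~\ref{prop:stretching interval} shows. Paths subordinate to a single interval of $\iota^*\II$ are discarded by convention. Hence $\tilde\alpha$ is subordinate to $\iota^*\BL_{[\bound,q-\bound-2]}$, where $F_*$ is proper by Proposition~\ref{prop:improper intervals}. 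It follows that $\tilde\alpha=\iota^*\alpha$ for a leaf $\alpha$ of $A$ subordinate to $\BL_{[\bound,q-\bound-2]}$, and $\alpha\in A^{\nbr}$. These $\alpha$ form a subwave of the $k$-controllable wave $A^{\nbr}$ that cannot crest over $\BL_0$. So it is a union of at most $k$ localized waves, and properness carries this to the corresponding part of $B$.

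Every other leaf of $B$ either ends at one of the $O(\bound^2)$ intervals of $\BL_{[-\bound,\bound]}$ or crests over $\BL_0$. This holds whatever its origin, broken or not, stretching interval or not. That gives $\eta=k+O(\bound^2)$ with no case analysis of $I_\str$ or $A_0$ at all; your remark about $\iota^*x$ is the right object, but what matters is that lifts subordinate to it are discarded, not what their images crest over.

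For $n>1$ the induction must likewise carry the broken pieces. Take a wave $X\subset F_*^{n-1}A$ with $B\subset F_*X$, bound the controllability of $X^{\nbr}$ by the inductive hypothesis, and apply the one-step case to $X$. Iterating only along $F_*^mA^{n\mathhyphen\nbr}$ is not enough.
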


\begin{proof}
	First we consider the case where $n = 1$.
	Let $B\subset F_*A$ be a wave.
	Let us assume that there is a leaf $\beta\in B$  subordinate to $\BL_{(\bound, q-\bound)}$. Let $\tilde{\alpha}$ be the leaf of $\iota^*A$ satisfying $f_*\tilde\alpha = \beta$, so $\tilde\alpha$ is subordinate to a component of $f^{-1}(\BL_{(\bound, q-\bound)})$.
	It follows from the proof of 
	Proposition \ref{prop:stretching interval} that one such component is contained in an interval in $\tilde \II$; recall that no paths in $\iota^*A$ are subordinate to an interval in $\tilde \II$ by definition. Thus $\tilde{\alpha}$ must be subordinate to $\iota^*\BL_{[\bound, q-\bound-2]}$. 
	As $F_*$ is proper on $\BL_{[\bound, q-\bound-2]}$ by Proposition \ref{prop:improper intervals}, it follows that there is a path $\alpha$ subordinate to $\BL_{[\bound, q-\bound-2]}$ so that $\tilde \alpha=\iota^*\alpha$. Hence $\beta = F_*\alpha$ and $\alpha\in A^\nbr$. 
	
	Let $B'\subset B$ be the subwave consisting of the leaves subordinate to $\BL_{(\bound, q-\bound)}$. By the above, there is a subwave $A'\subset A^\nbr$ subordinate to $\BL_{{[\bound, q-\bound-2]}}$ such that $B' = F_*A'$. As $A^\nbr$ is $k$-controllable, $A'$ is the union of $k$ localized waves. As $F_*$ is proper on $\BL_{[\bound, q-\bound-2]}$ it follows that $B'$ is also the union of $k$  localized waves. As any leaf of $B\sm B'$ must either have one endpoint in $\BL_{[-\bound, \bound]}$ or be a wave over $\BL_0$, it follows that $B$ is the union of a wave over $\BL_0$ and at most $\eta = k+4\bound(2\bound+1)$ many localized waves. This completes the proof when $n = 1$.
	
	Now let us suppose that $n > 1$ and that the Proposition holds for smaller values of $n$. Fixing some wave $B\subset F_*^nA$, there is a wave $X\subset F_*^{n-1}A$ satisfying $F_*X = B$. For any $\alpha\in A^{(n-1)\mathhyphen\br}$, if $F_*^{(n-1)}\alpha\in X^\nbr$ then $\alpha\in A^{n\mathhyphen\nbr}$ by definition. 
	Thus 
	$$X^\nbr\subset F_*^{n-1}(A^{n\mathhyphen\nbr}\cup A^{(n-1)\mathhyphen\br}).$$
	As $A^{n\mathhyphen\nbr}$ is  $k$ controllable, it follows from the inductive hypothesis that $X^\nbr$ is $\eta'$ controllable for some $\eta'$ depending only on $n$ and $k$. As $B\subset F_*X$, it then follows from the inductive hypothesis that $B$ is  $\eta$ controllable for some $\eta$ depending only on $\eta'$.
\end{proof}

With Lemma \ref{lem:pushing controllable}, we can now prove Proposition \ref{prop:small breaking}:

\begin{proof}[Proof of Proposition \ref{prop:small breaking}]
	We will only consider the case where $n = 1$; the general case follows by an easy induction. 
	Let $A$ be a $[\lambda]$-wide wave such that  $A^\nbr$ is $k$-controllable, and let 
	$\eta$ depend on $k$ and $n =1$ as in Lemma \ref{lem:pushing controllable}.
	Setting $\eps= 1-\lambda$, we suppose that $\WW(A^\br)\geq \eps\WW(A)$ and choose some $\delta$ to be specified later. 
	
	Let $I$ be the crest of $A$.  
	As $F_*$ is end-proper on $I$, the restriction in $\UU\sm \TT'$ of any leaf of $A^\br$ must  contain either a path connecting the two components of $\UU\sm \TT'$, a wave over $\iota^*I$, or two waves, one over each endpoint of $\iota^*I$ (see Figure \ref{fig:breaking}). We will consider these three possibilities separately. 
	
	\begin{figure}
		\begin{center}
			\def\svgwidth{5in}
			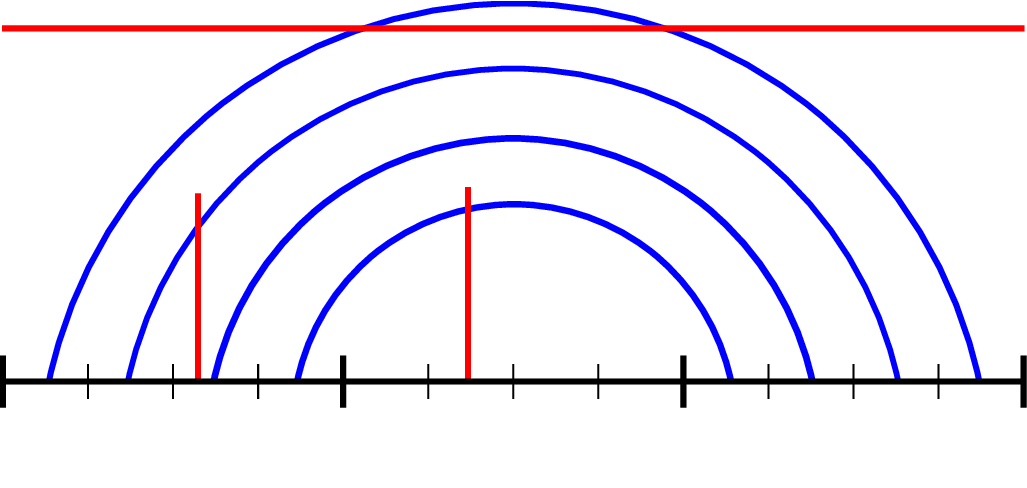
			\caption{A wave $A$ in $\UU\sm \TT$ shown in blue with crest $I$ and ends $J_1$, $J_2$. For simplicity we assume that the immersion $\UU'\to \UU$ is an inclusion; the boundary of $\UU'\sm \TT'$ in $\UU\sm \TT$ is shown in red. The leaves $\gamma_1, \gamma_2, \gamma_3,$ and $\gamma_4$ of $A$ belong to $A^\ver$, $A^\outbr$, $A^\nbr$, and $A^\inbr$ respectively.}
			\label{fig:breaking}
		\end{center}
	\end{figure}

	Let $A^\ver\subset A^\br$ be the subwave consisting of leaves whose restriction by $\iota$ contains a path connecting the two components of $\ddi (\UU'\sm \TT')$.  
	As 
	$$\WW(A^\ver)\leq \WW(\UU'\sm \TT')\leq 2\WW(\UU\sm \TT),$$
	if $\WW(A^\ver)\geq 2\delta \WW(A)$ then $A$ is $(\delta, \eta)$-terminal. 
	So we can  assume that  
	\begin{equation}\label{eq:vert}
		\WW(A^\ver)< 2\delta \WW(A).
	\end{equation}

	Let $A^\inbr\subset A^\br$ be the subwave consisting of leaves whose restriction by $\iota$  contains two waves, one over each endpoint of $\iota^*I$. 
	Thus there is a wave $B^\inbr\subset \iota^*A^\br$ over one endpoint of 
	$\iota^*I$ with 
	\begin{equation}\label{eq:inbr}
		\WW(B^{\inbr})\geq 2\WW(A^\inbr).
	\end{equation}

	We set $A^{\out} = A^\br\sm (A^\inbr\cup A^\ver)$, so the restriction by $\iota$ of any leaf of $A^{\out}$ contains a wave over $\iota^*I$. 
	Let $\BI_1, \dots, \BI_N$ be the intervals in $\II$ on which $F_*$ is improper; it follows from Proposition \ref{prop:improper intervals} that $N\leq 4\bound(2\bound+2)$.
	As $A^{\out}$ is disjoint from $A^\nbr$ and $A^\ver$, the restriction by $\iota$ of any leaf of $A^{\out}$ must also contain a wave with one end in some $\iota^*\BI_n$. Thus there are waves $B^{\out}$ and $C^{\out}$ in $\iota^*A^{\out}$ such that $B^{\out}$ is a wave over $\iota^*I$ and
	\begin{equation}\label{eq:out 1}
		\WW(A^{\out})\leq \WW(B^{\out})\oplus \left(2N\WW(C^{\out})\right).
	\end{equation}
	If $\WW(C^{\out})\geq 3\WW(A)$, 
	then it follows from the Lemmas \ref{lem:pushing controllable} and \ref{lem:wave} that there is a $\eta$-controllable wave with width at least $2\WW(A)$, hence $A$ is $(\delta, \eta)$-terminal. Thus we may assume that $\WW(C^{\out})<  3\WW(A)$. 
	It therefore follows from \eqref{eq:out 1} that 
	\begin{equation}\label{eq:out}
		\WW(B^{\out})\geq  \frac{\WW(A^{\out})}{1- \frac{\WW(A^{\out})}{2N\WW(C^{\out})}}\geq \WW(A^{\out})+ \frac{\WW(A^{\out})^2}{6N\WW(A)}.
	\end{equation}

	We set   $B^\nbr= \iota^*A^\nbr$, so $B^\nbr$ is a wave over $\iota^*I$ with $\WW(B^\nbr)\geq \WW(A^\nbr).$
	It follows from the constructions that $B^\nbr$, $B^\inbr$, and $B^{\out}$ are all disjoint and together form a wave; we set $B = B^\nbr\cup B^\inbr\cup B^{\out}$.
	As $A^\br = A^\ver\cup A^\inbr\cup A^{\out}$ and $\WW(A^\br)\geq \eps\WW(A)$,  \eqref{eq:vert} implies 
	\begin{equation}\label{eq:br minus ver}
		\WW(A^\inbr)+ \WW(A^{\out})\geq (\eps-2\delta)\WW(A).
	\end{equation}
	Combining 
	\eqref{eq:inbr}, \eqref{eq:out}, \eqref{eq:vert}, and \eqref{eq:br minus ver} yields
	\begin{align*}
		\WW(B)&\geq \WW(A)- \WW(A^\ver)+ \WW(A^\inbr)+ \frac{\WW(A^{\out})^2}{6N\WW(A)}\\
		& \geq \left(1- 2\delta+\frac{\WW(A^\inbr)}{\WW(A)}+ \frac{1}{6N}\left(\frac{\WW(A^{\out})}{\WW(A)}\right)^2\right)\WW(A)\\
		&\geq \left(1- 2\delta+ \frac{(\eps-2\delta)^2}{6N}\right)\WW(A).
	\end{align*}
	Choosing $\delta\ll\epsilon$, 
	Lemmas \ref{lem:pushing controllable} and \ref{lem:wave} then imply  
	that there is a $\eta$-controllable wave $C\subset f_*B$ with $\WW(B)\geq (1+\delta)\WW(A)$, so $A$ is $(\delta, \eta)$-terminal.

\end{proof}

Proposition \ref{prop:nonbreaking implies short} will be an immediate consequence of the following observation:

\begin{prop}\label{prop:mostly short in L}
	There is some universal $N$ such that: for any $0<\lambda< 1$, if $A$ is a $[k, \lambda]$-wide $k$-controllable wave, then $$\lambda\WW(A)\leq \WW(A^\sh)+\WW(A^{N\mathhyphen\br}).$$
\end{prop}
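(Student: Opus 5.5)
The plan is to bound the width of the ``bad'' part of $A$ directly. Set $C := A^{N\mathhyphen\nbr}\sm A^\sh$; these are the leaves of $A$ that crest over some limb $\BL_j$ of $\II$ and do not break during the first $N$ pushforwards. Since $A = A^\sh\cup A^{N\mathhyphen\br}\cup C$ as a union of sublaminations, the width $\WW(A)$ is at most the sum of the three widths. It therefore suffices to show
$$\WW(C) = O(k\,\WW(\RR F)) + O(\WW(\UU\sm\TT)) \qquad\text{for a universal } N,$$
with the implied constants depending only on $\bound$. Wideness $\WW(A)\largergiven{k,\lambda}\WW(\RR F)$ then gives $\WW(C)\le(1-\lambda)\WW(A)$, after absorbing the vertical term: either it is small, or $A$ would be terminal, which we may exclude as in Proposition \ref{prop:small breaking}. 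I would take $N = 2\bound+3$, so that any interval in $\BL_{[-\bound,\bound]}$ is carried into $\BL_0$ within $N$ steps, using $F_*\BL_j=\BL_{j+1}$ for $j\neq 0$.

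\emph{Decomposition.} Since $A$ is $k$-controllable, write $C = C_0\cup C_1\cup\dots\cup C_k$, where $C_0$ crests over $\BL_0$ and each $C_i$ ($i\ge1$) is localized. I would then split each $C_i$ by the limb $\BL_j$ over which its leaves crest. Lemma \ref{lem:comb size of limbs} gives $|\BL_j|\le 6\bound$, so one multi-interval carries a leaf's crest and there are $O(\bound)$ choices of endpoints. This leaves three cases, matching the outline before Proposition \ref{prop:small breaking}.

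\emph{Case 1: the crest contains $\BL_0$} (the leaves of $C_0$, or leaves returning there within $N$ steps). Since the leaf does not break, $F_*$ carries it to a single path. Because $F_*\BL_0=\dcara\TT$, this path crests over all of $\dcara\TT$, so its two ends are forced to lie on the same side of some interval. Hence the pushed-forward lamination meets itself: distinct leaves of $F_*C_0$ must cross, unless they run out to $\ddi\UU$ or land on sides of a single small Julia set. Crossing laminations have bounded width. The residual leaves contribute at most $2\WW(\UU\sm\TT)$ or, by \eqref{eq:width to weight 2}, $O(\WW(\RR F))$ per end interval. Since there are $O(\bound)$ end intervals, $\WW(C_0)=O(\WW(\RR F)+\WW(\UU\sm\TT))$.

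\emph{Case 2: the crest is $\BL_j$ with $\bound<j<q-\bound$} (translation region). By Proposition \ref{prop:improper intervals}, $F_*$ acts properly there as a shift $\BL_j\mapsto\BL_{j+1}$. So $F_*C_i$ is a translate of $C_i$ which, since $C_i$ crests over a whole limb, must intersect $C_i$. I would then invoke the Shift Lemma of Appendix \ref{app:transl-mark}, applied in the outer translation marking $(F_\outbr,\II_\outbr)$, to get $\WW(C_i)=O(\WW(\RR F))$.

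\emph{Case 3: the crest is $\BL_j$ with $0<|j|\le\bound$, or the wave connects the critical limb to the translation region.} I would push forward $m\le N$ times without breaking until the crest reaches $\BL_0$, reducing to Case 1. When the leaves instead stretch through $I_\str$ (Proposition \ref{prop:stretching interval}), their images join distinct components of $\KK$. That gives a lamination between small Julia sets, bounded by $O(\WW(\RR F))$ via Theorem \ref{main estimate} and \eqref{eq:width to weight 2}.

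Summing over the $k+1$ pieces and $O(\bound)$ subcases yields the bound above.

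The main obstacle is Case 1. Turning ``the image crests over all of $\dcara\TT$'' into a genuine self-intersection requires care about exactly where the ends of $F_*\alpha$ land. It also requires controlling the pushforward width through the immersion $\iota$, using $\WW(\iota^*\cdot)\ge\WW(\cdot)$ and the factor $2$ for $f$. I would first try to prove that the two end-intervals of $F_*\alpha$ are adjacent sides of one vertex of $\TT$, so that the wave is nested inside its own image.
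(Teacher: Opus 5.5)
Your Cases 1 and 2 are close to the paper's Propositions \ref{prop:over critical} and \ref{prop:away from critical}. The gap is in Case 3, which carries the real content of the statement. Waves with an end in $\BL_0$ cannot be reduced to Case 1 by pushing forward, for two reasons.
First, for $1\le j\le\bound$ one has $F_*\BL_j=\BL_{j+1}$, so the crest moves \emph{away} from $\BL_0$; it needs about $q-j$ steps, not $2\bound+3$, to come back.
Second, the end lying in $\BL_0$ can be sent by $F_*$ anywhere in $F_*\BL_0=\dcara\TT$, possibly across the whole translation region via $I_\str$. You therefore have no control over where the image wave sits.
Your assertion that leaves stretching through $I_\str$ have images joining distinct components of $\KK$ is unsupported, and that is exactly where $\WW(\RR F)$ has to enter.

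The missing idea is the one in Proposition \ref{prop:into the critical limb}:
\begin{enumerate}
\item Take $N=\bound+3$. For each $n<N-1$, peel off the part of $A^{N\mathhyphen\nbr}\sm A^\sh$ whose $(n+1)$-st pushforward no longer ends at $\BL_0$. By Lemma \ref{lem:wave} and the two easy cases, each peeled piece has width $O(1)$.
\item A leaf whose $n$-th pushforward still ends at $\BL_0$ restricts to a path in $\UU^n\sm F^{-n}(\TT)$ ending in $\LL_0^n$, the $n$-th preimage of $\LL_0$ inside $\FL_0$.
\item Since $\FL_0$ contains at most $\bound$ components of $\KK$, $\LL_0^{\bound}$ is disjoint from $\TT$. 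The preimage $K_0^n$ of the central small Julia set separates $\TT$ from $\LL_0^{n+1}$.
\item A leaf avoiding $K_0^{\bound}$ would split into two waves and break under $F_*^{\bound+1}$. So every surviving leaf crosses $K_0^{\bound}$, a component of $\KK^{N-1}$ on which $F^{N-1}$ has degree one.
\item Lemma \ref{lem:push forward laminations} converts this into a lamination in $\UU\sm\KK$ from $F^{N-1}(K_0^{\bound})$ to the rest of $\KK$, of width at most $2\WW(\RR F)+O(1)$.
\end{enumerate}
Separately, localized waves not touching $\BL_0$ need no translation marking. The Shift Lemma is Lemma \ref{lem:shift} of Appendix \ref{app:markings}, applied in $(\UU\sm\TT,\II)$ to the shifting multi-interval $\dcara\TT\sm\BL_0$. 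It gives an $O(1)$ bound for every $j\neq 0$, including $0<|j|\le\bound$.

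There is also a logical problem. The proposition has no ``or terminal'' alternative, so you cannot dispose of an $O(\WW(\UU\sm\TT))$ error by declaring $A$ terminal. Nor is $\WW(\UU\sm\TT)$ controlled by $\WW(\RR F)$ or by $\WW(A)$.
That term should not arise at all. For a wave over $\BL_0$, Proposition \ref{prop:nonbreaking} and Lemma \ref{lem:wave} give a wave $B\subset F_*A^\nbr$ over $F_*\BL_0=\dcara\TT$ with $\WW(A^\nbr)\le\WW(B)+O(1)$. No wave crests over all of $\dcara\TT$, so $B$ is empty and $\WW(A^\nbr)=O(1)$. There are then no residual leaves running to $\ddi\UU$ or onto a single small Julia set to account for.
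With these corrections the bound becomes $\WW(A)\le\WW(A^\sh)+\WW(A^{N\mathhyphen\br})+O(k)+O(\WW(\RR F))$, and $[k,\lambda]$-wideness finishes the proof.
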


\begin{proof}[Proof of Proposition \ref{prop:nonbreaking implies short}]
	We set $\lambda' = (1+\lambda)/2$ and let $N$ be as in Proposition \ref{prop:mostly short in L}.
	If $A$ is a $[k, \lambda]$-wide $(N, \lambda')$-unbreakable $k$-controllable wave, then  Proposition \ref{prop:mostly short in L} implies that
	\begin{align*}
		\lambda'\WW(A)\leq \WW(A^\sh)+ \WW(A^{N\mathhyphen\br})\leq \WW(A^\sh)+ (1-\lambda')\WW(A),
	\end{align*}
	so $\WW(A^\sh)\geq \lambda\WW(A).$
\end{proof}

We will prove Proposition \ref{prop:mostly short in L} by decomposing waves according to their position relative to the critical limb.
First we observe that unbreakable  waves over $\BL_0$  have bounded width:
\begin{prop}\label{prop:over critical}
	If $A$ is a wave over $\BL_0$, then 
	$$\WW(A^{\nbr})= O(1).$$
\end{prop}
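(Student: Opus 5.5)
The plan is to push the unbreakable part forward once and exploit that $F_*\BL_0 = \dcara\TT$: the pushed-forward wave has nowhere to crest, so it must intersect itself, and a self-intersecting lamination has width $O(1)$.

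Write $A^\nbr$ for the leaves $\alpha$ of $A$ such that $F_*\alpha$ is a single path in $\UU\sm\TT$. Every leaf of $A$ is a wave over $\BL_0$, so each has both ends in $\II$ outside $\BL_0$ (near its two endpoints) and crests over all of $\BL_0$. Pull back by $\iota$. By Proposition \ref{prop:pullback description}, $\iota^*\BL_0$ contains $\iota^{-1}(\BL_0)$ together with the Carath\'eodory boundaries of the components of $\TT'\sm\TT$ attached to $\LL_0$. In particular it contains the preimage $\BL'_{[\bound,q-\bound-2]}$ appearing in the proof of Proposition \ref{prop:stretching interval}, and this preimage is mapped by $f_*$ onto $\BL_{(\bound,q-\bound)}$.

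Since $\alpha$ does not break, $\iota^*\alpha$ is a single path crested over the whole critical half of $\iota^*\BL_0$. Its image $f_*\iota^*\alpha = F_*\alpha$ is then a single path whose crest, by $F_*\BL_0=\dcara\TT$, contains the entire ideal boundary $\dcara\TT$ except a neighbourhood of the images of its two ends. The central step is to check this more concretely. The lift must pass around both preimage pieces of $\LL_0$ under the double cover. After pushing forward, the path therefore winds around $\TT$ and returns to cross itself, because $f$ is $2$-to-$1$ on a neighbourhood of $\LL_0$ and $\alpha$ crests over the critical value's full preimage. Equivalently, $F_*\alpha$ is not simple: it separates $\UU\sm\TT$ inconsistently with being a proper arc. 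Alternatively, one shows directly that $F_*\alpha$ and $F_*\alpha'$ intersect for any two leaves $\alpha,\alpha'$ of $A^\nbr$, including $\alpha=\alpha'$.

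Given this, the family $F_*A^\nbr$ consists of pairwise-intersecting, or self-intersecting, paths. The Gr\"otzsch/extremal-length principle then bounds it. A lamination whose leaves all cross each other has width at most a universal constant. This holds because such a family is contained, through any one of its leaves, in a family crossing a fixed curve twice, and transversality gives $\WW\le O(1)$. Since $F_*$ is the composition of a pullback by the immersion $\iota$ (width nondecreasing, $\WW(\iota^*A^\nbr)\ge \WW(A^\nbr)$) and a pushforward by the degree-two cover $f$ (width at least halved), we get $\WW(A^\nbr)\le 2\WW(F_*A^\nbr)=O(1)$.

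The main obstacle is the combinatorial step: showing that unbroken pushforwards of leaves cresting over all of $\BL_0$ genuinely self-intersect, or mutually intersect, rather than merely being long. I would attack it using the symmetric structure of $\iota^*\BL_0$ under the involution of $f$ near $\LL_0$. A leaf cresting over $\BL_0$ lifts to a leaf cresting over a set containing both a piece and its symmetric partner. Pushing it forward therefore identifies two subarcs of its image onto the same region, which forces a crossing.
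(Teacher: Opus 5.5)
Your skeleton matches the paper's: push forward once and exploit $F_*\BL_0=\dcara\TT$. However, the two steps that carry the proof are either missing or misjustified.

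\textbf{The combinatorial step.} The step you leave as the ``main obstacle'' is exactly Proposition~\ref{prop:nonbreaking}. Since $F_*$ acts combinatorially on $\dcara\TT$ (Proposition~\ref{prop:comb action}), the pushforward of the unbreakable part of a wave over $\BL_0$ is a wave over $F_*\BL_0=\dcara\TT$. That is, its crest contains \emph{all} of $\dcara\TT$, so the ends lie beyond a full turn. Your own formulation does not give this. A crest equal to $\dcara\TT$ minus neighbourhoods of the images of the ends is precisely the crest of a simple arc going almost once around, so it forces no crossing. The involution heuristic (``identifies two subarcs onto the same region'') is not an argument.

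\textbf{The width bound.} ``A lamination whose leaves all cross each other'' is self-contradictory, since leaves of a lamination are disjoint. For the non-laminar family $F_*A^\nbr$ your justification fails. Crossing a fixed curve, even twice, gives no bound, because Gr\"otzsch against a single curve is vacuous. For example, in a $W\times 1$ rectangle, U-shaped arcs from the top side dipping below the midline form a family of width $\asymp W$. Self-intersection of individual members says nothing about width either.

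The paper closes this with Lemma~\ref{lem:wave}: there is an honest wave $B\subset F_*A^\nbr$ with $\WW(A^\nbr)\le\WW(B)+O(1)$. No nonempty wave crests over all of $\dcara\TT$, so $B=\emptyset$.

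To keep your route you need a genuine transversality input. Two options work:
\begin{itemize}
\item Let $\sigma$ be the deck involution of $f\colon\UU'\sm\TT'\to\UU\sm\TT$ and $\Lambda=\iota^*A^\nbr$. Every leaf of $\Lambda$ crosses every leaf of $\sigma\Lambda$: in the universal cover the lifted leaves are nested, with endpoints more than one and fewer than two $\sigma$-periods apart. Hence $\WW(\Lambda)^2=\WW(\Lambda)\,\WW(\sigma\Lambda)\le 1$ by Gr\"otzsch, and $\WW(A^\nbr)\le\WW(\Lambda)$ by Lemma~\ref{lem:domination}.
\item Normalize the core of the annulus $\UU\sm\TT$ to length $1$, and take the indicator metric of a collar of height $\min(1,\mmod(\UU\sm\TT))$ along $\dcara\TT$. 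Every path from $\dcara\TT$ to itself that wraps more than once around has length at least $1$ in this metric. So that whole family has width at most $1$, and your inequality $\WW(A^\nbr)\le 2\WW(F_*A^\nbr)$ then finishes the proof.
\end{itemize}
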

\begin{proof}
	It follows from Lemma \ref{lem:wave} and Proposition \ref{prop:nonbreaking} that there is a wave $B\subset F_* A^{\nbr}$ over $F_*\BL_0$ with $$\WW(A^{\nbr})\leq \WW(B)+O(1).$$
	As $F_*\BL_0= \dcara \TT$, $B$ must be empty.
\end{proof}

Next we observe that unbreakable waves away from $\BL_0$ have bounded width:
\begin{prop}\label{prop:away from critical}
	If $A$ is a localized wave in $\UU\sm \TT$ subordinate to $\dcara\TT\sm \BL_0$, then 
	\begin{equation*}
		\WW(A^\nbr)\leq \WW(A^\sh)+ O(1).
	\end{equation*}
\end{prop}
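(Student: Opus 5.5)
We follow the model of Proposition \ref{prop:over critical}: push the unbreakable part forward once and compare it with something narrow. Write $A^\nbr = A^\sh \cup A^\lg$, where $A^\lg$ is the subwave of unbreakable leaves that crest over some limb $\BL_j$ with $j\neq 0$. We must show $\WW(A^\lg)=O(1)$. Since $A$ is localized and subordinate to $\dcara\TT\sm\BL_0$, the ends of $A$ lie in a single interval away from the critical limb. Any leaf of $A^\lg$ must then crest over a full limb $\BL_j$, so its crest contains $\BL_j$ and, by nesting of leaves, we may order the leaves of $A^\lg$ by the set of limbs they crest over.

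We split $A^\lg$ into two subwaves. The first consists of leaves whose crest lies in the translation region $\BL_{[\bound,q-\bound-2]}$. The second consists of leaves whose crest meets $\BL_{[-\bound,\bound]}\sm\BL_0$ or reaches around to it; these cover only boundedly many limbs near the critical one.

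\textbf{Near-critical part.} Here $F_*$ maps $\BL_j$ to $\BL_{j+1}$. After at most $2\bound+1$ pushforwards, a leaf cresting over $\BL_j$ with $j$ near $q-\bound$ becomes a wave over a set containing $\BL_0$. Since each leaf is unbreakable, Lemma \ref{lem:wave} together with Proposition \ref{prop:nonbreaking} gives a wave over $F_*^m(\text{crest})$ of comparable width, up to $O(1)$. Applying $F_*$ once more yields a wave over a set containing $F_*\BL_0=\dcara\TT$, which must be empty, exactly as in Proposition \ref{prop:over critical}. This gives $O(1)$. The leaves near $j=1,\dots,\bound$ are handled the same way after boundedly many iterates around the cycle. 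Here I need the unbreakability to persist for a universal number of iterates; if only single-step unbreakability is available, I would absorb the extra error into the $O(1)$ by iterating the single-step bound a bounded number of times.

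\textbf{Translation-region part.} On $\BL_{[\bound,q-\bound-2]}$, $F_*$ is proper by Proposition \ref{prop:improper intervals}. So $F_*A'$ is a localized wave cresting over the shifted limbs $\BL_{j+1}$, with width at least $\WW(A')-O(1)$. Both $A'$ and its shift are laminations in $\UU\sm\TT$, and their leaves interlace: a leaf cresting over $\BL_j$ and its image cresting over $\BL_{j+1}$ must intersect, since their crests overlap but are not nested. This is the situation of the Shift Lemma, which should show that a lamination intersecting its own translate has width $O(1)$. This step is the main obstacle. I would invoke the Shift Lemma from the appendix on translation markings, or Theorem \ref{thm:bounded short}, lifting to the outer translation marking $(\D,\II_\outbr)$. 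There a crest over a full limb gives a long arc, and long arcs carry bounded width.

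The delicate point is making the intersection genuinely forced, via the ordering of limbs, rather than removable by homotopy. Adding the three contributions gives $\WW(A^\nbr)\le\WW(A^\sh)+O(1)$.
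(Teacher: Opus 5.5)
Your near-critical step has a genuine gap. The family $A^\nbr$ consists only of leaves that do not break under a \emph{single} application of $F_*$. Leaves of the wave you extract from $F_*A^\nbr$ may break at the next step, and the width of that breakable part is not bounded by any constant. So it cannot be ``absorbed into the $O(1)$'' by iterating the one-step estimate. This is exactly why Proposition \ref{prop:into the critical limb} is stated for $A^{N\mathhyphen\nbr}$, and why the paper handles breakable width through terminality (Proposition \ref{prop:small breaking}) rather than as an additive error.

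There are two further problems. Leaves cresting over $\BL_1,\dots,\BL_\bound$ need roughly $q$ iterates, not boundedly many, before their crest reaches $\BL_0$. Each use of Lemma \ref{lem:wave} costs an additive $O(1)$, so even with full unbreakability you would only get $O(q)$. Your fallback of Theorem \ref{thm:bounded short} for the translation part does not give $O(1)$ either. It concerns long arcs of the canonical WAD of a lifted marking of $\UU\sm\KK$, and bounds them by $O(\WW(F,\II)+np)$, which grows with $\WW(\RR F)$ and $q$.

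The missing observation is that no splitting is needed. The whole multi-interval $\dcara\TT\sm\BL_0$ shifts under $F_*$: $F_*$ acts combinatorially on $\dcara\TT$ (Proposition \ref{prop:comb action}) and moves every interval of $\dcara\TT\sm\BL_0$ off itself (e.g.\ $F_*\BL_j=\BL_{j+1}$ for $j\neq 0$).

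The Shift Lemma is Lemma \ref{lem:shift}, stated in Appendix \ref{app:dynamics}, not in the appendix on translation markings. It needs only this combinatorial (end-proper) action, not properness. So the limbs near $\BL_0$, where $F_*$ fails to be proper (Proposition \ref{prop:improper intervals}), cause no difficulty.

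Apply it to the non-short part $A\sm A^\sh$ of the localized wave. Its end $E$ is a single interval and its crest $C$ contains a full limb. Depending on which side of $E$ the crest lies, one checks that either $F_*\overline E\subset C$ or $E\subset F_*C$. Lemma \ref{lem:shift} then gives $\WW((A\sm A^\sh)^\nbr)\le 2$, hence $\WW(A^\nbr)\le \WW(A^\sh)+2$; this is the paper's one-line proof. Your interlacing heuristic in the translation region is precisely the content of that lemma; you just need to apply it to all non-short leaves at once.
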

\begin{proof}
	As $\dcara \TT \sm \BL_0$ shifts under $F_*$, the proposition follows immediately from Lemma \ref{lem:shift}.
\end{proof}

Finally, we observe that that unbreakable waves ending at $\BL_0$ have bounded width:
\begin{prop}\label{prop:into the critical limb}
	There is some universal $N$ such that if $A$ is a wave in $\UU\sm \TT$  ending at $\BL_0$, then $$\WW(A^{N\mathhyphen\nbr})\leq \WW(A^\sh)+ O(\WW(\RR F)).$$
\end{prop}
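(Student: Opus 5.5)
The plan is to push the long, unbreakable part of $A$ forward a bounded number $N$ of times. Each time, it either gets pulled into the translation region, where the Shift Lemma controls it, or it produces a lamination connecting components of $\KK$, whose width is bounded by $O(\WW(\RR F))$.

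\textbf{Setup.} Write the end of $A$ at $\BL_0$ as $J_0\subset \BL_0$ and the other end as $J_1$. Let $A'=A^{N\mathhyphen\nbr}\sm A^\sh$, so every leaf of $A'$ crests over some limb of $\II$. Since $A$ ends at $\BL_0$, the crest of each leaf of $A'$ contains a limb $\BL_j$ with $j\neq 0$, or contains $\BL_0$ itself.

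\textbf{Crest containing $\BL_0$.} For the subwave of leaves whose crest contains $\BL_0$, we can treat them as a wave over $\BL_0$. Proposition \ref{prop:over critical} then bounds the unbreakable part by $O(1)$, since $F_*\BL_0=\dcara\TT$ leaves no room for an unbroken image.

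\textbf{Crest containing some $\BL_j$, $j\neq 0$.}
\begin{itemize}
\item Under $F_*$ these limbs shift, $F_*\BL_j=\BL_{j+1}$. Because the leaves are unbreakable for $N$ steps, $F_*^m A'$ is a genuine wave for $m\le N$, with crests containing $\BL_{j+m}$.
\item The end at $\BL_0$ is mapped by $F_*$ onto a multi-interval of $\dcara\TT$ that meets the stretching multi-interval $I_\str$ only boundedly often. By Proposition \ref{prop:stretching interval}, away from $I_\str$ lengths are stretched by at most a universal factor.
\item With the number of returns bounded, Proposition \ref{prop:combinatorial growth} applies. After $N\asymp\bound^2$ iterates, either the end at $\BL_0$ has visited $\BL_0$ at least $6\bound+\ell$ times, in which case its length grows, or the ends are mapped into the translation region.
\item In the first case the ends eventually must contain entire limbs. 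Unbreakability then forces the pushed-forward leaves to land on the sides of small Julia sets in $\KK$ on both sides. Concretely, both ends contain a side of a component of $\KK$, so the leaves give a lamination connecting two components of $\KK$ (or a component to itself). Its width is $O(\WW(\RR F))$ by Lemma \ref{lem:width to weight}-type estimates, namely Theorem \ref{main estimate} together with \eqref{eq:width to weight 2}.
\item In the second case the wave is localized away from $\BL_0$. Proposition \ref{prop:away from critical}, that is, the Shift Lemma \ref{lem:shift}, bounds its unbreakable part by its short part plus $O(1)$.
\end{itemize}

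\textbf{Pulling back the bounds.} Pushing forward by unbreakable $F_*^m$ does not decrease width, by Lemma \ref{lem:wave} and Proposition \ref{prop:nonbreaking}, up to $O(1)$ errors per step. So each bound transfers back to $A$, giving $\WW(A^{N\mathhyphen\nbr})\le \WW(A^\sh)+O(N)+O(\WW(\RR F))$, which is the claim since $N$ is universal.

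\textbf{Main obstacle.} The delicate step is the combinatorial bookkeeping showing that within a universal number of iterates the ends become long enough to contain full sides of components of $\KK$. This has to hold uniformly in $q$, even though the leaf may spend unboundedly long in the translation region, where lengths do not grow. I would handle this by counting only iterates at which the end meets $\BL_0$. Iterates where the end lies in the translation region are exactly when the whole leaf translates, and the Shift Lemma there removes all but $O(1)$ width; so only leaves making frequent returns to $\BL_0$ survive, and for those Proposition \ref{prop:combinatorial growth} applies.
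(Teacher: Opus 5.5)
There is a genuine gap, and it is at the heart of the proposition: the leaves whose end keeps returning to $\BL_0$. Your handling of the other leaves is in the right spirit and matches the paper's first step. At each iterate, the part of the pushforward that no longer ends at $\BL_0$ is either a wave over $\BL_0$ or is subordinate to $\dcara\TT\sm\BL_0$. Propositions \ref{prop:over critical} and \ref{prop:away from critical} then bound it by $O(1)$, once Lemma \ref{lem:wave} is used to pass to a subwave without short leaves, so that no new short-part term appears at later times. What remains is the subwave $A_{N-1}$ of leaves all of whose images $F_*^n$, $n<N$, still end at $\BL_0$, and your argument does not handle these.

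\begin{itemize}
\item Proposition \ref{prop:combinatorial growth} controls the length of the forward orbit of a multi-interval; the paper uses it on crests to bound critical visits. An end of a wave becoming long says nothing about where an individual leaf lands. Each leaf still ends on a single interval, typically a side of an edge of $\TT$, and nothing forces it onto a side of a component of $\KK$.
\item Even granting a lamination in $\UU\sm\KK$ joining components of $\KK$, the bound $O(\WW(\RR F))$ needs all leaves to start at one fixed component. \eqref{eq:width to weight 2} bounds the width out of a single $K_k$. Leaves spread over many components only give a bound of order $\per\,\WW(\RR F)$, and Theorem \ref{main estimate}, which compares total width with total canonical weight, does not help.
\item Your case ``crest containing $\BL_0$'' is vacuous, since every leaf has an endpoint in $\BL_0$.
\item The dichotomy ``many returns or mapped into the translation region'' after $\asymp\bound^2$ iterates is not established.
\end{itemize}

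The missing idea is geometric and lives near $\beta_F$. Let $\LL_0^n$ and $K_0^n$ be the components of $F^{-n}(\LL_0)$ and $F^{-n}(K_0)$ contained in $\FL_0$. These nest down toward $\beta_F$, and since $\FL_0$ contains at most $\bound$ components of $\KK$, $\LL_0^{\bound}$ is disjoint from $\TT$. A leaf of $A_n$ restricts to a path in $\UU^n\sm F^{-n}(\TT)$ with an endpoint in $\LL_0^n$. So for $n\ge\bound$ it must go from $\BL_0$ out to $\LL_0^n$, off the tree, and then back to $\dcara\TT\sm\BL_0$. Because $K_0^n$ separates $\TT$ from $\LL_0^{n+1}$, the leaf must cross $K_0^{\bound}$. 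Otherwise its two pieces would push forward to separate waves and the leaf would break.

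Taking $N=\bound+3$, the restriction of $A_{N-1}$ to $\UU^{N-1}\sm\KK^{N-1}$ contains a lamination from the \emph{single} component $K=K_0^{\bound}$ of $\KK^{N-1}$ to $\KK^{N-1}\sm K$. Note that $K$ is a preimage Julia set off the tree, not a component of $\KK$. Since $F^{N-1}$ has degree one on $K$, Lemma \ref{lem:push forward laminations} pushes this to a lamination in $\UU\sm\KK$ emanating from the single small Julia set $F^{N-1}(K)$. Its width is at most $2\WW(\RR F)$, which gives $\WW(A_{N-1})=O(\WW(\RR F))$ with no need for combinatorial growth or translation-region arguments.
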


\begin{proof}
    Let us first outline the argument.
    For a wave $A$ ending at $\BL_0$, 
    we can decompose its pushforward into a wave also ending at $\BL_0$ and a wave that does not; the latter case is controlled by the previous two propositions. Thus we need only consider a wave that pushes forward repeatedly to waves ending at $\BL_0$. We then consider how such a wave intersects $\TT^n$ for some large $n$. If the wave is unbreakable then it must be ``aligned" with $\TT^n$ near $\beta_F$, and so passes between two components of $\KK^n$; this gives a bound on the width of the wave in terms of $\WW(\RR F)$.

	We set $N = \bound+3.$
	Setting $A_0= A^{N\mathhyphen\nbr}\sm A^\sh$, for all $0< n < N$ we inductively define $A_n\subset A_{n-1}$ to be the maximal subwave so that $F_*^nA_n$ is a wave ending at $\BL_0$. 
	Let us first observe:
    \begin{lemma}
        We have
        \begin{equation}\label{eq:into critical bound}
		\WW(A_{n}\sm A_{n+1})= O(1)
	\end{equation}
	for all $0\leq n < N-1$.
    \end{lemma}

    \begin{proof}
        We fix some $n$ and set $S_n = F_*^nA_n$, so $S_n$ is a wave ending at $\BL_0$ and $S_n = S_n^{(N-n)\mathhyphen\nbr}.$
	It follows from Lemma \ref{lem:wave} that there is a wave $X_n\subset F_*S_n$ with no short leaves and satisfying
	\begin{equation}\label{eq:into stationary X_n}
		\WW(S_{n})\leq \WW(X_{n})+O(1).
	\end{equation}
	We can decompose $X_n$ into waves $X_{n+1}$, $Y_{n+1}$, and $Z_{n+1}$ so that 
	$X_{n+1}$ is localized to $\BL_0$, $Y_{n+1}$ is a wave over $\BL_0$, and $Z_{n+1}$ is subordinate to $\dcara\TT\sm\BL_0$. As $S_{n+1} = S_{n+1}^{(N-n-1)\mathhyphen\nbr}$ and $N> n+1$, it follows from Propositions \ref{prop:over critical} and \ref{prop:away from critical} that 
	\begin{equation}\label{eq:into stationary extraneous}
		\WW(Y_{n+1})+ \WW(Z_{n+1})= O(1).
	\end{equation}
	Let $A_{n+1}'\subset A_n$ be the subwave satisfying $F_*^{n+1}A_{n+1} = X_{n+1}$, 
	so 
	\begin{equation}\label{eq:into stationary A'}
		\WW(A_{n+1})\geq \WW(A_{n+1}'),
	\end{equation}
	and \eqref{eq:into stationary X_n} implies
	\begin{equation}\label{eq:into stationary S_n}
		\WW(A_n\sm A_{n+1}')\leq \WW(S_{n+1}	
		\sm X_{n+1})+O(1)
	\end{equation}
	Combining \eqref{eq:into stationary extraneous}, \eqref{eq:into stationary A'}, and \eqref{eq:into stationary S_n} yields \eqref{eq:into critical bound}.
    \end{proof}

	It follows from \eqref{eq:into critical bound}  that 
    \begin{equation}\label{eq:stationary A_N}
        \WW(A^{N\mathhyphen\nbr})\leq \WW(A^\sh)+\WW(A_{N-1})+O(\bound).
    \end{equation}
    We next observe:
    \begin{lemma}\label{lem:stationary wave alignment}
        There is a component $K$ of $\KK^{N-1}$ such that the restriction to $\UU^{N-1}\sm \KK^{N-1}$ of each leaf of $A_{N-1}$ contains a path connecting $K$ to $\KK^{N-1}\sm K$.
        Moreover, $f^{N-1}$ has degree one on $K$. 
    \end{lemma}

    \begin{proof}
        Setting $\alpha^0= \alpha_F$, $K_0 = K_0^0$, and  $\LL_0^0 = \LL_0$, for all $n> 0$ we inductively define $K_0^n$, $\alpha^n$, and $\LL_0^n$ to be the components of $F^{-1}(K_0^{n-1})$, $F^{-1}(\alpha^{n-1})$, and $F^{-1}(\LL_0^{n-1})$ respectively contained in $\FL_0$. 
	For any $n>0$, if $\LL_0^n$ contains a component $K$ of $\KK$, then $\LL_0^{n-1}$ contains $F(K)$, which is also a component of $\KK$. As $\FL_0$ contains at most $\bound$ many components of $\KK$, it therefore follows that $\LL_{0}^{\bound}$ is disjoint from $\TT$.

    For any $n$, every leaf of $A_n$ must have one endpoint in $\LL_0^0$ and restrict to a path in $\UU^n\sm F^{-n}(\TT)$ that has one endpoint in $\LL_0^n$. 
	In particular, for $n\geq \bound$ any leaf of $A_n$ must start at $\LL_0^0$ and pass through $\LL_0^n$ before terminating somewhere on $\dcara\TT\sm \BL_0$.
	Note that $K_0^0$ separates $\alpha_0\in \overline{\LL_0^0}$ from $\LL_0^1$ in $\FK$, so $K_0^n$ separates $\alpha_n\in \overline {\LL_0^n}$ from $\LL_0^{n+1}$ for all $n$. In particular, $K_0^n$ separates $\TT$ from $\LL_0^{n+1}$ for all $n\geq \bound$. 
	Moreover, any leaf $\alpha$ of $A_{n+1}$ must pass through $K_0^n$ when $n\geq \bound$; otherwise the portions of $\alpha$ connecting $\BL_0$ to $\LL_0^{n+1}$ and connecting $\LL_0^{n+1}$ to $\dcara\TT\sm \BL_0$ would both pushforward to waves, so $\alpha$ would break under $F_*^{n+1}$. 

    As $N-1 = \bound+2$, the above implies that every leaf of $A_{N}$ passes through $K_0^{\bound}$ and $K_0^{\bound+1}$; taking $K = K_0^\bound\subset \KK^{N-1}$ completes the proof.
    \end{proof}

	It follows from Lemma \ref{lem:stationary wave alignment} that the restriction of $A_{N-1}$ to $\UU^{N-1}\sm \KK^{N-1}$ contains a lamination $B$ connecting a component $K$ of $\KK^{N-1}$ to $\KK^n\sm K$ and satisfying 
	\begin{equation}\label{eq:into stationary B}
		\WW(A_{N-1})\leq \WW(B).
	\end{equation}
	As $F^{N-1}$ has degree $1$ on $K$,  it follows from Lemma \ref{lem:push forward laminations} that there is a lamination $C$ in $\UU\sm \KK$ connecting $F^{N-1}(K)$ to $\KK$ with 
	\begin{equation}\label{eq:into stationary C}
		\WW(B)\leq \WW(C)+O(1).
	\end{equation}
	Moreover $\WW(C)\leq 2\WW(\RR F)$; 
    so combining \eqref{eq:into stationary B} and \eqref{eq:into stationary C} yields 
    \begin{equation}\label{eq:into stationary goal}
		\WW(A_{N-1})= O(\WW(\RR F)).
	\end{equation}
    Combining \eqref{eq:stationary A_N} and \eqref{eq:into stationary goal} completes the proof.
\end{proof}

\begin{proof}[Proof of Proposition \ref{prop:mostly short in L}]
	Let $N$ be the constant in Proposition \ref{prop:into the critical limb}. 
	As $A$ is $k$-controllable, we can write it as the union $$A= A^\sh \cup X\cup Y \cup Z_1\cup \cdots \cup Z_k,$$
	where $X$ is a wave over $\BL_0$, $Y$ is a wave ending at $\BL_0$, and each $Z_j$ is a localized wave subordinate to $\dcara\TT \sm \BL_0$; moreover $X, Y$, and each $Z_j$ have no short leaves. 
	Propositions \ref{prop:over critical}, \ref{prop:away from critical}, and \ref{prop:into the critical limb} then imply that
	\begin{align*}
		\WW(A)&\leq \WW(A^{N\mathhyphen\br})+\WW(A^\sh)+ \WW(X^{\nbr})+ \WW(Y^{N\mathhyphen\nbr})+ \sum_{j=1}^k \WW(Z^\nbr)\\
		&\leq \WW(A^{N\mathhyphen\br})+\WW(A^\sh)+O(k)+ O(\WW(\RR f)).
	\end{align*}
	Thus for any $\lambda$, 
	$$\lambda \WW(A)\leq  \WW(A^{N\mathhyphen\br})+\WW(A^\sh)$$
	when $A$ is $[k, \lambda]$-wide.
\end{proof}

We end this section by observing  two consequences our Proposition \ref{prop:mostly short}. The first is that any terminal wave is $(\delta, \eta)$-terminal for some universal $\eta$:

\begin{cor}\label{cor:universal terminal}
	For any $\delta'$ there exists $\delta$ such that: for all $k'$, any $[\delta', k']$-wide $(\delta', k')$-terminal wave is $(\delta, \eta)$-terminal for some universal $\eta$. 
\end{cor}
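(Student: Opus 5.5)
The proof is a direct unwinding of the definition of terminality, with Proposition \ref{prop:mostly short} used to trade the uncontrolled parameter $k'$ for a universal one. Let $A$ be a $[\delta', k']$-wide $(\delta', k')$-terminal wave. By definition, either $\WW(\UU\sm\TT) > \delta'\WW(A)$, or there is a $k'$-controllable wave $B$ with $\WW(B) > (1+\delta')\WW(A)$.

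In the first case $A$ is already $(\delta,\eta)$-terminal for any $\delta\le\delta'$ and any $\eta$, since the first alternative in the definition does not involve $\eta$. So the whole task is to handle the second case, in which the controllability parameter $k'$ of $B$ is not universal.

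Set $\lambda := (1+\delta'/2)/(1+\delta') < 1$, which depends only on $\delta'$.
\begin{itemize}
\item \emph{Wideness of $B$.} Since $\WW(B) > \WW(A)$ and $A$ is $[\delta',k']$-wide, $B$ is $[k',\lambda]$-wide, because $\lambda$ depends only on $\delta'$. This is the only point where the wideness hypothesis on $A$ is used, and we take the implicit constant there large enough to meet the requirement of Proposition \ref{prop:mostly short}.
\item \emph{Dichotomy for $B$.} Apply Proposition \ref{prop:mostly short} to the $k'$-controllable wave $B$ with this $\lambda$. We get a $\delta_1=\delta_1(\lambda)$ and a universal $\eta_1$ such that $B$ is either $\lambda$-short or $(\delta_1,\eta_1)$-terminal.
\end{itemize}

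Suppose $B$ is $(\delta_1,\eta_1)$-terminal.
\begin{itemize}
\item If $\WW(\UU\sm\TT) > \delta_1\WW(B)$, then $\WW(\UU\sm\TT) > \delta_1\WW(A)$.
\item Otherwise there is an $\eta_1$-controllable wave $C$ with
$$\WW(C) > (1+\delta_1)\WW(B) > (1+\delta_1)\WW(A).$$
\end{itemize}
Either way $A$ is $(\delta_1,\eta_1)$-terminal.

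Suppose instead $B$ is $\lambda$-short. Its short part $B^\sh$ is a short wave. By the remark following Lemma \ref{lem:terminal}, every short wave is $12\bound$-controllable, because limbs of $\II$ have combinatorial length at most $6\bound$ by Lemma \ref{lem:comb size of limbs}. Moreover,
$$\WW(B^\sh) > \lambda\WW(B) > \lambda(1+\delta')\WW(A) = (1+\delta'/2)\WW(A).$$
Hence $A$ is $(\delta'/2, 12\bound)$-terminal.

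Taking $\delta = \min(\delta'/2,\delta_1)$ and $\eta = \max(\eta_1, 12\bound)$ gives the claim. Here $\eta$ is universal in the paper's convention, where all constants may depend on $\bound$.

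The only genuine step is the invocation of Proposition \ref{prop:mostly short}. The one point to check is that its wideness hypothesis $[k',\lambda]$ for $B$ is implied by the $[\delta',k']$-wideness of $A$; this holds since $\WW(B)\ge\WW(A)$ and $\lambda$ is a function of $\delta'$ alone. Everything else is bookkeeping of the constants.
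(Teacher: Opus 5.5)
Your proposal is correct and follows the paper's proof essentially step for step. You split according to the definition of $(\delta',k')$-terminal, apply Proposition~\ref{prop:mostly short} to the $k'$-controllable wave $B$ with $\lambda$ close to $1$ depending only on $\delta'$, use that $B^\sh$ is $12\bound$-controllable in the short case, and otherwise transfer $(\delta_1,\eta_1)$-terminality from $B$ to $A$ via $\WW(B)\ge\WW(A)$. Your explicit choice $\lambda=(1+\delta'/2)/(1+\delta')$ and your check that the wideness of $A$ passes to $B$ simply make precise what the paper leaves implicit in ``choosing $\lambda_0\gg_{\delta'}0$''.
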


\begin{proof}
	Let $A$ be a $[\delta', k']$-wide $(\delta', k')$-terminal wave.
	If $\WW(\UU\sm \TT)\geq \delta'\WW(A)$, then $A$ is $(\delta, \eta)$-controllable for any $\delta\leq \delta'$ and any $\eta$. For any $\lambda_0$, if there is a $k'$-controllable wave $B$ with $\WW(B)\geq (1+\delta')\WW(A)$, then  Proposition \ref{prop:mostly short} implies that $B$ is either $\lambda_0$-short or $(\delta_0, \eta_0)$-terminal for some $\delta_0$ and universal $\eta_0$. 
	Choosing $\lambda_0 \gg_{\delta'} 0$, if $B$ is $\lambda_0$-short, then as $B^\sh$ is $12\bound$-controllable and  $$\WW(B^\sh)\geq \lambda_0(1+\delta')\WW(A),$$
	it follows that $A$ is $(\delta, 12\bound)$-terminal for some $\delta$.
	As $\WW(B)\geq \WW(A)$, if $B$ is $(\delta_0, \eta_0)$-terminal then so is $A$.
\end{proof}

For any  $0< \lambda< 1$, $n\geq 1$, and wave $A$ in $\UU\sm \TT$  with  crest $I$, we will say that a short wave $B$ over $F_*^nI$ is a \textit{short $(n, \lambda)$-pushforward} \index{waves!short pushforward} if $\WW(B)\geq \lambda\WW(A)$.
Our second consequence of Proposition \ref{prop:mostly short} is that non-terminal short waves have short pushforwards:

\begin{cor}\label{cor:short pushforward}
	For any $n$ and $\lambda$ there exists $\delta$ such that: any $[n, \lambda]$-wide short wave either has a short $(n, \lambda)$-pushforward or is $(\delta, \eta)$-terminal for some universal $\eta$.
\end{cor}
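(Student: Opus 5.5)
The plan is to push the short wave forward $n$ times, one step at a time, and apply Proposition \ref{prop:mostly short} together with Corollary \ref{cor:universal terminal} after each step. This keeps a definite short portion of the width, or else makes the original wave terminal.

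Let $A$ be an $[n,\lambda]$-wide short wave with crest $I$. Since $A$ is short, it is $12\bound$-controllable, and so is its unbreakable part. First apply Proposition \ref{prop:small breaking} with this $k=12\bound$ and a parameter $\lambda_1$ close to $1$ (to be chosen depending on $\lambda$ and $n$). Either $A$ is $(\delta_1,\eta_1)$-terminal, or $A$ is $(n,\lambda_1)$-unbreakable.

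In the unbreakable case, push the unbreakable part forward step by step.
\begin{itemize}
\item Lemma \ref{lem:wave} and Proposition \ref{prop:nonbreaking} (from the appendix) give a wave $B_1\subset F_*A^{n\mathhyphen\nbr}$ over $F_*I$ with $\WW(B_1)\geq \WW(A^{n\mathhyphen\nbr})-O(1)$.
\item Lemma \ref{lem:pushing controllable} makes $B_1$ $\eta$-controllable for some $\eta$ depending only on $n$ and $\bound$.
\item Since $A$ is $[n,\lambda]$-wide, the $O(1)$ loss is negligible, so $\WW(B_1)\geq \lambda_1'\WW(A)$ with $\lambda_1'$ close to $\lambda_1$.
\end{itemize}
Now apply Proposition \ref{prop:mostly short} to $B_1$ with parameter $\mu$. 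Either $B_1$ is $\mu$-short, and we replace it by its short part $B_1^\sh$, which is still a wave over $F_*I$; or $B_1$ is $(\delta,\eta_0)$-terminal. In the terminal case, either $\WW(\UU\sm\TT)>\delta\WW(B_1)\geq \delta\lambda_1'\WW(A)$, or there is an $\eta_0$-controllable wave of width at least $(1+\delta)\lambda_1'\WW(A)$. If $\lambda_1'(1+\delta)>1+\delta/2$, the second alternative makes $A$ $(\delta/2,\eta_0)$-terminal directly. So $\lambda_1$ must be chosen close to $1$ relative to $\delta$, which itself depends on $\mu$.

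Iterate this $n$ times with $B_{j+1}\subset F_*B_j^\sh$, tracking that every leaf still comes from $A^{n\mathhyphen\nbr}$ so that no breaking occurs. The result is either a short wave $B_n$ over $F_*^nI$ with
$$\WW(B_n)\geq \prod_j\mu_j\,\lambda_1\,\WW(A)-O(n),$$
or terminality of $A$ with a constant depending on $n$ and $\lambda$, made uniform in $\eta$ by Corollary \ref{cor:universal terminal}.

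The main obstacle is the order of quantifiers. To get a short pushforward with the same $\lambda$, we need $\prod_j \mu_j\geq\lambda$, which forces each $\mu_j$ close to $1$. But then Proposition \ref{prop:mostly short} gives only a small terminality constant $\delta(\mu_j)$, while to convert terminality of $B_j$ into terminality of $A$ we need $B_j$ to have lost less than a factor $1+\delta(\mu_j)$ of width.

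I would try to resolve this by choosing the constants from the last step backwards, in the style of the proof of Lemma \ref{lem:terminal}: fix the $\mu_j$ with product at least $\lambda^{1/2}$, read off $\delta_j=\delta(\mu_j)$, and then pick $\lambda_1$ with $\lambda_1\prod_{i<j}\mu_i\,(1+\delta_j)>1+\delta_j/2$ for every $j$. If a single product of $\mu$'s cannot be made compatible, the fallback is to use that $B_j$ and $B_j^\sh$ lie over the same pushed crest. Then replacing $B_j$ by $B_j^\sh$ loses only the non-short part, and that part, being unbreakable and non-short, is bounded by Proposition \ref{prop:mostly short in L}; this would make the loss additive, $O(\WW(\RR F))$, rather than multiplicative.
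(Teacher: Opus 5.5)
Your skeleton is exactly the paper's: Proposition \ref{prop:small breaking} to pass to $A^{n\mathhyphen\nbr}$; Lemma \ref{lem:wave}, Proposition \ref{prop:nonbreaking} and Lemma \ref{lem:pushing controllable} to push forward; Proposition \ref{prop:mostly short} to get a short part or terminality; Corollary \ref{cor:universal terminal} to make $\eta$ universal. The difference is that the paper does not shorten after each step. Lemma \ref{lem:pushing controllable} is already stated for $F_*^n$, and Lemma \ref{lem:wave} applies to $(f^n,\iota_n)$, whose degree is at most $2^n$. So in one stroke one gets a $k_1(n)$-controllable wave $B$ over $F_*^nI$ with $\WW(B)\ge\lambda_0\lambda_1\WW(A)$, where $\lambda_0$ is the unbreakable fraction and $\lambda_1$ absorbs the $O(n)$ loss. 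Proposition \ref{prop:mostly short} is then used once, with parameter $\lambda_2$, and the quantifiers are harmless. Fix $\lambda_2\in(\lambda,1)$ and read off $\delta_2$. Only then take $\lambda_0,\lambda_1$ so close to $1$ that $\lambda_0\lambda_1\lambda_2\ge\lambda$ and $\lambda_0\lambda_1(1+\delta_2)>1$. The price is only the constant $\delta_0$ from Proposition \ref{prop:small breaking} and a wideness threshold.

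By iterating, you create the compounding problem yourself, and the resolution you write down fails as stated. Once all the $\mu_j$ are fixed, the requirement $\lambda_1\prod_{i<j}\mu_i\,(1+\delta_j)>1+\delta_j/2$ cannot be met by tuning $\lambda_1\le 1$. It is a condition on the $\mu_j$ alone, and it is false at $j=n$ whenever $\mu_1\cdots\mu_{n-1}$ is near $\lambda^{1/2}$ while $\delta_n=\delta(\mu_n)<\lambda^{-1/2}-1$. It can be repaired by choosing in the order $\mu_n,\delta_n,\mu_{n-1},\delta_{n-1},\dots,\mu_1,\delta_1,\lambda_1$. Each $\mu_j$ must be taken so close to $1$, in terms of $\delta_{j+1},\dots,\delta_n$, that the remaining prefix-product constraints stay satisfiable.

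Your fallback has a separate hole. Proposition \ref{prop:mostly short in L} only gives $\WW(B^\sh)\ge\mu\WW(B)-\WW(B^{N\mathhyphen\br})$, with $N$ its universal constant. The leaves of $B_j$ come from $A^{n\mathhyphen\nbr}$, so they are only $(n-j)$-unbreakable, and the last $N$ steps are uncontrolled.

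If instead you start from $A^{(n+N)\mathhyphen\nbr}$ (Proposition \ref{prop:small breaking} with $n+N$ in place of $n$), the fallback becomes a valid proof, arguably cleaner than the paper's. The pushed-forward wave then has empty $N$-breakable part. Proposition \ref{prop:mostly short in L} therefore gives $\WW(B^\sh)\ge\mu\WW(B)$ for any $\mu<1$, at the cost of wideness alone. Terminality then enters only through the initial application of Proposition \ref{prop:small breaking}, together with Corollary \ref{cor:universal terminal}.
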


\begin{proof}
	Let $A$ be a $[n, \lambda]$-wide short wave with crest $I$. 
	For any $\lambda_0$, Proposition \ref{prop:small breaking}  implies that 
	$A$ is either  $(n, \lambda_0)$-unbreakable or $(\delta_0, \eta_0)$-terminal for some $\delta_0$ and $\eta_0$.  Moreover, $\eta_0$ depends only on $n$.
	If $A$ is $(\delta_0, \eta_0)$-terminal, then Corollary \ref{cor:universal terminal} implies that $A$ is $(\delta, \eta)$-terminal for some $\delta$ and some universal $\eta$. Thus we may assume that $A$ is $(n, \lambda_0)$-unbreakable.
	
	For any $\lambda_1$,  Lemmas \ref{lem:pushing controllable} and \ref{lem:wave} imply that  there is a  $k_1$-controllable wave $B$ over $F_*^nI$ with  $$\WW(B)\geq \lambda_1\lambda_0\WW(A)$$
	for some $k_1$ depending only on $n$.
	For any $\lambda_2$, Proposition \ref{prop:mostly short} implies that $B$ is either $\lambda_2$-short
	or $(\delta_2, \eta_2)$-terminal for some $\eta_2$ depending only on $n$. 
	Choosing $\lambda_0 \lambda_1\lambda_2\geq \lambda$, if $B$ is $\lambda_2$-short then $B^\sh$ is a short $(n, \lambda)$-pushforward of $A$. 
	Choosing $\lambda_0\lambda_1\gg_{\delta_2} 0$, if  $B$ is $(\delta_2, \eta_2)$-terminal, then $A$ is $(\delta_2', \eta_2)$-terminal for some $\delta_2'$. 
	It then follows from Corollary \ref{cor:universal terminal} that $A$ is $(\delta, \eta)$-terminal for some $\delta$ and some universal $\eta$.	
\end{proof}

\subsection{Critical visits}\label{sec:visits}

We will now prove Proposition \ref{prop:critical visit}; let us briefly outline the idea first.
Recall from Proposition \ref{prop:small breaking} that breakable waves are terminal, so we may restrict ourselves  to unbreakable waves. 
Pushing forward a wave by $F$ yields either a critical visit or a short wave in the translation region.
We can use the simple dynamics of $F$ in the translation region to show that, even for large iterates,  breakable waves are terminal; an unbreakable wave can then be pushed forward to a critical visit with small loss in width.

Let us now be more precise. We will show that non-terminal waves away from the critical limb eventually return:
\begin{prop}\label{prop:fast forward}
	For any $\lambda$ there exists $\delta$ such that: any short $[\lambda]$-wide wave whose crest avoids $\BL_0$ either has a critical $\lambda$-visit or is $(\delta, \eta)$-terminal for some universal $\eta$.
\end{prop}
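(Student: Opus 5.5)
Let $A$ be a short $[\lambda]$-wide wave with crest $I$ avoiding $\BL_0$. Since $A$ is short, $I$ has length less than $12\bound$ and lies in a single limb $\BL_j$ with $j\neq 0$, or in two adjacent ones. The idea is to fast-forward $A$ through the translation region. Let $n\geq 1$ be the first time at which $F_*^nI$ meets $\BL_0$; since $F_*\BL_j=\BL_{j+1}$ for $j\neq 0$, we have $n\leq q$. I will push $A$ forward $n$ times and show that, unless $A$ is terminal, a definite proportion of its width survives as a short wave over $F_*^nI$. Such a wave is a critical $\lambda$-visit. The difficulty is that $n$ is unbounded (of order $q$), while Corollary \ref{cor:short pushforward} only handles a bounded number of iterates, because its constants depend on $n$. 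So I split the orbit into two phases.

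\emph{Phase 1 (translation region).} For indices $m$ with $F_*^mI\subset\BL_{[\bound,q-\bound-2]}$, the map $F_*$ is proper by Proposition \ref{prop:improper intervals}, and it acts as a shift of the marking. I expect the key point to be that pushing forward a localized short wave through this proper region loses no width and creates no breaking. The leaves cannot break because the action is proper and every interval stays in the shift region. Lemma \ref{lem:wave} then gives $\WW(F_*A)\geq\WW(A)-O(1)$. To avoid accumulating $O(1)$ errors over $\sim q$ steps, I would not iterate step by step. Instead I would invoke the Shift Lemma (Lemma \ref{lem:shift}) on the whole shifting region $\dcara\TT\sm\BL_0$ at once, as in the proof of Proposition \ref{prop:away from critical}, treating $F_*^m$ for large $m$ like a single shift. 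The aim is to show that the part of $A$ whose leaves are not carried intact by $F_*^m$ has width $O(1)$ plus a part that becomes a wave of width at least $(1+\delta)\WW(A)$. In the latter case $A$ is terminal, using Corollary \ref{cor:universal terminal} to make $\eta$ universal. Because $A$ is short and localized with bounded ends, I expect this shifted wave to be $\eta$-controllable with universal $\eta$, by the $n=1$ count in Lemma \ref{lem:pushing controllable} and properness. Iterating to the first time $m_0$ with $F_*^{m_0}I\subset\BL_{[q-\bound-1,q-1]}$, we get a short wave over $F_*^{m_0}I$ of width at least $\WW(A)-O(1)$. If $A$ starts in $\BL_{[1,\bound)}$ instead, Phase 1 is preceded by at most $\bound$ applications of Corollary \ref{cor:short pushforward} with $n=1$.

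\emph{Phase 2 (bounded finish).} From time $m_0$, at most $\bound+1$ iterates remain until the crest meets $\BL_0$. Here I apply Corollary \ref{cor:short pushforward} with $n\leq\bound+1$, which is bounded, and with ratio $\lambda'$ chosen so that $\lambda'(1-o(1))\geq\lambda$. This yields either a short $(n-m_0,\lambda')$-pushforward, which is over $F_*^nI$ and hence is the desired critical $\lambda$-visit, or $(\delta,\eta)$-terminality with universal $\eta$. The wideness hypothesis $[\lambda]$ absorbs all the $O(1)$ losses.

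The main obstacle is Phase 1: we need width estimates uniform in the unbounded number of translation steps. My first attempt would be to lift to the outer translation marking $(F_\outbr,\II_\outbr)$ and use the long-arc bounds of Appendix \ref{app:transl-mark}. The argument there is that a leaf broken at some step must either contain a long segment, which has small total width, or exit through boundedly many improper intervals near $\BL_0$, which gives terminality as in the proof of Proposition \ref{prop:small breaking}.
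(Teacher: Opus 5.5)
\textbf{There is a genuine gap in Phase 1.} Your reduction of a crest in $\BL_{[1,\bound]}$ and your Phase~2, which finishes with Corollary~\ref{cor:short pushforward} over boundedly many iterates, match the paper. But Phase~1 is the whole content of the proposition, and it is not proved. The three tools you propose for it all fail.

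\emph{Breaking is not excluded by properness.} Properness of $F_*$ on the intervals of the base controls only the combinatorics of the intervals. Breaking depends on the actual leaves. These are only homotopic in $\UU\sm\TT$ to paths near the base, and for a wide wave they do run out toward $\dd\UU'$ or across the components of $\TT'\sm\TT$ attached near the critical limb. This is exactly why Proposition~\ref{prop:small breaking} splits $A^\br$ into $A^\ver$, $A^\inbr$, $A^{\out}$ even when $F_*$ is end-proper on the crest. After $N\sim q$ steps the obstruction is all of $\TT^N\sm\TT$ and $\UU\sm\UU^N$.

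\emph{The Shift Lemma does not apply.} Its hypothesis, $F_*\overline E\subset C$ or $E\subset F_*C$, needs a crest that is long relative to the shift, which a short wave does not have. Its conclusion also bounds the \emph{unbreakable} part, whereas you need to control what breaks at some step $m\le N$.

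\emph{The translation-marking bounds are unavailable.} Proposition~\ref{prop:outer translation marking} is proved under tameness, which fails in the wave case, and it concerns $\UU\sm\KK$ rather than $\UU\sm\TT$. Theorem~\ref{thm:bounded short} carries an $O(np)=O(q)$ error that $\WW(\RR F)$ does not control. Likewise, the $\eta$ of Lemma~\ref{lem:pushing controllable} is not uniform in $n$.

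\textbf{The missing idea.} Your instinct to treat the large iterate as one map is right, but the mechanism is different: do not iterate at all.

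\emph{Setup.} Set $N=q-\bound-j-2$ and $J=\BL_{[j,q+M-N]}$, with $M\ge 2/\eps$. Work with the single pair $(f^N,\iota_N)$ and sort the leaves of $A$ by their restrictions to $\UU^N\sm\TT^N$. Since $f^N$ has degree at most $2^{M+1}$ on $\iota_N^*J$, Lemmas~\ref{lem:push forward laminations} and~\ref{lem:wave} are applied once, with losses depending on $M$ and not on $N$.

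\emph{The four classes of leaves.}
\begin{enumerate}
\item Leaves whose restriction reaches $\ddi\UU^N$ give vertical width, hence terminality.
\item Leaves whose restriction contains a wave over some $\iota_N^*\BL_{m-N}$, $1\le m\le M$, have total width $O(M)$. Such waves push forward to waves over all of $\dcara\TT$ and must therefore be empty.
\item Any other leaf not carried to a wave over $\iota_N^*I$ ending in $J$ contributes, by Proposition~\ref{prop:itineraries}, a separate path to each of the $M$ waves $B^\amp_m$. By Lemma~\ref{lem:domination}, some $B^\amp_m$ then has at least $M$ times the width of this part. This $M$-fold amplification yields a wave of width at least $\frac32\WW(A)$ whenever the part has width $\eps\WW(A)$. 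That wave is controllable by Lemma~\ref{lem:fast pushing controllable}, with constants depending only on $M$, which gives terminality.
\item Only the remaining part, of width at least $(1-3\eps)\WW(A)$, is pushed forward by $f^N_*$ in one step.
\end{enumerate}

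\emph{Finish.} After that single push-forward, Proposition~\ref{prop:mostly short}, Corollary~\ref{cor:short pushforward} with $n=\bound+2$, and Corollary~\ref{cor:universal terminal} complete the argument as in your Phase~2.
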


Indeed, Proposition \ref{prop:critical visit} immediately follows from Proposition \ref{prop:fast forward}:

\begin{proof}[Proof of Proposition \ref{prop:critical visit}]
	Let $A$ be a $[\lambda]$-wide short wave with crest $I$. For any $\lambda_0$, Corollary \ref{cor:short pushforward} implies that either there is a short wave $B$ over $F_*I$ with $\WW(B)\geq \lambda_0\WW(A)$, or $A$ is $(\delta_0, \eta_0)$ for some $\delta_0$ and universal $\eta_0$. If $F_*I$ intersects $\BL_0$, then $B$ is a critical $\lambda_0$-visit of $A$. 
	If $F_*I$ avoids $\BL_0$, then for any $\lambda_1$ it follows from Proposition \ref{prop:fast forward} that  $B$ either has a critical $\lambda_1$-visit $C$ or is $(\delta_1, \eta_1)$-terminal for some $\delta_1$ and universal $\eta_1$. Note that $C$ is a critical $\lambda_0\lambda_1$-visit of $A$.
	
	Choosing $\lambda_0\gg_{\delta_1} 0$,  if $B$ is $(\delta_1, \eta_1)$-terminal then $A$ is $(\delta_1', \eta_1)$-terminal for some $\delta_1'$. 
	Choosing $\lambda_0\lambda_1\geq \lambda$,  $\delta< \min(\delta_0, \delta_1')$, and $\eta\geq \max(\eta_0, \eta_1)$, it follows from the above that $A$ either has a critical $\lambda$-visit or is $(\delta, \eta)$-terminal.
\end{proof}

For any short wide wave $A$ with crest $I$, if there is some $n\leq O(\bound)$ such that $F_*^nI$ intersects $\BL_0$, then we can use Corollary \ref{cor:short pushforward} to produce a critical visit. Thus to prove Proposition \ref{prop:fast forward}, we need to consider unbounded iterates of $F_*$. 
For any $n\geq 0$, we set $\UU^n = f^{-n}(\UU)$ and $\TT^n = f^{-n}(\TT)$ and define $\iota_n: \UU^n\sm \TT^n\to \UU\sm \TT$ to be the associated inclusion. We equip $\UU^n\sm \TT^n$ with the ideal marking $\II^n = (f^n)^*\II$;  the pair $(f^n, \iota_n)$ maps $\UU^n\sm \TT^n$ to $\UU\sm \TT.$
The action of the pair $(f^n, \iota_n)$ is closely related to $F_*^n$, indeed for any multi-interval $I$ in $\II$ we have $f_*^n\iota_n^*I = F_*^nI$. 
An identical argument to that of Proposition \ref{prop:comb action} implies that $(f^n, \iota_n)$ acts combinatorially on $(\UU\sm \TT, \II)$.

For the pair $(f^n, \iota_n)$, we have an analog of Lemma \ref{lem:pushing controllable} in the translation region:

\begin{lem}\label{lem:fast pushing controllable}
	For any $x< 0 <  y$ there exists $\eta$ such that: if $A$ is a wave in $\UU\sm \TT$ and $B\subset \iota_n^*A$ is a wave localized to $\iota_n^*\BL_{[x-n, y-n]}$ for some $y\leq n < x+q$, then any wave in $f_*^nB$ is $\eta$-controllable.
\end{lem}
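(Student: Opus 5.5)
The plan is to imitate the $n=1$ case of Lemma \ref{lem:pushing controllable}, but to replace induction on $n$ by the fact that $(f^n,\iota_n)$ is essentially a translation by $n$ limbs on the relevant region. First I will describe the action of $(f^n,\iota_n)$ on $\BL_{[x-n,y-n]}$. The hypothesis $y\le n<x+q$ gives $[x-n,y-n]\subset (-q,0]$ modulo $q$, so $[x-n,y-n]\subset [x,y]-n$. For $0\le m<n$ I will track the limbs $\BL_{j+m}$ with $j\in[x-n,y-n]$ and check that they avoid $\BL_0$ except possibly at $m$ close to $n$. More precisely, $j+m\equiv 0$ forces $m\equiv -j\in[n-y,n-x]$. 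Since $n-y\ge 0$ and $n-x<q$, this can only happen for $m\ge n-y$, i.e.\ in the last $y$ steps.

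The iterate therefore splits into two parts, $F_*^{n-y}$ followed by $F_*^{y}$. During the first $n-y$ steps the interval lies in $\BL_{[x-n+m,\,y-n+m]}$, which stays inside $\BL_{[x-y,-1]}$ up to the $\bound$-neighbourhood issue. Proposition \ref{prop:improper intervals} gives properness of $F_*$ on $\BL_{[\bound,q-\bound-2]}$. Using Proposition \ref{prop:pullback description}, I will then show that on $\iota_{n-y}^*\BL_{[x-n,y-n]}$ the tree $\TT^{n-y}$ agrees with $\TT$, except in a bounded neighbourhood of $\BL_0$ near the terminal steps. Consequently $\iota_{n-y}^*$ acts as the identity there, and $f_*^{n-y}$ is a homeomorphism $\BL_j\to\BL_{j+n-y}$ on each of these intervals. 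This is the translation-marking picture of \S\ref{sec:translation markings}.

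Next comes the pushforward. Since the first block is proper and bijective on $\BL_{[x-n,y-n]}$, the localized wave $B$ pushes forward by $f_*^{n-y}$ to a single localized wave, with at most $O(|x|+y)$ extra pieces. These pieces come from the boundary limbs near $\bound$ and $q-\bound$, where properness can fail, and each has combinatorial length at most $6\bound$ by Lemma \ref{lem:comb size of limbs}. The resulting wave is $\eta_1$-controllable with $\eta_1=\eta_1(x,y)$. I will then apply Lemma \ref{lem:pushing controllable} $y$ more times, so that the remaining iterate $F_*^{y}$, a bounded number of steps depending only on $y$, yields an $\eta(x,y)$-controllable wave. Here I use that $f_*^n\iota_n^*=F_*^n$ and the decomposition $f^n_*\iota_n^*=f^{y}_*\iota_y^*\circ f^{n-y}_*\iota_{n-y}^*$ on these intervals.

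The main obstacle I expect is the first block. One must verify that for unbounded $n-y$ the lift $\iota_{n-y}^*$ creates no new non-proper intervals. Preimages of $\TT$ in $\FL_0$ attach near $\alpha_F$ and $K_0$, and one has to check that they never meet $\BL_{[x-n+m,y-n+m]}$ for $m<n-y$. This requires the parabolic boundedness condition (3): components of $\KK\cap\FL_0$ map into $\FL_{[-\bound,\bound]}$. Some care is also needed so that leaves ending exactly at $\BL_0$ still count as localized. If this bookkeeping fails, I would instead induct on $n$ while maintaining the invariant "localized to one translated interval plus $C(x,y)$ extra localized waves", so that the constant does not grow with $n$.
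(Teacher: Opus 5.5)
\paragraph{The gap: the hand-off to Lemma \ref{lem:pushing controllable}.}
That lemma is about $F_*=f_*\iota^*$ acting on a wave in $(\UU\sm\TT,\II)$ whose unbreakable part is controllable. The factorization $F^n_*=F^y_*\circ F^{n-y}_*$ you invoke applies to $A$, which is an arbitrary wave. $B$ is only a subwave of the restriction $\iota_n^*A$.

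What your first block actually produces is a lamination $C\subset f^{n-y}_*B$ in $\UU^y\sm\TT^y$ ending at $\iota_y^*\BL_{[x-y,0]}$. This causes three problems:
\begin{itemize}
\item $C$ is not a wave in $\UU\sm\TT$, so calling it ``$\eta_1$-controllable'' has no meaning.
\item Its end contains $\iota_y^*\BL_0$, a multi-interval of $\II^y$ whose $f^y$-image is all of $\dcara\TT$.
\item The leaves of $F^{n-y}_*A$ whose $\iota_y$-restrictions contain leaves of $C$ may run past the components of $\TT^y\sm\TT$ on which $C$ ends, so they are not controlled either.
\end{itemize}
To apply Lemma \ref{lem:pushing controllable} you would first have to show that the $y$-unbreakable part of that family of level-$0$ leaves ends at $\BL_{[x-y,0]}$. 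You do not address this, and it is exactly where the content of the statement lies, since $\BL_0\subset\BL_{[x-y,0]}$ and $F_*\BL_0=\dcara\TT$.

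\paragraph{The obstacle you single out is not the real one.}
No step-by-step properness is needed. The direct iterate $f^{n-y}$ maps $\FL_{[x-n,y-n]}$ homeomorphically onto $\FL_{[x-y,0]}$, since no intermediate limb is $\FL_0$, as you checked. It also carries $\TT^n=f^{-(n-y)}(\TT^y)$ onto $\TT^y$ there. The properness you propose to verify may in fact fail. The window slides from $\BL_{[x-n,y-n]}$ to $\BL_{[x-y-1,-1]}$; it does not stay in $\BL_{[x-y,-1]}$. So it can cross limbs $\BL_j$ with $j\in[-\bound-1,\bound-1]\sm\{0\}$, where Proposition \ref{prop:improper intervals} says nothing.

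\paragraph{How the paper handles the last $y$ steps.}
The paper stays at level $y$ and argues directly, without Lemma \ref{lem:pushing controllable}.
\begin{itemize}
\item It first disposes of the case $q\le 2\bound+y-x$, where every wave is trivially controllable.
\item It then takes the far region $I=\BL_{[\bound+y,\,q-\bound+x-1]}$ (indices mod $q$).
\item By Proposition \ref{prop:stretching interval}, the only component of $f^{-y}(I)$ not contained in $\iota_y^*$ of a single interval of $\II$ is the translate $\iota_y^*\BL_{[\bound,\,q-\bound+x-y-1]}$. This translate is disjoint from $\iota_y^*\BL_{[x-y,0]}$.
\item Paths subordinate to a single interval of $\iota_y^*\II$ are discarded, so no leaf of a wave $D\subset f^y_*C$ is subordinate to $I$.
\end{itemize}
Hence every leaf of $D$ crests over $\BL_0$ or has an endpoint in $\BL_{[x-\bound,\,y+\bound+1]}$, and $D$ is $6\bound(2\bound+y-x+2)$-controllable.

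Your route can probably be repaired by supplying the bridge above. It would then amount to a repackaging of the paper's argument, since the proof of Lemma \ref{lem:pushing controllable} rests on the same stretching-interval mechanism.
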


\begin{proof}
	If $q\leq 2\bound +y-x$ then any wave in $\UU\sm \TT$ is $6\bound(2\bound+y-x)$-controllable, so we may assume that $q> 2\bound +y-x$.

	Let $D\subset f_*^nB$ be a wave, so there is some wave $C\subset f_*^{n-y}B$ satisfying $D\subset f_*^yC$. Thus $C$ is localized to $\iota_y^*\BL_{[x-y, 0]}.$
	Setting $I = \BL_{[-q+\bound+y, -\bound+x-1]}$ and $I' = \BL_{[-q+\bound, -\bound+x-y-1]}$, we suppose for the sake of contradiction that $D$ contains a leaf $\delta$ subordinate to $\iota_y^*I.$ Let $\gamma$ be the leaf of $C$ satisfying $f_*^y\gamma= \delta$, so $\gamma$ is subordinate to a component of $f^{-y}_*(I)$.  Proposition \ref{prop:stretching interval} implies that
	$\iota_y^*I'$ is the unique component of $f^{-y}_*I$ that is not contained in the restriction by $\iota_y^*$ of a single interval in $\II$, thus $\gamma$ is subordinate to $\iota_y^*I'$. But this contradicts the fact that $C$ is localized to  $\iota_y^*\BL_{[x-y, 0]}$, hence $D$ has no leaves subordinate to $I$. 
	Thus every leaf of $D$ either is a wave over $\BL_0$ or has an endpoint in $\BL_{[-\bound+x, \bound+y+1]}$, so $D$ is $6\bound(2\bound+y-x+2)$-controllable.
\end{proof}

We can now prove Proposition \ref{prop:fast forward}:

\begin{proof}[Proof of Proposition \ref{prop:fast forward}]
	Choosing some $\lambda_0$, we set $\eps = (1-\lambda_0)/4$ and fix an  integer $M\geq 2/\eps$.
	Let $A$ be a short $[\lambda]$-wide wave whose crest $I$ avoids $\BL_0$. 
	We can restrict to the case where $\bound< j < q-\bound - M-1$: if $0< j\leq \bound$ then we can use Corollary \ref{cor:short pushforward} to replace $A$ with a wave whose crest intersects $\BL_{\bound+1}$, and if $q-\bound - M-1\leq j < q$ then we can use Corollary \ref{cor:short pushforward}  to produce a critical visit of $A$.

	We set $N = q-\bound - j -2$ and $J = \BL_{[j, q+M-N]}$. 
	As in the proof of Proposition \ref{prop:small breaking}, we will decompose $A$ into subwaves based on their restriction by $\iota_N^*$. The main difference is that here we will keep track of their positions relative to $\iota_N^*J$. 
	It follows from Proposition  \ref{prop:improper intervals} that $\iota_N$ is proper on $\BL_{\bound, j+1}$ and that $f_*^N$ has degree $1$ on $\BL_{[1, q-N]}$ and degree $2^m$ on $\BL_{q+m-N}$ for all $1\leq m \leq M$. 
	Thus $f_*^N$ has degree $\leq 2^{M+1}$ on $\iota_N^*J$. 		
	As $A$ is short, $I\subset \BL_{[j-1, j+1]}$ and one end of $A$ is contained in $\BL_{[j, j+1]}$.

	Let $B^\ver\subset \iota_{N}^*A$ be the maximal sublamination consisting of leaves connecting 
	$\iota_N^*J$ to $\ddi\UU^{N}$.
	It follows from Lemma \ref{lem:push forward laminations} that there is a lamination $C^\ver\subset f_*^{N}A^\ver$ connecting  the two boundary components of $\UU\sm \TT$ and satisfying
	$$\WW(B^\ver)\leq 2^{M+1}(\WW(C^\ver)+2).$$
	Let $A^\ver\subset A$ be the subwave consisting of leaves whose restriction by $\iota_N^*$ contribute at least one path to $B^\ver$, 
	so $\WW(A^\ver)\leq \WW(B^\ver)$. If 
	$\WW(A^\ver)\geq \eps \WW(A)$ then it follows that 
	$$\WW(V)\geq \WW(C^\ver)\geq \WW(A^\ver)/2^{M+1}-2\geq (\eps/2^{M+2})\WW(A),$$
	hence $A$ is $(\eps /2^{M+2}, \eta)$-terminal 
	for any $\eta$.

	Let $B^{\out}\subset \iota_{N}^*A$ be the maximal subwave over $\iota_{N}^*I$ with one end in $J.$
	Let $A^{\out}\subset A$ be the  subwave consisting of leaves whose restriction by $\iota_{N}$ contributes a path to $B^{\out}$, so $\WW(B^{\out})\geq \WW(A^{\out}).$

	For all $1\leq m \leq M$, let $B^\lo_m$ be the maximal wave over $\iota_N^*\BL_{m-N}$ with one end in $J$. For each $m$, Lemma \ref{lem:wave} implies that there is a wave $C^\lo_m$ over $f^N_*\iota_N^*\BL_{m-N}$ with 
	$$\WW(B^\lo_m)\leq \WW(C^\lo_m)+O(M).$$
	As $f^N_*\iota_N^*\BL_{m-N}= F_*^N\BL_{m-N} = \ddi\TT$, each $C^\lo_m$ must be empty. 
	Let $A^\lo\subset A$ be the subwave consisting of leaves whose restriction by $\iota_N^*$ contributes a path to some $B_m^\lo$, so the above implies 
	$$\WW(A^\lo)\leq \sum_{m=1}^M\WW(B_m^\lo)\leq \eps\WW(A).$$

	For all $1\leq m\leq M$, let $B_m^\amp\subset \iota_{N}^*A$
	be the maximal wave with one end in $\iota_N^*\BL_{m-N}$ and one end in $\iota_N^*\BL_{j, m-N-1}$. Let $A^\amp\subset A$ be the subwave consisting of leaves whose restriction by $\iota_{N}^*$ contributes a different path to each $B_m^\amp$. 
	Thus there is some $m$ so that 
	$$M\WW(A^\amp)\leq \WW(B_m^\amp).$$
	For some $k$ depending only on $M$, it follows from Lemmas \ref{lem:fast pushing controllable} and \ref{lem:wave} that there is a $k$-controllable wave $C^\amp\subset f^{N}_*B_m^\amp$ satisfying 
	$$\WW(B_m^\amp)\leq \WW(C^\amp)+O(M)\leq \WW(C^\amp)+ \WW(A)/2.$$
	Thus if $\WW(A^\amp)\geq \eps\WW(A)$, then $\WW(C^\amp)\geq (3/2)\WW(A)$. This implies that $A$ is $(1/2, k)$-terminal, so Corollary \ref{cor:universal terminal} implies that $A$ is $(\delta, \eta)$-terminal for some $\delta$ and universal $\eta$.

	Let us now observe that $A= A^\ver\cup A^{\out}\cup A^\lo\cup A^\amp$. 
	Indeed, as $F_*^N$ is proper on $I$ and end-proper on each limb of $\II$,  for each $1\leq m \leq M$ it follows from Proposition \ref{prop:itineraries} that the restriction of any  leaf $\alpha\in A\sm (A^\ver\cup A^{\out})$ must  contain either a wave over $\iota_N^*\BL_{m-N}$  with one end in $\iota_N^*J$ or a wave with one endpoint in $\iota_N^*\BL_{m-N}$ and the other in   $\iota_N^*\BL_{j, j+m-N-1}$.
	
	It follows from the above that either $A$ is $(\delta, \eta)$-terminal for some $\delta$ and universal $\eta$, or $$\WW(A^\outbr)\geq (1-3\eps)\WW(A).$$
	It follows from Lemmas \ref{lem:fast pushing controllable} and \ref{lem:wave} that there is a $k_0$-controllable wave $C\subset f_*^n(B^\outbr)$ with 
	$$\WW(C)\geq \lambda_0\WW(A)$$
	for some $k_0$ depending only on $M$. 
	For any $\lambda_1$, Proposition \ref{prop:mostly short} implies that $C$ is either $\lambda_1$-short or $(\delta_1, \eta_1)$-terminal for some $\delta_1$ and some $\eta_1$ depending only on $M$. 
	Choosing $\lambda_0\gg_{\delta_1} 0$, if $C$ is $(\delta_1, \eta_1)$-terminal then it follows from Corollary \ref{cor:universal terminal} that $A$ is $(\delta, \eta)$-terminal for some $\delta$ and universal $\eta$. 
	As the crest of $C$ contains $F_*^NI$, if $C$ is $\lambda_1$-short, then for any $\lambda_2$ it follows from Corollary \ref{cor:short pushforward} that either there is a short wave $D$ over $F_*^{\bound+2}F_*^NI= F_*^{q-j}I$ with $$\WW(D)\geq \lambda_2\WW(C)\geq \lambda_0\lambda_1\lambda_2\WW(A)$$ or $C$ is $(\delta_2, \eta_2)$-terminal for some $\delta_2$ and universal $\eta_2$. 
	Choosing $\lambda_0\lambda_1\lambda_2\geq \lambda$, as  $F_*^{q-j}I$ intersects $\BL_0$ it follows that $D$ is a critical $\lambda$-visit for $A$.
	Choosing $\lambda_0\lambda_1\gg_{\delta_2} 0$, if $C$ is $(\delta_2, \eta_2)$-terminal then $A$ is $(\delta, \eta_2)$-terminal for some $\delta$.
\end{proof}

\section{Rigidity}\label{sec:rigidity}

We now sketch the framework for upgrading \emph{a priori} bounds to rigidity; for more details we refer the reader to \cite{lyubich1997dynamics}.

Let $M\subset\MM$ be a prime and primitive small Mandelbrot set, and let $f$ be a quadratic polynomial corresponding to a parameter in $M$. 
Let $I^0(f)$ denote the closure of the union of the external rays of $f$ landing at $\alpha_f$ together with the equipotential of $f$ at potential $1$. The set $I^0(f)$ cuts $\C$ into the \emph{Yoccoz puzzle pieces}  of level $0$. 
For all $n>1$, denoting $I^n(f):= f^{-n}(I^0(f))$, the Yoccoz puzzle pieces of level $n$ are defined to be the components of $\C\sm I^n(f)$. 
Let $Y^n(f)$ denote the level $n$ Yoccoz puzzle piece containing the critical point of $f$.
For $p$ the associated period of $M$, there is some minimal $n\geq 0$ depending only on $M$ so that $f^n:Y^{n+p}(f)\to Y^n(f)$ restricts to a quadratic-like map with connected filled Julia set; $n$ is called the \emph{renormalization level} of $M$. 

For $n$ the renormalization level of $M$ and $f, g$ two quadratic polynomials corresponding to parameters in $M$, there is a canonical homeomorphism $I^n(f)\to I^n(g)$ induced by the canonical markings of external rays and equipotentials. 
Let $d_M(f, g)$ denote the \emph{Teichm\"uller distance}  between $I^n(f)$ and $I^n(g)$, that is half of the $\log$ of the smallest dilatation of a quasiconformal homeomorphism $h: \C\to \C$ that agrees with the canonical map $I^n(f)\to I^n(g)$. We define the \emph{puzzle Teichm\"uller diameter}  $\text{diam}(M)$ to 
be the max of $d_M(f, g)$ over all such $f$ and $g$.
Let us say that set $\CC$ of prime and primitive types has \emph{bounded puzzle Teichm\"uller diameter} \index{family $\CC$ of types!bounded puzzle Teichm\"uller diameter} when 
$$\sup_{M\in \CC}\text{diam}(M)< \infty.$$

A family of types $\CC$ is \emph{combinatorially rigid} \index{family $\CC$ of types!combinatorially rigid} if any two infinitely renormalizable quadratic-like maps with identical combinatorics in $\CC$ are hybrid equivalent; if $\CC$ is combinatorially rigid then it is automatically rigid in the sense from the introduction.
The following theorem is proved in \cite{lyubich1997dynamics}, although it is not stated in this generality there:

\begin{thmB}
	Every set of prime primitive types with beau bounds and bounded puzzle Teichm\"uller diameter is rigid.
\end{thmB}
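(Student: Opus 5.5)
The plan is to follow Lyubich's rigidity argument in \cite{lyubich1997dynamics}, with the secondary limbs condition replaced by the bounded puzzle Teichm\"uller diameter wherever it enters. We prove combinatorial rigidity. Let $f,g$ be infinitely renormalizable quadratic polynomials with the same combinatorics $(M_k)$ in $\CC$, and let $f_k=\RR^kf$, $g_k=\RR^kg$ be their renormalizations. We aim for a $K$-quasiconformal conjugacy $h\colon\C\to\C$ between $f$ and $g$, with $K$ independent of $k$. Since $\FK(f)$ carries no invariant line field (by \cite{McMullen94}, which applies because of the beau bounds), such an $h$ is a hybrid equivalence. Combinatorial rigidity then gives MLC at these parameters by the standard Douady--Hubbard argument. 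That argument needs the fibres of $\MM$ to be trivial, and this follows once hybrid classes of infinitely renormalizable maps with these combinatorics are points.

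The construction goes level by level.

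\textbf{Puzzle maps at each level.} On the renormalization level $n_k$ of $M_k$, the canonical homeomorphism $I^{n_k}(f_k)\to I^{n_k}(g_k)$ of rays and equipotentials admits a $K_0$-qc extension to $\C$. Here $K_0$ is controlled by $\sup_{M\in\CC}\mathrm{diam}(M)<\infty$. This is the only place where SLC was used in \cite{lyubich1997dynamics}: there SLC bounded the moduli of puzzle annuli along the principal nest, which in turn gave bounded-dilatation puzzle maps. We substitute the diameter hypothesis directly.

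\textbf{Pullback inside a level.} Using the Yoccoz puzzle between level $0$ and the renormalization level, pull the puzzle map back equivariantly. The pullback is qc-controlled off the small Julia sets because the map is only corrected on annuli whose moduli are bounded below by the beau bounds. Note that the puzzle map already respects the relevant first-return combinatorics, so no extra dilatation accumulates along the way. The result is a $K_1$-qc map $h_k$ that conjugates $f_k$ to $g_k$ outside the little Julia sets of the next renormalization. It also respects the quadratic-like restrictions $f_k^{p_k}\colon Y^{n_k+p_k}\to Y^{n_k}$, whose moduli are bounded below by the beau bounds.

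\textbf{Gluing across levels.} Fill in the little Julia sets of $f_{k+1}$ by the maps $h_{k+1}$, transported back by the linear rescalings. Here we use the bounded geometry of the fundamental annuli coming from beau bounds, together with the standard interpolation lemma in annuli of definite modulus. The composite maps $H_k$ have uniformly bounded dilatation. Each $H_k$ conjugates $f$ and $g$ away from the depth-$k$ little Julia sets, whose diameters shrink because the bounds are uniform. A compactness argument then yields the limit conjugacy $h$.

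The main obstacle is the first step: verifying that the argument in \cite{lyubich1997dynamics} genuinely only consumes a bound on the Teichm\"uller distance of the level-$n_k$ puzzle configurations. One has to check that the subsequent pullback from level $n_k$ to the quadratic-like level does not reintroduce dependence on the limb, since the number of puzzle levels grows with $q$. The plan is to observe that these additional levels involve univalent pullbacks of a single qc map, so no dilatation is added. With this, the constant $K$ depends only on the diameter bound and the beau bounds.
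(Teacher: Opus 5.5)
Your proposal takes the same route as the paper. The paper's proof of Theorem B simply observes that the construction of \cite[Section 10]{lyubich1997dynamics} goes through unchanged, and that the only role of \cite[Section 11]{lyubich1997dynamics} (its Main Lemma) was to derive bounded puzzle Teichm\"uller diameter from the secondary limbs condition, which you take as a hypothesis.

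One small correction: the ``main obstacle'' you flag is not really there. By definition the renormalization level $n$ is already the level at which $f^p\colon Y^{n+p}\to Y^n$ is quadratic-like, so the diameter hypothesis covers every intermediate puzzle level. Also, the subsequent pullbacks are not all univalent, since they pass through the degree-two critical piece; holomorphic lifting still adds no dilatation, so your conclusion stands.
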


\begin{proof}
	The entire construction from \cite[Section 10]{lyubich1997dynamics} can be applied in this setting without change. The entire purpose of \cite[Section 11]{lyubich1997dynamics}, and more specifically \cite[Main Lemma]{lyubich1997dynamics}, is to show that the secondary limbs condition implies bounded puzzle Teichm\"uller diameter.
\end{proof}

\appendix
\section{Ideal markings and laminations}\label{app:markings}
Here we introduce the concept of an ideal marking and then define the canonical lamination and waves in terms of these markings. 

\subsection{Ideal markings}\label{app:ideal markings and waves}

Let $\VV$ be a hyperbolic Riemann surface without cusps. 
Let $\pi: \D\to \VV$ be the universal covering, and let $\Lambda$ be the limit set of the group $\Delta$ of deck transformations. The \textit{ideal boundary}  of $\VV$ is 
$$\ddi \VV:= (\dD\sm \Lambda)/ \Delta.$$ 
The \emph{ideal closure}  $\VV$ is $\overline{\VV}:= \VV\cup \ddi \VV$.

We define an \textit{ideal marking} \index{ideal marking $(\VV, \II)$} of $\VV$ to be a countable set 
$\II= \{\BI_n\}_n$  of pair-wise disjoint 
open connected subsets of the $\ddi\VV$ such that $\ddi \VV = \overline {\bigcup_n\BI_n}.$ We will call the elements of $\II$ \textit{intervals}, and the elements of the complement $\ddi \VV\sm {\bigcup_n {\BI_n}}$ \textit{endpoints}. The \textit{cardinality} of $\II$ is the number of intervals it contains. 
Note that we allow intervals to be entire components of the ideal boundary, in which case the interval is homeomorphic to $\dD$.

Fixing an ideal marking $\II$ of $\VV$, 
we will call a connected open subset $I$ of the ideal boundary of $\VV$ that is the union of intervals and endpoints a \emph{multi-interval} in $\II$.
We define the \textit{combinatorial length} $I$ to be the number $\ell(I)$ of intervals in $\II$ it contains. 
We will say that a multi-interval $J$ is \emph{well-inside} $I$, and write $J\Subset I$, if $\overline J\subset I$.
We will say that $\BI\in \II$ is an \textit{end-interval} of $I$ if $\BI\subset I$ and  $\BI\not \Subset I$; if $I$ is an entire component of $\ddi \VV$ then $I$ has no end-intervals.

We will say that another ideal marking $\JJ$ of $\VV$ is a \emph{refinement} \index{ideal marking $(\VV, \II)$!refinement} of $\II$ if every interval in $\II$ is a multi-interval in $\JJ$. We will say that the refinement is \emph{minor} \index{ideal marking $(\VV, \II)$!refinement!minor} if the combinatorial length in $\JJ$ of every interval in $\II$ is one; that is every interval in $\JJ$ can be obtained from an interval in $\II$ by removing at most one point. 
The ideal markings of $\VV$ are partially ordered by refinement, and there is a unique minimal ideal marking in this order given by the set of all components of $\ddi \VV$.

We will say that a \textit{partial ideal marking} \index{ideal marking $(\VV, \II)$!partial marking} of $\VV$ is a nonempty countable set $\JJ$ of pairwise disjoint connected open sets and points in $\ddi\VV$.  From the partial marking we can form the \textit{induced marking}
\index{ideal marking $(\VV, \II)$!induced marking} by taking the smallest tiling that contains each interval in $\JJ$ and has each point in $\JJ$ as an endpoint. 

For any subset $X$ of $\ddi \VV$, 
we will say that a proper path $\gamma$ in $\overline \VV$ is \textit{subordinate to $X$} \index{ideal marking $(\VV, \II)$!lamination!subordinate} if, up to homotopy rel its endpoints, $\gamma$ belongs to every neighborhood of $X$ in $\overline\VV$. 
We will say that a proper path $\gamma$ in $\overline\VV$ is a \textit{path in $(\VV, \II)$} if both its endpoints belong to intervals in $\II$ and it is not subordinate to a multi-interval with combinatorial length at most two.
An \textit{arc} \index{ideal marking $(\VV, \II)$!arc} in $(\VV, \II)$ is a homotopy class of simple paths.
A \textit{lamination} \index{ideal marking $(\VV, \II)$!lamination} in $(\VV, \II)$ is a lamination in $\VV$ consisting  of paths in $(\VV, \II)$. We will say that a path 
in $(\VV, \II)$ \textit{ends} \index{ideal marking $(\VV, \II)$!lamination!ends} at a multi-interval $I$ if it has an endpoint in $I$. We will say that a lamination $\FF$ in $(\VV, \II)$ ends at $I$ if every leaf ends at $I$; we will say that $\FF$ is \textit{localized} \index{ideal marking $(\VV, \II)$!lamination!localized} \index{waves!localized} if it ends at an interval in $\II$. 

Let $S$ be a lamination in $(\VV, \II)$ subordinate to a multi-interval $B$ in $\II$, that is every leaf of $S$ is subordinate to $B$; we will call $S$ a \textit{wave subordinate to $B$}. \index{waves} We will call the smallest such multi-interval $B$ the \textit{base} \index{waves!base}  of $S$. We define the \textit{ends} 
\index{waves!ends} of $S$ to be the smallest multi-intervals $I$ and $J$ such that each leaf of $S$ has one endpoint in $I$ and the other in $J$. We define the \textit{crest} \index{waves!crest} of $S$ to be $C= B\sm (I\cup J)$; we will say that $S$ \textit{crests over} $X\subset \ddi \VV$, or equivalently that $S$ is a \textit{wave over} $X$, if $X\subset C$.

\subsection{The canonical weighted arc diagram}

\subsubsection{Rectangles}
Let $\overline{\BPi}$ be a standard rectangle with horizontal sides $\BI$ and
$\BJ$, and let $\ol \FF= \overline{\FF}(\BI, \BJ)$ be the vertical foliation on $\overline{\BPi}$. We let 
$$
\overline{\WW}(\BI, \BJ)  \equiv {\WW}(\overline{\FF}) 
$$
be its conformal {\it width}, which is also equal to the width of the 
{full path family} in $\overline{\BPi}$ connecting $\BI$ to $\BJ$. 
When the width of $\overline{\FF}$ is greater than 2, we can remove square buffers from $\overline{\BPi}$ to obtain a quadrilateral $\BPi$ supporting a foliation $\FF = \FF(\BI, \BJ)$ with width
$$
\WW(\BI, \BJ) \equiv 
\WW(\FF)  = \WW(\ol \FF) -2. 
$$
If instead the width of $\ol \FF$ is at most two, then we take $\FF= \FF(\BI, \BJ)$ to be the empty foliation, so $\WW(\BI, \BJ) \equiv \WW(\FF) = 0$.
Denoting the hyperbolic distance and length in $\overline{\BPi}$ by $d_\hyp$ and $\l_\hyp$ respectively, the following lemma relates the conformal width to hyperbolic geometry:
\begin{lem}\label{lem:hyp geo}
	Let $\bg$ and $\Bde$ be the hyperbolic geodesics in $\overline{\BPi}$ that share endpoints with $\BI$ and $\BJ$ respectively. For any $\eps>0$ and $\bg^\eps= \setofst{z\in\bg}{d_\hyp(z, \Bde)< \eps}$, we have:
	$$\l_\hyp(\bg^\eps)= \WW(\BI, \BJ)+O_\eps(1).$$
\end{lem}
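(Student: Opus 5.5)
We uniformize the standard rectangle so that the computation becomes explicit. The plan is to map $\overline{\BPi}$ conformally onto a horizontal strip $S_h=\{z: 0<\Im z< h\}$ with the arcs $\BI$ and $\BJ$ landing on prescribed boundary segments. The cleanest model is the one where $\overline{\BPi}$ is a genuine Euclidean rectangle $[0,W]\times[0,1]$ with $\BI=[0,W]\times\{0\}$ and $\BJ=[0,W]\times\{1\}$, so that $W=\overline{\WW}(\BI,\BJ)$. We first treat the case $W>2$, since when $W\le 2$ we have $\WW(\BI,\BJ)=0$ and the claim reduces to showing that $\l_\hyp(\bg^\eps)$ is bounded by a constant depending only on $\eps$. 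That bound follows from compactness of the family of rectangles of bounded modulus together with the fact that $\bg^\eps$ stays in a compact part of $\bg$ away from its endpoints.

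For $W$ large, we compare the hyperbolic metric of the rectangle with that of the infinite vertical-foliation strip. Away from the two vertical sides, i.e.\ on $[A,W-A]\times[0,1]$, the hyperbolic density of the rectangle agrees with that of the bi-infinite strip $\R\times(0,1)$ up to a factor $1+O(e^{-\pi A})$. This follows by comparing the rectangle with the strip via Schwarz--Pick monotonicity, and bounding the reverse comparison by an explicit elliptic-function or harmonic-measure estimate. In the strip, the geodesics $\bg$ and $\Bde$ sharing endpoints with the bottom and top sides are, in the middle region, exponentially close to the lines $\Im z=0^+$ and $\Im z=1^-$ in the sense of the translation-invariant metric. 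More precisely, the hyperbolic distance between the points of $\bg$ and $\Bde$ above a given $x$ is $O(e^{-\pi \min(x,W-x)})$: both geodesics are exponentially close to the central geodesic of the strip in the middle, because the strip has a unique bi-infinite geodesic class joining the two ends. So for $x\in[A_\eps,W-A_\eps]$, with $A_\eps$ depending only on $\eps$, the points of $\bg$ lie in $\bg^\eps$. The hyperbolic length of this portion equals $\pi(W-2A_\eps)\cdot(\text{normalization})$ up to $O(1)$. With the normalization in which width corresponds to length, this is $W+O_\eps(1)$.

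Conversely, near the vertical sides, in the squares $[0,A]\times[0,1]$ and $[W-A,W]\times[0,1]$, the geodesics $\bg$ and $\Bde$ diverge. For $x\le a_\eps$ (respectively $x\ge W-a_\eps$) the distance exceeds $\eps$, and the intermediate portion has hyperbolic length $O_\eps(1)$ by compactness, since the geometry of a fixed-size end square converges as $W\to\infty$ to that of a half-strip. Summing the three contributions gives $\l_\hyp(\bg^\eps)=\overline{\WW}(\BI,\BJ)+O_\eps(1)=\WW(\BI,\BJ)+2+O_\eps(1)$, and the $2$ is absorbed into the error.

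The main obstacle is making the exponential-closeness estimate uniform. We need to show that $d_\hyp(\bg,\Bde)$ at horizontal position $x$ is both small in the middle and large near the ends, with constants independent of $W$. We would first prove this for the half-infinite strip $[0,\infty)\times[0,1]$ by explicit conformal mapping ($\cosh$ sends it to a half-plane, where the geodesics become semicircles). We would then transfer the estimate to the finite rectangle via the density comparison above, which is what makes all the error terms depend only on $\eps$.
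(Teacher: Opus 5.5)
The paper states this lemma without proof, so I am judging your argument on its own. Your picture is right: both geodesics run along the core line $\operatorname{Im} z=1/2$ for most of the rectangle and peel off only near the vertical sides, and the rates you quote are correct. But the step that carries all the content is asserted rather than proved. That step is the uniform-in-$W$ claim that:
\begin{itemize}
\item the points of $\bg$ over $[A_\eps,W-A_\eps]$ lie within $\eps$ of $\Bde$ and run near height $1/2$;
\item the points over $[0,a_\eps]\cup[W-a_\eps,W]$ do not lie within $\eps$ of $\Bde$;
\item the pieces in between have length $O_\eps(1)$.
\end{itemize}
The justification you give (``a unique bi-infinite geodesic class'') is heuristic. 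The proposed transfer from the half-strip also does not work as stated. A pointwise comparison of the hyperbolic densities of the rectangle and the strip controls the lengths of \emph{given} curves. It does not tell you where the rectangle's geodesics actually run. Those geodesics end at the corners $(W,0)$ and $(W,1)$, which move with $W$, so they are not the half-strip geodesics. Your ``compactness'' arguments for the end pieces and for $\overline{\WW}\le 2$ have the same issue, since they need convergence of geodesics with moving endpoints. Moreover, the family $\{\overline{\WW}\le 2\}$ is not compact as $\overline{\WW}\to 0$. Separately, the sentence placing the geodesics near $\operatorname{Im} z=0^+$ and $\operatorname{Im} z=1^-$ is wrong and contradicts the sentence after it.

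You can avoid position estimates altogether. Since $\overline{\BPi}$ is simply connected, $\bg$ and $\Bde$ are disjoint geodesics of the hyperbolic plane with a common perpendicular of some length $d$. Lambert-quadrilateral trigonometry gives $\sinh d_\hyp(z,\Bde)=\sinh d\,\cosh t$ for the point $z\in\bg$ at distance $t$ from the foot of that perpendicular. Hence $\bg^\eps$ is an interval, and
\[
\ell_\hyp(\bg^\eps)=2\operatorname{arccosh}\frac{\sinh\eps}{\sinh d}=2\log\frac1d+O_\eps(1)\qquad(d<\eps),
\]
while $\bg^\eps=\emptyset$ for $d\ge\eps$. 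So the lemma reduces to a single conformal computation relating $d$ and $\overline{\WW}=\overline{\WW}(\BI,\BJ)$.

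To do it, normalize so that $\bg$ and $\Bde$ are the semicircles $|z|=1$ and $|z|=e^d$ in $\mathbb H$. Then $\overline{\WW}$ is the width of the curves in $\mathbb H$ joining $[-1,1]$ to $\{|x|\ge e^d\}$, and this width is strictly decreasing in $d$. After $z\mapsto\log z$ it equals the extremal length of curves joining the boundary segments $[0,d]$ and $[0,d]+i\pi$ of the strip $\{0<\operatorname{Im}\zeta<\pi\}$. That extremal length is $\frac2\pi\log\frac1d+O(1)$ as $d\to 0$. This gives $\ell_\hyp(\bg^\eps)=\pi\overline{\WW}+O_\eps(1)$ when $\overline{\WW}$ is large. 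Monotonicity handles $\overline{\WW}\le 2$: then $d$ is bounded below, so $\ell_\hyp(\bg^\eps)=O_\eps(1)$.

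As you noticed, in curvature $-1$ the constant in front is $\pi$. So the statement must be read for the metric rescaled by $1/\pi$. This is harmless in the paper, because the lemma is used on both sides of the thick--thin comparison.
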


\subsubsection{Weighted arc diagrams}\label{app:WAD}
Now let us fix a Riemann surface $\VV$ with ideal marking $\II$
as in the previous subsection. 
A  \textit{weighted arc diagram} \index{weighted arc diagram (WAD)} (WAD) on $(\VV, \II)$ is a function $W: \AA\to \R_{\geq 0}$ such that any two arcs in the support of $W$ do not intersect.
The \textit{norm} of a weighted arc diagram is defined as 
$$\norm{W} = \sum_{\alpha\in \AA}W(\alpha).$$
Denoting the Euler characteristic of $\VV$ by $\chi(\VV)$, 
the number of arcs in the support of any WAD on $(\VV, \II)$ is at most $3|\chi(\VV)|+ |\II|$.

For any lamination $\FF$ and arc $\alpha$ in $(\VV, \II)$, let $\FF(\alpha)\subset \FF$ denote the sublamination consisting of leaves represented by $\alpha$. 
The lamination $\FF$ determines a WAD $W_\FF$ on $(\VV, \II)$ defined by
$$W_\FF(\alpha) = \WW(\FF(\alpha)).$$
We will call $W_\FF$ the WAD \textit{induced by} $\FF$; any WAD induced by a lamination is called \textit{valid}.

\subsubsection{The canonical lamination and arc diagram}\label{app:wcan}

Let $\pi: \tilde \VV\to \VV$ be the universal cover of $\VV$, and let $\Lambda$ be the limit set of the group $\Delta$ of deck transformations. 
The ideal marking $\II$ lifts to an ideal marking $\tilde \II$ of $\tilde \VV$. Every interval $\tilde \BI\in \tilde \II$ is mapped to an interval $\BI\in \II$ as either a homeomorphism or a universal cover; in either case we will call $\tilde \BI$ a \textit{lift} of $\BI$. 

If $(\VV, \II)$ is an annulus with the minimal ideal marking, then we can identify $\tilde \VV$ with an infinite strip $S= \{z: |\text{Im}z|< R\}$ for some $R>0$ so that $\Delta$ is the group generated by $z\mapsto z+1$ and $\tilde \II = \{\R\pm iR\}.$
In this case, 
the foliation of $S$ by vertical lines descends to a lamination on $\VV$, 
which we 
denote by $\fcan(\alpha) = \fcan(\VV, \II)(\alpha)$ for $\alpha$ the unique arc in $(\VV, \II)$.  We also define the \textit{flux} through $\alpha$ to be 
$$\flux(\alpha)= \flux(\VV, \II)(\alpha):= 1/\WW(\fcan(\alpha))= \mmod(\VV).$$
\index{ideal marking $(\VV, \II)$!arc!flux}

We suppose now that $(\VV, \II)$ is not an annulus with the minimal ideal marking. 
Any arc $\alpha$ in $(\VV, \II)$ connecting intervals $\BI$ and $\BJ$ lifts to an arc $\tilde \alpha$ in $(\tilde\VV, \tilde \II)$ connecting some intervals $\tilde \BI$ and $\tilde \BJ$ that cover $\BI$ and $\BJ$ respectively. These lifted intervals determine an ideal quadrilateral with ``horizontal" sides $\tilde\BI$, $\tilde\BJ$, and corresponding foliations $\FF(\tilde \BI, \tilde \BJ)$ as above. 
This foliation descends to a lamination $\fcan(\alpha) = \fcan(\VV, \II) (\alpha)$ on $(\VV, \II)$ whose width does not depend on the choice of lift $\tilde \alpha$. 
For an arc $\alpha$ as above, we also define the \textit{flux} through $\alpha$ to be
$$\flux(\alpha)= \flux(\VV, \II)(\alpha):= 1/{\overline{\WW}(\tilde \BI, \tilde\BJ)}.$$

Denoting the set of all arcs in $(\VV, \II)$ by $\AA = \AA(\VV, \II)$, the union $$\fcan = \fcan(\VV, \II):=\bigcup_{\alpha\in \AA}\fcan(\alpha)$$
is a lamination, called the \textit{canonical lamination} of $(\VV, \II).$
 The weighted arc diagram 
$\wcan = \wcan(\VV, \II)$ induced by $\fcan$ is called the \textit{canonical WAD} \index{weighted arc diagram (WAD)!canonical} of $(\VV, \II)$. 
The canonical WAD is maximal in the sense that if $W$ is another valid WAD on $(\VV, \II)$ then 
$$W(\alpha)\leq \wcan(\alpha)+2$$
for all arcs $\alpha$. Note that the function $\alpha\mapsto \flux(\alpha)$ is  usually not a weighted arc diagram as every arc has positive flux.

For any interval $\BI\in \II$, we denote by $\AA(\BI)= \AA(\VV, \II)( \BI)$ the set of oriented arcs in $(\VV, \II)$ that start at $\BI$. 
We will call  
$$
\wcan(\BI)= \wcan(\VV, \II)(\BI):=\sum_{\alpha\in \AA(\BI)}\wcan(\alpha)
$$
the \textit{local canonical weight} of $\BI$ in $(\VV, \II)$.
We use oriented arcs so that the weight of any arc connecting $\BI$ to itself is counted twice. In particular, we have 
$$2\|\wcan\| = \sum_{\BI\in \II}\wcan(\BI).$$

\subsubsection{Local widths}

If  $\BI\in \II$ is simply connected and $\tilde \BI\in \II$ is a lift of $\BI$, then there exists some minimal multi-interval $\tilde I$ in $\tilde \II$ that contains the closure of $\tilde \BI$; the \textit{local width} of $\BI$ is defined to be $$\WW(\BI)= \WW(\VV, \II, \BI):= \overline{\WW}(\tilde \BI, \ddi \tilde \VV\sm \tilde I).$$
This width does not depend on the choice of lift $\tilde \BI$. 
If instead $\BI\in \II$ is not simply connected and $\tilde \BI\in \II$ is a lift of $\BI$, then the stabilizer of $\tilde \BI$ in $\Delta$ is a cyclic group generated by a hyperbolic M\"obius transformation $\tau$. The Riemann surface $\tilde \VV/\langle \tau \rangle$ is an annulus; the \textit{local width} of $\BI$ in this case is defined as 
$$\WW(\BI)= \WW(\VV, \II, \BI):= \WW(\tilde \VV/\langle \tau \rangle).$$
We define the \textit{total width} of $(\VV, \II)$ to be 
$$\WW(\VV, \II)= \sum_{\BI\in \II}\WW(\BI).$$

\subsubsection{Thick-thin decomposition}
It follows immediately from the definitions that we can relate the local weight and local width of an interval by
\begin{equation}\label{eq:total width vs total weight single}
	\WW(\BI) - O(|\chi(\VV)|+ |\II|) \leq \wcan(\BI)\leq \WW(\BI).
\end{equation}
In fact, the following theorem, first proved in \cite{KL:eyes}, compares the local weight and width of $(\VV, \II)$:
\index{Thick-thin decomposition}
\begin{theorem}[Thick-thin decomposition]\label{main estimate} 
	If $\VV$ is a Riemann surface with ideal marking 
	$\II$, then:
	\begin{equation}\label{eq:thick-thin}
		\WW(\VV, \II)- O(|\chi(\VV)|+ |\II|) \leq 2\|\wcan(\VV, \II)\|\leq \WW(\VV, \II).
	\end{equation}
\end{theorem}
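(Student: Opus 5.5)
Two inequalities need proof. The upper bound $2\|\wcan(\VV,\II)\|\le \WW(\VV,\II)$ is the sum over $\BI\in\II$ of the right half of \eqref{eq:total width vs total weight single}, since $2\|\wcan\| = \sum_{\BI}\wcan(\BI)$. For the lower bound, summing the left half of \eqref{eq:total width vs total weight single} only yields an error $O(|\II|(|\chi(\VV)|+|\II|))$. So the work is to make the error additive over $\II$ rather than multiplicative. I will do this with a thick–thin decomposition of the hyperbolic surface $\overline{\VV}$, whose ideal boundary is taken as a geodesic boundary after doubling along $\ddi\VV$. I will translate widths and weights into hyperbolic lengths via Lemma \ref{lem:hyp geo}.

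\textbf{Step 1: local width as a length.} For a simply connected $\BI$ with lift $\tilde\BI$, Lemma \ref{lem:hyp geo} identifies $\WW(\BI)$, up to $O(1)$, with the hyperbolic length of the part of the geodesic $\bg_{\tilde\BI}$ spanning $\tilde\BI$ that lies within $\eps$ of the complementary geodesic. In other words, $\WW(\BI)$ is the length of the part of $\bg_\BI$ lying in the thin part of the doubled surface. For an annular $\BI$ the same holds with the core geodesic of $\tilde\VV/\langle\tau\rangle$. Likewise, $\wcan(\alpha)$ is, up to $O(1)$, the length of the portion of $\bg_\BI$ that runs $\eps$-close to the geodesic $\Bde$ on the far side of the quadrilateral determined by $\alpha$.

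\textbf{Step 2: thick–thin decomposition.} Fix a Margulis-type $\eps$ and decompose the doubled surface into $\eps$-thin collars and strips together with a thick part. By Gauss–Bonnet, the thick part has area $O(|\chi(\VV)|+|\II|)$, so it contributes that much total boundary length. Each point of $\bg_\BI$ in the thin part is $\eps$-close to exactly one other boundary geodesic $\Bde$. The thin strip between $\bg_\BI$ and $\Bde$ is exactly a rectangle supporting the canonical foliation of some arc $\alpha$ from $\BI$. Hence
$$\WW(\BI)\le \sum_{\alpha\in\AA(\BI)}\bigl(\wcan(\alpha)+O(1)\bigr)+\l_\hyp(\bg_\BI\cap\text{thick}).$$

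\textbf{Step 3: summing.} Summing over $\BI$ gives
$$\WW(\VV,\II)\le 2\|\wcan\|+O(\#\operatorname{supp}\wcan)+\l_\hyp(\partial\text{thick}).$$
Both error terms are $O(|\chi(\VV)|+|\II|)$. For the first, the support of a WAD has at most $3|\chi(\VV)|+|\II|$ arcs. For the second, use the area bound together with a bounded-geometry (collar lemma) argument.

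\textbf{Main obstacle.} The delicate point is Step 2: showing that the thin pieces adjacent to $\bg_\BI$ correspond bijectively, with $O(1)$ error each, to the canonical foliations $\FF(\tilde\BI,\tilde\BJ)$. This requires controlling the square buffers removed in defining $\WW(\BI,\BJ)$, and checking that distinct arcs give disjoint thin strips, so that lengths are not double counted. I would handle this by working in the universal cover, using the uniform $\eps$ with Lemma \ref{lem:hyp geo}, and noting that two thin strips along the same $\bg_{\tilde\BI}$ are separated by a thick segment of length at least a definite constant. This last fact also bounds the number of $O(1)$ errors by the number of thick boundary segments.
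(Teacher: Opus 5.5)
Your strategy matches the paper's proof. You represent each $\BI$ by the geodesic $\gamma_\BI$ with the same endpoints, turn $\WW(\BI)$ and $\wcan(\alpha)$ into lengths of pieces of $\gamma_\BI$ via Lemma~\ref{lem:hyp geo}, and bound the rest by a Gauss--Bonnet estimate for the region bounded by $\Gamma=\bigcup_\BI\gamma_\BI$.

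The gap is that Step~2 omits one of the three kinds of points on $\gamma_\BI$, and without it your thin-piece/arc correspondence is false. For simply connected $\BI$, each end of $\gamma_{\tilde\BI}$ is asymptotic to the geodesic $\gamma_{\tilde\BI_j}$ of an adjacent interval; in your doubled surface these ends run into cusp neighbourhoods. These \emph{cuspidal} ends are thin and have infinite length, but they carry no arc of $(\VV,\II)$. The corresponding paths are subordinate to a multi-interval of combinatorial length two, which the definition of a path in $(\VV,\II)$ excludes.

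Your gloss in Step~1, that $\WW(\BI)$ is the length of the thin part of $\gamma_\BI$, is wrong for this reason. It is also wrong in the other direction: the segment computing $\WW(\BI)$ may contain thick points, which is why a thick term is needed at all.

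What you must prove is that the width-computing segment does not enter the cuspidal ends, or enters them only in length $O(1)$. That segment is the set of points of $\gamma_{\tilde\BI}$ within $\eps$ of $\gamma_{\tilde I}$. This is exactly where the definition of local width, with $\tilde I$ containing both neighbours of $\tilde\BI$, is used, and your proposal never uses it.

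The paper fixes this by choosing $\eps$ so that, for any three disjoint geodesics in $\D$ none of which separates the other two, an $\eps$-ball centred on one meets at most one of the other two. Applied to $\gamma_{\tilde\BI}$, $\gamma_{\tilde I}$ and $\gamma_{\tilde\BI_j}$, this gives $\Gamma^\eps\subset\Gamma\setminus\Gamma^{\cusp}$. The same choice also gives your ``exactly one other geodesic'' claim and the disjointness of the strips $\gamma_\alpha$.

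There are three smaller points. First, the $O(1)$ errors must be summed over all arcs with a nonempty thin strip, not over $\operatorname{supp}\wcan$. An arc whose quadrilateral has $\overline\WW\le 2$ has $\wcan(\alpha)=0$ but can still contribute a strip of length $O(1)$. Your idea of charging each strip to an adjacent thick segment of definite length handles this.

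Second, ``area $O(|\chi(\VV)|+|\II|)$, hence boundary length $O(|\chi(\VV)|+|\II|)$'' needs a packing step. The paper takes an $\eps$-separated net in $\Gamma^{\thick}$; the $\eps/2$-discs around its points are disjoint and meet the region bounded by $\Gamma$ in definite area. This gives $\ell_{\hyp}(\Gamma^{\thick})=O((|\chi(\VV)|+|\II|)/\eps)$.

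Third, the paper does not treat annular intervals directly. It passes to a minor refinement $\JJ$ with simply connected intervals and uses two estimates:
\begin{itemize}
\item $\WW(\VV,\II)\le\WW(\VV,\JJ)+O(|\chi(\VV)|+|\II|)$;
\item $\|\wcan(\VV,\JJ)\|\le\|\wcan(\VV,\II)\|+O(|\chi(\VV)|+|\II|)$, which follows from maximality of the canonical WAD.
\end{itemize}
Your direct treatment via the core geodesic may work, but it needs its own checks. For example, you must show that a thin strip cannot wrap around the closed geodesic $\gamma_\BI$.
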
  

\begin{proof}
	The second inequality follows immediately from \eqref{eq:total width vs total weight single}, so we only need to prove the first inequality. We  set $p = |\chi(\VV)|+ |\II|.$ 
	
	We consider first the case where every interval in $\II$ is simply connected.
	For every $\BI\in \II$, let $\gamma_\BI$ be the hyperbolic geodesic in $\VV$ with the same endpoints as $\BI$.
	We set $\Gamma = \bigcup \gamma_\BI$.
	
	Let $\pi: \tilde \VV\to \VV$ be a universal covering, and let $\tilde \II$ be the corresponding lift of $\II$.  For any multi-interval $\tilde I$ in $\tilde \II$, we let $\gamma_{\tilde I}$ be the hyperbolic geodesic in $\tilde \VV$ with the same endpoints as $\tilde I$. Note that if $\tilde \BI$ is a lift of  $\tilde \BI\in \tilde \II$, then $\pi$ maps $\gamma_{\tilde \BI}$ isometrically to $\gamma_\BI$.

	Let $d_\hyp$ and $\l_\hyp$ denote hyperbolic distance and length respectively  in both $\VV$ and $\tilde \VV$. 
	We now fix some $\eps>0$ small enough so that for any disjoint hyperbolic geodesics $\gamma_0, \gamma_1, \gamma_2$ in $\tilde \VV \cong \D$ with none of the three separating the other two, if $x\in \gamma_0$ then the $\eps$-ball around $x$ intersects at most one of $\eps_1$ and $\eps_2$.

	For any oriented arc $\alpha$ in $(\VV, \II)$ connecting intervals $\BI$ and $\BJ$ with corresponding lift $\tilde\alpha$ in $(\tilde \VV, \tilde \II)$ connecting interval $\tilde \BI$ and $\tilde \BJ$, we set 
	\begin{equation*}
		\gamma_{\alpha}= \{\pi(z): z\in\gamma_{\tilde \BI}, d_\hyp(z, \gamma_{\tilde{\BJ}})< \eps\}.
	\end{equation*}
	For any interval $\BI\in \II$, we define the \textit{thin part} of $\gamma_\BI$ to be $\gamma_\BI^\thin:= \bigcup_{\alpha\in \AA(\BI)} \gamma_\alpha$.	
	We define the \textit{thin part} of $\Gamma$ to be $\Gamma^\thin= \bigcup\gamma_\BI^\thin.$
	It follows from Lemma \ref{lem:hyp geo} that 
	$$\wcan(\alpha) = \l_\hyp(\gamma_\alpha)+O(1),$$
	hence 
	\begin{equation}\label{eq:thin part}
		2\|\wcan\|= \l_\hyp(\Gamma^\thin)+O(p).
	\end{equation}
	
	For any interval $\BI\in \II$ and lift $\tilde \BI$ adjacent to some intervals $\tilde \BI_1, \tilde\BI_2$, we similarly define the \textit{cuspidal part} of $\gamma_\BI$ to 
	\begin{equation*}
		\gamma_{\BI}^\cusp = \{\pi(z): z\in\gamma_{\tilde \BI}, d_\hyp(z, \gamma_{\tilde{\BI}_1}\cup \gamma_{\tilde \BI_2})< \eps\},
	\end{equation*}
	which does not depend on the choice of lift. 
	We define the \textit{cuspidal part} of $\Gamma$ to be $\Gamma^\cusp= \bigcup \gamma_\BI^\cusp.$
	
	For any interval $\BI\in \II$, we define the \textit{thick part} of $\gamma_\BI$ to be $\gamma_{\BI}\sm (\gamma_\BI^\thin\cup \gamma_\BI^\cusp);$ we similarly define 
	$\Gamma^\thick = \Gamma \sm (\Gamma^\thin\cup \Gamma^\cusp).$
	Let $D\subset \VV$ be the region bounded by $\Gamma$, it follows from Gauss-Bonnet formula that the hyperbolic area of $D$ is $O(p)$.  
	We consider a maximal net of $\eps$-separated points in $\Gamma^\thick$; it contains approximately $\l_\hyp(\Gamma^\thick)/\eps$ many points. 
	Our choice of $\eps$ ensures that the hyperbolic disks of radius $\eps/2$ centered at these points do not overlap, so the total area of their intersection with $D$ is bounded by $O(p)$. Hence
	$$\frac{\l_\hyp(\Gamma^\thick)}{\eps}\cdot \eps^2 =O(p),$$
	so \begin{equation}\label{eq:thick part}
		\l_\hyp(\Gamma^\thick) = O(p/\eps).
	\end{equation}

	For any lift $\tilde \BI$ of some $\BI\in \II$ adjacent to intervals $\tilde \BI_1$ and $\tilde \BI_2$, let $\tilde I$ be the minimal multi-interval in $\tilde \II$ containing $\tilde \BI, \tilde \BI_1$, and $\tilde \BI_2$.  We set  $$\gamma_\BI^\eps= \setofst{\pi(z)}{z\in \gamma_{\tilde \BI}, d_\hyp(z, \gamma_{\tilde I})< \eps},$$
	which does not depend on the choice of lift.
	It follows from Lemma \ref{lem:hyp geo} that $\l_\hyp(\gamma_\BI^\eps) = \WW(\BI)+ O(1)$.
	Setting $\Gamma^\eps = \bigcup \gamma_\BI^\eps$, our assumption on $\eps$ ensures that $\Gamma^\eps\subset \Gamma\sm \Gamma^\cusp$, so 
	\begin{equation}\label{eq:cusp part}
		\WW(\VV, \II)\leq \l_\hyp(\Gamma^\eps)+O(|\II|) \leq \l_\hyp(\Gamma\sm \Gamma^\cusp)+O(|\II|).
	\end{equation}
	Combining \eqref{eq:thin part}, \eqref{eq:thick part}, and \eqref{eq:cusp part} yields 
	\begin{align*}
		\WW(\VV, \II)&\leq \l_\hyp(\Gamma^\thin)+ \l_\hyp(\Gamma^\thick)+ O(|\II|)\\
		&\leq 2\|\wcan\|+O(p)
	\end{align*}
	as desired.
	
	Now let us return to the general case where some intervals in $\II$ may not be simply connected. We  consider a minor refinement $\JJ$ of $\II$. 
	Thus for any interval $\BJ\in \JJ$ there is a unique $\BI\in \II$ with $\BJ\subset \BI$, moreover either $\BI = \BJ$ or $\BI = \overline\BJ$. 
	If $\BI = \BJ$, then it follows immediately from the definitions that $\WW(\BI) = \WW(\BJ)$. If $\BI = \overline \BJ$, then instead we have
	$\WW(\BI)\leq \WW(\BJ)+O(1).$
	Hence 
	\begin{equation}\label{eq:minor widths}
		\WW(\VV, \II)\leq \WW(\VV, \JJ)+O(p).
	\end{equation}
	
	Let $\alpha$ be an arc in $(\VV, \II)$, and let $B(\alpha)$ be the set of all arcs in $(\VV, \JJ)$ that are equivalent to $\alpha$ in $(\VV, \II)$. As the union $\bigcup_{\beta\in B(\alpha)}\fcan(\beta)$ is a lamination consisting of paths homotopic to $\alpha$, it follows from the maximality of the canonical weighted arc diagram that
	$$\sum_{\beta\in B(\alpha)}\wcan(\beta)\leq \wcan(\alpha)+O(1).$$
	Hence 
	\begin{equation}\label{eq:minor weights}
		\|\wcan(\VV, \JJ)\|\leq \|\wcan(\VV, \II)\|+O(p).
	\end{equation}
	We can choose $\JJ$ so that every interval in $\JJ$ is simply connected, so \eqref{eq:thick-thin} holds for $\JJ$. Together with \eqref{eq:minor widths} and \eqref{eq:minor weights}, we can conclude
	$$\WW(\VV, \II)\leq \WW(\VV, \JJ)+O(p)\leq 2\|\wcan(\VV, \JJ)\|+O(p)\leq 2\|\wcan(\VV, \II)\|+O(p)$$
	as desired.
\end{proof}

\subsection{Pullbacks and pushforwards}\label{app:dynamics}

Let us fix some hyperbolic Riemann surfaces $\VV, \VV'$ and ideal marking $\II$ of $\VV$.

\subsubsection{Coverings} Let $f: \VV'\to \VV$ be a holomorphic covering map,
so $f$ extends continuously to a covering map from an open and dense subset $f^{-1}(\ddi \VV)$ of $\ddi \VV'$ to $\ddi \VV$. The ideal marking $\II$ therefore lifts to an ideal marking $f^*\II$ of $\VV'$. 
For any multi-interval $I'$ in $f^*\II$, we define its \emph{pushforward} to be $f_*I:= f(I),$ the pushforward is automatically a multi-interval in $\II$.

If $\gamma'$ is a proper path in $(\VV', \II')$, then $f(\gamma)$ is a proper path in $(\VV, \II)$; we will call $f_*\gamma:=f(\gamma)$ the \emph{pushforward} of $\gamma$. 
We  similarly define the pushforward of arcs, path families, and laminations, however the pushforward of a lamination is not always a lamination. 
The following lemma controls allows us to extract laminations from the pushforward of laminations:

\begin{lemma}\label{lem:push forward laminations}
	Let $A'$ be a lamination in $\VV'$ ending at some $X'\subset \ddi \VV$.   If $f$ has degree $\leq d$ on $X'$, then there is a lamination $A\subset f_*A'$ such that 
	$$\WW(A')\leq d\WW(A)+2d.$$
\end{lemma}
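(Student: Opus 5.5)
Since $A'$ ends at $X'$ and $f$ has degree at most $d$ on $X'$, I will first split $A'$ according to which sheet of $f$ over $X'$ contains the endpoint of each leaf. Concretely, the lifts of $f(X')$ meeting $X'$ form at most $d$ pieces $X'_1,\dots,X'_d$, each mapped injectively by $f$; alternatively, I will take a fundamental domain for the deck action restricted to $X'$. This partitions $A'=A'_1\cup\dots\cup A'_d$, where every leaf of $A'_i$ ends in $X'_i$. By subadditivity of extremal width, some $i$ satisfies $\WW(A'_i)\ge \WW(A')/d$. It therefore suffices to prove the degree-one statement: if every leaf ends at a set on which $f$ is injective, then there is a lamination $A\subset f_*A'_i$ with $\WW(A'_i)\le \WW(A)+2$.

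For the degree-one case, the obstruction to $f_*A'_i$ being a lamination is that images of disjoint leaves may cross, or leaves may be non-simple after projection. I will pass to the universal cover $\tilde\VV\cong\D$ of $\VV$, which is also the universal cover of $\VV'$. There each leaf of $A'_i$ has a lift starting in a fixed lift $\tilde X$ of $X'_i$. Because $f$ is injective on $X'_i$, a single lift of $X'_i$ suffices, so these lifted leaves form a family of disjoint curves in $\D$ sharing a common starting interval. Projecting them by $\pi:\D\to\VV$ gives exactly $f_*A'_i$. The image leaves in $\VV$ can only fail to be disjoint when two leaves' lifts differ by a nontrivial deck transformation $g$ and their endpoints near $\tilde X$ and $g\tilde X$ interleave.

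The key step is to extract, from the lifted family, a subfamily whose projection is embedded and which loses only additive width $2$. I expect this to be the main obstacle. The approach I would try first is to order the lifted leaves along $\tilde X$ and to take the far-side endpoint of each in $\partial\D$. Leaves whose far endpoint lies in the region where some translate $g\tilde X$ overlaps, or where translates of the family would cross, are confined to a buffer adjacent to the two extremal leaves of the family. This is analogous to removing the square buffers in the definition of $\WW(\BI,\BJ)=\overline\WW-2$, and I expect each buffer to cost width at most $1$ by a standard extremal-length estimate for a quadrilateral that is degenerate on one side.

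After discarding the buffers, the remaining lifted leaves lie in an ideal quadrilateral with horizontal sides $\tilde X$ and a target interval $\tilde J$. That quadrilateral embeds under $\pi$, as in the construction of the canonical lamination. Its projection is therefore a genuine lamination $A\subset f_*A'_i$, and the width of the retained leaves is unchanged because $\pi$ is a local isometry. The retained leaves have width at least $\WW(A'_i)-2$, which gives $\WW(A'_i)\le\WW(A)+2$. Combining this with the pigeonhole step yields
$$\WW(A')\le d\,\WW(A'_i)\le d\WW(A)+2d.$$
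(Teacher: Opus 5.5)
The paper states this lemma without proof, so there is no argument to compare yours with. Judged on its own, the proposal has a genuine gap.

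Your first step is sound. Cutting $X'$ into at most $d$ arcs on which $f$ is injective, using subadditivity of extremal width, and keeping the widest piece does reduce the lemma to the case $d=1$ with additive loss $2$. It also accounts for the shape $d\WW(A)+2d$. But the case $d=1$ is the whole content of the lemma. There you only say what you expect: that crossing leaves are ``confined to a buffer adjacent to the two extremal leaves'', with each buffer costing ``width at most $1$''. No argument is given, and the proposed mechanism does not work as stated.

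\textbf{Why crossings are not confined to the ends.} Parametrize the lifted leaves as $\ell_u$, $u\in[0,1]$, and let $D_u\subset\dD$ be the arc cut off by $\ell_u$ on the side of $\ell_1$; these arcs are nested.
\begin{itemize}
\item A translated leaf $g\ell_s$ with endpoints $p,q$ crosses $\ell_u$ essentially exactly when one of $p,q$ lies in $D_u$ and the other does not.
\item Hence the leaves it crosses form an interval of parameters.
\item This interval avoids both extremal leaves as soon as $p$ and $q$ both lie in the union of the hull of the starting points and the hull of the far endpoints of $\tilde A$.
\item That happens, for example, when $g\tilde X$ sits in a gap between the far endpoints of two middle leaves, i.e.\ whenever leaves land on both sides of another sheet of $f^{-1}(f(X'_i))$.
\end{itemize}

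\textbf{Why a width-one end buffer is not enough.} Even when one of the two mutually crossing bands touches an extremal leaf, the problem remains. Call $S$ the leaves whose $g$-translates cross, and $B$ the leaves that get crossed. Extremal-length duality only gives $\WW(S)\,\WW(B)\le 1$. It can be the end band that is wide and the interior band that is thin; then removing a width-one buffer at the end leaves the crossing in place.

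\textbf{What is missing.} You need a rule deciding which band to delete for each $g$. You also need a reason why the total deleted width over \emph{all} translates $g$ meeting the hull of $\tilde A$ stays below $2$. The number of such translates is unbounded, so this bound must be independent of the topology of $\VV$.

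\textbf{The canonical-lamination analogy does not supply this.} There the sides $\tilde\BI,\tilde\BJ$ are single intervals of the $\Delta$-invariant lifted marking. So a translate of the rectangle either has non-interleaving sides, which the square buffers handle, or crosses it transversally, which is excluded since $\overline\WW(\tilde\BI,\tilde\BJ)>2$. Your $\tilde J$ is merely the hull of the far endpoints and may contain translates of $\tilde X$ and of other intervals, so that dichotomy is unavailable.

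\textbf{Two smaller points.} First, $A$ must be a subfamily of $f_*A'$, so you also have to handle image leaves that meet without interleaving endpoints; your analysis only sees essential crossings. Second, the width of the retained family is preserved because $\pi$ is injective on the union of the retained leaves, not because $\pi$ is a local isometry.
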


\subsubsection{Immersions}
Let $\iota: \VV'\to \VV$ be a holomorphic immersion. 
Let us say that $\iota$ is \emph{proper} on some $X'\subset \ddi\VV'$ if $\iota$ extends continuously to a proper map on $X'$.
For a proper path $\gamma$ in $\VV$, we define its \emph{pullback} $\iota^*\gamma$ to be the set of proper paths $\gamma'$ in $\VV'$ contained in $\iota^{-1}(\gamma)$. 
We similarly define the restriction of path families  and laminations. To avoid confusion with pullbacks under cover, we will usually call a pullback by $\iota$ a \emph{restriction}.
The following lemma lets us relate the widths of a lamination and its restriction:
\begin{lemma}\label{lem:domination}
	Let $S$ be a lamination on $\VV$ and let $S_1', \dots, S_N'$ be laminations on $\VV'$. If $\iota^*\alpha$ contains a leaf of each  $S_n'$ for every $\alpha\in S$, then 
	$\WW(S)\leq \WW(S_n')$ for all $n$. If additionally $\iota$ is injective on $\bigcup S_n'$, then 
	$$\WW(S) \leq \bigoplus_{n=1}^N\WW(S_n').$$
\end{lemma}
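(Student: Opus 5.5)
The plan is to prove both inequalities with the metric (extremal length) definition of width. For a path family $\FF$, $\WW(\FF)=\inf_\rho \mathrm{Area}(\rho)$, where the infimum runs over Borel conformal metrics $\rho$ such that every curve of $\FF$ has $\rho$-length at least $1$. The idea is to push admissible metrics for the $S_n'$ forward through the immersion $\iota$ and check that they stay admissible for $S$ without gaining area.

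\emph{First inequality.} Fix $n$ and an admissible metric $\rho'$ for $S_n'$ on $\VV'$. We may assume $\rho'$ is supported on the union of the leaves of $S_n'$, since this only lowers the area. Define a metric on $\VV$ by
$$\rho(z)=\sup_{w\in\iota^{-1}(z)}\rho'(w)/|\iota'(w)|,$$
which is Borel because $\iota$ is a local homeomorphism with countably many local inverses. Bounding the supremum by the sum over preimages and changing variables gives $\mathrm{Area}(\rho)\le\mathrm{Area}(\rho')$. Now let $\alpha$ be a leaf of $S$. By hypothesis $\iota^{-1}(\alpha)$ contains a proper path $\beta$ that is a leaf of $S_n'$, and $\iota|_\beta$ parametrizes a subarc of $\alpha$. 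Hence $L_\rho(\alpha)\ge L_\rho(\iota(\beta))\ge L_{\rho'}(\beta)\ge 1$. So $\rho$ is admissible for $S$, and taking the infimum over $\rho'$ yields $\WW(S)\le\WW(S_n')$.

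\emph{Second inequality.} Pick admissible metrics $\rho_n'$ for the $S_n'$, each supported on $\bigcup S_n'$, with areas $A_n$ close to $\WW(S_n')$. Push each forward as above to $\rho_n$; because $\iota$ is injective on $\bigcup_n S_n'$, the pushforward is isometric on these supports. For weights $t_n\ge0$ with $\sum t_n=1$, set $\rho=\sum t_n\rho_n$.
\begin{itemize}
\item Every leaf $\alpha$ of $S$ contains the images $\iota(\beta_n)$ of leaves $\beta_n\in S_n'$. By injectivity these images are subarcs of $\alpha$ that overlap only where the underlying leaves do, so $L_\rho(\alpha)\ge\sum_n t_n L_{\rho_n}(\iota(\beta_n))\ge\sum t_n=1$.
\item If the supports of the $\rho_n$ are disjoint, then $\mathrm{Area}(\rho)=\sum t_n^2A_n$.
\item Choosing $t_n\propto A_n^{-1}$ gives $\mathrm{Area}(\rho)=\big(\sum_n A_n^{-1}\big)^{-1}=\bigoplus_n A_n$.
\end{itemize}
Letting $A_n\to\WW(S_n')$ gives $\WW(S)\le\bigoplus_n\WW(S_n')$.

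\emph{Main obstacle.} The delicate point is the disjointness used in the length and area computations for the second inequality. If different $S_n'$ share leaves or pieces of leaves, their images overlap. Then the length sum over the subarcs $\iota(\beta_n)$ double counts, and the area is not simply $\sum t_n^2A_n$. I would handle this by interpreting the hypothesis so that for each $\alpha$ the leaves of the distinct $S_n'$ contained in $\iota^*\alpha$ are distinct components of $\iota^{-1}(\alpha)$, hence disjoint. Injectivity of $\iota$ on the union then makes the images disjoint subarcs of $\alpha$. It also lets us restrict each $\rho_n'$ to $\bigcup S_n'$ minus the other laminations, which costs no length along the relevant leaves, so the pushed-forward metrics have disjoint supports. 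If this reading fails, I would fall back on Lemma~\ref{lem:harmonic and geometric sum}-type series estimates applied to the pairwise disjoint subarcs.
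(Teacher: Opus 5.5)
The paper states this lemma without proof, treating it as a standard extremal-length fact, and your argument is the standard one. Pushing an admissible metric forward by $\rho(z)=\sup_{w\in\iota^{-1}(z)}\rho'(w)/|\iota'(w)|$ gives $\WW(S)\le\WW(S_n')$. The series (Gr\"otzsch) rule with $\rho=\sum_n t_n\rho_n$ and $t_n\propto A_n^{-1}$ gives the harmonic-sum bound. The proof is correct; three remarks tighten it.

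\textbf{The disjointness reading is forced.} It is not just a convenient interpretation. Take $\iota$ the identity and $S_1'=S_2'=S$: the hypotheses hold literally, yet the conclusion would say $\WW(S)\le \WW(S)/2$. So the lemma must be understood with the $S_n'$ sharing no leaves. This matches the paper's definition of arrowing, where the $a_i$ forming $\iota^*b$ are distinct paths. Consequently there is no ``fallback'' if that reading fails: the statement is simply false then.

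\textbf{How to get disjoint supports.} Do not do it by deleting the other laminations wholesale. The union of those leaves of $S_m'$ that lie in no $\iota^*\alpha$ can meet a relevant leaf of $S_n'$ in a set of positive length (think of a transverse foliation). Instead, first replace each $S_n'$ by its sublamination of leaves lying in some $\iota^*\alpha$ with $\alpha\in S$. This only lowers $\WW(S_n')$, and $\oplus$ is monotone in each argument. The remaining leaves are distinct components of $\iota^{-1}\big(\bigcup S\big)$, hence pairwise disjoint. Injectivity of $\iota$ on their union then makes the supports of the pushed-forward $\rho_n$ pairwise disjoint.

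\textbf{The length bound needs no disjointness of subarcs.} By linearity of length in the metric,
\[
L_\rho(\alpha)=\sum_n t_n L_{\rho_n}(\alpha)\ge \sum_n t_n L_{\rho_n'}(\beta_n)\ge 1,
\]
so the double-counting worry you raise about lengths does not arise. Disjointness enters only through the identity $\mathrm{Area}(\rho)=\sum_n t_n^2\,\mathrm{Area}(\rho_n)$. That identity is exactly what breaks in the counterexample above.
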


For any connected subset $X\subset \ddi\VV$,  we define its \emph{pullback} by $\iota$ to be the maximal subset $\iota^*X$ of $\ddi\VV'$ such that: if $\gamma$ is a proper path in $\VV$, there is a unique path $\gamma'\in \iota^*\gamma$, and $\gamma'$ is subordinate to $\iota^*X$, then $\gamma$ is subordinate to $X$.
Note that the pullbacks of intervals in $\II$ do not necessarily form an ideal marking, or even a partial marking, of $\VV'$: some pullbacks may not exist or may overlap. If the set $\setofst{\iota^*\BI}{\BI\in \II}$
forms a partial ideal marking of $\VV'$, then we denote the induced ideal marking by $\iota^*\II$. 

We assume now that $\iota^*\II$ is defined, and let $\II'$ be a refinement of $\iota^*\II$. Note that if $\gamma$ is a path in $(\VV, \II)$, then some of the paths in $\iota^*\gamma$ may not be paths in $(\VV',\II')$. 
In this context, we will ignore all paths in $\iota^*\gamma$ that are not paths in $(\VV', \II')$ or are subordinate to a single interval in $\iota^*\II$. 
The following proposition controls the combinatorics of the restrictions of paths:
\begin{prop}\label{prop:itineraries}
    Let $\II'$ be a refinement of $\iota^*\II$. 
	If $\gamma$ is a path in $(\VV, \II)$ connecting two intervals $\BI, \BJ\in \II$, there there is a finite sequence $(\BI_n')_{n=0}^{N+1}$ of intervals in $\II'$ satisfying:
	\begin{enumerate}
		\item $\BI_0'\subset \iota^*\BI$ and $\BI_{N+1}'\subset \iota^*\BJ$.
        \item $\iota$ is not proper on $\BI_n'$ for $1\leq n \leq N$.
		\item For all $0\leq n \leq N$, either $\iota^*\gamma$ contains a path $\gamma_n'$ connecting $\BI_n'$ to $\BI_{n+1}'$, $\BI_{n}'$ and $\BI_{n+1}'$ are adjacent, or $\BI_n'$ and $\BI_{n+1}'$ both belong to $\iota^*\BI_n$
		for some $\BI_n\in \II$. 
	\end{enumerate}
\end{prop}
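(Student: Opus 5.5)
We prove Proposition \ref{prop:itineraries} by tracking the preimage $\iota^{-1}(\gamma)$ along $\gamma$. Since $\iota$ is a holomorphic immersion, $\iota^{-1}(\gamma)$ is a disjoint union of open arcs in $\VV'$, each mapped locally homeomorphically onto a subarc of $\gamma$. Fix a parametrization $\gamma\colon[0,1]\to\overline\VV$ with $\gamma(0)\in\BI$ and $\gamma(1)\in\BJ$. Take a lift $\gamma_0'$ of an initial segment of $\gamma$ near $\gamma(0)$. It starts in $\iota^{-1}(\BI)\subset\iota^*\BI$, so it starts in some interval $\BI_0'$ of $\II'$ contained in $\iota^*\BI$, because $\II'$ refines $\iota^*\II$. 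Continue this lift along $\gamma$ for as long as possible. Either it reaches $\gamma(1)$, in which case it is a single path in $\iota^*\gamma$ ending in $\iota^*\BJ$ and we take $N=0$; or it runs off to the ideal boundary $\ddi\VV'$ at some time $t_1<1$. In the second case $\iota$ cannot extend continuously and properly near the landing point, since $\gamma(t_1)$ is an interior point of $\VV$. So the landing point lies in an interval $\BI_1'$, or at an endpoint between intervals, on which $\iota$ is not proper. This is the source of condition (2).

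Next, restart the lifting procedure just after time $t_1$. Choose the lift of $\gamma|_{(t_1,t_1+\epsilon)}$ that emanates from the same ideal boundary region, namely the lift through the sheet of $\iota$ that corresponds to the "other side" of the boundary point. The existence of such a lift is exactly the content of the definition of $\iota^*X$ given above: a proper path that is subordinate to $\iota^*X$ and has a unique restriction must be subordinate to $X$. Iterating gives times $0<t_1<\dots<t_N<1$ and intervals $\BI_n'$, and consecutive pieces give paths $\gamma_n'$ connecting $\BI_n'$ to $\BI_{n+1}'$. By compactness of $[0,1]$ and local finiteness of $\iota^{-1}(\gamma)$, only finitely many restarts occur.

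Finally, we handle the pieces $\gamma_n'$ that were discarded by the convention, i.e.\ those that are not paths in $(\VV',\II')$ or are subordinate to a single interval of $\iota^*\II$. Such a piece is homotopic into the boundary. If it is subordinate to a multi-interval of $\II'$ of combinatorial length at most two, then its endpoints lie in the same or in adjacent intervals of $\II'$; after merging repeated intervals, we are in the "adjacent" alternative of (3). If it is subordinate to a single $\iota^*\BI_n$, then both endpoints lie in $\iota^*\BI_n$, which is the third alternative of (3). With these cases, (3) holds for every $n$, and the endpoints of the first and last pieces give (1).

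We expect the main obstacle to be the restart step: justifying that after the lift escapes to $\ddi\VV'$ at a non-proper interval $\BI_n'$, the continuation begins in an interval that is correctly related to $\BI_n'$, so that the chain of intervals is not broken. To handle this, we first reduce to a local model. Near a boundary point of $\VV'$ where $\iota$ is not proper, the image under $\iota$ of a half-neighborhood is a neighborhood of an interior arc of $\VV$, and the sheets of $\iota^{-1}$ over that arc are arranged along $\ddi\VV'$. Then we use the maximality built into the definition of $\iota^*\BI$ to identify consecutive sheets with intervals that are either adjacent or lie in a common $\iota^*\BI_n$.
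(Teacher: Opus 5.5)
The paper records Proposition~\ref{prop:itineraries} without proof, so your argument has to stand on its own. Its skeleton is the natural one, and those parts are fine:
follow $\gamma$ and chain the components of $\iota^{-1}(\gamma)$;
note that a lift can only die over an interior point of $\VV$, hence on an interval where $\iota$ is not proper;
absorb discarded pieces into the ``adjacent'' and ``same interval'' alternatives.
But the real content of (1) and (3) lies in two steps that you assert rather than prove.

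First, you assume a lift of $\gamma$ exists near $\gamma(0)$ and starts in $\iota^{-1}(\BI)$. This fails already in the setting of Section~\ref{sec:waves}, with $\iota$ the inclusion $\UU'\sm\TT'\subset\UU\sm\TT$ of a quadratic-like map and $\BI=\ddi\UU$. There $\iota^{-1}(\ddi\UU)=\emptyset$, the first piece of $\iota^*\gamma$ is born on $\ddi\UU'$ at an interior time, and $\BI_0'\subset\iota^*\BI$ has to be argued just like a transition.

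Second, the restart. After the lift dies at time $t_1$, $\gamma$ may stay outside $\iota(\VV')$ for a whole time interval (in the same example, when it dips into the collar $\UU\sm\overline{\UU'}$). Then there is no lift of $\gamma|_{(t_1,t_1+\epsilon)}$ at all. When $\iota$ is not injective there may be several lifts born at various times, none singled out as lying ``on the other side''. The definition of $\iota^*X$ does not provide such a lift: it only constrains which paths in $\VV$ with a unique restriction are subordinate to $X$. It is not an existence statement and says nothing about where the next lift begins.

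Consequently the heart of (3) is only restated in your last paragraph: that the interval $\BI_n'$ where one lift dies and the interval $\BI_{n+1}'$ where the next is born are adjacent or lie in a common $\iota^*\BI_n$. This is a global statement about the excursion of $\gamma$ between two consecutive lifts, which may return to $\ddi\VV'$ far from where it left. No local model at the exit point can give it. Your local model is also not accurate: at a point of $\ddi\UU'$ the image of a half-neighbourhood is a one-sided neighbourhood of an arc of $\partial\UU'$, with nothing of $\iota(\VV')$ beyond it.

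What is missing is an input saying that each excursion leaves from and returns to a portion of $\ddi\VV'$ attached to $\ddi\VV$ within a single interval of $\II$. Where the proposition is used, this comes from Proposition~\ref{prop:pullback description}. An excursion either
crosses one component $\XX$ of $\TT'\sm\TT$, leaving and re-entering through $\dcara\XX$, or
runs through $\UU\sm\UU'$, leaving and re-entering through $\ddi\UU'\subset\iota^*\ddi\UU$.
In the first case $\dcara\XX$ lies in $\iota^*$ of the interval containing the attaching point. If the attaching point $x$ is an endpoint of $\II$, it lies instead in $\Int\iota^*x$, as in the proof of Proposition~\ref{prop:stretching interval}; so the third alternative really reads ``a common interval of $\iota^*\II$''. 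You need an argument of this kind, or a genuine derivation from the definition of $\iota^*$ together with the disjointness of the $\iota^*\BI$.

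Finally, finiteness needs a general-position reduction. A merely continuous $\gamma$ can cross $\partial\iota(\VV')$ infinitely often, so $\iota^{-1}(\gamma)$ need not be locally finite.
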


\subsubsection{Domination}\label{sec:domination}
Let us observe that Lemma \ref{lem:domination} induces a ``domination" relation on the WADs introduced in \cite{K06}. We suppose that $\iota^*\II$ is defined and $\II'$ is a refinement of $\iota^*\II$. 
We will say that a collection of arcs $(\alpha_i)_i$ in $\VV'$ \emph{arrows} and arc $\beta$ in $\VV$, and write $(\alpha_i)_i\to \beta,$ if there are paths $a_i$ representing the $\alpha_i$ and a path $b$ representing $\beta$ such that $(a_i)_i= \iota^* b$.
A WAD $X$ on $(\VV', \II')$ \emph{dominates} \index{weighted arc diagram (WAD)!domination $\dominates$} a WAD $Y$ on $(\VV, \II)$, denoted $X\dominates Y$, if we can write 
$$X\geq \sum_i \sum_jw_{i,j}\alpha_{i, j} \text{ and }Y= \sum_i v_i\beta_i,$$
where $$(\alpha_{i, j})_j\to \beta_i \text{ and }\bigoplus_jw_{i, j}\geq v_i$$
for all $i$. 
If $\iota$ is a proper embedding, $Y$ is a WAD induced by a lamination $\FF$ on $(\VV, \II)$, and $X$ is the WAD on $(\VV', \II')$ induced by $\iota^*\FF$, then Lemma \ref{lem:domination} immediately implies that $X\dominates Y$.
Let us say that $\iota$ \emph{factors through an embedding} if there is a Riemann surface $\UU$ and holomorphic immersions $\jmath: \UU\to \VV$ and $\jmath': \VV'\to \UU$ such that $\iota = \jmath\circ \jmath'$ and one of $\jmath, \jmath'$ is a proper embedding. 
In this case, if $Y$ is a WAD induced by a lamination $\FF$ on $(\VV, \II)$, and $X$ is the WAD on $(\VV', \II')$ induced by $\iota^*\FF$, then 
applying 
Lemma \ref{lem:domination} twice yields $X\dominates Y$.

\subsubsection{Dynamics}\label{sec:dynamics}

Let $f:\VV'\to \VV$ and $\iota:\VV'\to \VV$ be a holomorphic cover and holomorphic immersion respectively as above. 
Denoting the pair $F= (f, \iota)$ and fixing an ideal marking $\II$ of $\VV$, we will say that $F$ \textit{acts on $(\VV, \II)$} if $f^*\II$ is a refinement of $\iota^*\II$. In this case, $F_*:= f_*\iota^*$ acts on multi-intervals in $\II$. We can naturally extend this action to the closures of multi-intervals; note that we are not guaranteed that $F_*\overline{I} = \overline{F_*I}$, indeed the pullback of an endpoint of $I$ may itself be a multi-interval in $\II'$. 

\begin{figure}
	\begin{center}
		\def\svgwidth{4.5in}
		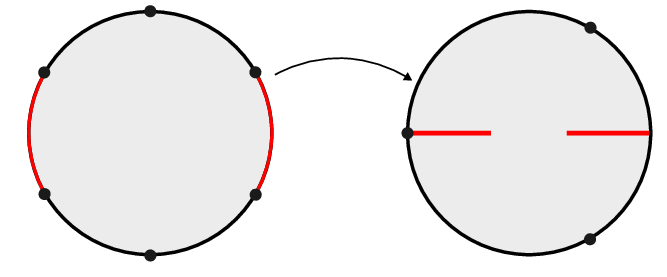
		\caption{
        An example of lifting intervals under an inclusion $\iota$. Here, $\iota^*A$ contains $\tilde A_1\cup \tilde X\cup \tilde A_2$, $\iota^*B = \tilde B$, and $\iota^* C = \tilde C$. In this example,  $\iota$ is end-proper on $\iota^*A$ and proper on $\iota^*B$ and $\iota^*C$.}
		\label{fig:lift}
	\end{center}
\end{figure}

The pushforward $F_* = f_*\iota^*$ also acts on path families and arcs; recall from the previous subsection that for a path $\gamma$ in $(\VV, \II)$ we only include in $\iota^*\gamma$ paths in $(\VV', \II')$ that are not subordinate to an interval in $\iota^*\II$.
Proposition \ref{prop:itineraries} together with Lemmas \ref{lem:push forward laminations} and \ref{lem:domination} allows us to study the action of $F_*$ on laminations. 
We end this section by examining some special cases where we have extra control of this action.

For waves, we have the following improvement to 
Lemma \ref{lem:push forward laminations} which was first introduced in \cite[Lemma 5.2]{dudko2023mlcfeigenbaumpoints} as the part of their proof of the \textit{Wave Lemma} there:
\begin{lemma}\label{lem:wave}
	Let $A'$ be a wave in $(\VV', f^*\II)$ ending at multi-interval $I'$.  If $f$ has degree $\leq d$ on $I'$, then there is a wave $A\subset f_*A'$  with 
	$$\WW(A')\leq \WW(A)+ O(1+\log d).$$
\end{lemma}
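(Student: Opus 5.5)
We need to prove Lemma \ref{lem:wave}: for a wave $A'$ in $(\VV', f^*\II)$ ending at a multi-interval $I'$ on which $f$ has degree $\le d$, there is a wave $A\subset f_*A'$ with $\WW(A')\le \WW(A)+O(1+\log d)$.

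The plan is to compare $A'$ with the full lift of its image and to exploit the nesting of leaves in a wave. The starting point is the structure of a wave. Its leaves are all subordinate to the base $B'$, and each runs from one end $I'_1$ to the other end $I'_2$ over the crest. Any two leaves are therefore nested, and the leaves of $A'$ are linearly ordered from the innermost leaf (closest to the crest) to the outermost. Since both endpoints of every leaf lie in $I'$, with $I'$ being the union of the two ends, the covering $f$ restricted to a neighborhood of the base has degree at most $d$ on the relevant boundary arcs.

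\emph{Step 1 (localize the covering).} Pass to the universal cover, or equivalently to the subsurface cover corresponding to the base. There the wave $A'$ lives in a half-plane-like region bounded by a lift of the base. The map $f$ lifts to a conformal isomorphism between universal covers. On the ideal boundary, the ends $I'_1$ and $I'_2$ map with degree $\le d$. The image leaves $f_*\alpha$ are then paths in $(\VV,\II)$ with endpoints in $f_*I'_1$ and $f_*I'_2$.

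\emph{Step 2 (decide which image leaves form a wave).} The image leaf $f_*\alpha$ is subordinate to $f_*B'$ as long as $f_*B'$ is a proper multi-interval, that is, as long as the image of the base does not wrap around. Wrapping can only occur because $f$ has degree $\le d$ on the ends. I would order the leaves of $A'$ and split them into at most $O(d)$ consecutive blocks. Within each block the images remain nested and subordinate to a common multi-interval, so the images form a wave. Using Lemma \ref{lem:push forward laminations} directly would cost a factor of $d$.

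\emph{Step 3 (get a logarithmic loss instead of a factor $d$).} I would use the key trick from \cite[Lemma 5.2]{dudko2023mlcfeigenbaumpoints}. The leaves that fail to push forward to a single wave are those that wind around; the failure happens because the image of an end interval covers the whole of a wave's end. Under a degree-$d$ covering of boundary arcs, winding by $k$ turns forces the nested leaves to cross a fundamental domain $k$ times. In the extremal-length (hyperbolic) picture, such a sub-family lives in an annular or thin collar whose width is $O(1+\log k)$: the widths of the successive layers sum like a harmonic or logarithmic series. By Lemma \ref{lem:hyp geo}, the width of the leaves corresponds to the hyperbolic length of the thin part of a geodesic. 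That thin part maps isometrically, except near at most $O(\log d)$-length regions where the geodesics of the lifted and projected intervals separate. Concretely, I would take $A$ to be the images of the innermost block, which is embedded under $f$. I would then bound the hyperbolic length of the remaining outer part of the geodesic by $O(\log d)$, using the fact that $f$ on the ideal boundary is a degree-$d$ covering, whose distortion on geodesic length is logarithmic in $d$.

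\emph{Main obstacle.} The main obstacle is Step 3, namely showing that the leaves lost when passing to a single embedded wave carry only $O(\log d)$ width rather than $O(d)$. I would first try the hyperbolic-length bookkeeping of Lemma \ref{lem:hyp geo} along the crest geodesic: each additional wrap moves the relevant geodesic segment by a definite hyperbolic distance only in the logarithmic scale of $d$.
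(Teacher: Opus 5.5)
The paper gives no proof of its own; it imports this lemma from \cite[Lemma 5.2]{dudko2023mlcfeigenbaumpoints}. Your architecture is the right one: keep the leaves whose images do not wrap, push them forward injectively, and show that the wrapping leaves carry only $O(1+\log d)$ of width. But there is a genuine gap at the only nontrivial step, and the mechanism you suggest for it cannot give the stated bound.

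First, you misread the hypothesis. ``$A'$ ends at $I'$'' means that every leaf has \emph{an} endpoint in the connected multi-interval $I'$; the union of the two ends is disconnected, so it is not a multi-interval. Hence only one end is controlled. The crest and the other end may wind around the boundary component arbitrarily often, and $f$ is not a degree-$d$ covering on the ideal boundary. This is exactly how the lemma is used, e.g.\ in the proof of Proposition~\ref{prop:fast forward}, where $f^N$ has huge degree and only its degree on $\iota_N^*J$ is bounded.

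Consequently, indexing the wrapping leaves by winding number, as in your Step 3, gives at best $\log$ of the total winding, not $\log d$. The claim that the distortion of geodesic length under a degree-$d$ covering is logarithmic in $d$ is not a fact you can cite. Lemma~\ref{lem:hyp geo} converts width into hyperbolic length but says nothing about which part wraps. The $O(d)$-block splitting of Step 2 is a dead end, since it loses a factor $d$. Finally, the embedding of the innermost block is asserted rather than checked.

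What closes the gap is an explicit modulus estimate that uses nestedness together with the single controlled end. Lift to $\mathbb{H}$ so that the lifted base lies in a component $\Omega=(0,\infty)$ of $\partial\mathbb{H}\setminus\Lambda$ whose stabilizer in the deck group of $\VV$ is $\langle\tau\rangle$, with $\tau(z)=e^{\ell}z$. Say the controlled end contains the left endpoints. Each leaf lifts to a curve from $a$ to $b$ with $0<a<b$, and these intervals $(a,b)$ are nested. Call a leaf \emph{wrapping} if $b\ge e^{\ell}a$, and write $A'=A'_{\mathrm{nw}}\cup A'_{\mathrm{w}}$ accordingly.

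\emph{Non-wrapping part.} All non-wrapping intervals lie in $[a^*,b^*]$ with $b^*\le e^{\ell}a^*$. So their left endpoints lie in an arc of multiplicative length $<e^{\ell}$, on which $f$ is injective. Lemma~\ref{lem:push forward laminations} with $d=1$ then gives a lamination $A\subset f_*A'_{\mathrm{nw}}$ with $\WW(A'_{\mathrm{nw}})\le\WW(A)+2$. It is a wave because the image of $(a^*,b^*)$ is embedded.

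\emph{Wrapping part.} Put $a_0=\sup a$ and $b_0=\inf b$ over wrapping leaves, so that $b_0\ge e^{\ell}a_0$. Since $f$ has degree $\le d$ on $I'$, the lift of the controlled end has multiplicative length $\le e^{d\ell}$. Hence every wrapping leaf joins $[a_0e^{-d\ell},a_0]$ to $[b_0,\infty)$. The width of all such curves in $\mathbb{H}$ is at most
\[
\tfrac1\pi\log^+\frac{a_0(1-e^{-d\ell})}{b_0-a_0}+O(1)\le\tfrac1\pi\log^+\frac{1-e^{-d\ell}}{e^{\ell}-1}+O(1)\le\tfrac1\pi\log^+ d+O(1).
\]
The width of $A'_{\mathrm{w}}$ in $\VV'$ is at most that of its lift. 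Subadditivity of width then gives $\WW(A')\le\WW(A)+O(1+\log d)$.

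The point missing from your sketch is this comparison of scales. Wrapping only begins at Euclidean size $a_0(e^{\ell}-1)$ above the crest, while nestedness and the controlled end cap the relevant sizes at $a_0(1-e^{-d\ell})$. The other end and the total winding never enter.
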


For any multi-interval $I'$ in $f^*\II$, we will say that $\iota$ is \textit{end-proper} on $I'$ if it is proper on the end-intervals of $I'$ (see Figure \ref{fig:lift}).
We will say that $F_*$ is proper (resp. end-proper) on an interval $I$ in $\II$ if $\iota$ is proper (resp. end-proper) on $\iota^*I$.
For any connected subset $X$ of $\ddi \VV$, we will say that $F$ acts \textit{combinatorially} on $X$ if $F_*$ is end-proper on every interval in $\II$ contained in $X$. 

Let us say that a path $\gamma$ in $(\VV, \II)$ \textit{breaks} under $F_*$ if $F_*\gamma$ is not a single path. 
For a path family $\Gamma$, we define the \textit{breakable} and \textit{unbreakable parts} of $\Gamma$ to be the subfamilies $\Gamma^\br$ and $\Gamma^\nbr$ consisting  of paths  that do or do not break under $F_*$ respectively. 
When $F$ acts combinatorially, we can  simplify Proposition \ref{prop:itineraries} for unbreakable laminations:
\begin{prop}\label{prop:nonbreaking}
    Assume that $F_*$ acts combinatorially on a multi-interval $I$ in $\II$, and
    let $A$ and $B\subset F_*A^\nbr$ be laminations on $(\VV, \II).$
    If $E\Subset I$ is one end of $A$, then $F_*\overline I$ contains an end of  $B$. If $A$ is a wave over  $I$, then $B$ is a wave over $F_*I$.
\end{prop}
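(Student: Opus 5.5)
The plan is to combine Proposition \ref{prop:itineraries} with the hypothesis that $F_*$ acts combinatorially on $I$, i.e.\ that $\iota$ is proper on the end-intervals of $\iota^*\BJ$ for every interval $\BJ\subset I$ in $\II$. Fix a leaf $\alpha$ of $A^\nbr$. Since $\alpha$ does not break, $\iota^*\alpha$ contributes exactly one path $\alpha'$ in $(\VV',\II')$ that is not subordinate to a single interval of $\iota^*\II$, and $F_*\alpha=f_*\alpha'$ is the leaf of $B$ it produces. We apply Proposition \ref{prop:itineraries} to $\alpha$, which runs from $\BI\subset E$ to some $\BJ$. This gives a chain $\BI_0',\dots,\BI_{N+1}'$ in which the intermediate intervals are non-proper for $\iota$, and consecutive intervals are joined either by a restricted path, by adjacency, or by lying in a common $\iota^*\BI_n$.

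Unbreakability means at most one link of this chain is a genuine restricted path, namely $\alpha'$. Every other link is an adjacency or a ``same pullback'' link. Consider the portion of the chain before $\alpha'$, starting at $\BI_0'\subset\iota^*\BI$. The intervals met there are non-proper. Because the action is combinatorial, any non-proper interval inside $\iota^*\BJ$ with $\BJ\subset I$ must lie in the interior of $\iota^*\BJ$, since the end-intervals of $\iota^*\BJ$ are proper. Also, adjacency inside $\iota^*$ of a multi-interval contained in $I$ cannot leave $\iota^*\overline{I}$ without first hitting a proper end-interval. Since $E\Subset I$ has closure inside $I$, we argue by induction along the chain that the starting endpoint of $\alpha'$ lies in $\iota^*\overline I$. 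Hence $f_*\alpha'$ has an endpoint in $f_*\iota^*\overline I=F_*\overline I$. Taking the union over all leaves, the corresponding end of $B$ lies in $F_*\overline I$, which proves the first claim.

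For the second claim, let $A$ be a wave over $I$. Then both ends of $A$ lie in the base, and every leaf is subordinate to the base while avoiding the crest, which contains $I$. We apply the first part to each end: the two endpoints of $\alpha'$ lie in $\iota^*$ of the two ends, on either side of $\iota^*I$. Subordination of $\alpha$ to the base $B_A$ lifts to subordination of $\alpha'$ to $\iota^*B_A$, by the defining maximality property of pullbacks of connected sets. Pushing forward by the covering $f$ preserves subordination, so $f_*\alpha'$ is subordinate to $F_*B_A$. Its endpoints lie in the images of the lifted ends, which lie outside $F_*I$ because $\iota$ is end-proper there. So every leaf of $B$ is subordinate to a multi-interval containing $F_*I$ with both endpoints outside $F_*I$, meaning $B$ crests over $F_*I$.

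The main obstacle is the inductive step along the itinerary. We must rule out that a chain of adjacency or same-pullback links escapes from $\iota^*\overline I$ before reaching $\alpha'$. This is exactly where end-properness is used: an escaping chain would have to pass through an end-interval of some $\iota^*\BJ$, which is proper and so cannot appear as an intermediate interval. Handling the closure $\overline I$ rather than $I$, in case an endpoint of $I$ pulls back to a multi-interval, requires some care and explains why the statement uses $F_*\overline I$.
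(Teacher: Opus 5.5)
The paper gives no separate proof of this proposition; it presents it as the specialization of Proposition~\ref{prop:itineraries} to unbreakable leaves. Your first part follows that route and is essentially right, provided you read the itinerary as the ordered list of pieces of $\iota^*\alpha$ along $\alpha$, so that the unique piece $\alpha'$ actually occurs as one of its links. The reasoning then goes through: intermediate intervals are non-proper, the end-intervals of each $\iota^*\BJ$ with $\BJ\subset I$ are proper, a same-pullback jump stays inside one $\iota^*\BJ$, and $E\Subset I$ keeps everything before $\alpha'$ inside $\iota^*I$.

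The second part has a genuine gap. You ``apply the first part to each end'' of a wave over $I$. But those ends lie outside the crest, hence outside $I$, and combinatorial action is assumed only on $I$, so the hypothesis $E\Subset I$ is unavailable. The conclusion you draw is also false in general. The walk may leave $\iota^*E_1$ through non-proper intervals before $\alpha'$ begins: for instance, through the pullback of an endpoint of $\II$ between $E_1$ and $I$, where the first piece of $\iota^*\alpha$ is subordinate to a length-two multi-interval and is discarded. So $\alpha'$ need not start in $\iota^*E_1$.

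What you should use is end-properness on $I$ itself. The two extreme intervals of $\iota^*I$ are proper, so they cannot be intermediate links. Hence adjacency steps and same-pullback jumps can neither enter nor cross $\iota^*I$. Since the walk starts on one side of $\iota^*I$ and ends on the other, the unique genuine link $\alpha'$ has its endpoints outside $\iota^*I$, on opposite sides.

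Two further steps also need real arguments.
\begin{enumerate}
\item The defining property of $\iota^*X$ gives ``$\gamma'$ subordinate to $\iota^*X$ implies $\gamma$ subordinate to $X$'', not the converse. So subordination of $\alpha'$ must be obtained geometrically, e.g.\ from the disk that $\alpha$ cobounds with an arc of its base, using that all other pieces of $\iota^*\alpha$ are trivial.
\item That the crest of $f_*\alpha'$ contains $F_*I$ has nothing to do with $\iota$ being end-proper. It requires $f$ to be injective on the boundary arc cut off by $\alpha'$.
\end{enumerate}
The second point genuinely fails for $I=\BL_0$: there $F_*\BL_0=\dcara\TT$, $f_*\alpha'$ wraps around the whole boundary component, and your claim that its endpoints lie outside $F_*I$ is plainly false. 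The missing ingredient is that leaves of the lamination $B$ are simple. A simple arc homotopic into a collar of a boundary component cuts off less than a full turn of it, so if $f$ is not injective on the base arc of $\alpha'$, then $f_*\alpha'$ cannot be a leaf of $B$. This is exactly what makes $B=\emptyset$ in Proposition~\ref{prop:over critical}, and it has to be supplied for your argument to work.
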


Let us say that a multi-interval $I\in \II$ \textit{shifts} under $F$ if $F_*$ acts combinatorially on $I$ and for any 
interval $\BI\subset I$, the intersection $\BI\cap F_*\overline\BI$ is empty. The following lemma, an analogue of the \textit{Shift Argument} introduced in \cite[A.3]{DLneutral}
\cite{dudko2023mlcfeigenbaumpoints}, bounds the width of some unbreakable waves with shifting bases:

\begin{lemma}\label{lem:shift}
    Assume that $B$ is a multi-interval in $\II$ that shifts under $F_*$, and let $S$ be a wave subordinate to $B$ with crest $C$ and end $E\Subset I$. 
     If $ F_*\overline E\subset C$ or $E\subset F_*C$, then $\WW(S^\nbr)\leq 2$.
\end{lemma}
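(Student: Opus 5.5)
The plan is to compare $S^\nbr$ with its own pushforward and use the hypothesis to force the two laminations to cross each other. Set $S' = S^\nbr$ and $T = F_*S'$. Since no leaf of $S'$ breaks and $F_*$ acts combinatorially on $B$ (part of the definition of shifting), Proposition \ref{prop:nonbreaking} should show that $T$ is a lamination which is again a wave. Its crest contains $F_*C$, and one of its ends is contained in $F_*\overline E$.

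The first step is to bound the width of $T$ from below, aiming for
$$\WW(T)\geq \WW(S').$$
Each leaf of $S'$ restricts under $\iota$ to a single path, so Lemma \ref{lem:domination} should give $\WW(\iota^*S')\geq \WW(S')$. The pushforward by $f$ should lose no width, since the paths are unbroken and $f$ is a covering that is injective on the relevant lifts. The concern is that Lemma \ref{lem:push forward laminations} only gives this up to an additive $2d$. To avoid that loss I would argue directly on the universal cover: lift $S'$ and $T$ to $(\tilde \VV,\tilde\II)$, where $f$ becomes a conformal isomorphism between the appropriate lifts, and width is preserved exactly.

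The second step, which I expect to be the heart of the argument, is to show that every leaf of $T$ crosses every leaf of $S'$ essentially, that is, with nonzero intersection number. Suppose first that $F_*\overline E\subset C$. Every leaf $\beta\in T$ then has one endpoint in the crest $C$ of $S$. Each leaf $\alpha\in S'$ is subordinate to $B$ and separates the crest $C$ from the complement of its base. The other endpoint of $\beta$ should lie outside the region cut off by $\alpha$. This is where the shift property is used: $\BI\cap F_*\overline\BI=\emptyset$ for every interval $\BI\subset B$ prevents the second end of $\beta$ from falling back onto an end of $\alpha$, or into the region cut off by $\alpha$, in a way that would let $\beta$ be homotoped off $\alpha$. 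I would make this precise by tracking endpoints in the circular order of the lifted intervals in $\tilde\II$.

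In the second case, $E\subset F_*C$, the roles are swapped: each leaf of $S'$ has an endpoint inside the crest of the wave $T$, and the same separation argument applies.

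The final step is the standard crossing estimate. Lift everything to $\tilde\VV\cong\D$ and choose a single lift. There, the leaves of $S'$ form a foliated rectangle whose horizontal sides lie in two intervals, and every leaf of $T$ must travel across this rectangle, from one vertical side to the other. The Grötzsch inequality then gives
$$\WW(S')\,\WW(T)\leq 1.$$
The $+2$ slack absorbs the square buffers in the definition of the canonical foliations. Combining this with $\WW(T)\geq \WW(S')$ yields $\WW(S')^2\leq 1$, up to these buffers, hence $\WW(S^\nbr)\leq 2$.

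The main obstacle is the crossing claim in the second step. One has to rule out that the pushed-forward leaves can be isotoped to avoid the original ones near the ends, and this is exactly where the hypotheses $E\Subset B$ and $\BI\cap F_*\overline\BI=\emptyset$ must enter.
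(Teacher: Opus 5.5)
The paper states this lemma without proof (it only refers to the Shift Argument of Dudko--Lyubich), so I am judging your argument on its own. There is a genuine gap, exactly at the step you flagged. Moreover, the claim made there is false: under the hypotheses it is \emph{not} true that every leaf of $F_*S^\nbr$ crosses every leaf of $S^\nbr$.

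Take the model of a translation, where the intervals of $B$ are $\BI_k$ in order and $F_*\BI_k=\BI_{k+1}$. Let $E=\BI_0$, let the crest run from $\BI_1$ to $\BI_5$, and let the far end be $\BI_6\cup\dots\cup\BI_{20}$, with $B$ extending past both ends. Then $F_*\overline E=\overline{\BI_1}\subset C$. A leaf $\alpha$ from $\BI_0$ to $\BI_6$ pushes forward to a path from $\BI_1$ to $\BI_7$. Both of its endpoints lie between those of a leaf $\beta$ from $\BI_0$ to $\BI_{20}$, so $F_*\alpha$ and $\beta$ are unlinked. The shift condition only moves each interval off itself, not past the whole far end. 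So no bookkeeping with $E\Subset B$ and $\BI\cap F_*\overline\BI=\emptyset$ can exclude this, and the Gr\"otzsch bound $\WW(S^\nbr)\,\WW(F_*S^\nbr)\le 1$ is unsupported.

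What does hold is a one-sided crossing statement:
\begin{itemize}
\item If $F_*\overline E\subset C$, then $F_*\alpha$ has one endpoint in $C$, and the shift moves its far endpoint outward, beyond $\alpha$'s own far endpoint. Hence $F_*\alpha$ crosses every leaf nested inside $\alpha$, though possibly none enclosing it.
\item If $E\subset F_*C$, then symmetrically $F_*\alpha$ crosses every leaf enclosing $\alpha$.
\end{itemize}

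The missing step is to order $S^\nbr$ by nesting and split it into an outer subwave $S_{\mathrm{out}}$ and an inner subwave $S_{\mathrm{in}}$, each of width $W/2$, where $W=\WW(S^\nbr)$. In the first case, every path of $F_*S_{\mathrm{out}}$ crosses the topological rectangle between the extreme leaves of $S_{\mathrm{in}}$. In the second case, every path of $F_*S_{\mathrm{in}}$ crosses the rectangle of $S_{\mathrm{out}}$. That rectangle has width at least $W/2$, so the family crossing it has width at most $2/W$. Your Step 1 gives width at least $W/2$ for the pushed-forward half. Hence $W^2\le 4$, which is exactly the constant $2$ in the statement. The constant does not come from buffers: $S$ is an arbitrary wave, not a canonical lamination, so there are none.

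A smaller point: $F_*S^\nbr$ need not be a lamination, since a degree-two covering can make disjoint leaves cross. Also, Proposition \ref{prop:nonbreaking} assumes, rather than produces, a lamination inside $F_*A^\nbr$. This is harmless, because both the crossing estimate and your universal-cover width comparison only need path families.
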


\section{Translation markings} \label{app:transl-mark}

Fixing some positive integers $n, p$ and setting $N = 2(n+1)p$, let $\II = (\BI_j)_{j\in \Z/N\Z}$ be a cardinality $N$ ideal marking of $\dD$ with the intervals labeled according to their counter-clockwise circular ordering. 
Let $f: \D\to \D$ be a holomorphic isomorphism that extends continuously to a homeomorphism from $\BI_j$ to $\BI_{j+2p}$ for all $0 < j < N-2p$. Let $\iota: \D\to \D$ be a holomorphic immersion that extends continuously to a  homeomorphism from $\BI_j$ to $\BI_j$ for all odd $1\leq j < N$. 
Denoting the pair $(f, \iota)$ by $F$, we will call the pair $(F, \II)$ an \textit{$(n, p)$-translation marking}. \index{translation marking}

We define the  \textit{standard width} \index{translation marking!standard width} of the translation marking $(F, \II)$ to be the quantity
$$\WW(F, \II): =\sum_{|j|\leq 2p}\WW(\BI_j).$$
Let us say that an arc in $(\D, \II)$ is \textit{long} \index{translation marking!long arcs} if it is represented by a wave over $\bigcup_{j=1}^{2p}\BI_{m+j}$ for some $0< m < N-2p$; we denote by $\wcan^\lo(\D, \II)$ the restriction of $\wcan(\D, \II)$ to long arcs. 
In this section we bound the total weight of long arcs by the standard width:
\begin{theorem}\label{thm:bounded short}
	For any $(n, p)$-translation marking $(F, \II)$, we have:
	$$\|\wcan^\lo(\D, \II)\|= O(\WW(F, \II)+np).$$
\end{theorem}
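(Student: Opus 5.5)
We prove Theorem \ref{thm:bounded short} by comparing the canonical lamination with its pullback under $F$ and using the translation structure to move long arcs back toward the \emph{standard window} $\BI_{[-2p,2p]}$, whose local widths define $\WW(F,\II)$.

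\textbf{Step 1: transport without loss.} For $0<m<N-2p$, a long arc $\alpha$ over $\BI_{m+1}\cup\dots\cup\BI_{m+2p}$ has endpoints in intervals where $f$ is a homeomorphism onto the intervals shifted by $2p$. Pulling back by $F$ (restrict by $\iota$, then lift by $f$) sends the long arc over the block starting at $m$ to the long arc over the block starting at $m-2p$. Since $\iota$ fixes the odd intervals and $f$ is an isomorphism, Lemma \ref{lem:domination} (in the domination form of \S\ref{sec:domination}) should give
$$\wcan(\alpha)\le \wcan(F^*\alpha)+O(1).$$
Iterating, every long arc's weight is dominated, up to additive error, by that of a long arc whose crest meets the standard window.

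\textbf{Step 2: window estimate.} The arcs landing near the window have total weight controlled by $\sum_{|j|\le 2p}\wcan(\BI_j)\le \WW(F,\II)$, by \eqref{eq:total width vs total weight single}. The main obstacle is that long arcs are nested: a wave has many leaves, possibly landing far from the window, so one must bound the total weight over all long arcs rather than the weight of a single translated arc. I would handle this as follows. Long arcs in the support of $\wcan$ are pairwise disjoint, so their crests form a nested family. Group them by the combinatorial depth $d$ of the $2p$-block they crest over. A disjoint collection of waves over distinct translates is pushed by $F^*$ into one lamination through the window. Lemma \ref{lem:domination} with the harmonic sum $\oplus$, applied across nested layers, then shows that the pulled-back family over the window has width at least $\sum_\alpha \wcan(\alpha)$ up to additive error $O(1)$ per arc. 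This additive error is where the $np$ term arises: there are at most $O(N)=O(np)$ arcs in a WAD on $(\D,\II)$.

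\textbf{Step 3: conclusion.} Any lamination crossing over the window block $\BI_1\cup\dots\cup\BI_{2p}$ has width bounded by the local widths $\WW(\BI_j)$ of the endpoint intervals in that block. Summing gives
$$\|\wcan^\lo\|\le \sum_{|j|\le 2p}\WW(\BI_j)+O(np)=O(\WW(F,\II)+np).$$
I expect the real difficulty to be Step 2: showing that pulling back preserves disjointness across different translates, and that nestedness converts into an additive rather than harmonic comparison. For this I would try a shift-type argument in the spirit of Lemma \ref{lem:shift}, as the first approach.
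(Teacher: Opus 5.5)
There is a genuine gap, and it sits where you flagged it. In Step 2 the transported families do not assemble into one lamination. If $\alpha$ joins $\BI_j$ to $\BI_k$ with $k>j+2p$, its translate joins $\BI_{j+2p}$ to $\BI_{k+2p}$, and $j<j+2p<k<k+2p$. So a long arc crosses its own translate, and two disjoint nested long arcs moved to the window by different numbers of steps typically cross each other too. Lemma \ref{lem:domination} cannot supply an additive lower bound either: its harmonic-sum clause bounds $\WW(S)$ from \emph{above} by $\bigoplus_n\WW(S_n')$ when leaves break into pieces, and it never gives a width $\ge\sum_\alpha\wcan(\alpha)$.

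Step 1 also ignores breaking. $\iota$ is proper only on the odd intervals, so at an even endpoint the restriction of a leaf of $\fcan(\alpha)$ may split into several paths, and there is no single arc $F^*\alpha$. Even without breaking, an $O(1)$ loss per translation step over up to $n$ steps costs $O(n)$ per arc, i.e.\ $O(n^2p)$ overall, not $O(np)$. Step 3 is misstated as well: leaves cresting over $\BI_1\cup\dots\cup\BI_{2p}$ have no endpoints in that block. What is true is that a lamination all of whose leaves end in $I=\bigcup_{|s|\le 2p}\BI_s$ has width $O(\WW(F,\II)+p)$.

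The missing idea is to use that crossing as the contradiction, instead of transporting anything to the window. Split the long leaves not ending in $I$ into at most $N$ classes $Y$ by their endpoint intervals $(\BI_j,\BI_k)$. Split each class into its breakable and unbreakable parts under $F_*=f_*\iota^*$.

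\textbf{Unbreakable part.} Every leaf of $F_*Y^\nbr$ crosses every leaf of $Y^\nbr$, so $\WW(Y^\nbr)\le 2$. Combined with the crossing, your Step 1 already gives this up to a constant, with no transport. If $j,k$ are both odd, $\iota$ is proper at both ends, and Proposition \ref{prop:itineraries} shows the pushforward of every leaf contains a path crossing every leaf of $Y$, so $\WW(Y)\le 2$.

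\textbf{Breakable part.} This is the real content, and it lives at the even intervals. A breakable leaf with both endpoints even yields at least two pushed-forward paths. A breakable leaf with one even endpoint, or an unbreakable even--even leaf, yields at least one. Each such path joins two even intervals or ends in $I$. Let $B$ and $C$ be the leaves with one, respectively two, even endpoints. Since $f$ is an isomorphism, maximality of the canonical lamination gives $\WW(B^\br)+\WW(C^\nbr)+2\WW(C^\br)\le\WW(C)+\WW(F,\II)+2N$. Hence $\WW(B^\br)+\WW(C^\br)\le\WW(F,\II)+2N$, and the total long weight is at most $2\WW(F,\II)+6N$.

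So the shift argument you mention at the end is the right tool. But its role is to make the unbreakable part negligible, not to make transported families disjoint. The window only absorbs leaves and broken pieces that land in it.
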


\begin{proof}
	Setting $I = \bigcup_{|s|\leq 2p}\BI_s$, let $X\subset \fcan(\D, \II)$ denote the sublamination consisting of leaves subordinate to $\dD\sm I$ and let $X^\lo\subset X$ be the sublamination consisting of long leaves.
	Any leaf of $\fcan^\lo(\D, \II)$ not in $X^\lo$ must have an endpoint in $I$ automatically,
	hence 
	\begin{equation}\label{eq:translation marking mostly long}
		\|\wcan^\lo(\D, \II)\|\leq \WW(X^\lo) + \WW(F, \II).
	\end{equation}
	
	Let $A$, $B$ and $C$ be the sublaminations of $X$ consisting of all the leaves connecting some $\BI_j$ to $\BI_k$ with 
	$i, j$ both odd, one of $i, j$ odd, and $i, j$ both even respectively.
	Fixing some $\BI_j$ and $\BI_k\subset \dD\sm I$ with $2p< j < k< N-2p$, let $Y\subset X$ be the sublamination consisting of leaves connecting $\BI_j$ to $\BI_k$.
	Note that $X$ can be written as the sum of at most $N$ many such sublaminations $Y$.

    Let us recall the definition of the breakable and unbreakable parts under $F$ of laminations from Appendix \ref{app:dynamics}.
	If $Y\subset X^\lo$, so $k > j+2p$, then every leaf of $f_*Y^\nbr$ intersects every leaf of $Y^\nbr$; hence $\WW(Y^\nbr)\leq 2$.
	If both $j$ and $k$ are odd, so $F_*$ is proper on $\BI_j$, $\BI_{j+2p}$, and $\BI_k$, then it 
	follows from Proposition \ref{prop:itineraries} that the pushforward by $F_*$ of any leaf of $Y$ must contain a path that intersects every leaf of $Y$; hence $\WW(Y)\leq 2$.
	Thus 
	\begin{equation}\label{eq:translation marking arc}
		\WW(X^\lo\cap X^\nbr)+ \WW(A^\lo) \leq 4N.
	\end{equation}
	If both $j$ and $k$ are even, then Proposition \ref{prop:itineraries} implies that the pushforward 
	by $F_*$ of any leaf in $Y^\nbr$ or $Y^\br$ must contain one or two paths respectively either connecting even intervals in  $\II$ or with one endpoint in $I$. Similarly, if exactly one of $j$ and $k$ is odd, then the pushforward by $F_*$ of any leaf of $Y^\br$ must contain a path 
	connecting two even intervals in $\II$ or a path with an endpoint in $I$. 
	It therefore follows from the maximality of the canonical lamination that 
	$$\WW(B^\br)+\WW(C^\nbr)+ 2\WW(C^\br)\leq \WW(C)+\WW(F, \II)+2N,$$
	so 
	\begin{equation}\label{eq:translation marking segments}
		\WW(B^\br)+\WW(C^\br)\leq \WW(F, \II)+2N.
	\end{equation}
	Combining  \eqref{eq:translation marking mostly long}, \eqref{eq:translation marking arc}, and \eqref{eq:translation marking segments} yields
	\begin{align*}
		\|\wcan^\lo(\D, \II)\|&\leq \WW(X^\lo\cap X^\nbr)+ \WW(A^\lo)+ \WW(B^\br)+\WW(C^\br)+\WW(F, \II)\\
		&\leq 2\WW(F, \II)+6N
	\end{align*}
	as desired.
\end{proof}

\section{Cross sections and cloning} \label{app:clones}

In this appendix we introduce the concept of cloning a cross section of a directed graph, and estimate the leading eigenvalue and associated eigenvector of the result. 

Let $A$ be a finite directed graph with vertex set $V$ and let $E \subset V$. 
We say that $E$ is a \emph{cross section} of $A$
 if there is some $N \in \N$ such that every directed path in $A$ of length $N-1$ intersects $E$, 
 and no edge goes from $E$ to itself. 
We fix $(A, E, N)$ with these properties for this appendix.
Given such an $E$, we can \emph{clone} $E$ by replacing $E$ with $n+1$ copies $E_0, \ldots, E_n$ of $E$: the edges into each vertex in $E$ becomes an edge into $E_0$ (with the same source), each edge out of $E$ becomes an edge out of $E_n$ (with the same target), and we add an edge from each vertex in $E_k$ with $k < n$ to the copy in $E_{k+1}$ of the same vertex. 
We let $A_n$ be the result of this cloning, so $A_0 = A$, and let $V_n$ be the vertex set of $A_n$.

Let $T\from \R^{V} \to \R^{V}$ be the pushforward matrix for $A$. 
Let $T_n$ be the matrix for $A_n$.
Suppose that $A$ is primitive (i.e. $T_0 = T$ is primitive) and further suppose that $T_n$ is primitive for $n \in \N$. 
Let $\lambda_n$ and $v_n$ denote the \PF\ eigenvalue and (normalized) eigenvector for $T_n$, 
so $v_n \in \R^{V_n}$. 
We observe that, when $x \in E_0$, and $y$ is the clone of $x$ in $E_k$ (for $0 \le k \le n$), then $v_n(y) = \lambda^k v_n(y)$. 
We also observe that, when $n > N$, every directed path of length $n$ in $A_n$ that \emph{ends} at a vertex in $E_0$ must begin in some $E_k$, with $0 \le k \le N$. 

Let $\hat A_n$ be the labeled directed graph  on $E_0 \subset A_n$ where, for each path of length $n$ from $x \in E_k$ to $y \in E_0$ in $A_n$, we add an edge from $x'$ to  $y$, where $x$ is the clone of $x' \in E_0$, and we label the edge with $k$. So $\hat A_n$ may have multiple edges between two given vertices in $E_0$. We observe (and define) the following:
\begin{enumerate}
\item
The graph $\hat A_n$ is independent of $n$, for $n > N$ (after identifying the various $E_0 \subset A_n$). So we can let $\hat A = \hat A_n$ for any $n > N$. 
\item
For $\lambda>0$, we let $\hat T_\lambda$ be the pushforward matrix of $\hat A$ where we multiply by $\lambda^k$ when pushing forward along an edge with label $k$. 
So 
$v_n|_{E_0}$ is an eigenvector of $\hat T_{\lambda_n}$ with eigenvalue $\lambda_n^n$.
\item
For $\lambda >0$, the matrix $\hat T_\lambda$ is primitive. 
We then let $h(\lambda)$ be the leading eigenvalue of $\hat T_\lambda$, and $\hat v_\lambda$ denote the associated (normalized) eigenvector. 
\end{enumerate}

\begin{theorem} \label{thm:clone-eigen}
For $n$ large, 
$\lambda_n$ is the unique solution for $\lambda \in \R^+$ to
\begin{equation} \label{eq:h-lambda}
h(\lambda) = \lambda^n,
\end{equation}
and $v_n|_{E_0} = \hat v_{\lambda_n}$. 
Moreover, $\lambda_n \to 1$ and $\lambda_n^n \to h(1)$ as $n\to \infty$ (and also $v_n|_{E_0} \to \hat v_1$ by continuity). 
\end{theorem}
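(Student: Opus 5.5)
The plan is to reduce the eigenproblem for $T_n$ on $V_n$ to an eigenproblem on $E_0$ alone, using the chain structure created by the cloning. Let $v_n$ be the \PF\ eigenvector of $T_n$ with eigenvalue $\lambda_n$. Since the only edge into a clone in $E_{k+1}$ comes from the same vertex in $E_k$, the eigenvector equation forces the weights along each clone chain to be geometric in $\lambda_n$. Pushing forward $n$ times then gives, for $y\in E_0$,
\begin{equation*}
\lambda_n^n v_n(y)=(T_n^n v_n)(y)=\sum_{\text{paths of length } n \text{ ending at } y} v_n(\text{start}).
\end{equation*}
For $n>N$ every such path starts in some $E_k$ with $k\le N$. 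Rewriting $v_n(\text{start})=\lambda_n^k v_n(x')$ for the corresponding $x'\in E_0$ gives exactly $\hat T_{\lambda_n}(v_n|_{E_0})=\lambda_n^n v_n|_{E_0}$.

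Next I note that $v_n|_{E_0}$ is a positive vector, since $T_n$ is primitive and so its \PF\ eigenvector is strictly positive. For the primitive matrix $\hat T_{\lambda_n}$, the only positive eigenvector is the \PF\ one. Hence $\lambda_n^n=h(\lambda_n)$ and $v_n|_{E_0}=\hat v_{\lambda_n}$ after normalization.

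For uniqueness of the solution to \eqref{eq:h-lambda}, I would observe that $h(\lambda)$ is a \PF\ eigenvalue of a matrix whose entries are polynomials in $\lambda$ of degree at most $N$ with nonnegative coefficients. So $h$ is continuous and nondecreasing, and $h(\lambda)/\lambda^N$ is nonincreasing (its entries are polynomials in $\lambda^{-1}$ with nonnegative coefficients). Then for $n>N$ the function $\lambda^n/h(\lambda)=\lambda^{n-N}\cdot\lambda^N/h(\lambda)$ is strictly increasing. It tends to $0$ as $\lambda\to0$, since $h(\lambda)\ge h(\text{small})>0$ given that some edges have label $0$; if not, use $\lambda^N/h$ bounded. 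It tends to $\infty$ as $\lambda\to\infty$. So the solution is unique. The main obstacle is making this monotonicity clean, and in particular checking that $h$ is positive at every $\lambda$, which uses primitivity of $\hat T_\lambda$.

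For the asymptotics, $h(\lambda)$ is bounded above and below by positive constants on, say, $[1/2,2]$. The equation $\lambda^n=h(\lambda)$ then forces $\lambda_n^n$ to lie in a compact subset of $(0,\infty)$, so $\lambda_n=h(\lambda_n)^{1/n}\to1$. Continuity of $h$ gives $\lambda_n^n=h(\lambda_n)\to h(1)$. The convergence $\hat v_{\lambda_n}\to\hat v_1$ follows from continuity of a simple \PF\ eigenvector in the matrix entries.
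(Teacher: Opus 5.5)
\textbf{A sign slip in the reduction.} Your reduction to $E_0$ has a sign error, and it is inherited from the setup just before the theorem (item (2) there). You use the convention $\lambda_n^n v_n(y)=\sum v_n(\text{start})$ over length-$n$ paths ending at $y$, that is, $(T_nv)(y)=\sum_{x\to y}v(x)$. Under it, the eigenvector equation at a clone $x_{k+1}\in E_{k+1}$, whose only incoming edge is from $x_k$, reads $\lambda_n v_n(x_{k+1})=v_n(x_k)$. Hence $v_n(x_k)=\lambda_n^{-k}v_n(x')$, not $\lambda_n^{k}v_n(x')$. What your computation actually gives is $\hat T_{1/\lambda_n}(v_n|_{E_0})=\lambda_n^n\,v_n|_{E_0}$, i.e.\ $h(1/\lambda_n)=\lambda_n^n$.

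This is not cosmetic. Take $E=\{e\}$, two further vertices $a,b$, and edges $e\to a$, $a\to e$, $e\to b$, $b\to a$. Then $\hat T_\lambda=\lambda^2+\lambda^3$. The first-return cycles at $e_0$ in $A_n$ have lengths $n+2$ and $n+3$, so $\lambda_n^{n+3}=\lambda_n+1$. This says exactly $h(1/\lambda_n)=\lambda_n^n$, whereas $h(\lambda_n)=\lambda_n^{n+5}\neq\lambda_n^n$; since $|E|=1$, this is independent of the row/column convention for $T_n$.

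So the first assertion must be read with weight $\lambda^{-k}$ on label-$k$ edges, equivalently as $h(\lambda^{-1})=\lambda^n$ for the paper's $h$. After this correction your positivity argument (positive $v_n|_{E_0}$, plus uniqueness of the positive eigenvector of a primitive matrix) goes through verbatim. Uniqueness of the root becomes immediate, since $\lambda\mapsto h(1/\lambda)$ is nonincreasing while $\lambda^n$ is strictly increasing. The limits $\lambda_n\to 1$, $\lambda_n^n\to h(1)$, $v_n|_{E_0}\to\hat v_1$ are unaffected.

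\textbf{Comparison with the paper's route.} Apart from this, your route genuinely differs from the paper's.
\begin{itemize}
\item The paper takes the reduction for granted. It rules out $(0,1]$ and $[2,\infty)$ using $\hat T_\lambda\le\lambda^N\hat T_1$ for $\lambda\ge 1$, the reverse inequality for $\lambda\le 1$, and $h(1)>1$.
\item It then gets uniqueness on $[1,2]$ by comparing the large derivative of $\lambda^n$ with a bound on $h'$. This works only for $n$ large and uses smoothness of $h$.
\item Finally it applies the implicit function theorem to $\alpha\log h(\lambda)=\log\lambda$ with $\alpha=1/n$, obtaining the rate $\lambda_n-1\sim\log h(1)/n$.
\end{itemize}
You instead use entrywise monotonicity of the \PF\ root: $h$ is nondecreasing and $h/\lambda^N$ is nonincreasing, so $\lambda^n/h(\lambda)$ is strictly increasing for every $n>N$. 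This is more elementary and needs no differentiability. It also justifies the identification $v_n|_{E_0}=\hat v_{\lambda_n}$, which the paper leaves implicit. It yields only the limits, not the rate, but the statement does not ask for the rate.

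\textbf{A small gap in the asymptotics.} You use that $h$ is bounded above and below on $[1/2,2]$ before knowing that $\lambda_n$ lies there. Locate $\lambda_n$ first with the same comparisons. In the paper's normalization they give $1<\lambda_n\le h(1)^{1/(n-N)}$; after the sign correction they give $1<\lambda_n\le h(1)^{1/n}$. Both limits follow at once from either bound.

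Also, every label is at least $1$, since a length-$n$ path starting in $E_0$ ends in $E_n$. So your `label $0$' case never occurs, though your fallback covers it anyway.
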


\begin{proof}
We observe that 
\begin{equation} \label{eq:lambda-ge-1}
\hat T_\lambda \le \lambda^N \hat T_1 \text{ for $\lambda \ge 1$}
\end{equation}
and 
\begin{equation} \label{eq:lambda-le-1}
\hat T_\lambda \ge \lambda^N \hat T_1 \text{ for $\lambda \le 1$}.
\end{equation}
Moreover
\begin{equation} \label{eq:h-1}
h(1) > 1
\end{equation}
because $\hat T_1$ is primitive and has integral coefficients.
By \eqref{eq:lambda-le-1} and \eqref{eq:h-1},
$h(\lambda) > \lambda^N \ge \lambda^n$ when $\lambda \in (0, 1]$ and $n > N$.
By \eqref{eq:lambda-ge-1}, 
$h(\lambda) \le h(1) \lambda^N < \lambda^n$ when $\lambda \in [2, \infty)$ and $n$ is large.
Moreover,
when $n$ is large,
$\frac{d}{d\lambda}\lambda^n$ is uniformly large (and positive) on $[1, 2]$, while $h'(\lambda)$ is bounded on that interval.
We conclude that there is a unique solution to \eqref{eq:h-lambda} in $\R^+$, and that it lies in $(1, 2)$. 

Moreover, 
letting $\alpha = 1/n$, 
we can recast \eqref{eq:h-lambda} as 
\begin{equation} \label{eq:h-alpha}
\alpha \log h(\lambda) = \log \lambda.
\end{equation}
Letting $g(\alpha, \lambda) = \alpha \log h(\lambda) -\log \lambda$, we see that $g(0, 1) = 0$, and $\dd_\alpha g(0, 1) = \log h(1)$ and $\dd_\lambda g(0, 1) = -1$.
Therefore, 
by the Implicit Function Theorem,
we can solve \eqref{eq:h-alpha} for $\lambda \equiv\lambda(\alpha)$ for all $\alpha$ near 0,
with $\lambda(0) = 1$ and $\lambda'(0) = \log h(1)$. 
Hence
$$
\lambda_n - 1 \sim \frac{\log h(1)}n
$$
and the Theorem follows.
\end{proof}
%
%
%
%


\section{The Wanderers Theorem}\label{app:wanderers}
Here we state and prove the Wanderers Theorem, 
which is similar in spirit to the Covering Lemma and the Quasi-Additivity Law from \cite{KL:QA}.
We state the theorem in a somewhat simplified form that will be sufficient for its application in this paper. 
\index{Wanderers Theorem}
\begin{theorem} \label{thm:wanderers}
Suppose $X$ and $Y$ are Riemann surfaces and $g\from X \to Y$ is proper and holomorphic, of degree $D$.
Suppose that $\BB$ is a finite set of disjoint compact connected subsets of $Y$ whose union contains the critical values of $g$.
Let $\AA$ be the set of components of $g\invp{\bigcup \BB}$,
and suppose $\hat \AA \subset \AA$. 
Suppose that $g\from \bigcup \hat \AA \to \bigcup \BB$ is at most $d$ to 1. 
Let $\FF$ be an oriented partial foliation of $X$ such that every leaf of $\FF$
\begin{enumerate}
\item
begins at an element of $\hat\AA$, 
\item
ends at an element of $\AA$,
\item
visits at least $n-1$ elements of $\AA$ in between, and
\item
does not cross through the component of $\hat \AA$ where it begins. 
\end{enumerate}
Then 
\begin{equation} \label{eq:wanderers}
\norm{\WW(\FF)} \le \frac{2d}n \norm{\wcan^{h+v}(Y \sm \bigcup \BB)} + C(|\chi(Y)| , |\BB|, D).
\end{equation}
\end{theorem}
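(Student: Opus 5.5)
We plan to prove \eqref{eq:wanderers} by cutting each leaf of $\FF$ into its segments between consecutive visits to elements of $\AA$, pushing these segments forward by $g$, and comparing the result with the canonical lamination of $Y\sm\bigcup\BB$. The mechanism is the one behind the Covering Lemma: a leaf of $\FF$ traverses about $n$ segments, and its total contribution downstairs is charged against one element of $\hat\AA$ where it starts. Since $g$ is at most $d$-to-$1$ on $\bigcup\hat\AA$, each unit of downstairs weight is charged at most $d$ times. This should produce a gain of order $n$, with the factor $2$ coming from horizontal arcs having two endpoints.

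\textbf{Step 1 (segments).} Each leaf $\gamma\in\FF$ begins at some $\hat A\in\hat\AA$. It then passes through at least $n-1$ further elements of $\AA$ and ends at an element of $\AA$. We cut $\gamma$ into its maximal subarcs in $X\sm\bigcup\AA$, giving segments $\gamma_1,\dots,\gamma_m$ with $m\ge n$. Each $\gamma_i$ is a proper path in $X\sm\bigcup\AA$, and this space covers $Y\sm\bigcup\BB$ with degree $D$ because the critical values lie in $\bigcup\BB$. We record, for each segment, its position $i$ along the leaf and the element of $\AA$ at which it begins.

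\textbf{Step 2 (push forward and compare with $\wcan$).} Fix $i\le n$. The family $\FF_i=\{\gamma_i\}$ is a lamination of the same width as $\FF$; by Lemma~\ref{lem:domination} it has width at least $\WW(\FF)$, since every leaf of $\FF$ restricts to a leaf of it. We push $\FF_i$ forward by the covering $g$ as in Lemma~\ref{lem:push forward laminations}, using a degree bound attached to the element of $\hat\AA$ where the leaf started. This is the key bookkeeping. Rather than treating each $i$ separately, we consider the single family $\bigcup_{i\le n}g_*\FF_i$ in $Y\sm\bigcup\BB$. Condition (4) ensures that $\gamma_1$ does not re-enter its initial component, so the pushed segments are genuine horizontal or vertical arcs. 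We then decompose this family by homotopy class into arcs of $Y\sm\bigcup\BB$. Up to an additive error of $O(|\chi(Y)|+|\BB|)$ per class, the maximality of the canonical lamination ($W\le\wcan+2$) bounds the width in each class by $\wcan^{h+v}$ of that class. Since there are boundedly many classes, the total error is $C(|\chi(Y)|,|\BB|,D)$.

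\textbf{Step 3 (counting).} We need to show that the $n$ segments of a leaf together contribute weight about $n\,\WW(\FF)/(2d)$ downstairs. The plan is a quasi-additivity argument in the style of \cite{KL:QA}: a harmonic-sum estimate in the form $n^2\WW(\FF)\le 4\WW(\GG)$, as used in the proof of Proposition~\ref{prop:aligned wcan}. The term $1/d$ arises because distinct leaves starting in $\hat\AA$ have pushforwards that overlap at most $d$ times at $\bigcup\BB$. The term $1/2$ arises because a horizontal arc of $Y\sm\bigcup\BB$ can serve as the image of a segment at either of its ends.

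The main obstacle is Step 3: making the additivity rigorous when the pushed-forward segments overlap and the degree of $g$ away from $\hat\AA$ can be as large as $D$. To handle this, we first try to remove the segments $\gamma_i$ whose images overlap more than $d$ times, putting them into the constant $C(\cdot,D)$ via the bound from Lemma~\ref{lem:push forward laminations}. On the remaining segments we use a covering argument near the boundary of each element of $\BB$: each element of $\hat\AA$ maps at most $d$-to-$1$, so a leaf starting at a given element of $\hat\AA$ is counted at most $d$ times downstairs.
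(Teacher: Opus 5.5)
Your skeleton is reasonable: split each leaf into $n$ segments, use $n^2\WW(\FF)\le\sum_k\WW(\FF_k)$ (Lemma~\ref{lem:harmonic and geometric sum}), and compare each $\FF_k$ with the canonical lamination of $Y\sm\bigcup\BB$. However, the step that carries the content of the theorem is missing.

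The point of \eqref{eq:wanderers} is that the leading coefficient involves only $d$, not $D$. This is what makes it usable in Proposition~\ref{prop:most-well-dominated}, where $D$ depends on $L_1$, and $L_1$ is chosen after $m$. To get this coefficient you need to control the multiplicity of $g$ on the $k$-th segments for \emph{every} $k\le n$, not just $k=1$. For $k\ge 2$ the $k$-th segment begins at an element of $\AA$ where the local degree of $g$ can be as large as $D$. The hypothesis that $g$ is at most $d$-to-$1$ on $\bigcup\hat\AA$ says nothing there.

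Your Step~2 invokes ``a degree bound attached to the element of $\hat\AA$ where the leaf started'' for all $i$, which is exactly the unproven claim. Your proposed fix does not supply it. Lemma~\ref{lem:push forward laminations} gives only a multiplicative loss, $\WW(A')\le d\,\WW(A)+2d$, so it yields no additive bound on the part of $\FF_k$ whose images overlap many times. That part does turn out to have bounded width, but only as a consequence of the argument below, not of anything you cite.

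The missing ingredient is the route rigidity from the Covering Lemma of \cite{KL:QA}, which is how the paper argues.
\begin{enumerate}
\item Reduce to $g$ Galois by composing with a branched cover $h$ so that $g\circ h$ is Galois of degree at most $D!$. The width of $\FF$ and the multiplicity on $\hat\AA$ both scale by $\deg h$.
\item Trim the leaves to good paths whose routes have length exactly $n$, group them by route into parallel families $\FF_p$, and cut off buffers of width $4$ at both ends of each family.
\item Suppose $\sigma Q_k(\FF_p)$ meets $Q_k(\FF_q)$ with the same orientation for some $\sigma\in\Deck(X/Y)$. Then leaves chosen in the buffers satisfy the hypotheses of Lemma~\ref{lem:work-backwards}, which forces $\sigma p=q$. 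So an overlap at the $k$-th step propagates back to the starting element of $\hat\AA$, where the multiplicity is at most $d$.
\item Hence $g$ is at most $2d$-to-$1$ on each $\bigcup_p\tilde Q_k(\FF_p)$, with the $2$ coming from orientation. These buffered rectangles map into the canonical rectangles of $Y\sm\bigcup\BB$.
\item Pulling back the canonical metric gives area at most $2d\norm{\wcan^{h+v}(Y\sm\bigcup\BB)}$ on each level $k$, so $2dn$ times that in total. Every leaf has length at least $n$, and this yields \eqref{eq:wanderers}.
\end{enumerate}

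The buffers also repair a second soft spot in your Step~2. The family $\bigcup_k g_*\FF_k$ is not a lamination, so the maximality of $\wcan$ (a statement about valid WADs) cannot be applied to it as you do. What one actually uses is that the buffered little rectangles sit inside the canonical rectangles downstairs.
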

Before proving this theorem,
we should recall the definition of the \emph{route} of a \emph{good path} from \cite{KL:QA}; 
we then have the following lemma, which is left to the reader:
\begin{lemma} \label{lem:work-backwards}
Suppose that
\begin{enumerate}
\item
 $C$,$C_1$, and $C_2$ are good paths in $X$ rel $\AA$,
\item
 $\rt{C_1} = \rt{C_2}$, 
\item
 $|\rt{C}| = |\rt{C_1}|$, 
 and 
\item
 $\delta_k(C) \subset Q_k(C_1, C_2)$. 
\end{enumerate}
Then $\rt C  = \rt{C_1}$. 
\end{lemma}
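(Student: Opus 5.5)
The plan is to compare routes one segment at a time. The hypotheses already say that each segment of $C$ is trapped in the corresponding region cut out by $C_1$ and $C_2$. I am recalling the conventions of \cite{KL:QA} rather than restating them, and one of them (the emptiness property in the second step) still needs to be checked.

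\emph{Setup.} Write $\rt{C_1}=\rt{C_2}=(A_0,A_1,\dots,A_m)$ with $A_i\in\AA$. By hypothesis (3), $\rt C=(A_0',\dots,A_m')$ has the same number of entries. Here $\delta_k(C)$ is the $k$-th segment of the good path $C$: the sub-path running from its visit to $A_k'$ to its visit to $A_{k+1}'$, with interior disjoint from $\bigcup\AA$. The set $Q_k(C_1,C_2)$ is the region bounded by $\delta_k(C_1)$, $\delta_k(C_2)$ and the two arcs of $\partial A_k$ and $\partial A_{k+1}$ joining their endpoints.

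\emph{Key step: each trap touches $\AA$ only at its two ends.} I would show that the only elements of $\AA$ meeting $Q_k(C_1,C_2)$ are $A_k$ and $A_{k+1}$. Since $\rt{C_1}=\rt{C_2}$, the segments $\delta_k(C_1)$ and $\delta_k(C_2)$ join the same pair of components. I expect the definition of $Q_k$ in \cite{KL:QA} to encode that they are homotopic rel these components, through a region containing no other element of $\AA$. If instead $Q_k$ is only defined as the region cut out by the two segments, the fallback is this: $\delta_k(C)\subset Q_k$, its interior avoids $\bigcup\AA$, and it is connected. So $\delta_k(C)$ cannot reach an enclosed component of $\AA$ except at an endpoint. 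A topological argument in the disk-like region $Q_k$ should then force that endpoint onto $\partial Q_k\cap\bigcup\AA\subset A_k\cup A_{k+1}$.

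\emph{Working backwards.} With the key step in hand, hypothesis (4) places both endpoints of $\delta_k(C)$ in $A_k\cup A_{k+1}$. Hence $\{A_k',A_{k+1}'\}\subset\{A_k,A_{k+1}\}$ for each $k$. I would then pin down the labels by induction running from the last segment to the first, which is where the lemma's name comes from.
\begin{itemize}
\item For the last segment, the terminal endpoint of $C$ lies on the terminal side of $Q_{m-1}$, so $A_m'=A_m$. This uses that good paths visit distinct consecutive components, so $A_{m-1}'\ne A_m'$, and that a segment cannot leave and return to the same component inside the trap.
\item Given $A_{k+1}'=A_{k+1}$, the segment $\delta_k(C)$ starts at a component different from $A_{k+1}$. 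That component lies in $A_k\cup A_{k+1}$, so $A_k'=A_k$.
\end{itemize}
Hypothesis (3) is what keeps the indexing aligned: without it, $C$ could have extra or missing segments, and $\delta_k(C)$ would not correspond to $Q_k$. Induction down to $k=0$ then gives $\rt C=\rt{C_1}$.

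\emph{Main obstacle.} I expect the hard part to be the key step, namely ruling out that $Q_k(C_1,C_2)$ encloses some other element of $\AA$ at which $\delta_k(C)$ could end. I would first try to read this off directly from the definition of $Q_k$ in \cite{KL:QA}, via the homotopy between $\delta_k(C_1)$ and $\delta_k(C_2)$. Only if that fails would I use the connectedness argument above.
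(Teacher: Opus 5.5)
\textbf{There is a genuine gap: hypothesis (4) concerns a single index $k$, and you use it at every index.}

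In the proof of the Wanderers Theorem, the lemma is invoked when $\sigma Q_k(\FF_p)$ and $Q_k(\FF_q)$ overlap for one particular $k$. The conclusion $\sigma p=q$ is nonetheless about the entire route. Your key step is fine, since it is essentially built into the definition of the little rectangle. But it only locates the two endpoints of the one segment $\delta_k(C)$. The sentence ``Hence $\{A_k',A_{k+1}'\}\subset\{A_k,A_{k+1}\}$ for each $k$'' uses (4) at every index, so the backward induction that follows has nothing to run on.

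A symptom is that you never use that $C$ is disjoint from $C_1$ and $C_2$. In the application, $C,C_1,C_2$ are three of four pairwise disjoint leaves. With only one $k$, the statement fails without disjointness. Let $C_1,C_2$ be parallel with route $(A_0,A_1,A_2)$. Let $C$ start at some $A_0'\neq A_0$, cross $\delta_0(C_1)$ to reach $A_1$ between $C_1$ and $C_2$, and then run from $A_1$ to $A_2$ inside $Q_1(C_1,C_2)$. Nothing in (1)--(4) excludes this with $k=1$, yet $\rt C\neq\rt{C_1}$.

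What is missing is the propagation step, which is the actual ``working backwards.'' Follow $C$ backwards from the initial endpoint of $\delta_k(C)$.
\begin{enumerate}
\item Since $C$ is disjoint from $C_1\cup C_2$, while passing through $A_k$ it cannot cross them. A planar-topology argument in a disk neighbourhood of $A_k$ then keeps it between $C_1$ and $C_2$.
\item It therefore emerges on the $A_k$-side of $Q_{k-1}(C_1,C_2)$, between the endpoints of $\delta_{k-1}(C_1)$ and $\delta_{k-1}(C_2)$.
\item This rectangle meets $\AA$ only along its two horizontal sides, and its vertical sides are pieces of $C_1$ and $C_2$, which $C$ cannot cross. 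Hence $\delta_{k-1}(C)\subset Q_{k-1}(C_1,C_2)$, with the same orientation.
\item Iterate down to $\delta_0$ and, in the same way, forward up to $\delta_{m-1}$.
\end{enumerate}
Hypothesis (3) is then exactly what prevents $C$ from stopping before $A_m$ or continuing past it, where $C_1$ and $C_2$ no longer confine it.

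Orientation should also be carried along by this propagation, or taken from (4) itself; the application speaks of \emph{oriented} homotopy classes. Even granting (4) at all indices, your claim that the terminal endpoint of $C$ lies on the terminal side of $Q_{m-1}$ does not follow from consecutive components being distinct. A segment can traverse a rectangle in either direction.
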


\begin{proof}
First suppose that $g$ is a (branched) Galois cover.
We can trim the leaves of $\FF$ at their terminal ends so that every resulting leaf is good (in the sense of \cite{KL:QA}) and has a route of length exactly $n$. We will still call the resulting foliation $\FF$; its width is at least as great as the original $\FF$.
We partition $\FF$ by route; the leaves with the same route form a parallel family, with a canonical linear order up to reversal. For each route $p$, we denote the corresponding subset of $\FF$ by $\FF_p$. Each $\FF_p$ (and indeed the whole of $\FF$) has a measure given by extremal width. We can form buffers of width 4 at both ends of each route class; we let $\tilde \FF_p$ be $\FF_p$ with these buffers removed, and denote the corresponding little rectangles, as in \cite{KL:QA}, by $\tilde Q_k(\FF_p)$. 

Let $p$ and $q$ be two routes, and suppose that, for some $\sigma \in \Deck(X/Y)$,  $\sigma Q_k(\FF_p) \cap Q_k(\FF_q) \neq \emptyset$ as oriented homotopy classes in $X \sm \bigcup \AA$.  As in \cite{KL:QA}, we can choose leaves in each of the two buffers of $\sigma\FF_p$ and $\FF_q$ (for four leaves in all), such that the four leaves are disjoint; we can then label three of these leaves as $C$, $C_1$, and $C_2$ such that they satisfy the hypotheses of Lemma \ref{lem:work-backwards}. By that Lemma, we can conclude that $\sigma p = q$. 

Because we have already cut out buffers,
each $\tilde Q_k(\FF_p)$ is mapped injectively by $g$ into the corresponding canonical rectangle for $Y \sm \bigcup \BB$. 
We pull back the canonical metric for the latter to each $Q_k(\FF_p)$; then each leaf of each $\tilde \FF_p$ has length at least $n$. 
If two of these $\tilde Q_k$'s map by $g$ to intersect \emph{and have the same orientation},
their associated routes are related by an element $\sigma$ of the deck group.
This latter relation is an equivalence relation,
and each equivalence class has at most $d$ elements, 
because each leaf of $\FF$ begins at an element of $\hat \AA$.
Thus $g$ is at most $2d$ to 1 on $\bigcup_p \tilde Q_k(\FF_p)$, and the pullback metric has area at most $2d \norm{\wcan(Y \sm \bigcup \BB}$. 
The Theorem then follows.

Now suppose that $g$ is not Galois. As in \cite{KL:QA}, we can form a branched cover $h\from \gc X \to X$ such that $\gc g := g \circ h$ is a Galois branched cover of degree at most $D!$. We let $\gc A = h^{-1}(A)$,  $\gc{\hat A} = h^{-1} \hat A$, and $\gc \FF = h^{-1} \hat \FF$. Then $W(\gc \FF) = (\deg h) W(\FF)$, and $\gc g\from \gc {\hat A} \to \BB$ is at most $d(\deg h)$ to 1.  Applying \eqref{eq:wanderers} to the Galois cover $\gc g$, we obtain
$$
\norm{\WW(\gc \FF)} \le \frac{2d(\deg h)}n \norm{\wcan^{h+v}(Y \sm \bigcup \BB)} + C(|\chi(Y)| , |\BB|, D!). \qedhere
$$
\end{proof}

\printindex

\printbibliography

\end{document}

%% file: macros/basic_macros.tex
\input{macros/paper_macros.tex}

%% file: macros/paper_macros.tex
\usepackage[margin=1.5in]{geometry}
\input{macros/packages}
\numberwithin{equation}{section}
\input{macros/theorem_types}

\input{macros/formula_macros}

%% file: macros/packages.tex
\usepackage{amsmath,amssymb}
\usepackage{amsthm}
\usepackage{graphicx}
\usepackage{hyperref}
\usepackage{mathtools}
\usepackage{verbatim}

\usepackage{tikz-cd}

\usepackage{enumitem}
\setlist{
leftmargin=2em}

%% file: macros/theorem_types.tex
\newtheorem{cor}[equation]{Corollary}
\newtheorem{lem}[equation]{Lemma}
\newtheorem{prop}[equation]{Proposition}
\newtheorem{theorem}[equation]{Theorem}
\newtheorem{lemma}[equation]{Lemma}
\newtheorem{proposition}[equation]{Proposition}

\theoremstyle{remark}
\newtheorem{rem}[equation]{Remark}

\theoremstyle{definition}

%% file: macros/formula_macros.tex
\newcommand{\inv}{^{-1}}
\newcommand{\invp}[1]{\inv(#1)}

\newcommand{\norm}[1]{\left\Vert #1 \right\Vert}
\newcommand{\pair}[2]{\left< #1, #2 \right>}

\newcommand{\setofst}[2]{\left\{ #1 \mid #2 \right\}}
\newcommand{\ceil}[1]{\lceil #1 \rceil}
\newcommand{\floor}[1]{\lfloor #1 \rfloor}

\newcommand{\largergiven}[1]{\ggg_{#1}}
\newcommand{\smallergiven}[1]{\lll_{#1}}

\newcommand{\largegiven}[1]{\largergiven{#1} 1}
\newcommand{\smallgiven}[1]{\smallergiven{#1} 1}

\newcommand{\from}{\colon}
\newcommand{\sm}{\setminus}
\newcommand{\thh}{^{\text{th}}}

\def\<#1>{\left< #1 \right>}
\newcommand{\dd}{\partial}

\newcommand{\B}{\mathbb B}
\newcommand{\C}{\mathbb C}
\newcommand{\D}{\mathbb D}
\newcommand{\E}{\mathbb E}

\newcommand{\N}{\mathbb N}

\newcommand{\Q}{\mathbb Q}
\newcommand{\R}{\mathbb R}

\newcommand{\Z}{\mathbb Z}

\renewcommand{\AA}{\mathcal A}
\newcommand{\BB}{\mathcal B}
\newcommand{\CC}{\mathcal C}

\newcommand{\FF}{\mathcal F}
\newcommand{\GG}{\mathcal G}
\newcommand{\HH}{\mathcal H}
\newcommand{\II}{\mathcal I}
\newcommand{\JJ}{\mathcal J}
\newcommand{\KK}{\mathcal K}
\newcommand{\LL}{\mathcal L}
\newcommand{\MM}{\mathcal M}
\newcommand{\NN}{\mathcal N}

\newcommand{\RR}{\mathcal R}

\newcommand{\TT}{\mathcal T}
\newcommand{\UU}{\mathcal U}
\newcommand{\VV}{\mathcal V}
\newcommand{\WW}{\mathcal W}
\newcommand{\XX}{\mathcal X}

\newcommand{\dD}{\dd \D}

\DeclareMathOperator{\mmod}{mod}

\DeclareMathOperator{\Deck}{Deck}

\DeclareMathOperator{\Int}{Int}

\DeclareMathOperator{\supp}{supp}

%% file: macros/annotation_macros.tex
\usepackage[textsize=small]{todonotes}
\newcommand{\jktodo}[2][inline]{\todo[color=red!40,#1]{#2}}

\newcommand{\jkheader}[1]{\smallskip\jktodo{#1}\kern-1em}

\newcommand{\ann}[1]
{\jktodo[]{#1}}

%% file: local_macros.tex
\usepackage{bm}
\usepackage{tikz}

\theoremstyle{theorem}
\newtheorem*{thmA}{Theorem A}
\newtheorem*{thmB}{Theorem B}
\newtheorem*{thmC}{Theorem C}
\newtheorem*{thmD}{Theorem D}

\newtheorem*{conjecture*}{Conjecture}
\newtheorem*{lemmaA}{Lemma}

\newcommand{\shrink}[2]{#1_{\left>#2\right<}}
\newcommand{\dcara}{\dd_C}
\newcommand{\pql}{$\psi$-ql }
\newcommand{\toto}{\rightrightarrows}

\newcommand{\FL}{\mathfrak{L}}

\newcommand{\biv}{{\mathrm{biv}}}
\newcommand{\br}{{\mathrm{br}}}
\newcommand{\nbr}{{\mathrm{ubr}}}
\newcommand{\ver}{{\mathrm{ver}}}
\newcommand{\hor}{{\mathrm{hor}}}

\newcommand{\str}{{\mathrm{str}}}

\newcommand{\out}{\operatorname{out}}
\newcommand{\bgg}{\mathbf G}
\newcommand{\inbr}{{\mathrm{in}}}

\newcommand{\outbr}{{\mathrm{out}}}
\newcommand{\amp}{{\mathrm{amp}}}

\newcommand{\etf}{{E(T_f)}}
\newcommand{\rt}[1]{\operatorname{rt}(#1)}

\newcommand{\wcan}{W_{\operatorname{can}}}
\newcommand{\fcan}{\FF_{\operatorname{can}}}

\newcommand{\dominates}{\multimap}

\newcommand{\hperp}{\HH^\biv}
\newcommand{\hv}{\HH^\ver}

\newcommand{\bound}{\mathbf b}

\newcommand{\wstd}{\WW(\RR F)}

\theoremstyle{definition}
\numberwithin{equation}{section}
\numberwithin{figure}{section}

\newcommand{\com} {{ \mathrm{com}}}
\newcommand{\thin} {{\mathrm{thin}}}
\newcommand{\thick} {{ \mathrm{thick}}}

\DeclareMathOperator{\flux}{flux}

\newcommand{\ddi}{\dd^i}

\DeclareMathOperator{\val}{Val}

\newcommand{\PF}{Perron-Fr\"obenius}

\newcommand{\FK}{\mathfrak{K}}

\newcommand{\per}{\mathbf{n}}

\makeatletter
\newtheorem{repeatlem@}{Lemma}

\makeatother

\makeatletter
\newtheorem{repeatprop@}{Proposition}

\makeatother

\newcommand{\good}{^g}
\newcommand{\bad}{^b}
\newcommand{\vg}{^{g\dominates}}

\newcommand{\gb}{^{gb\dominates}}
\newcommand{\bv}{^{b}}

\newcommand{\xhv}{X}

\newcommand{\jj}{\jmath}

\newcommand{\Card}{Card}

\renewcommand{\Card}{\cardioidsymbol}

\renewcommand{\Card}{\cardioidsymbolRightShifted}

\renewcommand{\Card}{\operatorname{Card}}

\newcommand{\gc}[1]{\breve{#1}}

\newcommand{\mathhyphen}{\text{-}}

%% file: misha_needed_macros
\newcommand{\hyp}{{\mathrm{hyp}}}
\newcommand{\cusp}{{\mathrm{cusp}}}

\def\BJ{{\mathbf{J}}}

\def\BJ{{\mathbf{J}}}

\newcommand{\BI}{{\mathbf{I}}}
\newcommand{\BL}{\mathbf{L}}
\def\B0{{\mathbf{0}}}

\newcommand{\BPi}{{\boldsymbol{\Pi}}}

\newcommand{\Bde}{{\boldsymbol{\delta}}}

\newcommand{\eps}{{\epsilon}}

\newcommand{\ra}{\rightarrow}

\newcommand{\bg}{{\boldsymbol{\gamma}}}

\newcommand{\ol}{\overline}

\newcommand{\lo}{{\mathrm{lo}}}

\newcommand{\sh}{{\mathrm{sh}}}

\newcommand{\can}{\operatorname{can}}

%% file: figs/tails.eps_tex
\begingroup%
  \makeatletter%
  \providecommand\color[2][]{%
    \errmessage{(Inkscape) Color is used for the text in Inkscape, but the package 'color.sty' is not loaded}%
    \renewcommand\color[2][]{}%
  }%
  \providecommand\transparent[1]{%
    \errmessage{(Inkscape) Transparency is used (non-zero) for the text in Inkscape, but the package 'transparent.sty' is not loaded}%
    \renewcommand\transparent[1]{}%
  }%
  \providecommand\rotatebox[2]{#2}%
  \newcommand*\fsize{\dimexpr\f@size pt\relax}%
  \newcommand*\lineheight[1]{\fontsize{\fsize}{#1\fsize}\selectfont}%
  \ifx\svgwidth\undefined%
    \setlength{\unitlength}{2033.82147217bp}%
    \ifx\svgscale\undefined%
      \relax%
    \else%
      \setlength{\unitlength}{\unitlength * \real{\svgscale}}%
    \fi%
  \else%
    \setlength{\unitlength}{\svgwidth}%
  \fi%
  \global\let\svgwidth\undefined%
  \global\let\svgscale\undefined%
  \makeatother%
  \begin{picture}(1,0.78056899)%
    \lineheight{1}%
    \setlength\tabcolsep{0pt}%
    \put(0,0){\includegraphics[width=\unitlength]{tails.eps}}%
  \end{picture}%
\endgroup%

%% file: figs/H_AD.eps_tex
\begingroup%
  \makeatletter%
  \providecommand\color[2][]{%
    \errmessage{(Inkscape) Color is used for the text in Inkscape, but the package 'color.sty' is not loaded}%
    \renewcommand\color[2][]{}%
  }%
  \providecommand\transparent[1]{%
    \errmessage{(Inkscape) Transparency is used (non-zero) for the text in Inkscape, but the package 'transparent.sty' is not loaded}%
    \renewcommand\transparent[1]{}%
  }%
  \providecommand\rotatebox[2]{#2}%
  \newcommand*\fsize{\dimexpr\f@size pt\relax}%
  \newcommand*\lineheight[1]{\fontsize{\fsize}{#1\fsize}\selectfont}%
  \ifx\svgwidth\undefined%
    \setlength{\unitlength}{494.97867614bp}%
    \ifx\svgscale\undefined%
      \relax%
    \else%
      \setlength{\unitlength}{\unitlength * \real{\svgscale}}%
    \fi%
  \else%
    \setlength{\unitlength}{\svgwidth}%
  \fi%
  \global\let\svgwidth\undefined%
  \global\let\svgscale\undefined%
  \makeatother%
  \begin{picture}(1,0.66053768)%
    \lineheight{1}%
    \setlength\tabcolsep{0pt}%
    \put(0,0){\includegraphics[width=\unitlength]{H_AD.eps}}%
  \end{picture}%
\endgroup%

%% file: value1.tex
Let  $F$ be a \pql map as in the previous subsection. The results in this section are purely combinatorial, so for simplicity may imagine that $F$ is a quadratic polynomial, so in particular $\iota$ is an inclusion (see \cite[Section 4.1]{K06} for details of transferring combinatorial data from \pql maps to quadratic polynomials).

We recall the definition of \emph{domination} from Appendix \ref{sec:domination}. Let us say that a WAD diagram on $\UU\sm \KK$ is \emph{horizontal} if it is supported on horizontal arcs. 
Let $\val$ be a function that assigns a non-negative real number to every WAD on 
$\UU^n\sm \KK^n$ with $n\geq 0$. If 
\begin{enumerate}
    \item $\val(X)>0$ for any horizontal WAD $X$;
    \item for any WADs $X$ on $\UU^n\sm \KK^n$ and $Y$ on $\UU\sm \KK$, if $X\dominates Y$ then $\val(X)\geq \val(Y)$; 
    \item there is some $\lambda>1$ so that $\val((f^*)^nX)\leq \lambda^{-1}\val(X)$ for every WAD $X$ on $\UU\sm \KK$ and every $n\geq 0$;
\end{enumerate}
then we will call $\val$ an \emph{$\lambda$-expanding valuation} \index{weighted arc diagram (WAD)!expanding valuation} for $F$. 
Recalling the core eigenvalue $\lambda_F$ from Section \ref{sec:hubbard trees}, we observe that a $\lambda_F$-expanding valuation always exists:

\begin{theorem}\label{thm:value}
    If $F$ is a primitively renormalizable and periodically repelling \pql-map with core eigenvalue $\lambda_F$, then there is a $\lambda_F$-expanding valuation for $F$.
\end{theorem}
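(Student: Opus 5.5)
The valuation will be defined by measuring how heavily a WAD crosses the Hubbard tree, weighting each edge by the Perron--Frobenius eigenvector of $T_F^\top$. Let $u\in\R^{E(\TT)}$ be a positive left eigenvector of $T_F$, normalized so that $T_F^\top u=\lambda_F u$; it is positive because $T_F$ is primitive. For a WAD $X=\sum_i w_i\alpha_i$ on $\UU^n\sm\KK^n$ we use the tree $\TT^n=f^{-n}(\TT)$ and its bivertical Hubbard arc-diagram $\HH^\biv(\TT^n)=(f^n)^*\HH^\biv$. Each edge $e'$ of $\TT^n$ maps under $f^n$ onto an edge $f^n_*e'$ of $\TT$. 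Set
$$\val(X)=\lambda_F^{-n}\sum_i w_i\sum_{e'\in E(\TT^n)}u(f^n_*e')\,\pair{\alpha_i}{\beta_{e'}},$$
where $\beta_{e'}\in\HH^\biv(\TT^n)$ is the bivertical arc dual to $e'$. Horizontal WADs only ever arise from $\UU\sm\KK$ or its pullbacks, which fixes the normalization.

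Property (1) holds because any horizontal arc $\alpha$ in $\UU\sm\KK$ joins two distinct small Julia sets, or comes back to the same one around some other vertex. In either case it cannot be homotoped off $\TT$, so it meets some $\beta_e$ essentially. Since $u>0$ and the weights are positive, $\val(X)>0$.

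Property (3) comes from naturality of intersection numbers under coverings. For an arc $\alpha$ in $\UU\sm\KK$, its pullback $f^*\alpha$ consists of the lifts of $\alpha$. The intersection of these lifts with $\beta_{e'}$, summed over $e'$ with $f_*e'=e$, equals $\pair{\alpha}{\beta_e}$, since $f^*\beta_e=\{\beta_{e'}:f_*\beta_{e'}=\beta_e\}$. This gives $\val(f^*X)=\lambda_F^{-1}\val(X)$ with our normalization, and the iterates follow the same way. This equality is stronger than the required inequality $\val((f^*)^nX)\le\lambda_F^{-1}\val(X)$, and it is fine for every $n\geq1$.

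Property (2) is the main step, and I expect it to be the main obstacle. Suppose $X\dominates Y$ with $(\alpha_{i,j})_j\to\beta_i$ and $\bigoplus_jw_{i,j}\ge v_i$. We must show $\sum_j w_{i,j}\,c(\alpha_{i,j})\ge v_i\,c(\beta_i)$, where $c$ denotes the $u$-weighted intersection count with the tree (on level $n$ for the $\alpha$'s, on level $0$ for $\beta$). The plan has two parts.

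First, we need additivity of the tree intersection under restriction: $\sum_j c_n(\alpha_{i,j})\ge c_0(\beta_i)$. Each crossing of $\beta_i$ with an edge $e$ of $\TT$ occurs inside the segment of $\iota_n^{-1}(\TT)$ lying over $e$. Pulling the edge back along $\iota_n$, the sum of $\lambda_F^{-n}u(f^n_*e')$ over the edges $e'$ of $\TT^n$ inside $\iota_n^{-1}(e)$ equals $\lambda_F^{-n}((T_F^\top)^n u)(e)=u(e)$. Here we use that $f^n\circ\iota_n^{-1}$ induces $T_F^n$.

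Second, we pass to harmonic sums. Since $\bigoplus_j w_{i,j}\ge v_i$, we have $w_{i,j}\ge v_i$ for every $j$. Hence $\sum_j w_{i,j}c(\alpha_{i,j})\ge v_i\sum_j c(\alpha_{i,j})\ge v_i c(\beta_i)$.

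Two technical points remain. One must check that minimal intersection numbers behave as claimed: a crossing of $\beta_i$ with $\beta_e$ that is essential in $\UU\sm\KK$ must survive as an essential crossing in some $\alpha_{i,j}$ rel $\KK^n$. One also has to handle the non-locally-connected case, which goes through the combinatorial model $\FK_\com$ as in Section \ref{sec:hubbard trees}.
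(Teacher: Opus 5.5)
There is a genuine gap, in your verification of property (3), and it breaks the construction. The map $f\colon \UU^1\sm\KK^1\to\UU\sm\KK$ is an unbranched covering of degree two. So each point of $\alpha\cap\beta_e$ has two preimages, one on each lift of $\alpha$, lying on the two lifts $\beta_{e'},\beta_{e''}$ of $\beta_e$. Hence
\[
\sum_{\tilde\alpha\in f^*\alpha}\ \sum_{f_*e'=e}\pair{\tilde\alpha}{\beta_{e'}}=2\pair{\alpha}{\beta_e},
\]
not $\pair{\alpha}{\beta_e}$. With your normalization this gives $\val((f^*)^nX)=(2/\lambda_F)^n\val(X)$. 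Since core entropy is at most $\log 2$, we have $\lambda_F\le 2$, so your valuation never decreases under pullback.

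Discarding lifts that are not on $\iota_n^{-1}(\TT)$ does not rescue a valuation that is linear in crossing numbers. Heuristically, about $\lambda_F^n$ lifts of $\beta_e$ lie on that tree, each carrying about $\lambda_F^{-n}$ of its weight, so the total is preserved rather than contracted.

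The slack you left unused is in the domination step. For $\val(X)=\sum_\alpha X(\alpha)c(\alpha)$, minimizing $\sum_j w_{i,j}c(\alpha_{i,j})$ subject to $\bigoplus_j w_{i,j}\ge v_i$ gives exactly $v_i\bigl(\sum_j\sqrt{c(\alpha_{i,j})}\bigr)^2$. So domination only requires $\sqrt{c}$, not $c$, to be superadditive under restriction. Your argument used only $w_{i,j}\ge v_i$ and never exploited this.

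That slack is what the paper uses: it makes the valuation \emph{quadratic} in an additive length and uses both Perron--Frobenius eigenvectors.
\begin{itemize}
\item Take $v,w$ with $\lambda_F^n v(\beta)=\sum_{\alpha\in\iota_n^*\beta}v(f^n_*\alpha)$ and $\lambda_F^n w(\beta)=\sum_{\alpha\in(f^n)^*\beta}w((\iota_n)_*\alpha)$.
\item Set $\ell(\beta)=v(\beta)\sqrt{w(\beta)}$ on $\HH^\biv$, and $\ell(\alpha)=\lambda_F^{-n}v(f^n_*\alpha)\sqrt{w((\iota_n)_*\alpha)}$ on $\iota_n^*\HH^\biv$. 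Only edges of the restricted tree $\iota_n^{-1}(\TT)$ carry weight.
\item Extend by $\ell(\gamma)=\sum_\alpha\pair{\alpha}{\gamma}\ell(\alpha)$ and put $\val(X)=\sum_\alpha X(\alpha)\ell(\alpha)^2$.
\end{itemize}

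The eigenvector $v$ gives additivity $\sum_{\alpha\in\iota_n^*\beta}\ell(\alpha)=\ell(\beta)$; this is your first step. The eigenvector $w$ gives $\sum_{\alpha\in(f^n)^*\beta}\ell(\alpha)^2=\lambda_F^{-n}\ell(\beta)^2$: lengths are roughly redistributed, but their squares contract.

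The $\ell^2$ triangle inequality (the lemma that $a_i^2+b_i^2\le c_i^2$ implies $(\sum a_i)^2+(\sum b_i)^2\le(\sum c_i)^2$) carries this from bivertical arcs to arbitrary arcs. This yields $\val((f^*)^nX)\le\lambda_F^{-n}\val(X)$.

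Domination then follows from Lemma \ref{lem:harmonic and geometric sum}:
\[
\sum_j w_{i,j}\ell(\alpha_{i,j})^2\ \ge\ \Bigl(\bigoplus_j w_{i,j}\Bigr)\Bigl(\sum_j\ell(\alpha_{i,j})\Bigr)^2\ \ge\ v_i\,\ell(\beta_i)^2 .
\]
Your positivity argument for (1) is fine and carries over unchanged.
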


Before proving Theorem \ref{thm:value}, we need the following two arithmetic facts:
\begin{lemma}
	Suppose that $a_i, b_i, c_i \ge 0$ for $i = 1 \ldots n$,
	and,
	for each $i$,
	\begin{equation} \label{eq:upper}
		a_i^2 + b_i^2 \le c_i^2.
	\end{equation}
	Then 
	\begin{equation} \label{eq:combined}
		\left(\sum_{i = 1}^n a_i\right)^2 + \left(\sum_{i = 1}^n b_i\right)^2 \le \left(\sum_{i = 1}^n c_i\right)^2.
	\end{equation}
\end{lemma}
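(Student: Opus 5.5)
This is the triangle inequality for the Euclidean norm on $\R^2$, so I would reduce to Minkowski's inequality. For each $i$ set $u_i = (a_i, b_i)\in \R^2$, and write $|u_i| = \sqrt{a_i^2+b_i^2}$ for its Euclidean norm. Hypothesis \eqref{eq:upper} together with $c_i\geq 0$ says exactly that $|u_i|\le c_i$. The left-hand side of \eqref{eq:combined} is $|\sum_i u_i|^2$. Since both sides of \eqref{eq:combined} are squares of nonnegative quantities, it suffices to prove
$$\Bigl|\sum_{i=1}^n u_i\Bigr|\le \sum_{i=1}^n c_i.$$

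The key step is the triangle inequality, which gives $|\sum_i u_i|\le \sum_i |u_i|$. Combining this with $|u_i|\le c_i$ for each $i$ yields the displayed bound, and squaring both sides (all quantities are nonnegative) gives \eqref{eq:combined}.

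To keep the argument self-contained, I would prove the two-term case directly and then induct on $n$. For two terms, expand
$$(a_1+a_2)^2+(b_1+b_2)^2 = (a_1^2+b_1^2)+(a_2^2+b_2^2)+2(a_1a_2+b_1b_2).$$
By Cauchy--Schwarz, $a_1a_2+b_1b_2\le |u_1||u_2|\le c_1c_2$. The first two groups are bounded by $c_1^2$ and $c_2^2$, so the whole expression is at most $(c_1+c_2)^2$.

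For the inductive step, the pair $A=\sum_{i<n}a_i$, $B=\sum_{i<n}b_i$, $C=\sum_{i<n}c_i$ satisfies $A^2+B^2\le C^2$ by the inductive hypothesis. Applying the two-term case to $(A,B,C)$ and $(a_n,b_n,c_n)$ gives the statement for $n$ terms.

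There is no real obstacle here. The one point needing care is the use of nonnegativity: we need $c_i\ge0$ to pass from $a_i^2+b_i^2\le c_i^2$ to $|u_i|\le c_i$, and we need the sums to be nonnegative to square the final inequality.
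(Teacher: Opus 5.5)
Your proof is correct and is essentially the paper's argument: both rest on the Cauchy--Schwarz bound $a_ia_j+b_ib_j\le c_ic_j$ for the cross terms. The paper applies this to all pairs $(i,j)$ at once and sums the expanded double sum directly, so it needs no induction on $n$.
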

\begin{proof}
	By Cauchy-Schwarz and \eqref{eq:upper},
	for all $i$ and $j$,
	\begin{equation} \label{eq:cs}
		a_i a_j + b_i b_j \le \sqrt{(a_i^2 + b_i^2)(a_j^2 + b_j^2)} \le c_i c_j.
	\end{equation}
	Adding this up over all $i$ and $j$, we obtain \eqref{eq:combined}.
\end{proof}

\begin{lemma}\label{lem:harmonic and geometric sum}
    For any sequences $(a_j)$ and $(w_j)$ of nonnegative real numbers, 
    \begin{equation}\label{eq:harmonic sum bound}
        \left(\bigoplus w_j\right)\left(\sum a_j\right)^2\leq \sum w_ja_j^2. 
    \end{equation}
\end{lemma}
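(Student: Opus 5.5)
This is the standard Cauchy--Schwarz inequality, written with the harmonic sum. Set $W=\bigoplus w_j$, so that $W^{-1}=\sum_j w_j^{-1}$. First I dispose of the degenerate cases. If some $w_j=0$, then $w_j^{-1}=\infty$ and $W=0$, so the left-hand side vanishes. This is true provided we adopt the convention $0\cdot\infty=0$, which is harmless here since all the $a_j$ are finite. Next, any index with $a_j=0$ contributes nothing to $\sum a_j$ on the left. Dropping such an index from the harmonic sum can only increase $W$, so it suffices to prove the inequality after those indices are removed. After these reductions I may assume that all $w_j>0$ and all $a_j>0$, with finitely many terms (or that the sums converge, in which case pass to the limit over finite partial sums).

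The key step is to write $a_j=\bigl(w_j^{-1/2}\bigr)\bigl(w_j^{1/2}a_j\bigr)$ and apply Cauchy--Schwarz:
$$\Bigl(\sum_j a_j\Bigr)^2\le \Bigl(\sum_j w_j^{-1}\Bigr)\Bigl(\sum_j w_j a_j^2\Bigr)=W^{-1}\sum_j w_j a_j^2.$$
Multiplying through by $W$ gives \eqref{eq:harmonic sum bound}.

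The only point requiring care is the bookkeeping of the degenerate cases (zero weights and infinite sums); the inequality itself is immediate. In particular, in the application in \eqref{eq:hor_weight_totals}, each $w_j$ is the width of a sublamination and $a_j$ counts restricted leaves, and both of these are finite and nonnegative, so the conventions above suffice.
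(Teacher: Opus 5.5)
Your argument is correct. The paper gives no proof of its own and only cites \cite[Lemma 10.1]{K06}, and your Cauchy--Schwarz step with the factorization $a_j=w_j^{-1/2}\cdot w_j^{1/2}a_j$ is the standard proof of exactly that fact.

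The only superfluous piece is the reduction that removes indices with $a_j=0$. Once all $w_j>0$, Cauchy--Schwarz already applies to nonnegative $a_j$. The reduction would also leave an empty harmonic sum if every $a_j$ vanished, though that case is trivial anyway.
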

\begin{proof}
    See \cite[Lemma 10.1]{K06}.
\end{proof}

\begin{proof}[Proof of Theorem \ref{thm:value}]
    We recall the linear  transformations $T_F: \R^{E(\TT)}\to \R^{E(\TT)}$ with eigenvalue $\lambda_F$ from Section \ref{sec:hubbard trees}. We have a natural bijection between $E(\TT)$ and $\HH^\biv$, so we can view any non-negative element of $\R^{E(\TT)}$ as a WAD with support in $\HH^\biv$. From this viewpoint, $T_F$ and its transpose $T_F^\top$ are exactly the linear transformation generated by 
    $$\beta\mapsto \sum_{\alpha\in \iota^*\beta} f_*\alpha \quad \text{ and } \quad \beta\mapsto \sum_{\alpha \in f^*\beta}\iota_*\alpha$$
    respectively.  {(Informally speaking, the former summation is taken over all $f$-push-forwards of $\beta$ (rel $\TT$),
    while the latter is taken over all $f$-pullbacks (rel $\TT$).}
    Additionally, 
    as $f^n\circ \iota_n^{-1}$ and $\iota_n\circ f^{-n}$ induce the linear transformations $T_F^n$ and $(T_F^\top)^n$ respectively, $f^n_*\iota_n^*\beta = (f_*\iota^*)^n\beta$ for all $\beta\in \HH^\biv$ and $n\geq 0$.

    Let $v$ and $w$ be WADs corresponding to eigenvectors with eigenvalue $\lambda_F$ of $T_F$ and $T_F^\top$ respectively and fix some $n\geq 0$. Thus 
    \begin{align*}
        \lambda_F^nv(\beta) =\sum_{\alpha\in \iota_n^*\beta}v(f^n_*\alpha) \text{ and }
        \lambda_F^nw(\beta) =\sum_{\alpha\in (f^n)^*\beta}w((\iota_n)_*\alpha)
    \end{align*}
    for all $\beta\in \HH^\biv$.
   {For any $\beta\in \HH^\biv$ and {$\alpha\in \iota_n^*(\HH^\biv)$} 
    we set 
    \begin{equation*}
    \l(\beta)= v(\beta)\sqrt{w(\beta)},\quad 
    \l(\alpha) = \lambda_F^{-n} \, v(f_*^n\alpha)\sqrt{w((i_n)_*\alpha)}.
    \end{equation*}} 
    Thus 
\begin{align} \label{eq:basic-length}
	\sum_{\alpha \in \iota_n^*\beta} l(\alpha) = l(\beta)
\end{align}
and 
\begin{equation} \label{eq:basic-value}
	\sum_{{\alpha \in f_*^n \beta}} l(\alpha)^2 = \lambda_F^{-n} l(\beta)^2.
\end{equation}

For any $n\geq 0$ and arc $\gamma$ in $\UU^n\sm \KK^n$, we set $\l(\gamma) = \sum_{\alpha\in \iota_n^*\HH^\biv} \pair{\alpha}{\gamma}\l(\alpha)$.
For any arc $\beta$ in $\UU\sm \KK$, we observe that
\begin{equation} \label{eq:length}
	 \sum_{\alpha\in \iota_n^*\beta} l(\alpha) \ge l(\beta)
\end{equation}
and
\begin{equation} \label{eq:value}
	\sum_{\alpha\in (f^n)^*\beta} l(\alpha)^2 \le \lambda_F^{-n} l(\beta)^2. 
\end{equation}
Indeed, the inequality \eqref{eq:length} follows easily from \eqref{eq:basic-length},
and \eqref{eq:value} follows from \eqref{eq:basic-value} and the following:

Now, we define the {valuation} $\val$ by $$\val(X) = \sum_\alpha X(\alpha) l(\alpha)^2.$$
We then observe that,
because of \eqref{eq:length} and \eqref{eq:harmonic sum bound}, 
\begin{equation}
	X \dominates Y \implies \val(X) \ge \val(Y).
\end{equation}
Moreover, 
by \eqref{eq:value},
$\val((f^n)^*X) \le \lambda_F^{-n} \val(X)$.
\end{proof}

Moreover, our proof of Theorem \ref{thm:value} allows us to estimate the value of a WAD in terms of its size:
\begin{prop}\label{prop:value estimate}
    Let $v$ and $w$ be eigenweights of $T_F$ and $T_F^\top$ respectively.
    We can choose the valuation in Theorem \ref{thm:value} so that if $X$ is a WAD on $\UU\sm \KK$ and $\langle \alpha, \HH^\biv\rangle\leq C$ for all $\alpha\in \supp X,$
    then
    $$u_{\min}^2\|X\|\leq V(X)\leq C^2u_{\max}^2\|X\|,$$
    where $u_{\min}$ and $u_{\max}$ are the maximum and minimum values of $v(\alpha)\sqrt{w(\alpha)}$ over all arcs $\alpha\in \HH^\biv$ that intersect the support of $X$.
\end{prop}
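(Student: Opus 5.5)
The plan is to use the valuation constructed in the proof of Theorem \ref{thm:value}, with the normalization of the eigenweights $v$ and $w$ left free:
$$\val(X)=\sum_\gamma X(\gamma)\,\l(\gamma)^2,\qquad \l(\gamma)=\sum_{\alpha\in\HH^\biv}\pair{\alpha}{\gamma}\,\l(\alpha),\qquad \l(\alpha)=v(\alpha)\sqrt{w(\alpha)}.$$
Both inequalities then reduce to a pointwise estimate on $\l(\gamma)$ for a single arc $\gamma\in\supp X$. Summing that estimate against the weights $X(\gamma)$ gives the claim, since $\|X\|=\sum_\gamma X(\gamma)$.

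\emph{Upper bound.} For $\gamma\in\supp X$, only arcs $\alpha\in\HH^\biv$ with $\pair{\alpha}{\gamma}>0$ contribute to $\l(\gamma)$. Each such $\alpha$ intersects the support of $X$, so $\l(\alpha)\le u_{\max}$. Hence
$$\l(\gamma)\le u_{\max}\sum_{\alpha}\pair{\alpha}{\gamma}=u_{\max}\,\pair{\gamma}{\HH^\biv}\le C\,u_{\max}.$$
Squaring and summing against $X(\gamma)$ gives $\val(X)\le C^2u_{\max}^2\|X\|$.

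\emph{Lower bound.} I first need that every arc $\gamma$ in the support of $X$ has $\pair{\gamma}{\HH^\biv}\ge 1$. Here the support consists of horizontal and vertical arcs, as in the intended application $X=\wcan^{h+v}$. Given this, some $\alpha\in\HH^\biv$ with $\pair{\alpha}{\gamma}\ge1$ meets the support, so $\l(\gamma)\ge\l(\alpha)\ge u_{\min}$. Squaring and summing gives $\val(X)\ge u_{\min}^2\|X\|$.

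The main obstacle is the topological claim that a horizontal or vertical arc must cross $\HH^\biv$. I would argue as follows. The arcs of $\HH^\biv$ are in bijection with the edges of $\TT$, and each crosses $\TT$ exactly once, transversally in its edge. Cutting $\UU\sm\KK$ along a representative of $\HH^\biv$ therefore leaves complementary pieces, each containing a single vertex of the combinatorial Hubbard tree together with the half-edges at it. For a vertex in $\KK$, such a piece is a collar of that one component of $\KK$ cut along the crossing bivertical arcs. In particular no piece contains two distinct components of $\KK$. Also, every path from $\KK$ to $\dcara\UU$ inside such a piece is peripheral, i.e.\ homotopic into a neighborhood of the $\KK$-component and a bivertical arc. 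Consequently:
\begin{enumerate}
\item a horizontal arc avoiding $\HH^\biv$ would be homotopic into one collar, hence trivial or subordinate to a single interval, which is not a path in the marking;
\item a vertical arc avoiding $\HH^\biv$ would be homotopic into a single collar, hence inessential.
\end{enumerate}
In either case we contradict $\gamma$ being an arc. If the non-locally-connected case needs care, I would run the same argument in the combinatorial model $\FK_\com$, as in Section \ref{sec:hubbard trees}.
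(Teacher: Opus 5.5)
\textbf{The upper bound and the horizontal case are right; the vertical case (item (2)) is false.} Your overall approach is the intended one. The paper gives no separate proof, only the remark that the estimate follows from the construction in the proof of Theorem \ref{thm:value}. Your pointwise bound on $\sum_{\alpha\in\HH^\biv}\pair{\alpha}{\gamma}\,v(\alpha)\sqrt{w(\alpha)}$ gives the upper bound correctly. Your item (1) correctly shows that every horizontal arc crosses $\HH^\biv$, which gives the lower bound for horizontal $X$.

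Item (2) fails. Being homotopic into the collar of a single component $K$ does not make a path from $\dd K$ to $\dcara\UU$ inessential: it joins two different components of the ideal boundary of $\UU\sm\KK$, so it is a genuine vertical arc. Concretely, $\UU\sm\TT$ is an annulus.
\begin{itemize}
\item An arc $\gamma\in\hv$ is realized as a radial path from a side of $K$ out to $\dcara\UU$.
\item Each arc of $\HH^\biv$ is realized as two radial paths from $\dcara\UU$ to the two sides of its edge, glued across the edge.
\item Radial paths with distinct inner endpoints are disjoint, so $\pair{\gamma}{\HH^\biv}=0$, and $\gamma$ contributes nothing to $\val$.
\end{itemize}
So the lower bound fails for any WAD carrying substantial weight on such arcs. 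For example, take $X=\gamma+\eps h$ with $h$ a horizontal arc disjoint from $\gamma$. Then $u_{\min}$ and $u_{\max}$ are determined by $h$ alone, $\val(X)\le \eps C^2u_{\max}^2$, and $u_{\min}^2\|X\|=u_{\min}^2(1+\eps)$, so the inequality breaks as $\eps\to 0$. No argument can prove the lower bound in that generality; the statement itself needs a restriction.

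\textbf{The fix.} State the lower bound only for horizontal WADs, or more generally for WADs all of whose support arcs cross $\HH^\biv$. With that restriction your proof is complete. This is consistent with axiom (1) of an expanding valuation, which only asks $\val>0$ on horizontal WADs. It is also all the paper uses. In the proof of Proposition \ref{prop:no-near-self-dominating}, the lower bound is applied to $Z$ and the upper bound to $Z-Y$, both supported on horizontal arcs in the translation region. The intended application is therefore not $\wcan^{h+v}$, as you suggest.
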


%% file: not-all-good2.tex
\subsection{Geometry, Combinatorics, and a Contradiction} \label{align:ggc}

In this subsection we will state a geometric proposition and a combinatorial one, and immediately conclude Lemma \ref{lem:hammer}. We let $G = (g, \jj) = (f^\per, \iota_\per)$ be the $\per\thh$ iterate of $F$. 

We say that $X = \wcan^{h+v}(\UU \sm \KK)$ is \emph{$(L, \epsilon)$-well-controlled} \index{weighted arc diagram (WAD)!well-controlled} if there is a good set of arcs in the translation region, each of combinatorial length at most $L$, that have $1 - \epsilon$ of the total weight of $X$.  In this case we say that an arc in $\supp X$ is good if belongs to the good set of arcs, and that it is \emph{bad} if it does not.  We let $X^g$ denote restriction of $X$ to the good arcs. 

For any interval $I\subset \R$ and  WAD $Z$ on $\UU \sm \KK$, we let $Z|_I$ be the restriction of $Z$ to the arcs that lie entirely in the subsurface $U_I$. We will say that $I$ lies in the translation region when $I\subset (\bound, q-\bound)$.

Here is our geometric proposition. It says that when $X$ is well-controlled, there is an interval $I$ in the translation region such that $X^g|I$ is nearly self-dominating. 
\begin{prop}  \label{prop:interval-self-dominating}
For all $\delta> 0$:
Suppose that $X$ is $(L, \epsilon)$-well-controlled
for $\epsilon \smallgiven{\bound, \delta}$ and $q, \WW(\RR F) \largegiven{\bound, \delta, L}$.
Then there is an interval $I$ in the translation region and a WAD $Y \le X^g|_I$ such that 
\begin{equation} \label{eq:interval-self-dominating-dominated}
g^*(X^g|_I) \dominates Y
\end{equation}
and 
\begin{equation} \label{eq:interval-self-dominating-control}
\norm{X^g|_I - Y} \le \delta \norm{X^g|_I}.
\end{equation}
\end{prop}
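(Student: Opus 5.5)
We will derive the self-domination from the basic fact that the canonical lamination of $\UU\sm\KK$ restricts under $\jj=\iota_\per$ to a lamination on $\UU^\per\sm\KK^\per$. That restricted lamination is then pulled back by $g$. Since $\KK^\per=g^{-1}(\KK)\supset\KK$ and $g$ has degree one on the components of $\KK^\per$ in the translation region, the lamination $\fcan^{h+v}(\UU\sm\KK)$ pulls back, via $\jj$ and the Domination discussion of Appendix \ref{sec:domination}, so that $g^*X\dominates X$ up to errors. Here $g$ is a covering on the complement of $\KK^\per$, and the canonical lamination is maximal up to additive constants. The plan is to localize this statement to a window $U_I$ and then discard the errors.

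\textbf{Step 1: Choose the window.} Partition the translation region $(\bound,q-\bound)$ into consecutive intervals of length $m\largegiven{\delta,L}$. By Proposition \ref{prop:comb-length-bound}, every good arc of length $\le L$ lies in some $U_J$ with $|J|\le 2L$. Hence all but a $O(L/m)$ fraction of the good weight lies in arcs contained in a single block $I$, with margin $2L$ from $\dd I$. By pigeonhole we pick a block $I$ for which three quantities are controlled. First, $\|X^g|_I\|$ is at least an average share of $\|X\|$, which is $\gtrsim(m/q)\|X\|$. Second, the good weight lying within distance $2L+\bound$ of $\dd I$ is at most $\delta/4$ of $\|X^g|_I\|$. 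Third, the bad weight of arcs meeting $U_I$ is at most $O(\epsilon/\delta)\|X^g|_I\|$. The first two use Lemma \ref{lem:width to weight} and Theorem \ref{main estimate}, so that $\|X\|\asymp \per\WW(\RR F)$ is spread roughly uniformly among the $\asymp q$ limbs. This uniformity follows from \eqref{eq:width to weight 2} together with the fact that each limb has at most $\bound$ components of $\KK$. The third uses $\|X^b\|\le\epsilon\|X\|$.

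\textbf{Step 2: Pull back and track arcs.} For each good arc $\beta\in X^g|_I$, the leaves of $\fcan(\beta)$ restrict under $\jj$ to paths in $\UU^\per\sm\KK^\per$. We then push the canonical lamination of $\UU\sm\KK$ forward and backward. In the translation region $F$ acts as a near-translation (Propositions \ref{prop:improper intervals} and \ref{prop:stretching interval}), so the $\per$-th iterate returns the subsurface $U_I$ to itself after passing once through the critical limb. The lift $g^*$ of $X^g|_I$ is supported on arcs whose $\jj$-images arrow to arcs of $\UU\sm\KK$.

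We define $Y$ as the part of $X^g|_I$ whose leaves, after restriction by $\jj$, contain only pieces that are lifts of good arcs in $U_I$. Because good arcs have disjoint translation classes and bounded length, their restrictions are again good-type arcs in $U_I$ (one per translation shift). This gives $g^*(X^g|_I)\dominates Y$ by Lemma \ref{lem:domination} and the harmonic-sum form of domination. Note that $\jj$ factors through an embedding near the translation region, so the domination bookkeeping applies there.

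\textbf{Step 3: Bound the loss, the main obstacle.} We must show that $\|X^g|_I-Y\|\le\delta\|X^g|_I\|$. The weight of good arcs in $U_I$ whose restriction involves bad arcs or leaves $U_I$ is controlled in three ways. First, the weight of bad arcs is controlled via the choice of $I$ in Step 1. Second, the boundary weight is controlled by the $\delta/4$ margin. Third, the weight of arcs passing through $\KK^\per\sm\KK$ or through pieces near the critical limb is bounded by the Wanderers Theorem \ref{thm:wanderers}, with $n\largegiven{L}$. This bound is $O(\|X\|/n)$ plus constants. We then compare it to $\|X^g|_I\|$ using $q,\WW(\RR F)\largegiven{\delta,L}$.

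The delicate point is the third estimate. A leaf of $\fcan(\beta)$ could wander through many preimage components before settling, and that wandering cost must be small relative to the local mass $\|X^g|_I\|$ rather than the global $\|X\|$. I would first try to apply the Wanderers Theorem directly in the localized surface $U_I$ with its bounded topology (Euler characteristic $O(m\bound)$), so that the error constant $C(|\chi|,|\BB|,D)$ is uniform. If that fails, I would fall back on the translation-marking bound, Theorem \ref{thm:bounded short}, to control long pieces.
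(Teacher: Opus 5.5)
There is a genuine gap, and it is in Step 3, where all the content of the proposition lies.

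\textbf{Multiplicity of lifts.} Your first control says the weight lost to bad arcs is small because $I$ has small bad weight. This ignores multiplicity. A good leaf restricted by $\jj$ is a concatenation of pieces in $\UU^\per\sm\KK^\per$, and one bad (or vertical) arc $\gamma$ of $X$ has many $g$-lifts. Different good leaves can run through different lifts of the same $\gamma$. So the loss is bounded by $X(\gamma)$ times the number of lifts actually reached, not by $X(\gamma)$.

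At best, a crude count by $\deg(g|_{\hat{U}_I})\le 2^{\bound|I|}$ (Lemma \ref{lem:pullback-degree}) bounds the loss by a fraction of order $2^{\bound|I|}\epsilon$ of the local mass. Your own Step 1 makes the window length grow with $L$, so this forces $\epsilon$ to depend on $L$, contrary to the quantifier $\epsilon\smallgiven{\bound,\delta}$.

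The missing idea (Proposition \ref{prop:most-well-dominated}) is to split the lost weight by the position $k$ of the first piece that is not a $g$-pullback of a good arc.
\begin{itemize}
\item For $k<m$, walk backwards. A component of $\KK$ is the endpoint of at most $6\bound$ good arcs, each with only two pullbacks starting at a component of $\KK$. A bad arc has at most two pullbacks starting at a given component. This part is therefore at most $4(6\bound)^{k}\norm{X^b_I}+C(\bound,k,L_1)$, with multiplicity independent of $|I|$ and $L$.
\item For $k\ge m$, the leaves visit at least $m-1$ preimage components, and the Wanderers Theorem \ref{thm:wanderers} bounds them by $\frac4m\norm{X_I}$.
\end{itemize}
One fixes $m\approx 12/\delta$ first and then takes $\epsilon\ll\delta(12\bound)^{-m}$. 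Your Step 2 claim that restrictions of good arcs ``are again good-type arcs'' does not follow from disjointness of translation classes. It is exactly what this estimate establishes for most of the weight.

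\textbf{The Wanderers Theorem is misapplied.} It does not concern leaves that merely pass through $\KK^\per\sm\KK$ or near the critical limb; almost every restricted leaf does that harmlessly. The number of visits needed is $m\sim 1/\delta$, not $n\gg L$. The theorem must also be applied to a proper map of bounded degree whose target carries the local mass.
\begin{itemize}
\item In $\UU\sm\KK$ it gives $O(\norm{X}/n)$ plus a constant depending on $\deg g=2^\per$. Since $\norm{X^g|_I}\asymp(|I|/q)\norm{X}$, taking $q$ large makes your comparison worse, not better.
\item $U_I$ is only an isotopy class, so $\wcan(U_I\sm\KK_I)$ is not defined.
\end{itemize}

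The paper builds the infrastructure your plan lacks:
\begin{itemize}
\item It constructs localizations $V_I$ and $\hat{V}_I$, using a component $\hat{U}_I\subset U_I$ of $g^{-1}(U_I)$, giving a branched cover $g\from\hat{V}_I\to V_I$ of bounded degree.
\item It starts from Kahn's estimate $g^*\wcan(V_I\sm\KK_I)\dominates\wcan(\hat{V}_I\sm\hat{\KK}_I)-C(L_1)$ in the inclusion $\hat{V}_I\sm g^{-1}\KK_I\subset\hat{V}_I\sm\hat{\KK}_I$.
\item It chooses $I$ typical, accounting for the ghost vertical weight that localization creates.
\item It returns to $V_I$ and $\UU\sm\KK$ through the immersion via Theorem \ref{thm:immer-decomp}.
\end{itemize}
Your fallback to Theorem \ref{thm:bounded short} does not bear on any of this.
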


We will prove Proposition \ref{prop:interval-self-dominating} in Sections \ref{align:interval} through \ref{align:immersion}.

Here is our complementary combinatorial proposition, which asserts that there can be no nearly self-dominating horizontal WAD with combinatorially bounded support. 
\begin{prop} \label{prop:no-near-self-dominating}
For $\delta \smallgiven{L, \bound}$,
there are no WAD's $Z$ and $Y$ with $Y \le Z$ such that 
$g^*Z \dominates Y$ and $\norm{Z - Y} \le \delta \norm{Z}$,
and for all $\alpha \in \supp Z$,
\begin{itemize}
\item
$\alpha$ is horizontal,
\item
$\alpha$ lies in translation region, and
\item
the combinatorial length of $\alpha$ is at most $L$.
\end{itemize}
\end{prop}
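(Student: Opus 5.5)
The plan is to use the expanding valuation of Theorem \ref{thm:value} applied to the renormalization iterate. Take the $\lambda_F$-expanding valuation $\val$ constructed from eigenweights $v,w$ of $T_F$ and $T_F^\top$. Since $g=f^\per$, property (3) gives $\val(g^*Z)\le \lambda_F^{-\per}\val(Z)$, and property (2) applied to $g^*Z\dominates Y$ gives $\val(Y)\le \val(g^*Z)$. Hence
$$\val(Y)\le \lambda_F^{-\per}\val(Z).$$
The contradiction will come from a lower bound on $\val(Y)$ in terms of $\val(Z)$ obtained from Proposition \ref{prop:value estimate}, using that $Y$ carries all but a $\delta$-fraction of the mass of $Z$ and that all arcs involved are combinatorially bounded.

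The first step is to compare $\val$ with the mass $\|\cdot\|$ on arcs of combinatorial length at most $L$ in the translation region. Such an arc $\alpha$ has $\langle\alpha,\HH^\hor\rangle\le L-1$, so Lemma \ref{lem:compare} gives $\langle \alpha,\HH^\biv\rangle\le 2\bound L$. By Proposition \ref{prop:value estimate}, $\val(\alpha)$ is then comparable to $u(\alpha)^2$ up to the factor $(2\bound L)^2$, where $u=v\sqrt w$ ranges over the bivertical arcs near $\alpha$.

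The hard part is that $u$ is not uniform across the translation region, because $v$ and $w$ vary from limb to limb. Here I would use the cloning picture of Appendix \ref{app:clones}. In the translation region the Hubbard tree dynamics is a shift by one limb, so the edge graph of $\TT$ is obtained by cloning a bounded cross section about $\q$ times. Theorem \ref{thm:clone-eigen} then gives $\lambda_F\to1$ with $\lambda_F^{q}\to h(1)$, and $v,w$ change by a factor $\lambda_F^{\pm1}$ per limb. Consequently $u$ varies by at most a factor $\lambda_F^{O(L)}=1+o(1)$ over any window of $O(L)$ limbs. Across the whole translation region, however, $u$ can vary by the bounded but non-trivial factor $h(1)^{O(1)}$.

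To handle this variation, I would split $Z$ according to the limb position of its arcs into windows of length $\asymp L$. Because $g=f^\per$ with $\per\ge q-O(\bound)$, pulling back by $g$ essentially moves each arc once around the translation region, from limb $j$ to limb $j-\per$ mod $q$, passing at most $O(\bound)$ times through the critical limb. I then need to check that arcs of $Y$ dominated by pullbacks of arcs of $Z$ have bivertical intersections with weights of size $u\lambda_F^{-\per}$. Concretely, the per-arc inequalities \eqref{eq:length} and \eqref{eq:value} make each unit of dominated mass lose a factor $\lambda_F^{-\per}\approx h(1)^{-\per/q}\cdot(\text{const})$. I expect this factor to be bounded away from $1$ uniformly in $q$; securing this uniformity is the main obstacle, and I would attack it through the explicit formula $\lambda_n^n\to h(1)>1$.

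Finally, sum these bounds. This yields $\val(Y)\ge c(L,\bound)\,(1-\delta)\val(Z)$ up to the $o(1)$ errors, where $c\to1$ after the windowing. Combined with $\val(Y)\le \kappa\val(Z)$ for a uniform $\kappa<1$, this is a contradiction once $\delta\ll_{L,\bound}1$ and $q$ is large. If the windowing fails, the fallback is to choose the valuation localized to the translation region, so that Proposition \ref{prop:value estimate} holds with ratio $u_{\max}/u_{\min}=1+o(1)$ directly.
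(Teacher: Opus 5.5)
There is a genuine gap at the step meant to produce the contradiction: the lower bound $\val(Y)\ge c(L,\bound)(1-\delta)\val(Z)$ with $c\to1$.

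\textbf{Why windowing cannot give $c\to1$.} Recall that $\val(X)=\sum_\alpha X(\alpha)l(\alpha)^2$ with $l(\alpha)=\sum_{\beta\in\HH^\biv}\pair{\beta}{\alpha}l(\beta)$. Making $u=v\sqrt w$ nearly constant on windows, or using a ``localized'' valuation, does not help. Even if $u$ were exactly constant, $\val(\alpha)$ would range between $u^2$ and $(2\bound L)^2u^2$ according to how many times $\alpha$ crosses $\HH^\biv$. This $(2\bound L)^2$ gap between the two sides of Proposition \ref{prop:value estimate} has nothing to do with the variation of $u$. So any comparison of $\val(Y)$ with $\val(Z)$ through the masses $\norm{Y}\ge(1-\delta)\norm{Z}$ only yields $c\approx(2\bound L)^{-2}(u_{\min}/u_{\max})^2$. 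That is far below $1/2$, whereas $\lambda_F^{-\per}$ can be as large as $1/2$, so no contradiction follows. Indeed, with only a mass bound, $Y$ could sit on the arcs of $\supp Z$ with the fewest crossings. Your fallback of a valuation localized to the translation region is also unavailable: the expanding property (3) depends on the global eigenvectors, and in any case the $(2\bound L)^2$ factor would persist.

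\textbf{The missing idea.} Use $Y\le Z$ together with additivity of $\val$ (it is linear with nonnegative coefficients), so that the lossy two-sided estimate is applied only to the small difference $Z-Y$:
$$\val(Z-Y)\le 4\bound^2L^2u_{\max}^2\norm{Z-Y}\le 4\delta\bound^2L^2u_{\max}^2\norm{Z}\le 4\delta\bound^2L^2\frac{u_{\max}^2}{u_{\min}^2}\val(Z).$$
Here the last step uses the lower bound $u_{\min}^2\norm{Z}\le\val(Z)$, and $\pair{\alpha}{\HH^\biv}\le 2\bound L$ comes from Lemma \ref{lem:compare}. All losses now multiply $\delta$. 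So a merely bounded ratio $u_{\max}/u_{\min}\le C(\bound)$ over the whole translation region suffices once $\delta\ll_{L,\bound}1$; this is Proposition \ref{prop:eigenweights}, which you essentially derived yourself from cloning. Then
$$\val(Y)=\val(Z)-\val(Z-Y)>\tfrac12\val(Z)\ge\val(g^*Z)\ge\val(Y),$$
a contradiction. This is the paper's argument, and it needs no windowing or per-arc tracking of $g^*$.

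\textbf{Two smaller points.}
\begin{itemize}
\item The step you flag as the main obstacle, a uniform $\kappa<1$, is not the crux. The paper simply uses $\lambda_F^\per\ge2$ from Proposition \ref{prop:eigenweights}. Your route via $\lambda_n^n\to h(1)>1$ also works for large $q$.
\item Describing $g^*$ as moving arcs from limb $j$ to limb $j-\per \bmod q$ is inaccurate. The map $g=f^\per$ returns each small Julia set to itself and maps $\LL_m$ into $\LL_{[m-N,m+N]}$ (Corollary \ref{cor:bounded-gamma}). In any case, property (3) of the valuation already controls $g^*$ globally, as you noted in your first paragraph.
\end{itemize}
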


We prove	Proposition \ref{prop:no-near-self-dominating} in Section \ref{align:value}, 
after preparation in Section \ref{align:Hubbard-weights}.

Assuming Propositions \ref{prop:interval-self-dominating} and \ref{prop:no-near-self-dominating},
we can immediately prove Lemma \ref{lem:hammer}.

\begin{proof}[Proof of Lemma \ref{lem:hammer}]
Given $L$, 
we choose a $\delta \smallgiven{L, \bound}$ as in Proposition \ref{prop:no-near-self-dominating},
and $\epsilon \smallgiven{\bound, \delta}$ as in Proposition \ref{prop:interval-self-dominating}.
Given $f$ that is $(\epsilon, L)$-well-controlled,
we  let $I$ and $Y$ be then as given in Proposition \ref{prop:interval-self-dominating}.
Then $X^g|_I$ and $Y$ satisfy the hypotheses for $Z$ and $Y$, 
respectively,
in Proposition \ref{prop:no-near-self-dominating}.
This is a contradiction.
\end{proof}

\subsection{The eigenvalues and eigenweights of our Hubbard trees} \label{align:Hubbard-weights}
We now recall the the concepts of cross sections and cloning from  Appendix \ref{app:clones}
and the linear transformation $T_F$ from Section \ref{sec:hubbard trees}; we will use the results there to bound the eigenvalue $\lambda_F^\per$ of $T_F^\per$ and corresponding eigenweights of $\TT$.
We will use the bound on the eigenweights in Section \ref{align:value} to prove Proposition \ref{prop:no-near-self-dominating}.
Along the way we prove some combinatorial statements that will be used in Section \ref{align:interval}. 

\begin{lemma} \label{lem:get-around}
There is some $N = N(\bound)$ such that, when $q \largegiven{\bound}$,  $$\LL_0 \subset \bigcup_{i=1}^{N-1} f^{-i}\left(\overline{\LL_{[1, \bound]}}\right).$$ 
\end{lemma}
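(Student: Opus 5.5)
Every point $x\in\LL_0$ is carried forward by $F$, and its orbit should reach the closure of $\LL_{[1,\bound]}$ within a number of steps bounded in terms of $\bound$ alone. We then want to read off the index $i$ with $F^i(x)\in\overline{\LL_{[1,\bound]}}$. Since $\LL_0\subset\FL_0$ and $F$ maps $\FL_0$ onto $\FK$ as a double branched cover, we use the forward invariance of $\TT$ to get $F(\LL_0)\subset\TT$. So $F(x)$ lies in $\overline{\LL_j}$ for some $j$, and the question is for which $j$.

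\emph{Step 1 (the good case).} Suppose $F(x)\in\overline{\LL_j}$ with $1\le j\le\bound$. Then $i=1$ works.

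\emph{Step 2 (landing in the translation region).} Suppose $F(x)\in\LL_j$ with $\bound<j<q$. The rotation number is $1/q$, so $F$ maps $\LL_j$ into $\LL_{j+1}$ for $j\neq0$. The orbit therefore walks through the limbs $j+1,j+2,\dots$ and returns to $\LL_0$ at time $q-j$, without ever meeting $\LL_{[1,\bound]}$. This must be ruled out, and the way to do it is to show that $F(\LL_0)$ avoids $\LL_{(\bound,q)}$. The natural input is condition (3) of parabolic $\bound$-boundedness: any component of $\KK$ in $\FL_0$ is mapped into $\bigcup_{|j|\le\bound}\FL_j$. Since $\TT$ is the hull of $\KK$, the set $\LL_0\cup\{\alpha_F\}$ is the hull of $\alpha_F$ and $\KK_0$. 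Its image $F(\LL_0\cup\{\alpha_F\})$ is then contained in the hull of $\alpha_F$ and $F(\KK_0)$, provided $F$ is injective along the relevant arcs, or failing that contained in the image of that hull. In either case it lies in $\overline{\LL_{[-\bound,\bound]}}$.

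This leaves the case $F(x)\in\overline{\LL_{[-\bound,-1]}}$, where the orbit takes at most $\bound+1$ further steps before entering $\overline{\LL_0}$. The naive bound fails because the orbit can return to $\LL_0$ and repeat the process. Here we use the fact that $\FL_0$ contains at most $\bound$ components of $\KK$. We should also use that a point of $\LL_0$ which keeps returning to $\LL_{[-\bound,0]}$ without ever entering $\LL_{[1,\bound]}$ has an itinerary confined to a bounded set of limbs. By the expansion of $F$ on the combinatorial tree ($\lambda_F>1$), every edge of $\TT$ eventually covers the tree, so no edge can have its image trapped in $\LL_{[-\bound,0]}$ forever. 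The aim is to make this quantitative. Consider the finite subtree $\TT_{[-\bound,0]}$, which has at most $O(\bound)$ edges by Lemma~\ref{lem:comb size of limbs}. For $q\gg\bound$, its combinatorics under $F$ relative to the nearby limbs is independent of $q$, because the first-return structure near $\LL_0$ is determined by the parabolically bounded data. It follows that each edge reaches $\LL_{[1,\bound]}$ within $N(\bound)$ iterates. Points of $\LL_0$ that are vertices, that is components of $\KK$ or branch points, are handled similarly, using periodicity of the cycle $\KK$ together with condition (3).

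\emph{Main obstacle.} The key difficulty is uniformity in $q$: a pigeonhole argument on the edges of $\TT_{[-\bound,0]}$ must give a bound depending only on $\bound$. We plan to prove this by encoding the dynamics restricted to $\LL_{[-\bound,\bound]}$, with paths through the translation region collapsed, as one of finitely many combinatorial patterns (up to the parameter $q$). A primitivity and positive-entropy argument, as in Proposition~\ref{prop:combinatorial growth}, then excludes an invariant trapped sub-collection. The bound $N$ is the maximum over these finitely many patterns.
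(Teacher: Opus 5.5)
\textbf{There is a genuine gap, in the treatment of non-vertex points of $\LL_0$.} Your handling of the small Julia sets in $\LL_0$ is essentially right. Your Step 2 observation $F(\LL_0)\subset\overline{\LL_{[-\bound,\bound]}}$ is also correct, though what it rules out is $\LL_{(\bound,q-\bound)}$, not $\LL_{(\bound,q)}$.

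The lemma is pointwise: every $x\in\LL_0$ must enter $\overline{\LL_{[1,\bound]}}$ within $N-1$ steps. Every tool you bring for edges is a statement about images of whole edges or multi-intervals, not individual points. This includes $\lambda_F>1$, ``every edge eventually covers the tree'', primitivity of $T_F$, and Proposition~\ref{prop:combinatorial growth}. They show that \emph{some} part of $F^k(e)$ leaves $\LL_{[-\bound,0]}$, not that \emph{every} point of $e$ does.

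Your pigeonhole reduction asks for more than that. A long itinerary trapped in $\LL_{[-\bound,0]}$ yields a cycle in the transition graph among edges of $\TT\cap\LL_{[-\bound,0]}$, hence a periodic point of $\TT$ whose orbit never meets $\LL_{[1,\bound]}$. Primitivity and positive entropy of the full matrix do not exclude such a cycle; for example, an edge whose image covers itself is perfectly compatible with primitivity. So ``a primitivity and positive-entropy argument excludes an invariant trapped sub-collection'' does not go through. The reduction to finitely many patterns only makes uniform in $q$ a statement you have not proved for a single pattern.

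\textbf{The missing idea is the tree structure of $\LL_0$, not expansion.} The paper decomposes
$$\LL_0\subset C\cup\bigcup_{i\in I_0}(K_i\cup\gamma_i),$$
where:
\begin{itemize}
\item $C=F^{-1}(\LL_1)$ is the central piece, so $F(C)\subset\LL_1$;
\item $I_0$ indexes the at most $\bound$ small Julia sets in $\LL_0$;
\item $\gamma_i$ is the tree path from $K_i$ to the co-fixed point $\alpha_F'$; for $K_i\not\subset\LL_0$ it is the path from $K_i$ to $\alpha_F$.
\end{itemize}

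Since $F$ is injective off the critical point and $[\alpha_F',\alpha_F]\subset\overline C$, one gets $F(\gamma_i)\subset C\cup\gamma_{i+1}$. By induction, each point of $\gamma_i$ either drops into $C$ at some step, and then lands in $\LL_1$ at the next step, or stays on the spine $\gamma_{i+k}$ shadowing $K_{i+k}$.

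By exactly your vertex argument, some $k\le\bound^2$ has $K_{i+k}\subset\LL_{[1,\bound]}$. That argument uses condition (3), the bound of at most $\bound$ components of $\KK$ in $\FL_0$, and $K_1\subset\FL_1$. For that $k$ the whole spine $\gamma_{i+k}$ lies in $\overline{\LL_{[1,\bound]}}$. This gives $N=\bound^2+1$ directly, with no expansion, entropy, or finiteness-of-patterns argument needed.
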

\begin{proof}
Recall that $K_0$ denotes the central small Julia set, and $K_i = f^i(K_0)$.
Let $\alpha_F'$ denote the second pre-image of $\alpha_F$, and let $\LL_{(0, q)}'$ denote the component of $F^{-1}(\LL_{(0, q)})$ containing $\alpha_F'$. We set $C=F^{-1}(\LL_1)$.
For simplicity we assume that $\FK$ is locally connected, the general case can be handled similarly.

For $K_i \subset \LL_{(0, q)}$, let $\gamma_i$ be the path in $\TT$ from $K_i$ to $\alpha_F$.
For $K_i \subset \LL_{(0, q)}'$, let $\gamma_i$ be the path in $\TT$ from $K_i$ to $\alpha'$.
For $K_i\subset C$, let $\gamma_i = \emptyset$. 
Thus $f(\gamma_i) \subset C \cup \gamma_{i+1}$ for all $i$.
Therefore, by induction,
for any $z \in \gamma_i$,
we have 
\begin{equation} \label{eq:get-around-1}
\gamma_i \subset F^{-k}(\gamma_{i+k}) \cup \bigcup_{j=1}^{k} F^{-j}(C)\subset F^{-k}(\gamma_{i+k}) \cup \bigcup_{j=2}^{k+1} F^{-j}(\LL_1)
\end{equation}
for all $k \ge 1$. 

For some set $I_0\subset \Z/\per\Z$ of cardinality at most $\bound$, we can write 
$$\LL_0 \subset C\cup \bigcup_{i \in I_0} K_i \cup \gamma_i.$$
We leave it as an exercise for the reader to show that, 
for each $i \in I_0$,
there is an $n_i \le \bound^2$ such that $F^{n_i}(K_i) \subset L_1 \cup \ldots \cup L_{\bound}$.
The lemma then follows from \eqref{eq:get-around-1},  taking $N = \bound^2 + 1$.
\end{proof}

\begin{cor} \label{cor:cross-section}
Suppose that $F$ is parabolically $\bound$-bounded, 
with rotation number $1/(2 \bound + 2)$. 
Then $E(\TT) \cap \LL_{b+1}$ is a cross section for the push-forward $T_F$, for paths of length $N = N(\bound)$. 
\end{cor}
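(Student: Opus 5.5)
We need to prove Corollary \ref{cor:cross-section}: with $\bound$ as above and rotation number $1/(2\bound+2)$ (so in particular $q\geq 2\bound+2$, and the limb $\LL_{\bound+1}$ lies strictly in the translation region), the set $E := E(\TT)\cap\LL_{\bound+1}$ is a cross section for the directed graph of $T_F$. That graph has vertex set $E(\TT)$ and an edge $e\to e'$ whenever $f\circ\iota^{-1}(e)$ covers $e'$. Two properties must be checked. First, every directed path of length $N-1$ meets $E$. Second, no edge goes from $E$ to itself.

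The second property is immediate from the limb dynamics recorded in Section \ref{sec:limbs}. Since $\bound+1\neq 0$, $F$ maps $\FL_{\bound+1}$ homeomorphically onto $\FL_{\bound+2}$. By parabolic boundedness, the part of $\TT$ in $\LL_{\bound+1}$ is carried onto edges of $\TT$ in $\LL_{\bound+2}$, exactly as in Proposition \ref{prop:improper intervals}. Because $q > \bound+2$, these limbs are distinct, so every edge out of a vertex of $E$ lands outside $E$.

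For the first property, I would reformulate it in terms of preimages. A directed path $e_0\to e_1\to\dots\to e_{N-1}$ with $e_k=f^k(\text{part of }e_0)$ avoids $E$ exactly when some point of $e_0$ has its first $N-1$ forward images all outside $\LL_{\bound+1}$. So it suffices to show that every edge $e_0$ satisfies
$$e_0\subset\bigcup_{i=0}^{N'} f^{-i}\bigl(\overline{\LL_{\bound+1}}\bigr)$$
for a uniform $N'$, in the strong form that each point (and hence each subarc path in the graph) hits $\LL_{\bound+1}$ within $N'$ steps. I would then split by limbs.

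\begin{itemize}
\item \emph{Edge in $\LL_j$ with $1\leq j\leq \bound+1$.} Its images travel $\LL_j\to\LL_{j+1}\to\cdots$ homeomorphically. They reach $\LL_{\bound+1}$ after $\bound+1-j\leq\bound$ steps.
\item \emph{Edge in $\LL_j$ with $\bound+1<j<q$.} The images march through $\LL_{j+1},\dots,\LL_{q-1}$ and then into $\LL_0$ after $q-j$ steps. Parabolic boundedness forces the edges in $\LL_j$ for $\bound<j<q-\bound$ to be the translation-region edges. The graph is the clone picture of Appendix \ref{app:clones}, so these edges sit in a chain of cloned copies.
\item \emph{Edge in $\LL_0$.} Lemma \ref{lem:get-around} shows every point of $\LL_0$ hits $\overline{\LL_{[1,\bound]}}$ within $N(\bound)-1$ steps. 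The first case then adds at most $\bound$ more steps.
\end{itemize}

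The main obstacle is the long chain in the second case. Under the mild chosen rotation number $1/(2\bound+2)$, $q$ is bounded by $2\bound+2$, so this chain has length at most $q\leq 2\bound+2$. With this bound, $N=N(\bound)+2\bound+3$ is uniform and the argument closes. I also need to handle $\alpha_F$ and the boundary point $\overline{\LL_{\bound+1}}\setminus\LL_{\bound+1}$ carefully, since Lemma \ref{lem:get-around} uses closures. I would deal with this by noting that edges are open arcs not containing $\alpha_F$. An edge hitting $\overline{\LL_{[1,\bound]}}$ therefore genuinely maps to an edge of $\LL_{[1,\bound]}$ or to the edge adjacent to $\alpha_F$. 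If needed, I would absorb this boundary case by an extra step, since the edge from $\alpha_F$ into $\LL_j$ maps to the one into $\LL_{j+1}$.
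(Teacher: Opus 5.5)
Your argument is correct and is essentially the paper's proof. Because $q = 2\bound+2$, every point of an edge rotates into $\LL_0$ within boundedly many steps. Lemma \ref{lem:get-around} then carries it into $\LL_{[1,\bound]}$, and at most $\bound$ further rotations land it in $\LL_{\bound+1}$; the paper takes $N = \bound^2+2\bound+3$.

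You also make explicit three things the paper leaves implicit: the correspondence between directed paths and point orbits, the closure issue at $\alpha_F$, and the check that no edge goes from $E$ to itself. The remark about cloned chains in your second case is unnecessary, and it would be circular since the cloning picture relies on this corollary, but your final bound does not use it.
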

\begin{proof}
We take $N = \bound^2 + 2 \bound + 3$. 
For any point $z \in \TT$, 
$w:=F^k(z) \in \LL_0$ for some $k \le 2 \bound + 1$,
and,
by Lemma \ref{lem:get-around},
$F^l(w) \in \LL_1 \cup \ldots \cup \LL_{\bound}$ for some $l \le \bound^2 + 1$. 
Therefore $F^m(z) \in L_{\bound + 1}$ for some $m \le \bound^2  + 2\bound + 2 < N$. 
\end{proof}

\begin{cor} \label{cor:bounded-gamma}
For suitable $N = N(\bound)$, 
and each $m \in  (N+b + 1, q-N-b-1)$, 
we have $f^\per(\LL_m) \subset \LL_{[m-N, m+N]}$. 
\end{cor}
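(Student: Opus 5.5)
The plan is to track the orbit of a point of $\LL_m$ under the $\per$ iterates of $f$, using that in the translation region $f$ acts as a shift of limbs, $f(\LL_j)\subset \LL_{j+1}$ for $j\neq 0$, while each full passage through $\LL_0$ brings the point back to within a bounded number of limbs of its starting position. Fix an edge $e\subset \LL_m$ with $m$ in the stated range and a point $z\in e$. Iterating the shift, $f^{q-m}(z)\in \LL_0$ after exactly $q-m$ steps; these steps move through limbs $\LL_{m+1},\dots,\LL_{q-1}$ and never leave the translation region until the last few.

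The key step is to show that each visit to $\LL_0$ is short and returns near $\LL_1$. Once $w\in\LL_0$, Lemma \ref{lem:get-around} gives some $l\le N_0:=N(\bound)$ with $f^l(w)\in \overline{\LL_{[1,\bound]}}$; moreover, while in $\LL_0$ the orbit stays in $\bigcup_{|j|\le\bound}\FL_j$ by condition (3) of parabolic $\bound$-boundedness, so the excursion changes the limb index by at most $O(\bound)$ per unit time. Combining the two, one cycle "start in $\LL_j$, shift to $\LL_0$, return to $\LL_{[1,\bound]}$" takes time $q-j+O(N_0)$ and produces an orbit point whose limb index is $O(N_0+\bound)$.

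Next, count returns during $\per$ steps. The number of components of $\KK$ is $\per$, and these are distributed with at most $\bound$ per limb; the orbit of $K_0$ under $f$ performs $r$ full revolutions, with $\per = rq + O(r N_0)$ (each revolution spends $q$ steps shifting plus a bounded excursion). Equivalently, the rotation structure forces the orbit of $z$ to make the same number $r\le\bound$ of revolutions, each lasting $q+O(N_0)$ steps. Thus after $\per$ steps the accumulated index drift relative to a pure shift by $rq\equiv 0$ is $O(rN_0)=O(\bound N_0)$, giving $f^\per(z)\in \LL_{[m-N,m+N]}$ with $N=O(\bound N_0)$. The hypothesis $m\in(N+\bound+1,q-N-\bound-1)$ ensures that the final segment of the orbit is in the shifting regime, so no wrap-around ambiguity arises and the endpoint limb is determined by the drift count.

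I expect the main obstacle to be making the revolution count rigorous: showing that every point of $\TT$ in the translation region, not just the points of $\KK$, completes exactly the same number of passages through $\LL_0$ in time $\per$, with each passage delayed by a uniformly bounded amount. First I would try to deduce this from the fact that $\TT$ is the hull of $\KK$, so each edge in $\LL_m$ lies between components $K_i$, $K_{i'}$ in limbs $m, m\pm O(1)$, whose $\per$-th images are $K_i$, $K_{i'}$ themselves. Since $f^\per$ maps the tree path between them into the hull of their images together with bounded preimage pieces near $\LL_0$, as in \eqref{eq:get-around-1}, and since the translation-region limbs are carried monotonically, this would give the claimed containment.
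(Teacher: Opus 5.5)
Your main line is correct, and it is the argument the paper leaves implicit (the corollary is stated without proof, right after Lemma~\ref{lem:get-around}). Points of $\LL_m$ shift through the translation limbs. Each passage through $\LL_0$ is an excursion of length at most $N_0$ ending in $\LL_{[1,\bound]}$. Only boundedly many revolutions fit into time $\per$.

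The step you single out as the main obstacle needs no new input, and the hull fallback you propose would not supply it. The map $f^\per$ sends the arc between two $f^\per$-invariant components onto a path joining them that can fold back a long way. Hulls of images give no control over that backtracking; your claim that the image lies in the hull plus bounded pieces near $\LL_0$ is essentially the statement itself. Close the step pointwise instead:
\begin{itemize}
\item Pick a point $x$ in a component $K\subset\FL_m$ of $\KK$. Such a $K$ exists because the orbit of $K_0$ sweeps through every translation limb, and $f^\per(K)=K$.
\item The orbits of $z$ and $x$ both start their first excursion at time $q-m$.
\item Thereafter each revolution of either orbit lasts $q+e-j$ steps, with excursion length $e\le N_0$ and exit index $1\le j\le\bound$.
\item The orbit of $x$ starts only $r\le\bound$ excursions during $[0,\per]$, since each one is preceded by a visit to $\FL_{q-1}$ at a different component of the cycle.
\end{itemize}
Hence the $k$-th excursion start times of the two orbits differ by at most $(k-1)(N_0+\bound)$ for $k\le r$. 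At time $\per$ the orbit of $x$ is back in $\FL_m$. It is roughly $m$ steps after its $r$-th excursion start, and exactly $q-m$ steps before the next one. So for $m$ in the stated range, the orbit of $z$ is at time $\per$ in the translation stretch following its own $r$-th excursion, at index $m+O(\bound(N_0+\bound))$. That $z$ makes the same number of revolutions is an output of this count, not something to establish beforehand.

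Two smaller points. First, condition (3) of parabolic $\bound$-boundedness concerns only components of $\KK$, so by itself it does not confine the orbit of an arbitrary $w\in\LL_0$. You do not need it. If $f^i(w)\in\LL_j$ with $\bound<j<q$ before the landing time $l\le N_0$ of Lemma~\ref{lem:get-around}, the orbit needs $q-j$ further steps just to return to $\LL_0$, so $j\ge q-N_0$.

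Second, the landing set in Lemma~\ref{lem:get-around} is the closure $\overline{\LL_{[1,\bound]}}$, which contains the fixed point $\alpha_F$, and landing there really happens. For the primitive types of period $q+1$ in the $1/q$-limb one has $\alpha_F'\in\TT$. The arc from $\alpha_F$ to the component of $\KK$ in $\FL_m$ then contains a point sent to $\alpha_F$ after $q-m+1<\per$ steps. For such $z$ your shift step breaks down and $f^\per(z)=\alpha_F\notin\LL_{[m-N,m+N]}$. So what the argument proves, and what is true, is $f^\per(\LL_m)\subset\overline{\LL_{[m-N,m+N]}}$. This is a defect of the statement rather than of your approach, and the closed version is what the later construction of $\hat U_I$ uses.
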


\begin{prop} \label{prop:eigenweights}
{Suppose that $F$ is parabolically $\bound$-bounded. 
Let $v$ and $w$ be eigenweights for $T_F$ and $T_F^\top$ respectively with eigenvalue $\lambda_F$.
For any $\alpha, \beta \in E(\TT)$ that lie in the translation region, 
$v_f(\alpha)/v_f(\beta) \le C(\bound)$ and $w_f(\alpha)/w_f(\beta) \le C(\bound)$. Moreover, $2 \le \lambda_f^\per \le C(\bound)$.}
\end{prop}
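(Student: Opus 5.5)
We will deduce both statements from the cloning Theorem \ref{thm:clone-eigen}, applied to the directed graph $A$ whose vertices are the edges $E(\TT)$ and whose edges record when one edge of $\TT$ covers another under $f\circ\iota^{-1}$. The pushforward matrix of $A$ is $T_F$. The key structural point is that, for parabolically $\bound$-bounded $F$ with rotation number $1/q$, the combinatorial Hubbard tree in the translation region $\LL_{(\bound,q-\bound)}$ consists of $q-2\bound-1$ isomorphic copies of one bounded-size piece (the tree in a single limb). The map $F$ carries each copy homeomorphically onto the next, since $\TT'\cap\LL_j=\TT\cap \LL_j$ there (as in the proof of Proposition \ref{prop:improper intervals}). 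Thus $A$ is obtained from the graph $A^{(0)}$ of the ``model'' map with rotation number $1/(2\bound+2)$, which has the same combinatorics near the critical limb, by cloning the cross section $E=E(\TT)\cap\LL_{\bound+1}$ from Corollary \ref{cor:cross-section}. The number of clones is $n\approx q-2\bound-2$. We will check that the model graph $A^{(0)}$, together with $E$ and the cross-section length $N=N(\bound)$, ranges over finitely many possibilities depending only on $\bound$. This holds because each limb contains at most $\bound$ small Julia sets, and $F$ sends the $\KK$-components of $\FL_0$ into $\bigcup_{|j|\le\bound}\FL_j$.

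First we apply Theorem \ref{thm:clone-eigen}. For $q\largegiven{\bound}$, $\lambda_F=\lambda_n$ is the unique solution of $h(\lambda)=\lambda^n$, where $h$ depends only on the model. We also have $\lambda_n^n\to h(1)>1$, and, from the proof, $\lambda_n\in(1,2)$ with $\lambda_n-1\sim \log h(1)/n$.

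Second, we convert this into the bound on $\lambda_F^\per$. Note that $\per\ge q$, and every small Julia set lies in some limb, so $\per\le \bound q$. Hence
\[
\lambda_F^\per\le \lambda_n^{\bound q}\le (\lambda_n^n)^{\bound q/n}\le C(\bound),
\]
since $q/n$ is bounded and $\lambda_n^n\to h(1)\le C(\bound)$. For the lower bound, first use $\lambda_F^\per\ge \lambda_n^{q}\ge \lambda_n^n\ge h(1)^{1-o(1)}$. This alone may not reach $2$. To fix it we will pass to a bounded power, or we use directly that $T_F^\per$ is a nonnegative integer primitive matrix. The cleaner route is to note that in $\per$ iterates the critical edge near $K_0$ is covered at least twice, because $f^\per$ has degree $2$ on a neighborhood of $K_0$, so the edge adjacent to $K_0$ covers itself twice under $T_F^\per$. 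Combined with primitivity, this gives $\lambda_F^\per\ge 2$. For bounded $q$ there are finitely many combinatorics, and both bounds hold with a constant $C(\bound)$.

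Third, the eigenweights. Theorem \ref{thm:clone-eigen} says $v_n|_{E_0}\to \hat v_1$, a positive vector on finitely many possible models, so its entries have ratios bounded by $C(\bound)$. Along a clone chain the weights are $v(y)=\lambda_n^k v(x)$ with $k\le n$, so ratios change by at most $\lambda_n^n\le C(\bound)$. Every translation-region edge is a clone of an edge of $E$, so $v(\alpha)/v(\beta)\le C(\bound)$. For $w$ we run the same argument with the transpose graph: reversing edges, the cloned set is still a cross section and the chain structure is the same.

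The main obstacle is the first step: formalizing that $A$ is exactly a cloning of a model graph from a finite list. One must handle carefully the edges adjacent to $\alpha_F$ and the edges in limbs $\LL_j$ with $j\le\bound$ or $j\ge q-\bound$, whose images are not shifts. The primitivity hypothesis on all $T_n$ also needs checking. The natural way to get it is to realize each $A_n$ itself as the tree graph of a parabolically $\bound$-bounded map with rotation number $1/(n+2\bound+2)$, using the limb-insertion surgery.
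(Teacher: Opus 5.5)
Your upper bound $\lambda_F^\per\le C(\bound)$ and your eigenweight ratio bounds follow the paper's argument. Both clone the cross section $E(\TT)\cap\LL_{\bound+1}$ of one of finitely many models with $q=2\bound+2$, apply Theorem \ref{thm:clone-eigen}, use $\per\le\bound q$, and compare clones by factors $\lambda_F^k$ with $|k|\le q$, passing to the transpose for $w$.

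The gap is the lower bound $\lambda_F^\per\ge 2$. Your first route gives only $\lambda_F^\per\ge\lambda_n^n\to h(1)>1$, that is, a uniform $c(\bound)>1$, as you note. Your ``cleaner route'' rests on the claim that an edge adjacent to $K_0$ covers itself twice under $T_F^\per$, and this claim is false. The degree-two folding of $F^\per$ on $K_0$ does not produce a diagonal entry $\ge 2$.

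A counterexample is the period-$3$ primitive (airplane) renormalization, which is parabolically $2$-bounded with $q=2$. Here $\TT$ is a real segment with vertices $K_1<\alpha_F<K_0<K_2$ and edges $e_1=[K_1,\alpha_F]$, $e_2=[\alpha_F,K_0]$, $e_3=[K_0,K_2]$. The matrix $T_F$ acts by $e_1\mapsto e_2+e_3$, $e_2\mapsto e_1$, $e_3\mapsto e_1+e_2$. Then $F^3(e_2)$ traverses $e_2,e_1,e_1$ and $F^3(e_3)$ traverses $e_2,e_1,e_1,e_2,e_3$, so all diagonal entries of $T_F^3$ equal $1$. The folding appears only as two different edges ($e_2$ and $e_3$) both beginning their images along $e_2$. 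That is a column phenomenon, and a column sum $\ge 2$ together with primitivity does not force spectral radius $\ge 2$: the matrix $\begin{pmatrix}0&1\\1&1\end{pmatrix}$ has spectral radius equal to the golden mean. (A diagonal entry $\ge 2$ would have sufficed even without primitivity, since $\rho(M)\ge M_{ii}$ for $M\ge 0$; the trouble is the premise.)

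The missing idea is the paper's row-sum argument.
\begin{enumerate}
\item Each edge $e$ is carried by $F^\per$ onto an edge path of the tree $\TT$ with the same endpoints as $e$, so this path must traverse $e$.
\item The path cannot be $e$ alone. Otherwise $F^{k\per}(e)=e$ for all $k$, contradicting primitivity of $T_F$.
\item Hence every row of $T_F^\per$ sums to at least $2$.
\item The Perron--Frobenius eigenvalue of a nonnegative matrix is at least its minimal row sum, which gives $\lambda_F^\per\ge 2$.
\end{enumerate}
Your weaker bound $c(\bound)>1$ would suffice for the later use in Proposition \ref{prop:no-near-self-dominating}, after replacing $\frac12$ there by $1/c(\bound)$. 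It does not prove the proposition as stated.
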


\begin{proof}
There are finitely many parabolically $\bound$-bounded renormalization types with $q = 2 \bound + 2$;
by Lemma \ref{lem:get-around},
$\TT \cap L_{\bound + 1}$ is a cross section for the push-forward map on $E(\TT)$;
the dynamics on $\TT$ for any $q > 2\bound +2$ is then the result of cloning of $\TT \cap L_{\bound + 1}$. 
Therefore,
by Theorem \ref{thm:clone-eigen},
$\lambda_F^{q - (2 \bound + 2)}$ converges, and hence is bounded, for each series of clones,
and therefore $\lambda_F^\per$ is bounded as well, since $\per \le \bound q$. 
On the other hand,
$\lambda_F^\per \ge 2$ for any prime and primitive renormalization type,
because each edge $e$ of $\TT$ is mapped by $F^p$ to a sequence of edges in $\TT$ with the same edges as $e$, and the sequence of edges cannot be just $e$, so therefore it has length at least 2.

Finally,
for $\alpha, \beta \in \TT \cap L_{\bound+1}$, 
we have $v(\alpha)/v(\beta)$ converges as $q \to \infty$ in each sequence of clones, and hence it is bounded.
On the other hand if $\alpha, \beta$ lie in the translation region, and are clones of the same edge, 
then $v(\alpha)/v(\beta) = \lambda_F^k$ for some $|k| \le q$, and is hence bounded.
The same applies to $w(\alpha)/w(\beta)$,
because the transpose of the clones are the clones of the transpose.
\end{proof}

\subsection{No near self-domination} \label{align:value}

We can now use the results of Section \ref{align:Hubbard-weights} and the theory of values from Section \ref{sec:value} to prove Proposition \ref{prop:no-near-self-dominating}.
\begin{proof}[Proof of Proposition \ref{prop:no-near-self-dominating}]

By Theorem \ref{thm:value}, there is an $\lambda_F$-expanding value $\val$ for $F$.

Suppose that $Z$ and $Y$ satisfy the hypotheses of the Proposition.
As $\lambda_F^\per \ge 2$, we have
\begin{equation} \label{eq:vals}
\frac12 \val(Z) \ge \val(g^*Z)\ge \val(Y).
\end{equation}

On the other hand,
our definition of combinatorial length and Lemma \ref{lem:compare} imply

$\pair{\alpha}{\hperp}
 \leq 2 \bound L$ 
for all $\alpha \in \supp Z$. 
Hence,  by Proposition \ref{prop:value estimate}, we can choose $\val$ so that
\begin{align*}
\val(Z - Y) 
&\le 4 \bound^2 L^2 u_{\max}^2 \norm{Z - Y} \\
&\le 4 \delta  \bound^2 L^2  u_{\max}^2 \norm{Z} \\
& \le 4\delta  \bound^2 L^2 \frac{u_{\max}^2}{u_{\min}^2} \val(Z).
\end{align*}
where  $v$ and $w$ are eigenweights for $T_F$ and $T_F^\top$ with eigenvalue $\lambda_F$ and $u_{\min}$ and $u_{\max}$ are minimum and maximum respectively of $v(\alpha)\sqrt{w(\alpha)}$ over all $\alpha$ in the translation region.
By Proposition \ref{prop:eigenweights},
\begin{equation}
 \frac{u_{\max}^2}{u_{\min}^2} \le \max_{\alpha, \beta} \frac{v_f(\alpha)^2 w_f(\alpha)}{v_f(\beta)^2 w_f(\beta)} \le C(\bound).
\end{equation}
When
\begin{equation} \label{eq:delta-control}
4 \delta \bound^2 L^2 \frac{u_{\max}^2}{u_{\min}^2} < \frac12, 
\end{equation}
we obtain $\val(Z - Y) < \frac12 \val(Z)$, a contradiction to \eqref{eq:vals}.
\end{proof}

\subsection{The subsurface and WAD for an interval} \label{align:interval}
We now turn to the proof of Proposition \ref{prop:interval-self-dominating}, which will occupy the rest of this section.
Because the full return map $g$ has uncontrolled degree, 
we need to lift $g$ to a subsurface cover (defined later in this subsection) where we can then control the degree. 
In this subsection, 
we describe how to effectively restrict $g$ to an interval and then how to lift that restriction to the subsurface cover. 
We begin this subsection with a combinatorial discussion where we can think of $f$ (and hence $g$) as a quadratic-like map, or even a quadratic polynomial, as in Section \ref{align:Hubbard-weights}. We then move to a more geometric discussion, where we are compelled to consider our given $\psi$-quadratic-like map $F$ and its $\per\thh$ iterate $G = (g, \jj)$. 

For any interval $I\subset \R$, we recall from Section \ref{sec:limbs}
that $U_I$ is a neighborhood of the limbs $\FL_I$ up to isotopy rel $\KK$. 
We observe that,
for any subsurface $U$ of $\UU$ defined rel $\KK$ (and whose boundary is disjoint from $\KK$), 
any component of $g\invp U$ is defined rel $g\invp \KK$, and hence defined rel $\KK$. 
This then determines the combinatorics of the lifts to the associated covers. 

If $I\subset \R$ is an interval of  length at least $2r+1$, let $\shrink I r$ denote the interval with the intervals of length $r$ on each side of $I$ removed; 
if the length of $I$ is smaller we let $\shrink I r$ be the empty interval.
If $K_i$ is a small Julia set, and $K_i \subset L_m$, we let $\NN_{r, s}(K_i) = \bigcup_{j=m-r}^{m+s} L_j$. 
For any subset $Z$ of a Riemann surface (or any Hausdorff topological space),
we let $Z|_a$ be the component of $a \in Z$ that contains $a$;
we have the same definition for $Z|_A$, where $A\subset Z$ is connected. 

We begin with the following two combinatorial lemmas.
\begin{lemma}
There is a component $\hat U_I$ of $g\invp {U_I}$ that contains $U_{\shrink I r}$, for $r = r(\bound )$. 
We also have $\hat U_I \subset U_I$ (in the sense of subsurfaces rel $\KK$). 
\end{lemma}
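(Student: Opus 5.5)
The plan is to exploit the near-translational dynamics of $g = f^\per$ on the limbs in the translation region, recorded in Corollary \ref{cor:bounded-gamma}: for $m$ in the middle range, $f^\per(\LL_m)\subset \LL_{[m-N,m+N]}$ with $N=N(\bound)$. I will take $r = N+\bound+1$, possibly enlarged by a bounded amount, and reduce the lemma to two facts. First, $g$ maps $U_{\shrink I r}$ into $U_I$. Second, $U_{\shrink I r}$ is connected and meets $g^{-1}(U_I)$ in a single component. Granting both, $U_{\shrink I r}$ lies in one component of $g^{-1}(U_I)$, and that component will be $\hat U_I$.

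For the first fact, I would argue limb by limb. Every $m\in\shrink I r$ lies at distance at least $r$ from the ends of $I$, so Corollary \ref{cor:bounded-gamma} gives $g(\LL_m)\subset \LL_{[m-N,m+N]}\subset \LL_I$. The same statement holds for the small Julia sets $K_i\subset \FL_m$, since $g$ maps each component of $\KK$ to itself. This also holds for the full limbs $\FL_m$, not only for $\LL_m$, because in this range $f^{-1}(\TT)\cap\FL_j=\TT\cap\FL_j$ (Proposition \ref{prop:improper intervals}). Thus every component of $g^{-1}(\KK)$ inside $\FL_{\shrink I r}$ is a component of $\KK$ carried into $\FL_I$, which means $\FL_{\shrink I r}\subset g^{-1}(\FL_I)$. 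I would then choose the neighborhood $U_{\shrink I r}$ thin enough, up to isotopy rel $\KK$, that it sits inside a neighborhood of this preimage. The connected set $U_{\shrink I r}$ then lies in a single component of $g^{-1}(U_I)$. The choice of component is essentially forced: since $\alpha_F\in\overline{\FL_m}$ is fixed, $\alpha_F$ belongs to that component.

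For the containment $\hat U_I\subset U_I$ rel $\KK$, I would describe $\hat U_I$ combinatorially. It is a neighborhood of the component of $g^{-1}(\overline{\FL_I})$ that contains $\alpha_F$. By the translation structure, this component consists of $\FL_{I'}$ for an interval $I'$ with $\shrink I r\subset I'$ and $I'\subset I$ up to the bounded shift $N$, together with preimage decorations attached inside $\FL_{I'}$ and near $\FL_0$. Those decorations are components of $g^{-1}(\KK)$, not of $\KK$. As subsurfaces rel $\KK$ they therefore add no new punctures, and the only components of $\KK$ that $\hat U_I$ encloses lie in $\FL_I$.

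The main obstacle is this last containment when $I$ runs close to the critical limb $\FL_0$ or close to the boundary of the translation region. There $g$ stops being a translation: $\FL_0$ maps onto all of $\FK$, so a component of $g^{-1}(U_I)$ could wrap around and enclose components of $\KK$ outside $\FL_I$. My first attempt would be to restrict to intervals $I$ inside the translation region, as all later applications do. I would then use Lemma \ref{lem:get-around}, or its bounded-orbit form in Corollary \ref{cor:cross-section}, to show that any point of $\FL_0$ whose $g$-image lands in $\FL_I$ lies in a preimage branch disjoint from the $\alpha_F$-component. If that fails, I would enlarge $r$ by $O(\bound^2)$ so that $\shrink I r$ stays out of reach of these critical branches.
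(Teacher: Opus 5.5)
Your first half is essentially the paper's argument. Corollary~\ref{cor:bounded-gamma} gives $g(\TT\cap U_{\shrink I r})\subset U_I$, so the component of $g^{-1}(U_I)$ through $\alpha_F$ contains $\TT\cap U_{\shrink I r}$, and hence contains $U_{\shrink I r}$ up to isotopy rel $\KK$.

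You should delete the upgrade to full limbs, because it is false. Since $f^{q-m}(\FL_m)=\FL_0$, $f(\FL_0)=\FK$ and $\per\ge q$, we get $g(\FL_m)=\FK$. So $\FL_{\shrink I r}\not\subset g^{-1}(\FL_I)$, and $g^{-1}(\KK)\cap\FL_m$ has many components that are not components of $\KK$. Proposition~\ref{prop:improper intervals} only concerns the trees $\TT'$ and $\TT$. You need nothing beyond the tree part and the isotopy rel $\KK$.

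The genuine gap is the containment $\hat U_I\subset U_I$.

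\begin{itemize}
\item Your description of the $\alpha_F$-component of $g^{-1}(\overline{\FL_I})$ as ``$\FL_{I'}$ plus decorations'' fails for the same reason: that component contains no full limb.
\item Your claim that $\hat U_I$ encloses no component of $\KK$ outside $\FL_I$ is asserted, not proved. The shift is real. Near $\alpha_F$, $g$ rotates the limbs by $\per \bmod q$, which need not vanish, so $\hat U_I$ genuinely enters limbs outside $I$. The remark that decorations are components of $g^{-1}(\KK)$ says nothing about the components of $\KK$ in those limbs.
\item Even a correct count of enclosed components would not give containment as subsurfaces rel $\KK$. A disk enclosing only components of $\KK_I$ can still wind around other components and fail to be isotopic rel $\KK$ into $U_I$.
\item Your fallback cannot help. $\hat U_I$ is the component of $g^{-1}(U_I)$ through $\alpha_F$, which is determined by $I$ alone, so enlarging $r$ changes nothing. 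Also, $\FL_0$ is not where the difficulty lies.
\end{itemize}

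The paper instead splits the pullback by $g$ into $\per$ pullbacks by $f$ along the backward orbit of a single $K_i\subset\FL_{\shrink I r}$. At each step it compares $f^{-1}(\NN_{r,s}(K_i))|_{K_{i-1}}$ with the limb neighborhood $\NN_{r,s}(K_{i-1})$, and it inducts back around the cycle to $K_i$.

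Alternatively, you can close the gap with a fact you already used: each component of $\KK$ is $g$-invariant, so $K\subset\hat U_I$ forces $K=g(K)\subset U_I$. This gives $\KK\cap\hat U_I\subset\KK_I$. For the shape, proceed as follows.

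\begin{enumerate}
\item The critical values of $g|_{\hat U_I}$ lie on the critical orbit, hence in $\KK\cap U_I=\KK_I\subset\overline{\LL_I}$.
\item Therefore $P=\hat U_I\cap g^{-1}(\overline{\LL_I})$ is a connected full subset of $\FK$ through $\alpha_F$. It contains all of $\hat U_I\cap g^{-1}(\KK)$.
\item So $\hat U_I$ is isotopic rel $g^{-1}(\KK)$, and hence rel $\KK$, to a thin neighborhood of $P$.
\item For each $j$, the piece $P\cap\overline{\FL_j}$ is connected. For $j\notin I$ it meets no component of $\KK$, so its thin neighborhood retracts to a small disk around $\alpha_F$ rel $\KK$.
\item What remains is a thin neighborhood of a subset of $\overline{\FL_I}$, which lies inside $U_I$.
\end{enumerate}
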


\begin{proof}
Let $r$ be the $N=N(\bound)$ defined as in Corollary \ref{cor:bounded-gamma}.
Then by that Corollary,
$g(\TT \cap U_{\shrink I r}) \subset U_I$. 
Hence there is a component $\hat U_I$ of $g\invp{U_I}$ that contains $\TT \cap U_{\shrink I r}$; 
up to isotopy rel $\KK$, this component contains $U_{\shrink I r}$. 

On the other hand, we observe that, for any $K_i$,
$$ f\invp{\NN_{r, s}(K_i)}|_{K_{i-1}} \subset \NN_{r, s}(K_{i-1}).$$
It follows by induction that $f^{-j}(\NN_{r, s}(K_i))|_{K_{i-j}} \subset \NN_{r, s}(K_{i-j})$, 
and hence that $f^{-\per}(\NN_{r, s}(K_i))|_{K_i} \subset \NN_{r, s}(K_i)$. 
This implies that $\hat U_I \subset U_I$. 
\end{proof}

\begin{lemma} \label{lem:pullback-degree}
Suppose $f$ is an $\per$-renormalizable quadratic polynomial (or ($\psi$-)quadratic-like) map. Suppose that $U$ is a simply connected subsurface of the range of $f$ that is simply connected such that $\dd U$ is disjoint from $\KK$, and $V$ is any component of $f^{-\per}(U)$. Then the degree of $f^\per|_V$ is at most $2^m$, where $m$ is the number of components of $\KK \cap U$.
\end{lemma}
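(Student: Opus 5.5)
The plan is to factor $f^\per|_V$ into its $\per$ single steps and count the steps at which the local degree actually doubles. Since $f$ has a single critical point $c$ (of local degree $2$), each step $f\colon V_{j}\to V_{j+1}$ has degree $2$ if $c\in V_j$ and degree $1$ otherwise. Here $V_\per = U$, $V_0 = V$, and $V_j$ is the component of $f^{-(\per-j)}(U)$ containing $f^{j}(V)$.

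First I check that every $V_j$ is simply connected. Induct downward from $V_\per=U$: a component of the preimage of a simply connected domain under a branched covering with at most one critical point is simply connected. For a polynomial this follows from the maximum principle (preimages of full sets are full); for the \pql case, work with the quadratic-like restriction from Theorem \ref{thm:induced quadratic-like map}, or lift through $\iota$, which is an immersion. Therefore
$$\deg(f^\per|_V)=\prod_{j=0}^{\per-1}\deg(f|_{V_j}) = 2^{\#\{j: c\in V_j\}}.$$

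The key step is to bound $\#\{j : c\in V_j\}$ by $m$. Because $\dd U$ is disjoint from $\KK$, $\dd V_j$ is disjoint from $f^{-(\per-j)}(\KK)\supset\KK$, since $\KK$ is forward invariant. Hence each component of $\KK$ lies either inside $V_j$ or outside it. Now suppose $c\in V_j$. Then $K_0\subset V_j$, because $K_0$ is connected, contains $c$, and does not meet $\dd V_j$. Applying $f^{\per-j}$ gives $K_{\per-j}=f^{\per-j}(K_0)\subset f^{\per-j}(V_j)= U$. The indices $j\in[0,\per)$ are distinct, so the sets $K_{\per-j}$ are distinct components of $\KK$ by primitivity (the $K_i$ are pairwise disjoint for $0\le i<\per$). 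This map $j\mapsto K_{\per-j}$ is therefore an injection from the critical steps into the components of $\KK\cap U$, which gives $\#\{j: c\in V_j\}\le m$ and the bound $2^m$.

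The main obstacle is the boundary bookkeeping in the \pql setting. There one must make sure that "$\dd U$ disjoint from $\KK$" passes to the pulled-back domains $V_j$ inside $\UU^{\per-j}$, and that $f^{\per-j}(V_j)=U$ holds in the sense of the $\iota$-identifications. I would handle this by passing to the quadratic-like restriction near $\FK$, using the remark at the start of Section \ref{sec:value} that combinatorial statements may be checked for quadratic polynomials. In that case $\iota$ is an inclusion and the argument above applies verbatim.
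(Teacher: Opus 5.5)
Your proposal is correct and follows the paper's proof. Both factor $f^\per|_V$ through the chain $V_0=V$, $V_{i+1}=f(V_i)$, note that a step has degree $2$ exactly when $c\in V_i$, and inject the critical steps into the components of $\KK\cap U$ via $i\mapsto K_{\per-i}\ni f^{\per-i}(c)$. Your explicit check that each $V_i$ is simply connected, which is needed so that a step avoiding $c$ has degree $1$, fills in a point the paper leaves implicit.
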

\begin{proof}
We let $V_0 = V$ and inductively let $V_{i+1} = f(V_i)$, so $V_\per = U$. 
Then each $V_i$ is connected, so $f\from V_i \to V_{i+1}$ has degree 1 or 2, with the latter occurring if and only if the critical point $c$ of $f$ lies in $V_i \cap \KK$. 
If it does,
then $f^{\per - i}(c)$ lies in $\KK \cap U$,
and each such $i$
requires a different component of $\KK \cap U$. 
\end{proof}

If $Q$ is a surface and $H \subset Q$ is a subsurface, 
we say the \emph{subsurface cover} of $Q$ for $H$ is the cover corresponding to $i_*\pi_1(H)$, 
where $i\from H \to Q$ is the inclusion. 
We can lift $i$ to a map $\hat \imath$ from $H$ to the subsurface cover,
and every end of $H$ that is proper for $i$ will be proper for $\hat \imath$. 
In particular every arc in $H$ has a natural lift to the subsurface cover.
Any continuous map $h\from (\hat Q, \hat H) \to (Q, H)$ then lifts to a map,
from the subsurface cover of $\hat Q$ for $\hat H$,
to the subsurface cover for $Q$ for $H$,
and immersions lift to immersions.

Recalling the definition of localization from Section \ref{sec:localization}, we let $V_I$ be the localization for $U_I$ in $\UU\sm \KK$ with all the components of $\KK\cap U_I$ filled in; we also denote by $\KK_I$ the copy of $\KK_I$ in $V_I$. 
We likewise define $\hat V_I$ 
to be the localization of $\hat U_I$ in $g^{-1}(\UU\sm \KK)$ with all the components of $g^{-1}(\KK)\cap \hat U_I$ filled in; we also denote by $g^{-1}(\KK_I)$ the copy of $g^{-1}(\KK_I)$ inside $\hat V_I$. We let $\hat\KK_I$ denote the components of $g\inv(\KK_I) \cap \hat V_I$ 
that correspond to components of $\KK$.
We let $X_I = \wcan^{h+v}(V_I \sm \KK_I)$ and $\hat X_I = \wcan^{h+v}(\hat V_I \sm \hat\KK_I)$.

We then have 
\begin{itemize}
\item
a branched cover $g\from \hat V_I \to V_I$ with degree bounded in terms of $\bound$ and $|I|$,
\item
an inclusion
\begin{equation} \label{eq:inclusion}
\hat V_I \sm g\inv\KK_I \subset \hat V_I \sm \hat \KK_I,
\end{equation}
and 
\item
the immersion $\jj\from \UU^\per \to \UU$ 
restricts to an immersion $\jj\from \UU^\per \sm g\inv\KK \to \UU\sm \KK$, 
which lifts, 
via the inclusion $ \hat U_I \subset U_I$,
to an immersion 
\begin{equation} \label{eq:immersion}
i \from \hat V_I \sm \hat \KK_I \to V_I \sm \KK_I
\end{equation}
that is proper on the ends of $\hat V_I \sm \hat \KK_I$ corresponding to  $\hat\KK_I$. 
\end{itemize}
We will study the effect of \eqref{eq:inclusion} in Section \ref{align:good-by-good} and the effect of \eqref{eq:immersion} in Section \ref{align:immersion}, where
we then connect the relations in the covers $\hat V_I$ and $V_I$ to those in $\UU \sm \KK$ and complete the proof of Proposition \ref{prop:interval-self-dominating}.

\subsection{Typical intervals in the translation region} \label{align:typical}
\emph{We assume for the rest of this section that $X$ is $(L, \epsilon)$-well controlled}.

In this subsection we define a ``typical'' interval in the translation region, and show that they exist. As the name suggests, these are intervals $I$ where anomalies that are rare in the translation region are rare in $I$.

We fix $I$ and let $X\good_I \le X_I$ denote the portion of $X_I$ that is supported on the lift of good arcs, of combinatorial length at most $L$,
\emph{that lie entirely in $U_I$},
and let $X\bad_I := X_I - X\good_I$. We observe that every good arc is horizontal, so $X\bad_I$ contains all the vertical arcs in $X_I$. 
\begin{lemma} \label{lem:I-typical}
For  $L_1 \largegiven{\epsilon, L}$ and $\WW(\RR F) \largegiven \epsilon$, we can choose $I$ in the translation region with $|I| = L_1$,
such that 
\begin{equation} \label{eq:I-typical}
\norm{X\bad_I} <  3\epsilon L_1 \WW(\RR F).
\end{equation}
\end{lemma}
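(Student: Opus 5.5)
This is an averaging (pigeonhole) argument over disjoint windows in the translation region. I will compare the total bad weight across all windows of length $L_1$ with the total weight $\|X\|$, which is of order $q\WW(\RR F)$.

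First I bound the total bad weight globally. Write $X = X^g + X^b$, where $X^g$ is supported on the good set of arcs witnessing $(L,\epsilon)$-well-control. Then $\|X^b\| \le \epsilon\|X\|$. By Lemma \ref{lem:width to weight} together with Theorem \ref{main estimate} and \eqref{eq:width to weight 2}, $\|X\| \le \per \WW(\RR F) + O(\per)$. Since $\per \le \bound q$, this gives $\|X^b\| \le \epsilon \bound q (\WW(\RR F)+O(1))$. The dependence on $\bound$ is implicit throughout.

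Next I partition the translation region $(\bound, q-\bound)$ into about $q/L_1$ consecutive disjoint intervals $I_1, I_2, \dots$, each of length $L_1$. An arc counted in $X^b_{I_k}$ falls into one of three kinds:
\begin{enumerate}
\item the lift of a bad arc of $X$ lying in $U_{I_k}$;
\item the lift of a good arc that is not contained in $U_{I_k}$;
\item an arc of $X_{I_k}$ that does not come from an arc of $\UU\sm\KK$ at all. These are arcs created by the localization and by filling in $\KK\setminus\KK_{I_k}$, and they run toward the boundary of $V_{I_k}$.
\end{enumerate}

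For the first kind, I compare $\wcan(V_{I_k}\sm\KK_{I_k})$ with the restriction of $X$ to arcs in $U_{I_k}$. The canonical WAD should change by at most $O(1)$ per arc under passage to the localization, since the localization is conformally a neighborhood of $U_{I_k}$ glued to cover pieces, and by maximality of $\wcan$ up to an additive $2$. The number of arcs in $U_{I_k}$ is $O(L_1)$, so summing over $k$ gives at most $\|X^b\| + O(q)$.

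For the second kind, Proposition \ref{prop:comb-length-bound} shows that a good arc has no long segments and is contained in some $U_J$ with $|J|\le 2L$. It therefore fails to lie in $U_{I_k}$ only if it meets one of the two ends of $I_k$ within distance $2L$. Its weight is then controlled by the flux through $O(L)$ arcs of $\HH^\hor$ near the boundary. I would use the $M$-bounded flux from tameness, or more simply the fact that such arcs form $O(L)$ translation classes per boundary point. Summing over the windows gives $O(L)\cdot(q/L_1)\cdot \WW(\RR F)$, where I am assuming the flux bound is $O(\WW(\RR F))$ with constants absorbed.

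For the third kind, these arcs are the genuinely new vertical arcs of $V_{I_k}$. Their weight is bounded by the widths of the boundary ends of the localization, which are comparable to the fluxes through the two boundary arcs of $\HH^\hor$ plus the widths of $O(\bound)$ adjacent components of $\KK$. This contributes $O(\WW(\RR F))$ per window, for a total of $O(q\WW(\RR F)/L_1)$.

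Summing the three kinds, the average over the roughly $q/L_1$ windows of $\|X^b_{I_k}\|$ is at most
\[
\frac{L_1}{q}\Bigl(\epsilon\bound q\WW(\RR F) + O(q) + O\bigl(L q\WW(\RR F)/L_1\bigr)\Bigr)
\le \epsilon\bound L_1\WW(\RR F) + O(L_1) + O(L\WW(\RR F)).
\]
Taking $L_1 \gg_{\epsilon, L} 1$ and $\WW(\RR F)\gg_\epsilon 1$ makes this less than $3\epsilon L_1\WW(\RR F)$, with the normalization of $\epsilon$ chosen to absorb $\bound$. Some window $I_k$ then achieves at most the average, which proves \eqref{eq:I-typical}.

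The main obstacle is the boundary bookkeeping in the second and third kinds: showing that the canonical weight of the localization $V_I$ differs from $X$ only by arcs near $\partial U_I$, with total weight $O_L(\WW(\RR F))$ independent of $L_1$. I would handle this using Lemma \ref{lem:hyp geo} and Theorem \ref{main estimate} locally, comparing local widths of intervals in $V_I$ with those in $\UU\sm\KK$.
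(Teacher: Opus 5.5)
Your skeleton matches the paper's proof: disjoint windows of length $L_1$ in the translation region, a three-way split of $X^b_I$ (bad arcs, good arcs leaving $U_I$, weight created by localization), then pigeonholing. The gap is in your third kind. You bound the new vertical weight by the local width of the outer end of $V_I\sm\KK_I$, and then assert this is comparable to the fluxes through the two $\HH^\hor$-arcs at the ends of $I$ plus $O(\bound)$ Julia-set widths. That comparison is not justified and fails in general.

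The outer end's local width is essentially $\ell_{\mathrm{hyp}}(\dd U_I)/\pi$, i.e.\ the total canonical weight crossing $\dd U_I$. This includes vertical arcs and bad horizontal arcs leaving from Julia sets deep inside $U_I$, and these need not cross either of those two $\HH^\hor$-arcs. So this width is not $O(\WW(\RR F))$ per window. In any case, flux bounds come from tameness, which is not among the hypotheses here: this lemma feeds Lemma \ref{lem:hammer}, which assumes only well-control.

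The missing idea is a global deficit count, not a boundary one. The local width $W(K_i)$ of each $K_i\subset U_I$, and the canonical weight of every horizontal arc in $U_I$, are computed in the universal cover. They are therefore identical in $V_I\sm\KK_I$ and in $\UU\sm\KK$. Hence the weight at $K_i$ in $X_I$ not accounted for by arcs of $X$ is at most $W(K_i)-X|_{K_i}$. This is $\ge 0$ by \eqref{eq:total width vs total weight single}, and it sums to $O(\per)$ over all $i$ by Theorem \ref{main estimate}. So the ghost weight is $O(1)$ per small Julia set on average, i.e.\ $O(L_1)$ in a typical window, and it is absorbed once $\WW(\RR F)\gg 1/\epsilon$. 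You mention Theorem \ref{main estimate} at the end, but you aim it at the wrong target: no localization of the discrepancy near $\dd U_I$ is needed.

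Three smaller repairs:

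\begin{enumerate}
\item For the second kind, the flux through an $\HH^\hor$-arc measures curves crossing it; it does not bound the weight of arcs homotopic to it. Use instead that a good arc leaving $U_I$ has an endpoint on a component of $\KK$ within $2L$ limbs of an end of $I$. The total weight at each such component is at most its local width, which is $\le \WW(\RR F)$ by \eqref{eq:width to weight 2}. This gives $O(L\,\WW(\RR F))$ per window with no tameness.
\item Your first kind counts only bad arcs lying inside $U_I$. Bad arcs with an endpoint in $U_I$ that leave it become vertical weight of $X_I$, and they must be charged to $\norm{X^b}$ on average (each has an endpoint in at most two windows), not to the third kind.
\item The bound $\norm{X}\le \per\WW(\RR F)$ comes from summing \eqref{eq:total width vs total weight single} over the components of $\KK$ together with \eqref{eq:width to weight 2}. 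Lemma \ref{lem:width to weight} gives the opposite inequality.
\end{enumerate}
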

\begin{proof}
For a given interval $I$ of length $L_1$, there are three kinds of contributions to $X\bad_I$:
\begin{enumerate}
\item \label{it:bad}
Weighted arcs in $X^b$ that have at least one endpoint in $U_I$,
\item \label{it:outside}
arcs of $X^g$ that go from $U_I$ to outside $U_I$, and
\item \label{it:ghost}
``ghost vertical'', in the sense of weight of $X^v_I$ that does not appear in $X$. 
This has an upper bound of 
\begin{equation}
\sum_{K_i \subset U_I} W(K_i) - X|_{K_i}
\end{equation}
where $X|_{K_i}$ is the (weighted) intersection number of $X$ with a curve going around $K_i$ (as in \cite{K06}).	
\end{enumerate}
The contribution from \ref{it:bad} is, \emph{on average}, at most $\epsilon L_1\wstd$.
Case \ref{it:outside} contributes at most $4L\wstd$, 
because now our good arcs have combinatorial length at most $L$, 
so only those that start outside of $\shrink I L$ can go outside of $I$.
Finally,
the terms $W(K_i) - X|_{K_i}$ are each universally bounded, \emph{on average}, so the contribution from \ref{it:ghost} is $O(L_1)$ for a typical interval. 

For $q > L_1$, we can find a set of disjoint intervals of length $L_1$ that together cover at least half of the translation interval. 
Then one of these intervals will be typical in the sense that the total from Cases \ref{it:bad} and \ref{it:ghost} will be no more than twice the  average amounts stated above. 

Taking $L_1 \largergiven{} L/\epsilon$ and $\wstd \largergiven{} 1/\epsilon$, we obtain \eqref{eq:I-typical}.
\end{proof}

We will call an interval $I$ that satisfies \eqref{eq:I-typical} a \emph{typical} interval.
We observe that for any interval, $\norm{X_I} \asymp |I| \wstd$, and $\norm{X^g_I} \asymp \norm{X_I}$ when $X$ is $\epsilon, L$-controlled.

\subsection{Good arcs are mostly dominated by good arcs} \label{align:good-by-good}
In this subsection,
which is the heart of the proof of Proposition \ref{prop:interval-self-dominating},
 we study the inclusion \eqref{eq:inclusion} and show that most of $X_I$ is dominated by $g^*X^g_I$. 
\begin{prop} \label{prop:most-well-dominated}
For $\epsilon \smallgiven{\bound, \delta}$,
when $I$ is a typical interval of length $L_1$,
we can find $\hat Y_I \le \hat X_I$ with 
\begin{equation} \label{eq:most-well-dominated}
\norm{\hat X_I - \hat Y_I} \le \delta\norm{X_I}
\end{equation}
such that 
\begin{equation} \label{eq:most-well-dominated-dominated}
g^*X^g_I \dominates \hat Y_I
\end{equation}
in the inclusion \eqref{eq:inclusion}.
\end{prop}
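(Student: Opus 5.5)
The plan is to build $\hat Y_I$ from the canonical lamination upstairs. We restrict it through the inclusion \eqref{eq:inclusion} and compare what comes out with the pullback of $X_I$ under the branched cover $g\colon \hat V_I\to V_I$.

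\emph{Step 1 (restriction).} Let $\hat\FF$ be the horizontal and vertical part of $\fcan(\hat V_I\sm\hat\KK_I)$, so that $\hat\FF$ induces $\hat X_I$. Each leaf of $\hat\FF$ is cut by the extra components $g^{-1}\KK_I\sm\hat\KK_I$ into paths in $\hat V_I\sm g^{-1}\KK_I$. Lemma \ref{lem:domination} and the discussion in Section \ref{sec:domination} then give a WAD $Z$ on $\hat V_I\sm g^{-1}\KK_I$ with $Z\dominates \hat X_I$. I will use the fact that in a domination a leaf whose restriction has a single piece is dominated by that piece alone.

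\emph{Step 2 (wanderers).} I split $\hat\FF$ into leaves that visit at least $n$ components of $g^{-1}\KK_I$ and leaves that visit fewer. For the first family I apply the Wanderers Theorem \ref{thm:wanderers} to $g\colon \hat V_I\to V_I$ with $\BB=\KK_I$ and $\hat\AA=\hat\KK_I$. The degree $d$ of $g$ on $\bigcup\hat\AA$ is bounded in terms of $\bound$ alone, since each component of $\hat\KK_I$ corresponds to a component of $\KK$ and maps with degree at most $2$. This bounds the width of the first family by $\frac{2d}{n}\|X_I\|+C(|I|,\bound)$. Choosing $n\largegiven{\delta,\bound}$, and using $\|X_I\|\asymp L_1\WW(\RR F)$ with $\WW(\RR F)\largegiven{L_1}$, makes this at most $(\delta/3)\|X_I\|$. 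For the second family, each restricted piece lives in $\hat V_I\sm g^{-1}\KK_I$, which covers $V_I\sm\KK_I$. The pushforward of these pieces is a lamination of $V_I\sm\KK_I$, so by maximality of $\wcan$ the pieces are supported on $g^*$ of arcs of $X_I$. Their weight exceeds $g^*X_I$ by at most $O(\text{number of arcs})$, and Theorem \ref{main estimate} bounds that error by $O(L_1)\ll\delta\|X_I\|$. We therefore obtain $\hat Y_I'\le\hat X_I$ carrying all but $(2\delta/3)\|X_I\|$ of the weight, with $g^*X_I\dominates\hat Y_I'$.

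\emph{Step 3 (discard bad arcs).} I write $g^*X_I=g^*X^g_I+g^*X^b_I$. I remove from $\hat Y_I'$ the part whose dominating pieces include an arc of $g^*X^b_I$. By the harmonic-sum form of domination, the weight removed is at most a bounded multiple of $\|g^*X^b_I\|$ per leaf. Since the pieces come from leaves meeting fewer than $n$ components, this multiple depends only on $n$ and the degree. By typicality \eqref{eq:I-typical} the removed weight is therefore $O_{n,\bound}(\epsilon L_1\WW(\RR F))\le(\delta/3)\|X_I\|$ once $\epsilon\smallgiven{\bound,\delta}$. What remains is $\hat Y_I$, and it satisfies \eqref{eq:most-well-dominated} and \eqref{eq:most-well-dominated-dominated}.

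The main obstacle is degree control. The global degree of $g$ on $\hat V_I$ grows like $2^{O(L_1)}$ by Lemma \ref{lem:pullback-degree}, and $L_1$ depends on $\epsilon$. Every estimate must therefore use only the local degree on $\hat\KK_I$ (through the Wanderers Theorem) and the number of pieces per short leaf. No estimate may use the total degree; the additive constants that do depend on $L_1$ must be absorbed by taking $\WW(\RR F)\largegiven{L_1}$. If Step 3 fails because a single leaf meets many bad arcs, I would fall back to bounding $g^*X^b_I$ leafwise via the Wanderers Theorem again, taking $\hat\AA$ to be the lifts of endpoints of bad arcs.
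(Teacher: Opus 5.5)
\textbf{Where your proposal matches the paper.} Your Steps 1--2 follow the paper's opening: the domination $g^*X_I\dominates\hat X_I-C(L_1)$, which the paper imports from \cite{K06}, and the Wanderers Theorem with $\hat\AA=\hat\KK_I$, $d=2$. Two small corrections there. First, the pushforward of the pieces is not a lamination in general; what you actually need is naturality of the canonical WAD under the unbranched covering $\hat V_I\sm g^{-1}\KK_I\to V_I\sm\KK_I$. Second, the additive error is $C(L_1)$ rather than $O(L_1)$. Both are harmless since $\WW(\RR F)\gg_{L_1}1$.

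\textbf{The gap is in Step 3.} The harmonic-sum form of domination only shows that the removed weight is at most the total weight of those lifts of bad arcs that actually occur as pieces of your short leaves. Bounding this by $\norm{g^*X^b_I}$ costs the global degree of $g$ on $\hat V_I$. That degree grows exponentially in $L_1$, since each component of $\KK$ in $\hat U_I$ forces one passage through the critical point. Because $L_1\gg L/\epsilon$ is chosen after $\epsilon$, the resulting $2^{cL_1}\epsilon L_1\WW(\RR F)$ is not $\le\delta\norm{X_I}$.

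To get a factor depending only on $n$ and $\bound$, you must count how many lifts of a fixed bad arc are reachable from $\hat\KK_I$ in fewer than $n$ steps. That requires bounded branching at every step. The local degree bound ($g$ has degree $\le 2$ on each component of $g^{-1}\KK_I$) gives only half of this. If the pieces preceding the bad one are lifts of arbitrary arcs of $\supp X_I$, the branching is the valence of $\supp X_I$ at a small Julia set. That valence can be of order $\bound L_1$, and $(C\bound L_1)^n\epsilon$ is again not small. Your fallback does not help: the Wanderers Theorem says nothing about short leaves, and with $\hat\AA$ taken to be lifts of endpoints of bad arcs, the constant $d$ is again the uncontrolled degree.

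\textbf{The missing idea} is to sort leaves by the position $k$ of the \emph{first} piece that is a lift of a bad arc.
\begin{itemize}
\item All earlier pieces are then lifts of good arcs, and good arcs have bounded valence: at most $6\bound$ oriented good arcs end at any component of $\KK_I$. This is where goodness enters (e.g.\ via Lemma \ref{lem:bounded good}), not just typicality.
\item A good arc from $E'$ to $E$ has only two lifts starting at a component of $\hat\KK_I$, namely at $E'$ itself, since $g$ fixes each component of $\KK$ with degree $2$. Each later lift branches at most twice.
\item Walking backwards from the bad piece therefore yields $O((12\bound)^k)$ admissible lifts per bad arc. So the leaves with first bad index $k+1$ carry weight at most $O((12\bound)^k)\norm{X^b_I}+C(\bound,k,L_1)$.
\item Leaves with first bad index $\ge m$ visit at least $m-1$ components, so the Wanderers Theorem handles them.
\end{itemize}
Take $m=\lceil 12/\delta\rceil$ and $\epsilon<\delta/(9(12\bound)^m)$; this choice is independent of $L_1$, and typicality \eqref{eq:I-typical} then closes the argument. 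Your split by number of visits is compatible with this, since a leaf with fewer than $n$ visits has first bad index at most $n$. But without the bounded-valence count along good arcs, Step 3 does not go through.
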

\begin{proof}
From \cite{K06},
we have $g^*\wcan(V_I \sm \KK_I)\dominates\wcan(\hat V_I \sm \hat\KK_I) - C(L_1)$.
Moreover horizontal arcs can only be realized as a concatenation of horizontal arcs,
and vertical as a concatenation of horizontal and vertical. 
So we have
$g^*X_I \dominates \hat X_I - C(L_1)$. 
For the sake of the next paragraph we orient the arcs of $X^h_I$ arbitrarily,
and we orient the arcs of $X^v_I$ from $\KK_I$ to $\dd V_I$. 

As a witness to this domination,
we can find sequences $(Y_i)$ and $(Z_i)$ of WAD's (with the same index set), such that
\begin{equation} \label{eq:witness}
g^*X_I \ge \sum_i Y_i \text{ and } \hat X_I = \sum_i Z_i
\end{equation}
and $Y_i = \sum_j w_{ij} \alpha_{ij}$ and $Z_i = v_i \beta_i$,
and $Y_i \dominates Z_i$ in the elementary sense that $\sum_j v_{ij}^{-1} \le w_i^{-1}$ and $(\alpha_{ij})_j \to \beta$. 
It is important in this setting to require that the $\alpha_{ij}$'s appear 
\emph{in order along the associated representative of $\beta_i$} in $(\alpha_{ij})_j$.

We let $\hat X\vg_I$ be the portion of $\hat X_I$ that is dominated by $g^*X\good_I$,
so $X\vg_I = \sum_{Y_i \le g^*{X\good_I}} Z_i$. 
We let $\hat X\gb_I = \hat X_I - \hat X\vg_I$. 
(The superscript $gb\kern-3pt\dominates$ here suggests an arc that is dominated by a sequence of good and bad arcs (with at least one bad arc). 
We observe that $\hat X\gb_I$ includes all weight on the vertical arcs).  
Our goal is to show that 
\begin{equation} \label{eq:gb+v}
\norm{\hat X\gb_I} \le \delta \norm{X_I}
\end{equation}
for $\epsilon \smallgiven{\bound, \delta}$ and $\norm{X_I} \largegiven{\bound, \delta, L_1}$;
we can then let $\hat Y_I = \hat X\vg_I$ and conclude \eqref{eq:most-well-dominated}.

To see \eqref{eq:gb+v},
we let $\hat X\gb_k$ be the sum of all $Z_i$'s where the first $j$ for which $\alpha_{ij}$ is not a pullback by $g$ of a good arc is equal to $k$. 

We observe that 
\begin{equation} \label{eq:kis1}
\norm{\hat X\gb_1} \le 4 \norm{X_I\bv} + C(\bound , L_1),
\end{equation} 
because each oriented bad (or vertical) arc from a given small Julia set has at most 2 preimages by $g$ that start at the same small Julia set (and there are two ways of orienting each arc). 

Let $E$ be a component $E$ of $\KK$ in $V_I$.
Then there are at most $6\bound$ (oriented) good arcs that end at $E$.
Each such arc starts at another component $E'$ of $\KK$,
and has two pullbacks by $g$ that start at $E'$; 
these are actually the only pullbacks by $g$ that start at any component of $\KK$.
We conclude that there are at most $12\bound$ components of $g\invp E$ in $\hat V_I$
that are connected to a component of $\KK$ by an arc in $\supp g^*X_I^g$. 

So we obtain 
\begin{equation}
\norm{\hat X\gb_2} \le 24\bound \norm{X_I\bv} + C(\bound , L_1).
\end{equation}
Similarly,
for each $E$ as before,
there are at most $(6\bound)^k$ ways of walking backwards along a sequence of $k$ oriented arcs,
starting the backward walk from $E$.
We therefore obtain
\begin{equation} \label{eq:genk}
\norm{\hat X\gb_{k+1}} \le 4(6\bound )^k \norm{X_I\bv} + C(\bound, k, L_1)
\end{equation}
 for each $k\ge 1$. 

Finally,
letting $X\gb_{\ge m} = \sum_{k = m}^\infty X\gb_k$, we have
\begin{equation} \label{eq:applywand}
\norm{\hat X\gb_{\ge m}} \le \frac{4}{m} \norm{X_I}
\end{equation}
by the Wanderers Theorem (\ref{thm:wanderers}).
Taking $m = \ceil{12/\delta}$ and $\epsilon < \frac19\delta / (12\bound)^m$ and $\wstd\largegiven{m, L_1, \delta}$, and applying \eqref{eq:kis1}, \eqref{eq:genk}, and \eqref{eq:applywand},
we obtain \eqref{eq:gb+v}.
\end{proof}


\subsection{From $\hat Y_I$ to $Y_I$ (and $Y_I$ to $Y$)} \label{align:immersion}
In this brief and final subsection we study the effect of the inclusion \eqref{eq:immersion},
and combine that with Proposition \ref{prop:most-well-dominated} to prove Proposition \ref{prop:interval-self-dominating}.

First, we have the following fact for relating weighted arc diagrams under by immersions:
\begin{theorem} \label{thm:immer-decomp}
Suppose that $i\from V' \to V$ is a holomorphic immersion of finite topology open Riemann surfaces, and $E'$ is a set of ends of $V'$ such that $i$ is proper, and has degree 1, at each end in $E'$.
Suppose that $Y$ is a valid WAD on $V$, 
such that all arcs in $Y$ have (at least) one endpoint in $E := i(E')$. 
Then there exists $Y'$ valid on $V'$ such that we can write 
\begin{equation} \label{eq:immer-decomp}
Y' \ge \sum_i v_i \beta'_i \qquad Y = \sum v_i \beta_i,
\end{equation}
and,
for each $i$,
either
\begin{itemize}
\item
both endponts of $\beta'_i$ lie in $E'$ and $i_*\beta'_i = \beta_i$,
\item
one endpoint of $\beta'_i$ lies in $E'$ and the other endpoint does not. 
\end{itemize}
\end{theorem}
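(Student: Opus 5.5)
The plan is to realize $Y$ by a lamination, lift its leaves individually through the immersion, starting at the ends in $E'$, and take $Y'$ to be the WAD induced by the lifted lamination. Since $Y$ is valid, fix a lamination $\FF$ on $V$ with $W_\FF = Y$. Orient every leaf $\gamma\in\FF$ so that it starts at an end in $E$; if both endpoints lie in $E$, choose one, consistently within each homotopy class. Because $i$ is proper of degree $1$ at each end of $E'$, it maps a neighbourhood of that end in the ideal closure homeomorphically onto a neighbourhood of the corresponding end of $E$. Hence the initial germ of $\gamma$ has a unique lift starting at $E'$. Define $\gamma'$ to be the component of $i^{-1}(\gamma)$ containing this germ, that is, the maximal path lift of $\gamma$ through the immersion.

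First I will check that $\gamma'$ is a proper path in $V'$. It is a component of the preimage of a closed proper $1$-manifold under a local homeomorphism, so it is a closed embedded $1$-manifold. It is also not a circle, since one end goes out to $E'$, so it runs out to the ideal boundary at its other end as well. Moreover $i|_{\gamma'}$ is injective: it is a lift of the simple arc $\gamma$ by path lifting, and its parameter only moves forward. Distinct leaves of $\FF$ are disjoint, so their lifts are disjoint; thus $\FF' := \{\gamma'\}$ is a lamination on $V'$ and $i$ is injective on $\bigcup\FF'$.

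Next comes the dichotomy required in the statement. Suppose the terminal end of $\gamma'$ lies in $E'$. Then properness and degree one at that end force the lift to exhaust $\gamma$, so $i(\gamma')=\gamma$ and $i_*[\gamma']=[\gamma]$. Otherwise exactly one endpoint of $\gamma'$ lies in $E'$. I then partition each $\FF(\beta)$, for $\beta\in\supp Y$, according to the arc class $\beta'$ of the lift. This gives sublaminations $\FF(\beta,\beta')$, and I set $v_{\beta,\beta'} := \WW(\FF(\beta,\beta'))$. The sublaminations of a single homotopy class form a parallel family carried by one foliated rectangle, so their widths add: $\sum_{\beta'} v_{\beta,\beta'} = \WW(\FF(\beta)) = Y(\beta)$. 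Indexing by pairs $(\beta,\beta')$ gives $Y=\sum v_i\beta_i$.

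Finally, set $Y' := W_{\FF'}$, which is valid by construction. We need $Y'(\beta') \ge \sum_{\beta} v_{\beta,\beta'}$. This follows from Lemma \ref{lem:domination} in its injective form. Concretely, since $i$ is injective on $\bigcup\FF'$, any admissible metric $\rho'$ for a lifted family pushes forward to a metric on $V$ with no larger area and the same leaf lengths; hence the width of the lifted family is at least the width of $\FF(\beta,\beta')$. Summing over all $\beta$ whose lifts land in the class $\beta'$ again uses additivity of widths within the single class $\beta'$.

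The step I expect to be the main obstacle is the bookkeeping of homotopy classes in $V'$. A lifted leaf might be trivial, or subordinate to a short multi-interval, so that it is not an arc of $(V',\II')$. Also, lifts of leaves in different classes $\beta$ may share a class $\beta'$, and I need the resulting family to sit inside a single canonical rectangle so that widths genuinely add. For the first issue I would try to show that a trivial lift forces the original leaf to be trivial, using properness at the starting end; failing that, I would discard such leaves at an $O(1)$ cost. For the second, I would use the fact that disjoint homotopic leaves of a lamination are nested parallel, as in the rectangle picture underlying the canonical lamination.
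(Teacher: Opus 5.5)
Your proposal is correct and is essentially the paper's proof. Both realize $Y$ by a lamination, orient each leaf to start at $E$, lift it from $E'$ until it leaves $V'$, let $Y'$ be the WAD of the lifted lamination, and subdivide each class of $Y$ by the homotopy class of its lifts. The extra checks you supply (injectivity of $i$ on the lifted leaves, the width comparison by pushing metrics forward) are left implicit in the paper. Your bookkeeping worry needs no rectangle argument, since extremal width is additive over subfamilies with disjoint supports.
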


\begin{proof}
Since $Y$ is valid, is the WAD for a partial proper foliation $F$ on $V$. 
We can orient the leaves of $F$ such that each leaf $\ell$ begins on an end in $E$.
We can then lift $\ell$, 
starting from an end in $E'$, 
until it leaves $V'$; we thereby obtain a proper leaf $\ell'$ in $V'$.
If the other endpoint of $\ell'$ lies in $E'$, 
then $i(\ell') = \ell$. 

We assemble the leaves $\ell'$ above into a partial proper foliation $F'$ on $V'$, and let $Y' = \WW(F')$.
We can also divide up $F$ into the leaves $\ell$ of a given homotopy class, and then further subdivide by the homotopy class of the resulting $\ell'$. In this way we can form the divisions of $Y$ and $Y'$ given in \eqref{eq:immer-decomp}, and, by the previous paragraph, it has the desired properties. 
\end{proof}

We call \eqref{eq:immer-decomp} a witness for Theorem \ref{thm:immer-decomp}.


%


\begin{proof}[Proof of Proposition \ref{prop:interval-self-dominating}]
We choose $\epsilon$ and $L_1$ as in Proposition \ref{prop:most-well-dominated},
with $\delta$ replaced by $\delta/2$;
we can also require that $L_1$ satisfy the hypothesis of Proposition \ref{lem:I-typical}
and  $L_1 > 8r$.
We can then choose a typical interval $I$ of length $L_1$ and then apply Proposition \ref{prop:most-well-dominated} to obtain a WAD $\hat Y_I$ on $\hat V_I \sm \hat \KK_I$ that satisfies \eqref{eq:most-well-dominated} (with $\delta$ replaced by $\delta/2$) and \eqref{eq:most-well-dominated-dominated}.

We let $X_I'$ be the portion of $X_I$ that is supported on arcs with at least one endpoint of $i(\hat\KK_I)$.
We can then apply Theorem \ref{thm:immer-decomp} to the immersion $i\from \hat V_I \sm \hat\KK_I \to V_I \sm \KK_I$,
letting $Y$ in that theorem be $X_I'$. 
We can then write $X'_I = X''_I+ Y_I$,
where 
\begin{itemize}
\item
$Y_I = i_*\hat Y_I$, and
\item
in the witness of Theorem \ref{thm:immer-decomp},
$X''_I$ corresponds to something less than or equal to $\hat X_I - \hat Y_I$. 
\end{itemize}

Then $\norm{X''_I} \le \norm{\hat X_I - \hat Y_I}$.
Moreover $\norm{X_I - X_I''} \le 4r \wstd$.
Combining these two inequalities with \eqref{eq:most-well-dominated},
we obtain
\begin{equation} \label{eq:what-we-wanted}
\norm{X_I - Y_I} \le \delta \norm{X_I}.
\end{equation}

Finally,
we observe that $Y_I$ is a lift of a (unique) WAD $Y$ on $\UU \sm \KK$, 
and $g^*(X^g|_I) \dominates Y$,
and $\norm{X|_I - Y} \le \norm{X_I - Y_I}$, so,
using the observation at the end of Section \ref{align:typical},
we obtain \eqref{eq:interval-self-dominating-control}.
\end{proof}

%% file: figs/dynamic_intervals.eps_tex
\begingroup%
  \makeatletter%
  \providecommand\color[2][]{%
    \errmessage{(Inkscape) Color is used for the text in Inkscape, but the package 'color.sty' is not loaded}%
    \renewcommand\color[2][]{}%
  }%
  \providecommand\transparent[1]{%
    \errmessage{(Inkscape) Transparency is used (non-zero) for the text in Inkscape, but the package 'transparent.sty' is not loaded}%
    \renewcommand\transparent[1]{}%
  }%
  \providecommand\rotatebox[2]{#2}%
  \newcommand*\fsize{\dimexpr\f@size pt\relax}%
  \newcommand*\lineheight[1]{\fontsize{\fsize}{#1\fsize}\selectfont}%
  \ifx\svgwidth\undefined%
    \setlength{\unitlength}{1041.05874369bp}%
    \ifx\svgscale\undefined%
      \relax%
    \else%
      \setlength{\unitlength}{\unitlength * \real{\svgscale}}%
    \fi%
  \else%
    \setlength{\unitlength}{\svgwidth}%
  \fi%
  \global\let\svgwidth\undefined%
  \global\let\svgscale\undefined%
  \makeatother%
  \begin{picture}(1,0.66072362)%
    \lineheight{1}%
    \setlength\tabcolsep{0pt}%
    \put(0,0){\includegraphics[width=\unitlength]{dynamic_intervals.eps}}%
    \put(0.49508958,0.59284245){\makebox(0,0)[lt]{\lineheight{1.25}\smash{\begin{tabular}[t]{l}$f$\end{tabular}}}}%
    \put(0.49300066,0.41700863){\makebox(0,0)[lt]{\lineheight{1.25}\smash{\begin{tabular}[t]{l}$\iota$\end{tabular}}}}%
    \put(0.49508958,0.27513135){\makebox(0,0)[lt]{\lineheight{1.25}\smash{\begin{tabular}[t]{l}$f_*$\end{tabular}}}}%
    \put(0.49300066,0.08565948){\makebox(0,0)[lt]{\lineheight{1.25}\smash{\begin{tabular}[t]{l}$\iota^*$\end{tabular}}}}%
  \end{picture}%
\endgroup%

%% file: figs/breaking.eps_tex
\begingroup%
  \makeatletter%
  \providecommand\color[2][]{%
    \errmessage{(Inkscape) Color is used for the text in Inkscape, but the package 'color.sty' is not loaded}%
    \renewcommand\color[2][]{}%
  }%
  \providecommand\transparent[1]{%
    \errmessage{(Inkscape) Transparency is used (non-zero) for the text in Inkscape, but the package 'transparent.sty' is not loaded}%
    \renewcommand\transparent[1]{}%
  }%
  \providecommand\rotatebox[2]{#2}%
  \newcommand*\fsize{\dimexpr\f@size pt\relax}%
  \newcommand*\lineheight[1]{\fontsize{\fsize}{#1\fsize}\selectfont}%
  \ifx\svgwidth\undefined%
    \setlength{\unitlength}{492.73465374bp}%
    \ifx\svgscale\undefined%
      \relax%
    \else%
      \setlength{\unitlength}{\unitlength * \real{\svgscale}}%
    \fi%
  \else%
    \setlength{\unitlength}{\svgwidth}%
  \fi%
  \global\let\svgwidth\undefined%
  \global\let\svgscale\undefined%
  \makeatother%
  \begin{picture}(1,0.47329661)%
    \lineheight{1}%
    \setlength\tabcolsep{0pt}%
    \put(0,0){\includegraphics[width=\unitlength]{breaking.eps}}%
    \put(0.15644294,0.02242361){\makebox(0,0)[lt]{\lineheight{1.25}\smash{\begin{tabular}[t]{l}$J_1$\end{tabular}}}}%
    \put(0.4897755,0.01971825){\makebox(0,0)[lt]{\lineheight{1.25}\smash{\begin{tabular}[t]{l}$I$\end{tabular}}}}%
    \put(0.81575407,0.02242361){\makebox(0,0)[lt]{\lineheight{1.25}\smash{\begin{tabular}[t]{l}$J_2$\end{tabular}}}}%
    \put(0.17542455,0.38810318){\makebox(0,0)[lt]{\lineheight{1.25}\smash{\begin{tabular}[t]{l}$\gamma_1$\end{tabular}}}}%
    \put(0.21806379,0.33373808){\makebox(0,0)[lt]{\lineheight{1.25}\smash{\begin{tabular}[t]{l}$\gamma_2$\end{tabular}}}}%
    \put(0.26603307,0.27724094){\makebox(0,0)[lt]{\lineheight{1.25}\smash{\begin{tabular}[t]{l}$\gamma_3$\end{tabular}}}}%
    \put(0.30867236,0.22287579){\makebox(0,0)[lt]{\lineheight{1.25}\smash{\begin{tabular}[t]{l}$\gamma_4$\end{tabular}}}}%
  \end{picture}%
\endgroup%

%% file: figs/lift.eps_tex
\begingroup%
  \makeatletter%
  \providecommand\color[2][]{%
    \errmessage{(Inkscape) Color is used for the text in Inkscape, but the package 'color.sty' is not loaded}%
    \renewcommand\color[2][]{}%
  }%
  \providecommand\transparent[1]{%
    \errmessage{(Inkscape) Transparency is used (non-zero) for the text in Inkscape, but the package 'transparent.sty' is not loaded}%
    \renewcommand\transparent[1]{}%
  }%
  \providecommand\rotatebox[2]{#2}%
  \newcommand*\fsize{\dimexpr\f@size pt\relax}%
  \newcommand*\lineheight[1]{\fontsize{\fsize}{#1\fsize}\selectfont}%
  \ifx\svgwidth\undefined%
    \setlength{\unitlength}{319.69124862bp}%
    \ifx\svgscale\undefined%
      \relax%
    \else%
      \setlength{\unitlength}{\unitlength * \real{\svgscale}}%
    \fi%
  \else%
    \setlength{\unitlength}{\svgwidth}%
  \fi%
  \global\let\svgwidth\undefined%
  \global\let\svgscale\undefined%
  \makeatother%
  \begin{picture}(1,0.41427898)%
    \lineheight{1}%
    \setlength\tabcolsep{0pt}%
    \put(0,0){\includegraphics[width=\unitlength]{lift.eps}}%
    \put(0.32651859,0.38000408){\makebox(0,0)[lt]{\lineheight{1.25}\smash{\begin{tabular}[t]{l}$\tilde A_2$\end{tabular}}}}%
    \put(0.41531833,0.20635916){\makebox(0,0)[lt]{\lineheight{1.25}\smash{\begin{tabular}[t]{l}$\tilde X$\end{tabular}}}}%
    \put(0.00402521,0.21183692){\makebox(0,0)[lt]{\lineheight{1.25}\smash{\begin{tabular}[t]{l}$\tilde Y$\end{tabular}}}}%
    \put(0.09322369,0.01539218){\makebox(0,0)[lt]{\lineheight{1.25}\smash{\begin{tabular}[t]{l}$\tilde C$\end{tabular}}}}%
    \put(0.50059559,0.34082978){\makebox(0,0)[lt]{\lineheight{1.25}\smash{\begin{tabular}[t]{l}$\iota$\end{tabular}}}}%
    \put(0.1026123,0.38164554){\makebox(0,0)[lt]{\lineheight{1.25}\smash{\begin{tabular}[t]{l}$\tilde B$\end{tabular}}}}%
    \put(0.98686433,0.20793766){\makebox(0,0)[lt]{\lineheight{1.25}\smash{\begin{tabular}[t]{l}$A$\end{tabular}}}}%
    \put(0.67400399,0.38668144){\makebox(0,0)[lt]{\lineheight{1.25}\smash{\begin{tabular}[t]{l}$B$\end{tabular}}}}%
    \put(0.67785583,0.01505758){\makebox(0,0)[lt]{\lineheight{1.25}\smash{\begin{tabular}[t]{l}$C$\end{tabular}}}}%
    \put(0.31884564,0.01894996){\makebox(0,0)[lt]{\lineheight{1.25}\smash{\begin{tabular}[t]{l}$\tilde A_1$\end{tabular}}}}%
  \end{picture}%
\endgroup%

%% file: macros/biblio.bib
@book {McMullen94,
	AUTHOR = {McMullen, Curtis T.},
	TITLE = {Complex dynamics and renormalization},
	SERIES = {Annals of Mathematics Studies},
	VOLUME = {135},
	PUBLISHER = {Princeton University Press, Princeton, NJ},
	YEAR = {1994},
	PAGES = {x+214},
	ISBN = {0-691-02982-2; 0-691-02981-4},
	MRCLASS = {58F23 (30D05)},
	MRNUMBER = {1312365},
	MRREVIEWER = {Gregery\ T.\ Buzzard},
}

@book {Orsay,
	AUTHOR = {Douady, A. and Hubbard, J. H.},
	TITLE = {\'{E}tude dynamique des polyn\^{o}mes complexes.},
	SERIES = {Publications Math\'{e}matiques d'Orsay [Mathematical Publications
	of Orsay]},
	VOLUME = {85},
	NOTE = {With the collaboration of P. Lavaurs, Tan Lei and P. Sentenac},
	PUBLISHER = {Universit\'{e} de Paris-Sud, D\'{e}partement de Math\'{e}matiques, Orsay},
	YEAR = {1985},
	PAGES = {v+154},
	MRCLASS = {58F08 (30D05 39B10)},
	MRNUMBER = {812271},
	MRREVIEWER = {M. Rees},
}

@incollection {Hubbard-Yoccoz,
	AUTHOR = {Hubbard, J. H.},
	TITLE = {Local connectivity of {J}ulia sets and bifurcation loci: three
	theorems of {J}.-{C}. {Y}occoz},
	BOOKTITLE = {Topological methods in modern mathematics ({S}tony {B}rook,
	{NY}, 1991)},
	PAGES = {467--511},
	PUBLISHER = {Publish or Perish, Houston, TX},
	YEAR = {1993},
	MRCLASS = {58F23 (28A80 30C10 58F12)},
	MRNUMBER = {1215974},
	MRREVIEWER = {Christoph\ Bandt},
}

@incollection {BD86,
	AUTHOR = {Branner, Bodil and Douady, Adrien},
	TITLE = {Surgery on complex polynomials},
	BOOKTITLE = {Holomorphic dynamics ({M}exico, 1986)},
	SERIES = {Lecture Notes in Math.},
	VOLUME = {1345},
	PAGES = {11--72},
	PUBLISHER = {Springer, Berlin},
	YEAR = {1988},
	ISBN = {3-540-50226-2},
	MRCLASS = {58F14 (30C60)},
	MRNUMBER = {980952},
	MRREVIEWER = {M.\ Lyubich},
	DOI = {10.1007/BFb0081395},
	URL = {https://doi.org/10.1007/BFb0081395},
}

@article {DL_pacmen,
	AUTHOR = {Dudko, Dzmitry and Lyubich, Mikhail},
	TITLE = {Local connectivity of the {M}andelbrot set at some satellite
	parameters of bounded type},
	JOURNAL = {Geom. Funct. Anal.},
	FJOURNAL = {Geometric and Functional Analysis},
	VOLUME = {33},
	YEAR = {2023},
	NUMBER = {4},
	PAGES = {912--1047},
	ISSN = {1016-443X,1420-8970},
	MRCLASS = {37F10},
	MRNUMBER = {4616693},
	DOI = {10.1007/s00039-023-00637-8},
	URL = {https://doi.org/10.1007/s00039-023-00637-8},
}

@ARTICLE{CS_satellite,
	author = {Cheraghi, Davoud and Shishikura, Mitshurio},
	title = "{Satellite renormalization of quadratic polynomials}",
	journal = {arXiv e-prints},
	year = 2015,
	month = sep,
	eid = {arXiv.1509.07843},
	pages = {arXiv.1509.07843},
	archivePrefix = {arXiv},
	eprint = {1509.07843},
	primaryClass = {math.DS},
	adsurl = {https://arxiv.org/abs/1509.07843v1},
}

@article{IS,
	title = {The renormalization for parabolic fixed points and their
	perturbation},
	author = {Inou, H. and Shishikura, M.},
	Year = {2008},
	Journal = {Manuscript},
}

@article {DLS_pacman,
	AUTHOR = {Dudko, Dzmitry and Lyubich, Mikhail and Selinger, Nikita},
	TITLE = {Pacman renormalization and self-similarity of the {M}andelbrot
	set near {S}iegel parameters},
	JOURNAL = {J. Amer. Math. Soc.},
	FJOURNAL = {Journal of the American Mathematical Society},
	VOLUME = {33},
	YEAR = {2020},
	NUMBER = {3},
	PAGES = {653--733},
	ISSN = {0894-0347,1088-6834},
	MRCLASS = {37E20 (37F25 37F46)},
	MRNUMBER = {4127901},
	MRREVIEWER = {Matthieu\ Astorg},
	DOI = {10.1090/jams/942},
	URL = {https://doi.org/10.1090/jams/942},
}

@misc{DLneutral,
	title={Uniform a priori bounds for neutral renormalization}, 
	author={Dzmitry Dudko and Mikhail Lyubich},
	year={2024},
	eprint={2210.09280},
	archivePrefix={arXiv},
	primaryClass={math.DS},
	url={https://arxiv.org/abs/2210.09280}, 
}

@unpublished{DKLP:elephants,
	author = {Dudko, Dzmitry and Kapiamba, Alex and  Lyubich, Mikhail and Peterson, Carsten},
	note = {In preparation},
	title = {Elephant puzzles}
}


%% file: macros/everything.bib
@preamble{"\def\cprime{$'$} "}

@unpublished{DKL:virtual,
	author = {Dzmitry Dudko and Jeremy Kahn and Mikhail Lyubich},
	note = {In preparation},
	title = {Virtual renormalization: interpolating between primitive and satelite regimes}}

@article{KL:QA,
	author = {Kahn, Jeremy and Lyubich, Mikahil},
	coden = {ANMAAH},
	doi = {10.4007/annals.2009.169.561},
	fjournal = {Annals of Mathematics. Second Series},
	issn = {0003-486X},
	journal = {Ann. of Math. (2)},
	mrclass = {37F10 (30F20 37F35)},
	mrnumber = {2480612 (2010a:37091)},
	mrreviewer = {Peter Ha{\"{\i}}ssinsky},
	number = {2},
	pages = {561--593},
	title = {The quasi-additivity law in conformal geometry},
	url = {http://dx.doi.org/10.4007/annals.2009.169.561},
	volume = {169},
	year = {2009}}

@article{KL:decorations,
	author = {Kahn, Jeremy and Lyubich, Mikhail},
	fjournal = {Annales Scientifiques de l'\'Ecole Normale Sup\'erieure. Quatri\`eme S\'erie},
	issn = {0012-9593},
	journal = {Ann. Sci. \'Ec. Norm. Sup\'er. (4)},
	mrclass = {37F25 (37F10 37F45 37F50)},
	mrnumber = {2423310 (2009k:37106)},
	mrreviewer = {Volker Mayer},
	number = {1},
	pages = {57--84},
	title = {A priori bounds for some infinitely renormalizable quadratics. {II}. {D}ecorations},
	volume = {41},
	year = {2008}}

@incollection{KL:molecules,
	address = {Wellesley, MA},
	author = {Kahn, Jeremy and Lyubich, Mikhail},
	booktitle = {Complex dynamics},
	mrclass = {37F25 (37F10 37F45 37F50)},
	mrnumber = {2508259 (2010f:37078)},
	mrreviewer = {Volker Mayer},
	pages = {229--254},
	publisher = {A K Peters},
	title = {A priori bounds for some infinitely renormalizable quadratics. {III}. {M}olecules},
	year = {2009}}


%% file: macros/mlc.bib
@article{lyubich1997dynamics,
	author = {Lyubich, Mikhail},
	journal = {Acta Mathematica},
	number = {2},
	pages = {185--297},
	publisher = {International Press of Boston},
	title = {{ Dynamics of quadratic polynomials, I--II}},
	volume = {178},
	year = {1997}}

@misc{dudko2023mlcfeigenbaumpoints,
	archiveprefix = {arXiv},
	author = {Dzmitry Dudko and Mikhail Lyubich},
	eprint = {2309.02107},
	primaryclass = {math.DS},
	title = {{MLC at Feigenbaum points}},
	url = {https://arxiv.org/abs/2309.02107},
	year = {2023}}


%% file: papers.bib
@misc{KL:eyes,
      title={A priori bounds for some infinitely renormalizable quadratic: IV. Elephant Eyes}, 
      author={Jeremy Kahn and Misha Lyubich},
      year={2026},
      eprint={2601.21905},
      archivePrefix={arXiv},
      primaryClass={math.DS},
      url={https://arxiv.org/abs/2601.21905}, 
}

@misc{K06,
      title={A priori bounds for some infinitely renormalizable quadratics: I. Bounded primitive combinatorics}, 
      author={Jeremy Kahn},
      year={2006},
      eprint={math/0609045},
      archivePrefix={arXiv},
      primaryClass={math.DS},
      url={https://arxiv.org/abs/math/0609045}, 
}

@inbook{Milnor,
place={Cambridge}, series={London Mathematical Society Lecture Note Series}, title={Local connectivity of Julia sets: expository lectures}, booktitle={The Mandelbrot Set, Theme and Variations}, publisher={Cambridge University Press}, author={Milnor, John}, editor={Lei, TanEditor}, year={2000}, pages={67–116}, collection={London Mathematical Society Lecture Note Series}
}

@misc{dudkoMLC,
      title={On the MLC Conjecture and the Renormalization Theory in Complex Dynamics}, 
      author={Dzmitry Dudko},
      year={2025},
      eprint={2512.24171},
      archivePrefix={arXiv},
      primaryClass={math.DS},
      url={https://arxiv.org/abs/2512.24171}, 
}

@article {AL22,
    AUTHOR = {Avila, Artur and Lyubich, Mikhail},
     TITLE = {Lebesgue measure of {F}eigenbaum {J}ulia sets},
   JOURNAL = {Ann. of Math. (2)},
  FJOURNAL = {Annals of Mathematics. Second Series},
    VOLUME = {195},
      YEAR = {2022},
    NUMBER = {1},
     PAGES = {1--88},
      ISSN = {0003-486X,1939-8980},
   MRCLASS = {37F10 (37F25)},
  MRNUMBER = {4358413},
MRREVIEWER = {Kevin\ M.\ Pilgrim},
       DOI = {10.4007/annals.2022.195.1.1},
       URL = {https://doi-org.ezp-prod1.hul.harvard.edu/10.4007/annals.2022.195.1.1},
}

@article {BC12,
    AUTHOR = {Buff, Xavier and Ch\'eritat, Arnaud},
     TITLE = {Quadratic {J}ulia sets with positive area},
   JOURNAL = {Ann. of Math. (2)},
  FJOURNAL = {Annals of Mathematics. Second Series},
    VOLUME = {176},
      YEAR = {2012},
    NUMBER = {2},
     PAGES = {673--746},
      ISSN = {0003-486X,1939-8980},
   MRCLASS = {37F50},
  MRNUMBER = {2950763},
MRREVIEWER = {Peter\ Ha\"issinsky},
       DOI = {10.4007/annals.2012.176.2.1},
       URL = {https://doi-org.ezp-prod1.hul.harvard.edu/10.4007/annals.2012.176.2.1},
}
